\mathchardef\period=\mathcode`.
\DeclareMathSymbol{.}{\mathord}{letters}{"3B}
\newcommand*{\isoarrow}[1]{\arrow[#1,"\rotatebox{90}{\(\sim\)}"
]}
\newtheorem*{rep@theorem}{\rep@title}
\newcommand{\newreptheorem}[2]{%
\newenvironment{rep#1}[1]{%
 \def\rep@title{#2 \ref{##1}}%
 \begin{rep@theorem}}%
 {\end{rep@theorem}}}
\tikzset{
  symbol/.style={
    draw=none,
    every to/.append style={
      edge node={node [sloped, allow upside down, auto=false]{$#1$}}}
  }
}
\newtheorem{lemma}{Lemma}[section]
\newtheorem{theorem}[lemma]{Theorem}
\newtheorem{corollary}[lemma]{Corollary}
\newtheorem{prop}[lemma]{Proposition}
\theoremstyle{definition}
\newtheorem{defn}[lemma]{Definition}
\newtheorem{rem}[lemma]{Remark}
\newtheorem{question}[lemma]{Question}
\theoremstyle{remark}
\newtheorem*{rem*}{Remark}
\newtheorem*{note*}{Note}
\newcommand\isomto{\stackrel{\textstyle\sim}{\smash{\longrightarrow}\rule{0pt}{0.4ex}}}
\newcommand\restr[2]{{
  \left.\kern-\nulldelimiterspace 
  #1 
  \vphantom{\big|} 
  \right|_{#2} 
  }}
  \DeclareMathSymbol{.}{\mathord}{letters}{"3B}
\def\temp{&} \catcode`&=\active \let&=\temp
\begin{document}
\title{Remarks on the Bondal quiver}
\author{Benjamin Sung}
\address{Department of Mathematics, University of California, Santa Barbara, CA 93106, USA}
\email{bsung@ucsb.edu}
\date{}
\begin{abstract}
We study an admissible subcategory of the Bondal quiver which conjecturally does not admit any Bridgeland stability conditions. Specifically, we prove that its Serre functor coincides with the spherical twist associated with a $3$-spherical object. As a consequence, we obtain a classification of the spherical objects, deduce the non-existence of Serre-invariant stability conditions, and construct a natural spherical functor from its structure as a categorical resolution of the nodal cubic curve.
\end{abstract}
\maketitle
\tableofcontents
\section{Introduction}
A cornerstone of modern algebraic geometry is the study of algebraic varieties through their bounded derived categories, which often admit natural decompositions consistent with their Hodge structure. It has long been recognized that categories arising from such decompositions are not necessarily derived categories of a variety themselves, and it is critical for geometric applications to classify and to characterize their properties. A possible assumption to preclude pathologies is the existence of a Bridgeland stability condition, which is believed to exist on any smooth projective variety, and produces natural geometric structures from an abstract category, such as a moduli space of objects. This brings us to the guiding motivation for the present article, which is summarized by the following question:
\begin{question}\label{q:motivation}
Are there sharp criteria delineating triangulated categories admitting a Bridgeland stability condition from those that provably do not?
\end{question}
For example, the Jordan-H\"{o}lder property serves as a natural illustration of the principle in question~\ref{q:motivation}. A triangulated category $\mathcal{T}$ has the \textit{Jordan-H\"{o}lder property} if for any pair
\[
\mathcal{T} = \langle \mathcal{A}_1, \ldots, \mathcal{A}_n \rangle = \langle \mathcal{B}_1, \ldots, \mathcal{B}_m \rangle
\]
of semi-orthogonal decompositions, one has $m = n$, and there exists a permutation $\sigma \in S_m$ such that $\mathcal{B}_i \simeq \mathcal{A}_{\sigma(i)}$ for all $1 \leq i \leq n$. A number of counter-examples have been found, for example in \cite{kuznetsov2013simple,li2020refined}, which admit an admissible embedding into $D^b(X)$ with $X$ a smooth, projective variety. In particular, many such examples admit a pair of semi-orthogonal decompositions
\[
\mathcal{D} = \langle E_1, \ldots, E_k \rangle = \langle \mathcal{A}, F_1, \ldots, F_l \rangle
\]
where $E_i, F_j$ are exceptional objects, and $\mathcal{A}$ does not contain any exceptional objects. Moreover, $\mathcal{A}$ is generally quite pathological, in the sense that it contains a number of numerically trivial, spherical objects corresponding to the projections of the exceptional objects $F_j$, and it is expected that such categories do not admit a Bridgeland stability condition. 

On the other hand, a definitive answer even in concrete examples, is currently out of reach. Indeed, the determination of the existence of a stability condition requires an understanding of all hearts of a bounded t-structure on a triangulated category $\mathcal{D}$, and thus, non-existence is only known in rather trivial cases. For example, a (quasi)-phantom category $\mathcal{D}$, satisfies $K_0(\mathcal{D}) \otimes \mathbb{Q} = 0$ by definition, and hence there does not exist any numerical Bridgeland stability condition since the central charge cannot map any element to $0 \in \mathbb{C}$.

In an effort to study general properties of categories not expected to admit a stability condition, we will perform a systematic analysis of an admissible subcategory $P^\perp \subset D^b(Q)$, with $Q$ the Bondal quiver defined by the following quiver with relations
\[
Q = \Bigg(
\begin{tikzcd}
\bullet \arrow[r,bend left,"\alpha_1"] \arrow[r,bend right,swap,"\beta_1"] & \bullet   \arrow[r,bend left,"\alpha_2"] \arrow[r,bend right,swap,"\beta_2"] & \bullet
\end{tikzcd}\Bigg\vert \; \beta_2 \circ \alpha_1 = \alpha_2 \circ \beta_1 = 0 \Bigg), \quad
\]
and $D^b(Q)$ admitting the decompositions
\[
D^b(Q) = \langle P_1, P_2 ,P_3 \rangle = \langle P^\perp, P \rangle, \quad \begin{tikzcd}
P: \mathbb{C} \arrow[r, shift left, "1"] \arrow[r, shift right,"0", labels = below]& \mathbb{C}\arrow[r, shift left, "1"] \arrow[r, shift right,"0", labels = below]& \mathbb{C}
\end{tikzcd}
\]
with $P_i$ projective modules, $P$ an exceptional object, and $P^\perp$ not containing any exceptional objects. Moreover, this implies that $D^b(Q)$ does not satisfy the Jordan-H\"{o}lder property~\cite{kuznetsov2013simple}. Such an example lies at the nexus of various interesting phenomena in the theory of derived categories. On the one hand, $P^\perp$ is the minimal categorical resolution of the nodal cubic curve~\cite{2009arXiv0905.1231B}, but on the other hand, $P^\perp$ also contains a $3$-spherical object which is trivial in the Grothendieck group. In particular, $P^\perp$ exhibits many of the phenomena expected to hold for categorical resolutions of nodal singularities, but also exhibits $3$-dimensional phenomena and hence its study is not amenable to many techniques available for studying derived categories of curves and surfaces.

Specifically, we will systematically study the following phenomena for the category $P^\perp$.
\begin{enumerate}
\item
Serre functors on noncommutative schemes
\item
New spherical functors and derived symmetries
\item
Non-existence of Serre-invariant stability conditions
\end{enumerate}

\subsection{Summary of results}
We now discuss the main results of this article in more detail. We first prove the following theorem, describing the Serre functors $S$ and $S_{P^\perp}$ on $D^b(Q)$ and $P^\perp$ respectively. In the following $S(P)$ denotes the Serre dual of the exceptional object $P$ and $\mathbb{L}_{S(P)}$ denotes the left mutation with respect to this object. The object $E$, which will be described at length in Lemma~\ref{lem:E}, is constructed as an extension of $S(P)$ and $P$ and is a $3$-spherical object with spherical twist $\mathbb{T}_E$.
\begin{reptheorem}{thm:serremutation}
There exists a natural isomorphism of functors 
\[S^{-1}[1]|_{P^\perp} \simeq \mathbb{L}_{S(P)} \colon P^\perp \rightarrow \prescript{\perp}{}{P}
\]\end{reptheorem}
\begin{repcorollary}{cor:serrepperp}
The Serre functor on the category $P^\perp$ satisfies the following: $S^{-1}_{P^\perp} \simeq \mathbb{T}_E[-1]$.
\end{repcorollary}
As a consequence of our computation of the Serre functor, we obtain a classification of spherical objects in $P^\perp$.
\begin{repcorollary}{cor:unique}
The object $E$ is the unique $3$-spherical object in the category $P^\perp$.
\end{repcorollary}
Moreover, we may easily deduce the following, where we recall that a Bridgeland stability condition $\sigma$ is \textit{Serre-invariant} if  $S \cdot \sigma =\sigma \cdot g$ for some $g \in \widetilde{GL}_2^+(\mathbb{R})$. 
\begin{repcorollary}{cor:serreinv}
The category $P^\perp \subset D^b(Q)$ with $Q$ the Bondal quiver does not admit any Serre-invariant stability conditions. 
\end{repcorollary}
Applying corollary~\ref{cor:serrepperp}, we then turn to the composition of functors induced from the categorical resolution of the nodal cubic curve $X_0 \subset \mathbb{P}^2$.
\[
P^\perp \rightarrow Perf(X_0) \rightarrow D^b(\mathbb{P}^2)
\]
Our main goal will be to demonstrate that this is indeed a spherical functor, which opens the possibility of constructing new derived symmetries arising from categorical resolutions of singularities.


\subsection{Related works}
In \cite{kuznetsov2021serre}, the authors developed a general approach to compute the Serre functor for residual categories of Fano complete intersections. Though the setup is different from ours, we hope that Proposition~\ref{prop:bondalsphericalfunctor} could be useful in identifying with their setting.

In~\cite{li2020refined}, the authors studied an admissible subcategory of an Enriques surface. This category is quite similar to $P^\perp$ in many ways; it also admits a number of $3$-spherical objects which have trivial class in the Grothendieck group and which arise similarly as $E$ did from an ambient category. We note that our results parallel their study of Enriques categories in many respects. 

Finally, the authors in~\cite{2209.12853} studied categorical resolutions of nodal singularities in dimension $\geq 2$. We note that our results in section~\ref{sec:spherical} parallel theirs in a number of ways; for example, we demonstrate that the kernel of our categorical resolution is also generated by a unique spherical object.
\subsection{Organization}
The organization of this paper is as follows. In Section~\ref{sec:bondal}, review and prove several fundamental facts on the category $D^b(Q)$ associated to the Bondal quiver. In Section~\ref{sec:serre}, we develop a general approach and conclude with an explicit computation for the Serre functor on $D^b(Q)$ and $P^\perp$ in theorem~\ref{thm:serremutation}. In Section~\ref{sec:spherical}, we construct a natural spherical functor from $P^\perp$ to $D^b(\mathbb{P}^2)$, and prove Proposition~\ref{prop:bondalsphericalfunctor}.
\subsection*{Acknowledgements.}
I would like to thank my advisor, Emanuele Macr\`i, for extensive discussions throughout the years. I am grateful to to the University of Paris-Saclay for hospitality during the completion of this work. This work was partially supported by the NSF Graduate Research Fellowship under grant DGE-1451070 and by the ERC Synergy Grant ERC-2020-SyG-854361-HyperK.

\section{The Bondal Quiver}\label{sec:bondal}
In this section, we will introduce several basic facts associated with the Bondal quiver. We first introduce the basic building blocks of the bounded derived category of representations of the quiver, namely the projective and injective modules and some of their properties. Lemmas~\ref{lem:p} and \ref{lem:E} are of particular importance, and together with Corollary~\ref{cor:nonequiv}, these results highlight the particularly interesting aspects of this category. 

Fix $k$ an algebraically closed field of characteristic $0$ and let $Q$ be the Bondal quiver given by the following:
\begin{equation}\label{eq:bondal}
Q = \Bigg(
\begin{tikzcd}
\bullet \arrow[r,bend left,"\alpha_1"] \arrow[r,bend right,swap,"\beta_1"] & \bullet   \arrow[r,bend left,"\alpha_2"] \arrow[r,bend right,swap,"\beta_2"] & \bullet
\end{tikzcd}\Bigg\vert \; \beta_2 \circ \alpha_1 = \alpha_2 \circ \beta_1 = 0 \Bigg) \quad
\end{equation}
Let $kQ$ denote the path algebra on the quiver $Q$ and $I$ the ideal generated by the paths $\langle \beta_2 \alpha_1, \alpha_2\beta_1 \rangle$. Let $kQ/I$ be the corresponding path algebra with relations and $D^b(Q)$ the bounded derived category of right modules over the algebra $kQ/I$. We begin by characterizing and studying the objects which serve as the building blocks of this category.
\begin{lemma}\label{lem:projinj}
The projective and injective right modules over $kQ/I$ are given by the following representations
\begin{gather*}
\begin{tikzcd}
P_1: \mathbb{C} \arrow[r, shift left, "1 \rightarrow (1.0)"] \arrow[r, shift right,"1 \rightarrow (0.1)", labels = below]& \mathbb{C}^{2}\arrow[r, shift left, "(x.y) \rightarrow (x.0)"] \arrow[r, shift right,"(x.y) \rightarrow (0.y)", labels = below]& \mathbb{C}^2
\end{tikzcd}
\quad
\begin{tikzcd}
P_2: 0 \arrow[r, shift left] \arrow[r, shift right]& \mathbb{C}\arrow[r, shift left, "1\rightarrow (1.0)"] \arrow[r, shift right,"1 \rightarrow (0.1)",labels=below]& \mathbb{C}^2
\end{tikzcd}
\quad
\begin{tikzcd}
P_3: 0 \arrow[r, shift left,""] \arrow[r, shift right,"",labels=below]& 0\arrow[r, shift left] \arrow[r, shift right]& \mathbb{C}
\end{tikzcd}
\\
\begin{tikzcd}
I_1 : \mathbb{C} \arrow[r, shift left, ""] \arrow[r, shift right,"", labels = below]& 0\arrow[r, shift left, ""] \arrow[r, shift right,"", labels = below]&0
\end{tikzcd}\quad
\begin{tikzcd}
I_2 : \mathbb{C}^2 \arrow[r, shift left, "(x.y) \rightarrow x"] \arrow[r, shift right, "(x.y) \rightarrow y", labels=below]& \mathbb{C}\arrow[r, shift left, ""] \arrow[r, shift right,"",labels=below]& 0
\end{tikzcd}\quad
\begin{tikzcd}
I_3 : \mathbb{C}^2 \arrow[r, shift left, "(x.y) \rightarrow (x.0)"] \arrow[r, shift right,"(x.y) \rightarrow (0.y)", labels = below]& \mathbb{C}^{2}\arrow[r, shift left, "(x.y) \rightarrow x"] \arrow[r, shift right,"(x.y) \rightarrow y", labels = below]& \mathbb{C}
\end{tikzcd}
\end{gather*}
Moreover, these satisfy the following extension relations:
\begin{enumerate}
\item
$Hom(P_i,P_j) = \mathbb{C}^2$ for $i > j$
\item
$Hom(P_i, P_i) = \mathbb{C}$ for all $i$
\item
$Hom(P_i, P_j) = 0$ for $i < j$
\end{enumerate}
\end{lemma}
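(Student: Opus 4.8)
The plan is to treat the statement as a standard computation in the representation theory of the finite-dimensional algebra $A = kQ/I$, writing $e_1, e_2, e_3$ for the idempotents at the three vertices. By definition the indecomposable projective right module at vertex $i$ is $P_i = e_i A$, whose underlying representation sends a vertex $j$ to $e_i A e_j$, the span of the paths from $i$ to $j$ modulo $I$, with arrow maps given by right composition. First I would enumerate these paths. At vertex $1$: the only path to itself is $e_1$; the paths to vertex $2$ are $\alpha_1, \beta_1$; and the paths to vertex $3$ are $\alpha_2\alpha_1$ and $\beta_2\beta_1$, since the two remaining length-two paths $\beta_2\alpha_1$ and $\alpha_2\beta_1$ are precisely the generators of $I$ and hence vanish. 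This makes $(P_1)_3$ two- rather than four-dimensional, and the same bookkeeping at vertices $2$ and $3$ reproduces the displayed representations $P_1, P_2, P_3$ together with their arrow actions. The injectives are then obtained by $k$-linear duality: $I_i$ is the representation sending $j$ to the dual of $e_j A e_i$, the dual of the span of paths from $j$ to $i$, and the analogous enumeration of paths \emph{into} $i$ yields $I_1, I_2, I_3$.

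For the relations (1)--(3) I would avoid solving the commuting-square equations by hand and instead exploit the projectivity of $P_i = e_i A$ through the natural isomorphism $Hom_A(e_i A, M) \cong M e_i$. Specializing $M = e_j A$ gives
\[
Hom(P_i, P_j) \;=\; Hom_A(e_i A, e_j A) \;\cong\; e_j A e_i,
\]
the span of paths from $j$ to $i$ modulo $I$, which is exactly the value $(P_j)_i$ of the representation $P_j$ at vertex $i$. Hence $\dim Hom(P_i, P_j) = \dim (P_j)_i$ may simply be read off from the projectives established above. Since $Q$ carries no oriented path from a larger to a smaller vertex, $e_j A e_i = 0$ for $i < j$, yielding (3); the only path from a vertex to itself is the trivial idempotent, so $(P_i)_i = \mathbb{C}$ and we obtain (2); and for $i > j$ exactly two paths from $j$ to $i$ survive the relations at each stage, so $(P_j)_i = \mathbb{C}^2$ and we obtain (1).

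The argument has no genuine obstacle, but two points demand care. The first is fixing the module-theoretic convention: choosing right modules so that $P_i = e_i A$ corresponds to paths \emph{out of} $i$ is what makes the identification $Hom(P_i, P_j) \cong e_j A e_i$ carry the correct variance, and thus places the two-dimensional Hom spaces in the range $i > j$ rather than $i < j$; I would pin this down at the outset and check it against the single case $Hom(P_3, P_1) \cong (P_1)_3 = \mathbb{C}^2$. The second is confirming that $\beta_2\alpha_1$ and $\alpha_2\beta_1$ are exactly the length-two paths annihilated by $I$, so that the length-two path spaces collapse from dimension four to dimension two; this truncation is the defining feature of the Bondal relations responsible for the shapes of $P_1$ and $I_3$, and I would verify it directly against the displayed maps, e.g.\ $\beta_2 \circ \alpha_1$ sends $1 \mapsto (1,0) \mapsto (0,0)$. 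As an independent sanity check one can recompute a single Hom space by the direct method, writing a morphism $P_i \to P_j$ as a triple of linear maps intertwining the arrow actions and solving the resulting linear system; for instance $Hom(P_2, P_1)$ comes out two-dimensional, parametrized by the diagonal endomorphisms of $\mathbb{C}^2$ compatible with $\alpha_2, \beta_2$.
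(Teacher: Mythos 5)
Your proposal is correct and is precisely the standard argument ($P_i = e_iA$, enumeration of paths surviving the relations $\beta_2\alpha_1 = \alpha_2\beta_1 = 0$, injectives by $k$-linear duality, and $Hom(e_iA,e_jA)\cong e_jAe_i$) that the paper leaves implicit, as it states this lemma without proof and refers to standard references for such facts. The two points of care you flag (the right-module convention and the collapse of the length-two path spaces) are indeed the only places where one could slip, and your sanity checks handle them.
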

In particular, the action of the Serre functor on the projective and injective modules can be described explicitly.
\begin{lemma}\label{lem:serre}
The Serre functor $S$ exists and its action on the projective modules satisfies $S(P_i) \simeq I_i$. Moreover, its action on the morphisms is given by the following:
\[
\begin{tikzcd}[row sep = 0.3cm, column sep = 0.6cm]
0 \arrow[r, shift left] \arrow[r, shift right]\arrow[dd]& 0\arrow[dd] \arrow[r, shift left] \arrow[r, shift right] &\mathbb{C} \arrow[dd, "{\begin{psmallmatrix} 1\\ 0\end{psmallmatrix}}"']& \mathbb{C}^2 \arrow[r, shift left] \arrow[r, shift right]\arrow[dd, "{\begin{psmallmatrix} 1&0\\0&0\end{psmallmatrix}}"]& \mathbb{C}^{2}\arrow[r, shift left] \arrow[r, shift right]\arrow[dd, "{\begin{psmallmatrix} 1& 0\end{psmallmatrix}}"]& \mathbb{C}\arrow[dd]&0 \arrow[r, shift left] \arrow[r, shift right]\arrow[dd]& 0\arrow[dd] \arrow[r, shift left] \arrow[r, shift right] &\mathbb{C} \arrow[dd, "{\begin{psmallmatrix} 0\\ 1\end{psmallmatrix}}"']& \mathbb{C}^2 \arrow[r, shift left] \arrow[r, shift right]\arrow[dd, "{\begin{psmallmatrix} 0&0\\0&1\end{psmallmatrix}}"]& \mathbb{C}^{2}\arrow[r, shift left] \arrow[r, shift right]\arrow[dd, "{\begin{psmallmatrix} 0&1\end{psmallmatrix}}"]& \mathbb{C}\arrow[dd]\\
&&{}\arrow[r,leftrightsquigarrow ]&{}&&&&&{} \arrow[r,leftrightsquigarrow]&{}
 \\
0 \arrow[r, shift left] \arrow[r, shift right]& \mathbb{C}\arrow[r, shift left] \arrow[r, shift right]& \mathbb{C}^2 &  \mathbb{C}^2 \arrow[r, shift left] \arrow[r, shift right]& \mathbb{C}\arrow[r, shift left] \arrow[r, shift right]& 0 &0 \arrow[r, shift left] \arrow[r, shift right]& \mathbb{C}\arrow[r, shift left] \arrow[r, shift right]& \mathbb{C}^2 &  \mathbb{C}^2 \arrow[r, shift left] \arrow[r, shift right]& \mathbb{C}\arrow[r, shift left] \arrow[r, shift right]& 0 
\\
\\
0 \arrow[r, shift left] \arrow[r, shift right]\arrow[dd]& \mathbb{C}\arrow[dd,"{\begin{psmallmatrix} 1\\ 0\end{psmallmatrix}}"'] \arrow[r, shift left] \arrow[r, shift right] &\mathbb{C}^2 \arrow[dd, "{\begin{psmallmatrix} 1& 0\\0&0\end{psmallmatrix}}"']& \mathbb{C}^2 \arrow[r, shift left] \arrow[r, shift right]\arrow[dd, "{\begin{psmallmatrix} 1&0\end{psmallmatrix}}"]& \mathbb{C}\arrow[r, shift left] \arrow[r, shift right]\arrow[dd]& 0\arrow[dd]&0 \arrow[r, shift left] \arrow[r, shift right]\arrow[dd]& \mathbb{C}\arrow[dd,"{\begin{psmallmatrix} 0\\ 1\end{psmallmatrix}}"'] \arrow[r, shift left] \arrow[r, shift right] &\mathbb{C}^2 \arrow[dd, "{\begin{psmallmatrix} 0&0\\0&1\end{psmallmatrix}}"']& \mathbb{C}^2 \arrow[r, shift left] \arrow[r, shift right]\arrow[dd, "{\begin{psmallmatrix} 0&1\end{psmallmatrix}}"]&\mathbb{C}\arrow[r, shift left] \arrow[r, shift right]\arrow[dd]& 0\arrow[dd]\\
&&{}\arrow[r,leftrightsquigarrow ]&{}&&&&&{} \arrow[r,leftrightsquigarrow]&{}
 \\
\mathbb{C} \arrow[r, shift left] \arrow[r, shift right]& \mathbb{C}^2\arrow[r, shift left] \arrow[r, shift right]& \mathbb{C}^2 & \mathbb{C} \arrow[r, shift left] \arrow[r, shift right]& 0\arrow[r, shift left] \arrow[r, shift right]& 0 &\mathbb{C} \arrow[r, shift left] \arrow[r, shift right]& \mathbb{C}^2\arrow[r, shift left] \arrow[r, shift right]& \mathbb{C}^2 &  \mathbb{C} \arrow[r, shift left] \arrow[r, shift right]& 0\arrow[r, shift left] \arrow[r, shift right]& 0 
\end{tikzcd}
\]
where in the first row, we have presented a basis of morphisms of $Hom(P_3,P_2)$ and its Serre dual, and in the second row, we have presented a basis of morphisms of $Hom(P_2,P_1)$ and its Serre dual.
\end{lemma}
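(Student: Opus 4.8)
The plan is to realise the Serre functor concretely as the derived Nakayama functor and then read off its behaviour on objects and on morphisms by direct computation. Writing $A = kQ/I$, the first task is existence. Since $Q$ is acyclic, $A$ is a finite–dimensional triangular algebra; explicitly computing minimal projective resolutions of the three simple modules gives $\operatorname{pd} S_3 = 0$ (as $S_3 = P_3$), $\operatorname{pd} S_2 = 1$ (the radical of $P_2$ is $P_3^{\oplus 2}$), and $\operatorname{pd} S_1 = 2$ (the radical of $P_1$ splits as a sum of two length-two modules, each resolved by $0 \to P_3 \to P_2 \to (-) \to 0$). Hence $\operatorname{gl.dim} A = 2 < \infty$, and by the standard theory of Serre functors on derived categories of algebras of finite global dimension, $D^b(Q)$ admits a Serre functor given by the derived Nakayama functor $S \simeq - \otimes_A^{\mathbf{L}} DA \simeq D\, R\mathrm{Hom}_A(-,A)$, where $DA = \mathrm{Hom}_k(A,k)$.

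Next I would treat the action on objects. Since each $P_i = e_i A$ is projective, no higher Tor-terms intervene and $S(P_i) = e_iA \otimes_A DA \cong e_i\, DA \cong D(Ae_i)$, which is by definition the indecomposable injective right module $I_i$ with socle $S_i$. Comparing dimension vectors confirms the identification with the explicit representations of Lemma~\ref{lem:projinj}: counting paths to vertex $i$ (using the relations $\beta_2\alpha_1 = \alpha_2\beta_1 = 0$) yields dimension vectors $(1,0,0), (2,1,0), (2,2,1)$ for $i = 1,2,3$, exactly the $I_i$ listed there. This gives $S(P_i) \simeq I_i$.

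Finally, and this is where the real work lies, I would pin down the action on morphisms. Hom-spaces of projectives are computed by paths, $\mathrm{Hom}_A(P_i,P_j) \cong e_j A e_i$, with the two basis morphisms for $(i,j) = (3,2)$ and $(2,1)$ corresponding to the arrows $\alpha_\bullet,\beta_\bullet$ and realised on representations as the vertical maps in the top rows of the diagram. Dually $\mathrm{Hom}_A(I_i,I_j) \cong \mathrm{Hom}_A(D(Ae_i),D(Ae_j)) \cong e_j A e_i$, so $S$ induces an isomorphism of two-dimensional spaces which is the identity under these path-space identifications. The content of the asserted matrices is therefore the explicit translation of this identity through the chosen bases of the $I_i$: I would apply $f \otimes \mathrm{id}_{DA}$ to a basis morphism $f$, unwind the bimodule structure of $DA$ together with $e_i DA \cong D(Ae_i)$, and verify that the resulting map of representations is the drawn matrix, e.g. that $\alpha_2$ is sent to the pair $\bigl(\begin{psmallmatrix}1&0\\0&0\end{psmallmatrix}, (1\ 0)\bigr)$ and $\beta_2$ to $\bigl(\begin{psmallmatrix}0&0\\0&1\end{psmallmatrix}, (0\ 1)\bigr)$, with the analogous statement for the $(2,1)$ block.

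I expect the main obstacle to be purely bookkeeping: keeping the left/right-module and duality conventions consistent so that the abstract identity map lands on precisely the stated matrices rather than, say, their transposes. As a cross-check I would exploit that the identification $\mathrm{Hom}_A(P_i,P_j) \cong e_j A e_i$ is compatible with composition; the matrices on the injective side are then forced by how $\alpha_\bullet,\beta_\bullet$ multiply in $A$, and the vanishing entries are dictated by the relations $\beta_2\alpha_1 = \alpha_2\beta_1 = 0$, which rigidifies the bases enough to remove any remaining convention ambiguity.
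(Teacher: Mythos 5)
Your proposal is correct and is essentially the argument the paper relies on: the paper's proof of this lemma is a one-line citation to the standard description of the Serre functor as the derived Nakayama functor $-\otimes_A^{\mathbf{L}}DA$ on an algebra of finite global dimension (\cite[Proposition 2.29]{schiffler2014quiver}), which is exactly what you reconstruct --- existence via $\operatorname{gl.dim}A=2$, the identification $S(P_i)=e_iDA\cong D(Ae_i)=I_i$, and the identity map on $e_jAe_i$ unwound into the stated matrices. Your intermediate computations are consistent with what the paper establishes elsewhere (e.g.\ the resolution $P_3^{\oplus 2}\to P_2\to S_2$ appearing in Lemma~\ref{lem:bondaltriangular} and the dimension vectors of the $I_i$ in Lemma~\ref{lem:projinj}).
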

\begin{proof}
This is standard, see for example \cite[Proposition 2.29]{schiffler2014quiver} for the explicit action on the morphisms between projective objects. 
\end{proof}
In addition to the projective and injective objects, there is a canonical quiver representation, $P$, which is an exceptional object in $D^b(Q)$. Moreover, it is a $(4,2)$-fractional Calabi-Yau object under the action of the Serre functor, i.e. $S^2P = P[4]$, and is responsible for the various interesting phenomena associated to this category.

\begin{lemma}\label{lem:p}
The following quiver representations
\[
\begin{tikzcd}
P: \mathbb{C} \arrow[r, shift left, "1"] \arrow[r, shift right,"0", labels = below]& \mathbb{C}\arrow[r, shift left, "1"] \arrow[r, shift right,"0", labels = below]& \mathbb{C}
\end{tikzcd}
\quad
\begin{tikzcd}
\tilde{P}: \mathbb{C} \arrow[r, shift left, "0"] \arrow[r, shift right,"1", labels = below]& \mathbb{C}\arrow[r, shift left, "0"] \arrow[r, shift right,"1", labels = below]& \mathbb{C}
\end{tikzcd}
\]
are exceptional objects in $D^b(Q)$. In particular, there is an isomorphism $S(P) \simeq \tilde{P}[2]$.
\end{lemma}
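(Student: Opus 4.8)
The plan is to derive both assertions from a single computation, namely the minimal projective resolution of $P$.

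\textbf{Step 1 (the resolution).} The top of $P$ is the simple module $\mathbb{S}_1$ at vertex $1$: since $\alpha_1$ and $\alpha_2$ act as identities, all of $P_{(2)}$ and $P_{(3)}$ lies in the radical, so the projective cover is $P_1$. Computing the kernel of $P_1 \twoheadrightarrow P$ would give the subrepresentation $0 \to \mathbb{C}\xrightarrow{\alpha=0,\,\beta=1}\mathbb{C}$, whose top is $\mathbb{S}_2$; its cover is $P_2$, and the remaining kernel has dimension vector $(0,0,1)$, i.e. equals $P_3$. This yields
\[ 0 \to P_3 \xrightarrow{d_2} P_2 \xrightarrow{d_1} P_1 \xrightarrow{\varepsilon} P \to 0, \]
where, tracing generators, $d_1$ sends the generator of $P_2$ to $\beta_1 \in P_1$ and $d_2$ sends the generator of $P_3$ to $\alpha_2\cdot e_2 \in P_2$. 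The object $\tilde{P}$ is handled identically under the symmetry $\alpha_i \leftrightarrow \beta_i$.

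\textbf{Step 2 (exceptionality).} First $\mathrm{End}(P)=k$: a scalar endomorphism $(c_1,c_2,c_3)$ must commute with the structure maps, and $\alpha_1=\alpha_2=1$ forces $c_1=c_2=c_3$. For the higher groups I would apply $\mathrm{Hom}(-,P)$ to the deleted resolution. Since $\mathrm{Hom}(P_i,P)=P_{(i)}=k$ for each $i$, this is a complex $k \xrightarrow{d_1^\ast} k \xrightarrow{d_2^\ast} k$ in degrees $0,1,2$. Because $\beta_1$ acts as $0$ on $P$, precomposition with $d_1$ vanishes, so $d_1^\ast=0$ (consistent with $\mathrm{Ext}^0=\mathrm{End}=k$); because $\alpha_2$ acts as the identity on $P$, precomposition with $d_2$ is invertible, so $d_2^\ast$ is an isomorphism. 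Hence $\mathrm{Ext}^1(P,P)=\ker d_2^\ast=0$ and $\mathrm{Ext}^2(P,P)=\mathrm{coker}\,d_2^\ast=0$, and $P$ (and symmetrically $\tilde{P}$) is exceptional.

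\textbf{Step 3 (Serre functor).} Applying the triangulated equivalence $S$ to the resolution and using $S(P_i)\simeq I_i$ from Lemma~\ref{lem:serre}, I would obtain
\[ S(P)\simeq \big[\, I_3 \to I_2 \to I_1 \,\big] \]
placed in cohomological degrees $-2,-1,0$, with differentials the Serre duals $S(d_i)$. Independently, I would compute the minimal injective coresolution of $\tilde{P}$: since $\operatorname{soc}\tilde{P}=\mathbb{S}_3$ its injective envelope is $I_3$, and iterating (the successive cokernels have socles $\mathbb{S}_2$ and then $\mathbb{S}_1$) produces an exact sequence $0 \to \tilde{P} \to I_3 \to I_2 \to I_1 \to 0$. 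This identifies $\tilde{P}[2]$ with the complex $[I_3\to I_2\to I_1]$ in degrees $-2,-1,0$, matching the previous display and giving $S(P)\simeq\tilde{P}[2]$.

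\textbf{Main obstacle.} The delicate step is this last comparison: a priori $S(P)$ and $\tilde{P}[2]$ are only known to be complexes with the same injective terms in the same degrees, and a representation of dimension vector $(1,1,1)$ satisfying the relations need not be $\tilde{P}$. To pin this down I will feed the explicit Serre-dual maps recorded in Lemma~\ref{lem:serre} into the differentials $S(d_i)$ and check directly that $S(d_1)\colon I_2\to I_1$ is surjective with $\ker S(d_1)=\operatorname{im}S(d_2)$, so that the only surviving cohomology is $\ker S(d_2)\subset I_3$, on which the induced structure maps are $\alpha=0,\ \beta=1$, i.e. exactly $\tilde{P}$. Equivalently and more conceptually, $S$ carries the minimal projective resolution of $P$ to the minimal injective coresolution of $\tilde{P}$, so the two minimal injective complexes coincide; all remaining verifications are routine bookkeeping with the explicit representations.
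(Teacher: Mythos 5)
Your proposal is correct and is essentially the detailed version of what the paper leaves as a ``direct computation using Lemmas~\ref{lem:projinj} and~\ref{lem:serre}'': the minimal resolution $P_3\to P_2\to P_1\to P$ with $d_1=\beta_1$, $d_2=\alpha_2$, the resulting vanishing of $\mathrm{Ext}^{1,2}(P,P)$, and the termwise application of $S$. The ``main obstacle'' you flag does resolve as you predict: with the explicit duals from Lemma~\ref{lem:serre}, $S(d_2)$ is $\begin{psmallmatrix}1&0\\0&0\end{psmallmatrix},\begin{psmallmatrix}1&0\end{psmallmatrix},0$ and $S(d_1)$ is $\begin{psmallmatrix}0&1\end{psmallmatrix},0,0$, so $S(d_1)$ is surjective with $\ker S(d_1)=\operatorname{im}S(d_2)=(1,1,0)$ and $\ker S(d_2)$ carries $\alpha=0$, $\beta=1$, i.e.\ is exactly $\tilde P$.
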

\begin{proof}
Follows from a direct computation using the results of Lemma~\ref{lem:projinj} and \ref{lem:serre}.
\end{proof}
As a consequence of this object, $D^b(Q)$ does not satisfy the Jordan-H\"{o}lder property~\cite{kuznetsov2013simple}.
\begin{corollary}\label{cor:nonequiv}
The category $D^b(Q)$ admits two semi-orthogonal decompositions given by the following.
\[
D^b(Q) = \langle P_3, P_2, P_1 \rangle = \langle P^\perp, P \rangle
\]
where $P^\perp = \{T \in D^b(Q)\, \vert\, Hom(T, P[i]) = 0\text{ for all }i \in \mathbb{Z} \}$. 
\end{corollary}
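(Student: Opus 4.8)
The plan is to treat the two decompositions separately, in each case reducing to standard structural facts about exceptional collections and admissible subcategories. For the first decomposition, I would check that the indecomposable projectives form a full exceptional collection ordered as $\langle P_3, P_2, P_1 \rangle$. Exceptionality and semi-orthogonality are immediate from Lemma~\ref{lem:projinj}: projectivity forces $\mathrm{Hom}(P_i, P_j[\ell]) = 0$ for $\ell > 0$, the concentration of the modules in cohomological degree $0$ forces it for $\ell < 0$, so the only surviving constraints are the degree-zero vanishings $\mathrm{Hom}(P_i, P_j) = 0$ for $i < j$ together with $\mathrm{Hom}(P_i, P_i) = \mathbb{C}$. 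Fullness is where I would spend a sentence: since $Q$ is acyclic, $kQ/I$ has finite global dimension, so every object of $D^b(Q)$ admits a finite resolution by the $P_i$ and therefore lies in their triangulated hull; hence $D^b(Q) = \langle P_3, P_2, P_1 \rangle$.

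For the second decomposition I would invoke the general mechanism attached to an exceptional object. By Lemma~\ref{lem:p} the object $P$ is exceptional, so $\langle P \rangle \simeq D^b(k)$. Since $D^b(Q)$ is $\mathrm{Ext}$-finite and admits a Serre functor (Lemma~\ref{lem:serre}), the subcategory $\langle P \rangle$ is admissible, and the inclusion $\langle P \rangle \hookrightarrow D^b(Q)$ has both adjoints. Bondal's lemma then produces the semi-orthogonal decomposition $D^b(Q) = \langle P^\perp, P \rangle$, where $P^\perp = \{T : \mathrm{Hom}(T, P[i]) = 0 \text{ for all } i\}$ is precisely the orthogonal complement cut out by these adjoints; generation and the functorial projection triangles come for free from admissibility.

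I expect no serious obstacle here: the statement is an assembly of the theory of exceptional collections with the existence of the Serre functor established in Lemma~\ref{lem:serre}. The one point deserving care is bookkeeping with the orthogonality convention, namely confirming that $P^\perp$ as defined sits on the intended side of $\langle P \rangle$; this I would verify directly from the defining vanishing $\mathrm{Hom}(P^\perp, P[i]) = 0$, so that $\langle P^\perp, P \rangle$ is genuinely semi-orthogonal in the stated order, with admissibility supplying the complementary component. I would also remark, though it is not needed for the proof, that the two decompositions taken together are exactly what witnesses the failure of the Jordan--H\"{o}lder property for $D^b(Q)$.
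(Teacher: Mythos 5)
Your overall strategy is the same one the paper implicitly relies on: the corollary is stated there without proof, as an immediate consequence of Lemma~\ref{lem:projinj} (the indecomposable projectives form a full exceptional collection) and Lemma~\ref{lem:p} ($P$ is exceptional, hence $\langle P\rangle$ is admissible in a proper category with a Serre functor), with the rest deferred to the standard theory of exceptional collections and to \cite{kuznetsov2013simple}. Your verification of semi-orthogonality and fullness for $\langle P_3, P_2, P_1\rangle$, and your appeal to Bondal's lemma for the second decomposition, are both correct.

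The one place the proposal goes astray is exactly the ``bookkeeping'' point you flag, and you resolve it in the wrong direction. The first decomposition pins down the convention: in $\langle \mathcal{A}_1,\ldots,\mathcal{A}_n\rangle$ one requires $\mathrm{Hom}(\mathcal{A}_i,\mathcal{A}_j)=0$ for $i>j$, which is what Lemma~\ref{lem:projinj}(3) supplies for $\langle P_3,P_2,P_1\rangle$. Under that convention the component sitting to the \emph{left} of $P$ in $\langle P^\perp, P\rangle$ must be the \emph{right} orthogonal $\{T : \mathrm{Hom}(P,T[i])=0 \text{ for all } i\}$; the vanishing $\mathrm{Hom}(P^\perp,P[i])=0$ that you propose to check would instead place $P^\perp$ to the \emph{right} of $P$. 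This is not a harmless relabeling here, because $P$ is not Calabi--Yau ($S(P)\simeq\tilde P[2]\not\simeq P[k]$), so the two orthogonals are genuinely different subcategories: for instance $D$ satisfies $\mathrm{Hom}(P,D[i])=0$ by Lemma~\ref{lem:cd} yet $\mathrm{Hom}(D,P)=\mathbb{C}$ by Lemma~\ref{lem:extensions}, and the paper reserves the notation $\prescript{\perp}{}{P}$ for the left orthogonal and makes the nontrivial equivalence $\mathbb{L}_{S(P)}\colon P^\perp\to\prescript{\perp}{}{P}$ its main theorem. (The displayed definition of $P^\perp$ in the corollary is itself at odds with how membership in $P^\perp$ is actually tested in Lemmas~\ref{lem:E} and~\ref{lem:cd}, namely via $\mathrm{Hom}(P,-)$; you should correct that definition rather than build the proof on it.) With $P^\perp$ taken to be the right orthogonal of $P$, your argument goes through verbatim.
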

As a consequence of the fractional Calabi-Yau property of the object $P \in D^b(Q)$, this induces a numerically trivial $3$-spherical object $E$ in the subcategory $P^\perp$. In the subsequent sections of this paper, we will see that the object $E$ is responsible for all the interesting structural phenomena of the category $P^\perp$.
\begin{lemma}\label{lem:E}
Let $E$ be the object generated from the extension
\begin{equation}\label{eq:E}
\begin{tikzcd}
S(P) [-1] \arrow[r]& E \arrow[r]& P
\end{tikzcd}
\end{equation}
Then $E$ is a $3$-spherical object in the category $P^\perp$. Moreover, the class $[E] \in K_0(Q)$ is trivial in the Grothendieck group.
\end{lemma}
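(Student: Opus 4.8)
The plan is to establish the three defining properties of a $3$-spherical object for $E$ --- membership in $P^\perp$, the graded endomorphism algebra $\operatorname{Hom}^\bullet(E,E)\cong k\oplus k[-3]$, and the Calabi--Yau condition $S_{P^\perp}(E)\cong E[3]$ --- together with the vanishing of $[E]$, working throughout with the defining triangle $S(P)[-1]\to E\to P\xrightarrow{w}S(P)$, where $w$ spans the one-dimensional space $\operatorname{Hom}(P,S(P))\cong\operatorname{Hom}(P,P)^\vee$ by Serre duality and exceptionality of $P$ (Lemma~\ref{lem:p}).

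First I would check $E\in P^\perp$, i.e.\ that $\operatorname{Hom}(P,E[i])=0$ for all $i$. Applying $\operatorname{Hom}(P,-)$ to the triangle and using Lemma~\ref{lem:p} together with $S^2P\cong P[4]$, the only possibly nonzero terms are $\operatorname{Hom}(P,P)\cong k$ and $\operatorname{Hom}(P,S(P))\cong k$, and the connecting map between them is post-composition with $w$, which is an isomorphism; hence all the relevant groups vanish. The same triangle, rotated to $E\to P\to S(P)$, exhibits $E$ as the image of $P$ under the projection onto $P^\perp$, since its cone $S(P)$ satisfies $\operatorname{Hom}(F,S(P)[m])\cong\operatorname{Hom}(P,F[-m])^\vee=0$ for $F\in P^\perp$. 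For the endomorphism algebra, Serre duality gives $\operatorname{Hom}(E,S(P)[m])\cong\operatorname{Hom}(P,E[-m])^\vee=0$, so applying $\operatorname{Hom}(E,-)$ to the triangle yields $\operatorname{Hom}(E,E[i])\cong\operatorname{Hom}(E,P[i])$; and applying $\operatorname{Hom}(-,P[i])$ to the triangle, where the connecting maps vanish for degree reasons, gives $\operatorname{Hom}(E,P[i])\cong k$ for $i=0,3$ and $0$ otherwise. This establishes $\operatorname{Hom}^\bullet(E,E)\cong k\oplus k[-3]$.

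The hard part is the Calabi--Yau condition $S_{P^\perp}(E)\cong E[3]$, which I would prove without computing $S_{P^\perp}$ explicitly, that computation being the content of Theorem~\ref{thm:serremutation} and hence logically downstream. Since $P^\perp$ is admissible it carries a Serre functor $S_{P^\perp}$, and by Yoneda it suffices to produce a natural isomorphism $\operatorname{Hom}(E,F[i])\cong\operatorname{Hom}(F,E[3-i])^\vee$ for all $F\in P^\perp$. For such $F$ I would compute both sides in the ambient category: applying $\operatorname{Hom}(-,F[i])$ to the triangle and using $\operatorname{Hom}(P,F[\bullet])=0$ gives $\operatorname{Hom}(E,F[i])\cong\operatorname{Hom}(S(P),F[i+1])\cong\operatorname{Hom}(F,P[3-i])^\vee$, the last step by Serre duality together with $S^2P\cong P[4]$; while applying $\operatorname{Hom}(F,-)$ and using $\operatorname{Hom}(F,S(P)[\bullet])\cong\operatorname{Hom}(P,F[\bullet])^\vee=0$ gives $\operatorname{Hom}(F,E[j])\cong\operatorname{Hom}(F,P[j])$. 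Combining the two identities yields exactly $\operatorname{Hom}(E,F[i])\cong\operatorname{Hom}(F,E[3-i])^\vee$; the key leverage is the fractional Calabi--Yau relation $S^2P\cong P[4]$, which is precisely what converts ambient Serre duality into the $3$-Calabi--Yau duality on $P^\perp$.

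Finally, the class of $E$ is immediate from the triangle: $[E]=[P]-[S(P)]=[P]-[\tilde P]$ in $K_0(Q)$, and since $P$ and $\tilde P$ are quiver representations with the same dimension vector $(1,1,1)$ they have the same class, so $[E]=0$. I expect the only genuinely delicate point to be the Calabi--Yau step, specifically the bookkeeping ensuring the two one-dimensional contributions land in the correct cohomological degrees; everything else reduces to the long exact sequences of the defining triangle together with exceptionality of $P$ and the relation $S^2P\cong P[4]$.
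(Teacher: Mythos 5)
Your proposal is correct, and all four ingredients (membership in $P^\perp$, the class in $K_0$, the Ext-algebra, and the Calabi--Yau property) check out; but for the two substantive ones you take a genuinely different route from the paper. For $S_{P^\perp}(E)\simeq E[3]$ the paper invokes the formula $S_{P^\perp}^{-1}\simeq \mathbb{L}_P\circ S_Q^{-1}$ from \cite[Lemma 2.7]{Kuznetsov_2019} and computes mutations: applying $\mathbb{L}_P\circ S_Q^{-1}$ to the defining triangle gives $S_{P^\perp}^{-1}E\simeq \mathbb{L}_P\tilde P[-2]$, while applying $\mathbb{L}_P$ directly gives $E\simeq \mathbb{L}_P\tilde P[1]$, and comparison yields the shift by $-3$. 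Your representability argument --- showing $\operatorname{Hom}(E,F[i])\cong\operatorname{Hom}(F,E[3-i])^\vee$ naturally in $F\in P^\perp$ via ambient Serre duality and $S^2P\cong P[4]$, then concluding by Yoneda and uniqueness of the Serre functor --- is more self-contained, avoiding the mutation formula entirely; the paper's route has the side benefit of producing the identification $E\simeq\mathbb{L}_P\tilde P[1]$, which is of the same flavor as computations reused later (e.g.\ in Lemma~\ref{lem:sinverse} and the proof of Lemma~\ref{lem:extension}). For $\operatorname{Ext}^\bullet(E,E)=(1,0,0,1)$ the paper uses the hypercohomology spectral sequence $E_2^{p,q}=\bigoplus_i\operatorname{Hom}^p(\mathcal{H}^i(E),\mathcal{H}^{i+q}(E))$ with $\mathcal{H}^{-1}(E)\simeq\tilde P$, $\mathcal{H}^0(E)\simeq P$, whereas your chain $\operatorname{Hom}(E,E[i])\cong\operatorname{Hom}(E,P[i])$ (from $\operatorname{Hom}(E,S(P)[\bullet])\cong\operatorname{Hom}(P,E[-\bullet])^\vee=0$) followed by the long exact sequence for $\operatorname{Hom}(-,P[i])$ is an equally valid and arguably cleaner computation; your degree bookkeeping ($\operatorname{Hom}(E,P[i])=k$ exactly for $i=0,3$, with the connecting maps vanishing because $\operatorname{Hom}(S(P),P)\cong\operatorname{Hom}(P,P[4])^\vee=0$) is accurate. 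The arguments for $E\in P^\perp$ and for $[E]=0$ coincide with the paper's up to cosmetic differences (the paper deduces $[P]=[\tilde P]$ from projective resolutions rather than from equality of dimension vectors).
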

\begin{proof}
We first prove that $E$ is numerically trivial. We first observe that $[P] = [\tilde{P}]$ from their projective resolutions. We have the following
\[
[E] = [S(P)[-1]] + [P] = [\tilde{P}[1]] + [P] = - [P] + [P] = 0
\]
where the second equality follows from Lemma~\ref{lem:p}. Thus, the claim follows.

We now prove the inclusion $E \in P^\perp$. Taking the long exact sequence associated to the functor $Hom(P, \bullet)$ on sequence~\eqref{eq:E} and applying Lemma~\ref{lem:p}, we obtain the exact sequence
\[
0 \rightarrow Hom(P,E) \rightarrow Hom(P,P) = \mathbb{C} \rightarrow Hom(P, \tilde{P}[2]) = \mathbb{C} \rightarrow Hom(P,E[1]) \rightarrow 0
\]
where all other vector spaces are zero. But the morphism $Hom(P,P) \rightarrow Hom(P,\tilde{P}[2])$ must be nonzero as it is defined by a composition of the identity morphism with the nontrivial morphism $P \rightarrow \tilde{P}[2]$. Thus, it must be an isomorphism and we have that $Hom^\bullet(P,E) = 0$.

We now prove that $E$ is a $3$-spherical object in $P^\perp$. We first prove that $S_{P^\perp}(E) \simeq E[3]$. By \cite[Lemma 2.7]{Kuznetsov_2019}, we have a natural isomorphism $S_{P^\perp}^{-1} \simeq \mathbb{L}_P \circ S_Q^{-1}$. Applying the composition $\mathbb{L}_P \circ S_Q^{-1}$ to sequence~\eqref{eq:E}, we obtain the triangle
\[
\begin{tikzcd}
0 \arrow[r] & S_{P^\perp}^{-1} E \arrow[r, "\sim"] & \mathbb{L}_P \tilde{P}[-2]
\end{tikzcd}
\]
On the other hand, applying the functor $\mathbb{L}_P$ on sequence~\eqref{eq:E}, we obtain the triangle
\[
\begin{tikzcd}
\mathbb{L}_P \tilde{P}[1] \arrow[r, "\sim"] & E \arrow[r]& 0
\end{tikzcd}
\]
where we have used that $\mathbb{L}_PE \simeq E$ as $E \in P^\perp$. Combining the above isomorphisms, we have that $S_{P^\perp}^{-1}E \simeq E[-3]$, and the claim follows.

Finally, we prove that the extension algebra satisfies $Ext^\bullet(E,E) = (1,0,0,1)$. By \cite[Proposition 2.4]{oka2006}, there exists a spectral sequence
\[
E_2^{p,q} \coloneqq \bigoplus\limits_i Hom^p(\mathcal{H}_Q^i(E), \mathcal{H}_Q^{i+q}(E)) \implies Hom^{p+q}(E,E)
\]
where $\mathcal{H}_Q$ denotes taking cohomology with respect to the canonical heart of a bounded t-structure $\mathrm{mod}\text{-}kQ/I \subset D^b(Q)$. By sequence~\eqref{eq:E}, we have that $\mathcal{H}^{-1}(E) \simeq \tilde{P}$ and $\mathcal{H}^0(E) \simeq P$, and the claim follows from a straightforward computation.
\end{proof}

In the rest of this section, we will study the category $P^\perp$ in detail. Specifically, we study the projection of the generating objects of $D^b(Q)$ onto the subcategory $P^\perp$ and their homological properties. Two canonical objects in this category are given by the following.
\begin{lemma}\label{lem:cd}
The following quiver representations
\[
\begin{tikzcd}
D\colon0 \arrow[r, shift left] \arrow[r, shift right]& \mathbb{C}\arrow[r, shift left, "1"] \arrow[r, shift right,"0", labels = below]& \mathbb{C}
\end{tikzcd}
\quad
\begin{tikzcd}
\tilde{C} \colon \mathbb{C} \arrow[r, shift left, "0"] \arrow[r, shift right,"1", labels = below]& \mathbb{C} \arrow[r,shift left] \arrow[r,shift right]& 0
\end{tikzcd}
\]
are contained in the category $P^\perp$.
\end{lemma}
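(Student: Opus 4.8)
The plan is to verify the two membership statements directly, by showing $Hom^\bullet(P,D)=0$ and $Hom^\bullet(P,\tilde{C})=0$; as in the proof of Lemma~\ref{lem:E}, this vanishing is exactly the condition for an object to belong to $P^\perp$. Rather than treat the two objects separately, I would first compute a single projective resolution of $P$ and use it to reduce each Ext-vanishing to the acyclicity of an explicit three-term complex of vector spaces.

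First I would resolve $P$. By Lemma~\ref{lem:projinj}, $P$ is generated by its component at vertex $1$, so there is a surjection $P_1 \twoheadrightarrow P$; comparing the representations shows that its kernel is the subrepresentation supported on vertices $2,3$ whose nontrivial structure map is $\beta_2$, which in turn is a quotient of $P_2$ with kernel $P_3$. Tracking where the generators go identifies the two connecting maps as those induced by $\beta_1$ and $\alpha_2$, yielding the exact sequence
\[
0\longrightarrow P_3 \xrightarrow{\ \alpha_2\ } P_2 \xrightarrow{\ \beta_1\ } P_1 \longrightarrow P \longrightarrow 0,
\]
which is consistent with the identity of dimension vectors $(1,1,1)=(1,2,2)-(0,1,2)+(0,0,1)$.

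Applying $Hom(-,M)$ and using $Hom(P_i,M)\cong M_i$, the value of the representation $M$ at vertex $i$, the groups $Hom^\bullet(P,M)$ are then computed as the cohomology, in degrees $0,1,2$, of
\[
M_1 \xrightarrow{\ \beta_1\ } M_2 \xrightarrow{\ \alpha_2\ } M_3.
\]
As a check, this criterion returns $Hom^\bullet(P,P)=(\mathbb{C},0,0)$, recovering the exceptionality of $P$, and $Hom^\bullet(P,\tilde{P})=(0,0,\mathbb{C})$, matching $S(P)\simeq\tilde{P}[2]$ from Lemma~\ref{lem:p} via Serre duality. Substituting the objects of interest, for $D$ the complex is $0\to\mathbb{C}\xrightarrow{\sim}\mathbb{C}$, the nonzero map being $\alpha_2$, and for $\tilde{C}$ it is $\mathbb{C}\xrightarrow{\sim}\mathbb{C}\to 0$, the nonzero map being $\beta_1$; both are acyclic, so $Hom^\bullet(P,D)=Hom^\bullet(P,\tilde{C})=0$ and therefore $D,\tilde{C}\in P^\perp$.

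The one step requiring genuine care, and the likeliest source of error, is orienting the two maps in the resolution correctly, that is, determining which of $\alpha_i,\beta_i$ realizes each inclusion of projectives, since swapping a pair would replace a zero differential by an isomorphism and falsely report nonvanishing cohomology. Everything else is routine bookkeeping with the explicit representations of Lemma~\ref{lem:projinj}.
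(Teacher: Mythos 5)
Your proof is correct, and the orientation you flag as the delicate point is indeed right: the surjection $P_1\twoheadrightarrow P$ kills $\beta_1 e_1$, so the kernel is generated at vertex $2$ by $\beta_1 e_1$, and the relation $\alpha_2\beta_1=0$ then forces the next syzygy to be $\alpha_2 e_2$; hence the differentials of the induced complex $M_1\to M_2\to M_3$ are the structure maps $\beta_1^M$ and $\alpha_2^M$, exactly as you use. Your sanity checks $Hom^\bullet(P,P)=(\mathbb{C},0,0)$ and $Hom^\bullet(P,\tilde P)=(0,0,\mathbb{C})$ confirm this. The route is genuinely different from the paper's, which never computes $Ext^1(P,D)$ directly: there one observes $\chi(P,D)=0$ from the Euler form, notes $Hom^i(P,D)=0$ outside degrees $0$--$2$ since $P$ has projective dimension $2$, checks $Hom(P,D)=0$ by hand, obtains $Hom(P,D[2])=Hom(D,\tilde P)^{\ast}=0$ via Serre duality and $S(P)\simeq\tilde P[2]$ from Lemma~\ref{lem:p}, and then forces $Hom(P,D[1])=0$ from the vanishing Euler characteristic. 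Your argument is more elementary and self-contained (no Serre duality, no Euler form), computes all the $Ext$ groups in one pass, and makes the resolution of $P$ explicit where the paper only invokes it implicitly; the paper's argument trades that explicitness for not having to identify the differentials of the resolution at all. Both establish $Hom^\bullet(P,D)=Hom^\bullet(P,\tilde C)=0$, which is the condition the paper itself uses for membership in $P^\perp$ (as in the proof of Lemma~\ref{lem:E}), notwithstanding the reversed $Hom$ in the displayed definition in Corollary~\ref{cor:nonequiv}.
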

\begin{proof}
We prove the claim for the object $D$. By Lemma~\ref{lem:projinj} and using the projective resolutions for $P$ and $D$, the Euler form 
\[\chi \colon K_0(Q) \times K_0(Q) \rightarrow \mathbb{Z}\] satisfies $\chi(P,D) = 0$. As $P$ is of projective dimension $2$, we have $Hom^i(P,D) = 0$ for all $i > 2$ and $i < 0$. But clearly, we have that $Hom(P,D) = 0$ and $Hom(P,D[2]) = Hom(D[2], \tilde{P}[2]) = Hom(D, \tilde{P}) = 0$ where we applied Lemma~\ref{lem:p} in the second chain of equalities along with Serre duality. Together with the vanishing of the Euler form, it follows that $Hom(P,D[1]) = 0$ and we conclude.

The argument for the object $\tilde{C}$ is identical.
\end{proof}
As a consequence of Lemma~\ref{lem:cd}, we note that the object $A \coloneqq Cone(D \rightarrow \tilde{C})$ is also contained in the category $P^\perp$. For a systematic study of the category $P^\perp$, it is useful to understand the semi-orthogonal decompositions of the projective and injective objects. We record these in the following two Lemmas.
\begin{lemma}\label{lem:injective}
The injective objects $I_i$ admit the following decompositions with respect to the semiorthogonal decomposition $D^b(Q) = \langle P^\perp, P\rangle$.
\[
\begin{tikzcd}[row sep=tiny]
P \arrow[r] & I_3 \arrow[r] & \tilde{C}\\[-5pt]
P \arrow[r] & I_2 \arrow[r] & A \\[-5pt]
P \arrow[r] & I_1 \arrow[r] & D[1]
\end{tikzcd}
\]
\end{lemma}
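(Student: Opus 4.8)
The plan is to realize each stated triangle as the canonical decomposition triangle of $I_i$ with respect to $\langle P^\perp, P\rangle$, namely the left–mutation (evaluation) triangle
\[
\mathrm{RHom}^\bullet(P, I_i) \otimes P \longrightarrow I_i \longrightarrow \mathbb{L}_P I_i ,
\]
whose third term lies in $P^\perp$ by construction. The first step is to compute $\mathrm{RHom}^\bullet(P, I_i)$ and check that it is one–dimensional, concentrated in degree $0$; this identifies the $\langle P\rangle$–component with a single copy of $P$. Using $I_i \simeq S(P_i)$ from Lemma~\ref{lem:serre} and Serre duality, one has $\mathrm{RHom}^\bullet(P, I_i) \simeq \mathrm{RHom}^\bullet(P_i, P)^\vee$, and since $P_i$ is projective this is simply $\mathrm{Hom}(P_i, P)^\vee$, the dual of the vertex–$i$ component of the representation $P$, hence $\simeq \mathbb{C}$ in degree $0$. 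Thus the decomposition triangle takes the form $P \to I_i \to Z_i$ with $Z_i := \mathbb{L}_P I_i \in P^\perp$, and everything reduces to identifying $Z_i$ — the cone of a nonzero morphism $P \to I_i$ — with $\tilde C$, $A$ and $D[1]$ respectively. That these three objects do lie in $P^\perp$ is already recorded in Lemma~\ref{lem:cd} and the remark following it.

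Two of the three identifications are short exact sequences of $kQ/I$–modules, which I read off from the minimal injective coresolution of $P$. A direct computation with the representations of Lemma~\ref{lem:projinj} (the socle of $P$ being $P_3$) produces the four–term exact sequence
\[
0 \longrightarrow P \longrightarrow I_3 \longrightarrow I_2 \longrightarrow I_1 \longrightarrow 0 .
\]
Breaking it into short exact sequences, the first gives $0 \to P \to I_3 \to \tilde C \to 0$, which is the triangle $P \to I_3 \to \tilde C$ and settles the case of $I_3$. For $I_1$ I instead take the surjection $k\colon P \twoheadrightarrow I_1$ onto the top of $P$; a direct check gives $\ker k \simeq D$, so that $0 \to D \to P \xrightarrow{k} I_1 \to 0$ yields the triangle $P \to I_1 \to D[1]$.

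The remaining case $I_2$ is the main obstacle. Here a nonzero map $h\colon P \to I_2$ is neither injective nor surjective, and $Z_2 = \mathrm{Cone}(h)$ is a genuine two–term complex with $\mathcal H^{-1}(Z_2) \simeq P_3$ and $\mathcal H^{0}(Z_2) \simeq I_1$. These agree with the cohomology of $A$, but this does \emph{not} by itself force $Z_2 \simeq A$: the relevant group $\mathrm{Ext}^2(I_1, P_3)$ is two–dimensional, being spanned by the two relations $\beta_2\alpha_1$ and $\alpha_2\beta_1$, so the extension class is not determined by cohomology alone. To pin it down I will use the octahedral axiom. The surjection $\pi\colon I_2 \twoheadrightarrow I_1$ occurring in the coresolution above has kernel $\tilde C$, so $\mathrm{Cone}(\pi) \simeq \tilde C[1]$; and one checks that $\pi\circ h = k$ is nonzero, whence $\mathrm{Cone}(\pi\circ h)\simeq D[1]$. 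Applying the octahedron to the composable pair $P \xrightarrow{h} I_2 \xrightarrow{\pi} I_1$ then yields an exact triangle $\tilde C \to Z_2 \to D[1]$, i.e. an identification $Z_2 \simeq \mathrm{Cone}(D \xrightarrow{\varphi} \tilde C)$ for the resulting connecting morphism $\varphi$. Since $\mathrm{Hom}(D, \tilde C)$ is one–dimensional, $\varphi$ is a scalar multiple of the morphism defining $A$; and $\varphi \neq 0$, for otherwise $Z_2 \simeq \tilde C \oplus D[1]$ would have cohomology $\tilde C$ and $D$ rather than $I_1$ and $P_3$. Hence $Z_2 \simeq A$, which completes the final case.
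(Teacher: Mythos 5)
Your proof is correct, and for two of the three triangles it coincides with the paper's: the short exact sequences $0 \to P \to I_3 \to \tilde C \to 0$ and $0 \to D \to P \to I_1 \to 0$ are exactly what the paper uses. Where you diverge is the case of $I_2$, and also in making explicit a point the paper leaves implicit, namely that $\mathrm{RHom}^\bullet(P,I_i)\simeq \mathbb{C}$ (via $I_i\simeq S(P_i)$ and Serre duality), so that each triangle really is the decomposition triangle for $\langle P^\perp,P\rangle$ with a single copy of $P$ on the left. For $I_2$ the paper simply exhibits an explicit quasi-isomorphism of two-term complexes $[D\to\tilde C]\xrightarrow{\sim}[P\to I_2]$, whereas you identify $\mathbb{L}_P I_2$ with $A$ by running the octahedral axiom on $P\xrightarrow{h} I_2\xrightarrow{\pi} I_1$ and then arguing that the resulting connecting map $D\to\tilde C$ is nonzero. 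Your route is more structural and has the merit of flagging a genuine subtlety the paper passes over in silence: since $\mathrm{Ext}^2(I_1,P_3)\simeq\mathbb{C}^2$ (spanned by the two relations of the quiver), the cohomology objects $P_3$ and $I_1$ alone do not determine the cone, so some argument pinning down the extension class is genuinely needed; both the paper's explicit chain map and your octahedron accomplish this. The trade-off is that the paper's explicit quasi-isomorphism is not wasted effort --- it is reused verbatim in the chain-level verifications of Section~\ref{sec:compatibility} (e.g.\ Step 1 of Lemma~\ref{lem:calculation1}), where an abstract isomorphism $\mathbb{L}_P I_2\simeq A$ would not suffice.
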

\begin{proof}
The first sequence is obviously exact in the abelian category $\mathrm{mod}\text{-}kQ/I$.
The second comes from the quasi-isomorphism of complexes
\[
\begin{tikzcd}
0 \arrow[r, shift left] \arrow[r, shift right]& \mathbb{C}\arrow[d,"1"]\arrow[r, shift left, "1"] \arrow[r, shift right,"0", labels = below]& \mathbb{C} \arrow[d,"1"]\arrow[r] & \mathbb{C}\arrow[d,"{\begin{psmallmatrix}0\\1 \end{psmallmatrix}}"'] \arrow[r, shift left, "0"] \arrow[r, shift right,"1", labels = below]& \mathbb{C}\arrow[d,"1"]\arrow[r,shift left] \arrow[r,shift right]& 0\\
\mathbb{C} \arrow[r, shift left, "1"] \arrow[r, shift right,"0", labels = below]& \mathbb{C}\arrow[r, shift left, "1"] \arrow[r, shift right,"0", labels = below]& \mathbb{C} \arrow[r] & \mathbb{C}^2 \arrow[r, shift left, "(x.y) \rightarrow x"] \arrow[r, shift right, "(x.y) \rightarrow y", labels=below]& \mathbb{C}\arrow[r, shift left, ""] \arrow[r, shift right,"",labels=below]& 0
\end{tikzcd}
\]
identifying $Cone(P \rightarrow I_2) \xrightarrow{\sim} Cone(D \rightarrow \tilde{C}) = A$.
The third sequence comes from the obvious exact sequence
\[
\begin{tikzcd}
D \arrow[r] & P \arrow[r] & I_1
\end{tikzcd}
\]
in the abelian category $\mathrm{mod}\text{-}kQ/I$. 
\end{proof}
\begin{lemma}\label{lem:projective}
Similarly, the projective objects $P_i$ admit the following decompositions with respect to the semiorthogonal decomposition $D^b(Q) = \langle S(P), P^\perp\rangle$.
\[
\begin{tikzcd}[row sep = tiny]
\tilde{C} \arrow[r] & P_3[1] \arrow[r] & \tilde{P}[1]\\[-5pt]
A \arrow[r] & P_2[1] \arrow[r] & \tilde{P}[1]\\[-5pt]
D[1] \arrow[r] & P_1[1] \arrow[r] & \tilde{P}[1] \\
\end{tikzcd}
\]
\end{lemma}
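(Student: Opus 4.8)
The plan is to realize each of the three displayed triangles as the canonical decomposition triangle of $P_i[1]$ for the semiorthogonal decomposition $D^b(Q) = \langle S(P), P^\perp\rangle$. Since $S(P)\simeq\tilde{P}[2]$ by Lemma~\ref{lem:p}, we have $\tilde{P}[1] = S(P)[-1]\in\langle S(P)\rangle$; on the other hand $\tilde{C}, D\in P^\perp$ by Lemma~\ref{lem:cd}, so $D[1]\in P^\perp$ and $A = Cone(D\to\tilde{C})\in P^\perp$ as $P^\perp$ is a triangulated subcategory. By uniqueness of the decomposition in a semiorthogonal decomposition, it therefore suffices to exhibit, for each $i$, a distinguished triangle whose middle term is $P_i[1]$, whose $P^\perp$-component is the stated object, and whose $\langle S(P)\rangle$-component is $\tilde{P}[1]$; semiorthogonality then forces this to be the decomposition triangle.

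For $P_3$ and $P_1$ the triangles come from short exact sequences in the heart $\mathrm{mod}\text{-}kQ/I$, rotated appropriately. The simple module $P_3$ includes into $\tilde{P}$ as the subspace at the third vertex with quotient $\tilde{C}$, so the sequence $0\to P_3\to\tilde{P}\to\tilde{C}\to 0$ rotates to $\tilde{C}\to P_3[1]\to\tilde{P}[1]$. For $P_1$, a direct check on the representations of Lemma~\ref{lem:projinj} produces a surjection $P_1\twoheadrightarrow\tilde{P}$ whose kernel has dimension vector $(0,1,1)$ and structure maps $\alpha_2 = 1,\ \beta_2 = 0$, i.e. is exactly $D$; the sequence $0\to D\to P_1\to\tilde{P}\to 0$ then rotates to $D[1]\to P_1[1]\to\tilde{P}[1]$.

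The case of $P_2$ is the main point, because the natural morphism $\phi\colon P_2\to\tilde{P}$ is neither injective nor surjective, so no single short exact sequence in the heart yields the triangle. Instead I would apply the octahedral axiom to the composition $P_3\xrightarrow{g}P_2\xrightarrow{\phi}\tilde{P}$, where $g$ is the inclusion arising from the short exact sequence $0\to P_3\to P_2\to D\to 0$ (again verified on representations, with $g(P_3)$ the line mapping isomorphically onto $D$) and where the composite $\phi\circ g$ is the nonzero inclusion $P_3\hookrightarrow\tilde{P}$ of the third-vertex simple, whose cone is $\tilde{C}$. The octahedron then produces the triangle
\[
D \to \tilde{C} \to Cone(\phi) \to D[1],
\]
identifying $Cone(\phi)$ with $Cone(D\to\tilde{C}) = A$, and rotating $P_2\xrightarrow{\phi}\tilde{P}\to A$ gives the desired $A\to P_2[1]\to\tilde{P}[1]$.

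The crux is thus the $P_2$-computation, and the one subtlety to verify is that the connecting morphism $D\to\tilde{C}$ furnished by the octahedron is the \emph{nonzero} element of $Hom(D,\tilde{C})\simeq\mathbb{C}$, so that its cone is $A$ and not the split object $\tilde{C}\oplus D[1]$. This I would confirm on cohomology with respect to the heart: one computes $\mathcal{H}^{-1}(Cone(\phi))\simeq P_3$ and $\mathcal{H}^{0}(Cone(\phi))\simeq I_1$, which agrees with $A$ and rules out the split extension (whose cohomology would be $D$ and $\tilde{C}$). Equivalently, one could bypass the octahedron and write down directly the quasi-isomorphism identifying $Cone(P_2\to\tilde{P})$ with $A$, in the style of the computation for $I_2$ in Lemma~\ref{lem:injective}.
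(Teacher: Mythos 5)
Your proof is correct, and for $P_3$ and $P_1$ it coincides with the paper's: both rotate the short exact sequences $0\to P_3\to\tilde{P}\to\tilde{C}\to 0$ and $0\to D\to P_1\to\tilde{P}\to 0$ in the heart. The difference is in the $P_2$ case, which is indeed the only nontrivial one. The paper simply writes down an explicit quasi-isomorphism of two-term complexes between $[P_2\to\tilde{P}]$ and $[D\to\tilde{C}]$, identifying $Cone(P_2\to\tilde{P})\simeq A$ by inspection; you instead derive the same identification structurally, by running the octahedral axiom on $P_3\to P_2\xrightarrow{\phi}\tilde{P}$ and then checking on $\mathcal{H}^\bullet$ that the resulting connecting morphism $D\to\tilde{C}$ is the nonzero element of $Hom(D,\tilde{C})\simeq\mathbb{C}$ (your computation $\mathcal{H}^{-1}(Cone(\phi))\simeq P_3$, $\mathcal{H}^0(Cone(\phi))\simeq I_1$ is right and does rule out the split cone, whose cohomologies would be $D$ and $\tilde{C}$). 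Your route trades an explicit chain-level diagram for a one-dimensional Hom computation plus a cohomology check, which is arguably easier to audit; the paper's version has the advantage that the explicit quasi-isomorphism is reused later in the chain-level verifications of Section~\ref{sec:compatibility}. You also make explicit the uniqueness argument for semiorthogonal components (that $\tilde{C},A,D[1]\in P^\perp$ and $\tilde{P}[1]\in\langle S(P)\rangle$ force these triangles to be the decomposition triangles), which the paper leaves implicit; that is a worthwhile addition.
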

\begin{proof}
The first sequence follows from observing the exact sequence
\[
\begin{tikzcd}
P_3 \arrow[r] & \tilde{P} \arrow[r] & \tilde{C}
\end{tikzcd}
\]
in $\mathrm{mod}\text{-}kQ/I$. The second comes from the quasi-isomorphism of complexes
\[
\begin{tikzcd}
0 \arrow[r, shift left] \arrow[r, shift right] & \mathbb{C}\arrow[r, shift left, "1\rightarrow (1.0)"]\arrow[d,"1"] \arrow[r, shift right,"1 \rightarrow (0.1)",labels=below]&\mathbb{C}^2 \arrow[d,"{\begin{psmallmatrix}1 & 0 \end{psmallmatrix}}"]\arrow[r] & \mathbb{C}  \arrow[d,"1"] \arrow[r, shift left,"0"] \arrow[r, shift right,"1"'] & \mathbb{C} \arrow[d,"1"]  \arrow[r, shift left,"0"] \arrow[r, shift right,"1"'] & \mathbb{C} 
\\
0 \arrow[r, shift left] \arrow[r, shift right]& \mathbb{C}\arrow[r, shift left, "1"] \arrow[r, shift right,"0", labels = below]& \mathbb{C}\arrow[r] & \mathbb{C} \arrow[r, shift left, "0"] \arrow[r, shift right,"1", labels = below]& \mathbb{C}\arrow[r,shift left] \arrow[r,shift right]& 0
\end{tikzcd}
\]
identifying $A = Cone(D \rightarrow \tilde{C}) \xrightarrow{\sim} Cone(P_2 \rightarrow \tilde{P})$.
The third comes from the obvious exact sequence
\[
\begin{tikzcd}
D \arrow[r] & P_1 \arrow[r] & \tilde{P}
\end{tikzcd}
\]
in $\mathrm{mod}\text{-}kQ/I$.
\end{proof}
We conclude this section by recording some homological properties of the projections $\mathbb{L}_PI_i \in P^\perp$ obtained in Lemma~\ref{lem:injective}.
\begin{lemma}\label{lem:extensions}
The extension algebras of the objects $\tilde{C}, A, D \in P^\perp$ with $P$ are given by the following.
\begin{enumerate}
\item
$Hom^\bullet(\tilde{C},P) = (0,1,1)$
\item
$Hom^\bullet(A,P) = (0,1,1)$
\item
$Hom^\bullet(D,P) = (1,1,0)$
\end{enumerate}
\end{lemma}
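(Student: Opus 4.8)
The plan is to extract all three extension algebras from the three distinguished triangles
\[
P \to I_3 \to \tilde C, \qquad P \to I_2 \to A, \qquad P \to I_1 \to D[1]
\]
of Lemma~\ref{lem:injective} by applying the cohomological functor $Hom^\bullet(-,P)$. Since $\tilde C$, $A$ and $D[1]$ are precisely the respective cones, each triangle produces a long exact sequence
\[
\cdots \to Hom^n(C_i,P) \to Hom^n(I_i,P) \to Hom^n(P,P) \to Hom^{n+1}(C_i,P) \to \cdots,
\]
and the only inputs needed are $Hom^\bullet(P,P)$ and $Hom^\bullet(I_i,P)$. The first is immediate, since $P$ is exceptional by Lemma~\ref{lem:p} and hence $Hom^\bullet(P,P)=k$ is concentrated in degree $0$. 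Everything then reduces to the computation of $Hom^\bullet(I_i,P)$.

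For the latter I would invoke Serre duality. By Lemma~\ref{lem:serre} we have $I_i \simeq S(P_i)$, and from $S(P)\simeq \tilde P[2]$ (Lemma~\ref{lem:p}) together with the fractional Calabi--Yau relation $S^2P\simeq P[4]$ one obtains $S^{-1}(P)\simeq \tilde P[-2]$. Consequently
\[
Hom^n(I_i,P)\simeq Hom^n(S(P_i),P)\simeq Hom^n(P_i,S^{-1}P)\simeq Hom^{n-2}(P_i,\tilde P).
\]
As $P_i$ is projective and $\tilde P$ takes a one-dimensional value at every vertex, $Hom^\bullet(P_i,\tilde P)=k$ is concentrated in degree $0$, whence $Hom^\bullet(I_i,P)=k$ is concentrated in degree $2$ for every $i$.

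With these inputs the long exact sequences collapse. For the cones $C_3=\tilde C$ and $C_2=A$ the groups $Hom^\bullet(I_i,P)$ and $Hom^\bullet(P,P)$ lie in the disjoint degrees $2$ and $0$, so the sequence splits into the isomorphisms $Hom^0(P,P)\simeq Hom^1(C_i,P)$ and $Hom^2(C_i,P)\simeq Hom^2(I_i,P)$, forcing $Hom^\bullet(\tilde C,P)=Hom^\bullet(A,P)=(0,1,1)$. For the third triangle I would carefully account for the shift $Hom^n(D[1],P)=Hom^{n-1}(D,P)$; the same bookkeeping then gives $Hom^0(D,P)\simeq Hom^0(P,P)=k$, $Hom^1(D,P)\simeq Hom^2(I_1,P)=k$ and $Hom^{\ge 2}(D,P)=0$, that is $(1,1,0)$.

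The computation is essentially forced once $Hom^\bullet(I_i,P)$ is known, so the only substantive content is that single Serre-duality calculation. There is no genuine obstacle: the two nonvanishing input groups always occupy distinct cohomological degrees, so every relevant connecting homomorphism is automatically an isomorphism and no hidden cancellation can occur. The one point demanding care is the degree shift in the third triangle, where the cone is $D[1]$ rather than $D$ --- precisely the feature that converts the otherwise uniform input into the asymmetric answer $(1,1,0)$.
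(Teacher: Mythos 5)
Your proposal is correct and follows essentially the same route as the paper: the paper's proof consists precisely of applying $Hom(-,P)$ to the triangles $P_3 \to \tilde{P} \to \tilde{C}$, $P \to I_2 \to A$, and $D \to P \to I_1$, the last two of which coincide (up to rotation) with yours, and your substitute triangle $P \to I_3 \to \tilde{C}$ for the first case works equally well since $Hom^\bullet(I_3,P)$ and $Hom^\bullet(\tilde{P},P)$ are both one-dimensional and concentrated in degree $2$. Your explicit derivation of $Hom^\bullet(I_i,P)$ via $I_i\simeq S(P_i)$ and $S^{-1}(P)\simeq\tilde{P}[-2]$ is a correct filling-in of details the paper leaves implicit.
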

\begin{proof}
This follows from the long exact sequence associated to the functor $Hom(\bullet,P)$ applied to the following sequences for each of the above cases.
\begin{enumerate}
\item
$P_3 \rightarrow \tilde{P} \rightarrow \tilde{C}$.
\item
$P \rightarrow I_2 \rightarrow A$.
\item
$D \rightarrow P \rightarrow I_1$.
\end{enumerate}
\end{proof}

For later use, we also record the following resolutions of the objects $A, \tilde{C}, D \in P^\perp$. 
\begin{lemma}\label{lem:aresolution}
$A$ admits the following projective and injective resolutions $P_A^\bullet,I_A^\bullet$
\[
P^\bullet \simeq \begin{tikzcd}
0 \arrow[r, shift left, ""] \arrow[r, shift right, "", labels = below] \arrow[d, "", labels = left] & 0\arrow[r, shift left] \arrow[r, shift right] \arrow[d]& \mathbb{C} \arrow[d,"{\begin{psmallmatrix} 0 & 1 & 0 & 0\end{psmallmatrix}}",labels=right] \\
0\arrow[r, shift left, ""] \arrow[r, shift right,"", labels = below]\arrow[d]& \mathbb{C}^2 \arrow[d,"id",labels=right]\arrow[r, shift left,""] \arrow[r, shift right, labels = below]& \mathbb{C}^4 \arrow[d, "{\begin{psmallmatrix} 1 & 0 & 0 & 0\\ 0 & 0 & 0 & 1\end{psmallmatrix}}"] \\
\mathbb{C} \arrow[r, shift left] \arrow[r, shift right]& \mathbb{C}^2 \arrow[r, shift left] \arrow[r, shift right]& \mathbb{C}^2\\
\end{tikzcd}
\quad
I^\bullet \simeq
\begin{tikzcd}
\mathbb{C}^2 \arrow[r, shift left, ""] \arrow[r, shift right, "", labels = below] \arrow[d, "{\begin{psmallmatrix}1& 0\\ 0 & 0\\0 & 0\\0 &1\end{psmallmatrix}}", labels = left] & \mathbb{C}^2 \arrow[r, shift left] \arrow[r, shift right] \arrow[d, "id"]& \mathbb{C} \arrow[d] \\
\mathbb{C}^4 \arrow[r, shift left, ""] \arrow[r, shift right,"", labels = below]\arrow[d, shift left,"{\begin{psmallmatrix}0 \\ 0 \\ 1 \\0\end{psmallmatrix}}", labels=left]& \mathbb{C}^2\arrow[d]\arrow[r, shift left] \arrow[r, shift right, labels = below]& 0\arrow[d] \\
\mathbb{C} \arrow[r, shift left] \arrow[r, shift right]& 0 \arrow[r, shift left] \arrow[r, shift right]& 0\\
\end{tikzcd}
\]
where the morphisms of complexes are given by the following
\[
\begin{tikzcd}
0 \arrow[r, shift left] \arrow[r, shift right] \arrow[d] & \mathbb{C}\arrow[d, "{\begin{psmallmatrix}1 \\ 0\end{psmallmatrix}}", labels = left]\arrow[r, shift left, "1", labels=above] \arrow[r, shift right, "0", labels=below] & \mathbb{C} \arrow[d] \arrow[r] & \mathbb{C} \arrow[r, shift left, "", labels=above] \arrow[r, shift right, "", labels=below] \arrow[d, "{\begin{psmallmatrix}0 \\ 1 \\ 0 \\ 0\end{psmallmatrix}}", labels = right]& \mathbb{C} \arrow[d, "{\begin{psmallmatrix}1\\ 0\end{psmallmatrix}}"] \arrow[r, shift left] \arrow[r, shift right] & 0\arrow[d]\\
\mathbb{C}^2 \arrow[r, shift left] \arrow[r, shift right] & \mathbb{C}^2 \arrow[r, shift left] \arrow[r, shift right] & \mathbb{C} \arrow[r] & \mathbb{C}^4 \arrow[r, shift left] \arrow[r, shift right] & \mathbb{C}^2 \arrow[r, shift left] \arrow[r, shift right] & 0 \arrow[r] & \mathbb{C} \arrow[r, shift left] \arrow[r, shift right] & 0 \arrow[r, shift left] \arrow[r, shift right] & 0\\[-10pt]
0 \arrow[r, shift left] \arrow[r, shift right] & 0\arrow[r, shift left] \arrow[r, shift right] & \mathbb{C} \arrow[r] &0\arrow[d] \arrow[r, shift left] \arrow[r, shift right] & \mathbb{C}^2 \arrow[d, "{\begin{psmallmatrix}0 \\1 \end{psmallmatrix}}", labels = left]\arrow[r, shift left] \arrow[r, shift right] & \mathbb{C}^4 \arrow[d, "{\begin{psmallmatrix}0 \\ 0 \\ 1 \\ 0\end{psmallmatrix}}", labels=left]\arrow[r] & \mathbb{C} \arrow[d, "id"] \arrow[r, shift left] \arrow[r, shift right] & \mathbb{C}^2 \arrow[d, "{\begin{psmallmatrix}0 \\ 1\end{psmallmatrix}}"]\arrow[r, shift left] \arrow[r, shift right] & \mathbb{C}^2\arrow[d]\\
&  &  & 0 \arrow[r, shift left] \arrow[r, shift right] & \mathbb{C}\arrow[r, shift left, "1", labels=above] \arrow[r, shift right, "0", labels=below] & \mathbb{C} \arrow[r] & \mathbb{C} \arrow[r, shift left, "", labels=above] \arrow[r, shift right, "", labels=below]& \mathbb{C} \arrow[r, shift left] \arrow[r, shift right] & 0
\end{tikzcd}
\]
\end{lemma}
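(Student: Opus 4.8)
The plan is to realize both resolutions as mapping cones built from the two constituent modules $D$ and $\tilde{C}$. Recall that $A = Cone(D \to \tilde{C})$, where the morphism $D \to \tilde{C}$ is (up to scalar) the unique nonzero one, acting as an isomorphism on the vertex-$2$ component and as zero elsewhere. Viewing this as a two-term complex with $D$ in degree $-1$ and $\tilde{C}$ in degree $0$, one computes $H^{-1}(A) = \ker \cong P_3$, the simple at the sink $0 \rightrightarrows 0 \rightrightarrows \mathbb{C}$, and $H^0(A) = \operatorname{coker} \cong I_1$, the simple at the source. This pins down $[A] = (1,0,-1)$ and gives a convention-independent consistency check against the alternating sums of the claimed resolutions $P_3 \to P_2^{\oplus 2} \to P_1$ and $I_3 \to I_2^{\oplus 2} \to I_1$.

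First I would write the length-one projective resolutions of the two modules. Using Lemma~\ref{lem:projinj}, the projective cover of $D$ is $P_2$ with kernel $P_3$, giving $0 \to P_3 \to P_2 \to D \to 0$, while the projective cover of $\tilde{C}$ is $P_1$ with kernel $P_2$, giving $0 \to P_2 \to P_1 \to \tilde{C} \to 0$; in each case one checks the kernel has the asserted dimension vector and is the claimed indecomposable projective. I would then lift the morphism $f \colon D \to \tilde{C}$ to a chain map $\phi$ between the two-term complexes $[P_3 \to P_2] \to [P_2 \to P_1]$; this is where the explicit matrices enter, since $\phi$ is determined on generators and propagated through the arrows subject to the relations $\beta_2\alpha_1 = \alpha_2\beta_1 = 0$. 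Since the mapping cone of a chain map lifting $f$ between projective resolutions is itself a projective resolution of $Cone(f) = A$, the complex $Cone(\phi) = (P_3 \to P_2^{\oplus 2} \to P_1)$ is the desired resolution, and reading off the block differentials assembled from $\phi$ and the two resolution maps recovers the matrices $\begin{psmallmatrix} 0 & 1 & 0 & 0\end{psmallmatrix}$ and $\begin{psmallmatrix} 1 & 0 & 0 & 0 \\ 0 & 0 & 0 & 1\end{psmallmatrix}$ in the statement.

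For the injective resolution I would invoke the self-duality of the quiver: the $k$-linear duality $\operatorname{Hom}_k(-,k)$, combined with the arrow-reversing automorphism exchanging the source and sink and swapping $\alpha_i \leftrightarrow \beta_i$, yields a contravariant self-equivalence with $P_i^\vee \cong I_{4-i}$ and $D^\vee \cong \tilde{C}$, hence $A^\vee \cong A$ up to shift. Dualizing $P_3 \to P_2^{\oplus 2} \to P_1$ term by term then produces $I_3 \to I_2^{\oplus 2} \to I_1$ with transposed differentials immediately; alternatively one runs the cone construction directly on the injective coresolutions $0 \to D \to I_3 \to I_2 \to 0$ and $0 \to \tilde{C} \to I_2 \to I_1 \to 0$, obtained symmetrically from Lemma~\ref{lem:projinj}. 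In either case the displayed morphisms of complexes are precisely the comparison maps packaged by the two cone constructions, so verifying that they are quasi-isomorphisms reduces to the exactness already established for the short (co)resolutions.

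I expect the only genuine difficulty to be bookkeeping rather than conceptual: choosing compatible bases on the $\mathbb{C}^2$ and $\mathbb{C}^4$ components so that the lift $\phi$, the induced mapping-cone differentials, and the displayed comparison quasi-isomorphisms simultaneously match the precise matrices and signs written in the statement. Once $A = Cone(D \to \tilde{C})$ and the four short (co)resolutions are in hand, the content is simply checking commutativity of each square in the large diagram and confirming that the resulting three-term complexes have cohomology concentrated as $P_3$ and $I_1$ in the correct degrees.
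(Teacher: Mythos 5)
Your overall strategy (realize $A=\mathrm{Cone}(D\to\tilde{C})$, resolve the two constituents, lift the morphism, take the cone) is reasonable, and your cohomology computation $H^{-1}(A)\simeq P_3$, $H^{0}(A)\simeq I_1$ and the class check are correct, as is the short exact sequence $0\to P_3\to P_2\to D\to 0$. But there is a genuine gap in the other two building blocks: the sequences $0\to P_2\to P_1\to\tilde{C}\to 0$ and $0\to D\to I_3\to I_2\to 0$ do not exist. The projective cover $P_1\twoheadrightarrow\tilde{C}$ sends the generator at vertex $1$ identically, hence kills $\alpha_1(1)=(1,0)$ at vertex $2$ and all of $(P_1)_3$; its kernel is $(0,\mathbb{C},\mathbb{C}^2)$ with $\alpha_2=\begin{psmallmatrix}1\\0\end{psmallmatrix}$ and $\beta_2=0$, which is $D\oplus P_3$, not $P_2$ (in $P_2$ both $\alpha_2$ and $\beta_2$ are nonzero). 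So $\tilde{C}$ has projective dimension $2$, and dually the cokernel of $D\hookrightarrow I_3$ is $\tilde{C}\oplus I_1$, so $D$ has injective dimension $2$. This is visible in the paper itself: Lemma~\ref{lem:cdresolution} records the \emph{three}-term resolutions $P_{\tilde{C}}^\bullet=[P_3\to P_2\oplus P_3\to P_1]$ and $I_D^\bullet=[I_3\to I_2\oplus I_1\to I_1]$. One can also see directly that no injection $P_2\hookrightarrow P_1$ has cokernel $\tilde{C}$: the injective maps are $e_2\mapsto a\alpha_1+b\beta_1$ with $a,b\neq 0$, and their cokernels are the modules $\mathbb{C}\rightrightarrows\mathbb{C}\rightrightarrows 0$ with \emph{both} arrows nonzero (the ``regular point'' modules of Theorem~\ref{thm:quiverresolution}), never $\tilde{C}$.

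Consequently the two-term complex $[P_2\to P_1]$ is not quasi-isomorphic to $\tilde{C}$, and the mapping cone you form is not a priori a resolution of $A$; the same objection applies to the ``alternative'' cone of injective coresolutions. The argument can be repaired: lift $D\to\tilde{C}$ to a chain map from $[P_3\to P_2]$ to the honest resolution $[P_3\to P_2\oplus P_3\to P_1]$ of $\tilde{C}$, take the total complex $[P_3^{\oplus 2}\to P_2^{\oplus 2}\oplus P_3\to P_1]$, and cancel the contractible direct summand $[P_3\xrightarrow{\ \sim\ }P_3]$ to arrive at the minimal complex $[P_3\to P_2^{\oplus 2}\to P_1]$ of the statement; the injective resolution then does follow from your (correct) self-duality observation $P_i^\vee\simeq I_{4-i}$, $D^\vee\simeq\tilde{C}$. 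Alternatively one verifies directly, as the paper does, that the displayed chain maps from $P_A^\bullet$ and to $I_A^\bullet$ are quasi-isomorphisms, using $\mathcal{H}^{-1}(A)\simeq P_3$ and $\mathcal{H}^0(A)\simeq I_1$. As written, however, the proposal rests on two false short exact sequences and does not establish the lemma.
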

\begin{proof}
Follows from a direct computation.
\end{proof}
\begin{lemma}\label{lem:cdresolution}
$\tilde{C}$ and $D$ admit the following projective and injective resolutions respectively.
\[
P_{\tilde{C}}^\bullet \simeq \begin{tikzcd}
0 \arrow[r, shift left, ""] \arrow[r, shift right, "", labels = below] \arrow[d, "", labels = left] & 0\arrow[r, shift left] \arrow[r, shift right] \arrow[d]& \mathbb{C} \arrow[d,"{\begin{psmallmatrix} 0 \\ 1 \\ 0\end{psmallmatrix}}",labels=right] \\
0\arrow[r, shift left, ""] \arrow[r, shift right,"", labels = below]\arrow[d]& \mathbb{C} \arrow[d,"{\begin{psmallmatrix}1\\0 \end{psmallmatrix}}"']\arrow[r, shift left,"{\begin{psmallmatrix}1& 0 & 0 \end{psmallmatrix}}"] \arrow[r, shift right,"{\begin{psmallmatrix}0& 1 & 0 \end{psmallmatrix}}", labels = below]& \mathbb{C}^3 \arrow[d, "{\begin{psmallmatrix} 1 & 0 & 0 \\ 0 & 0 &  1\end{psmallmatrix}}"] \\
\mathbb{C} \arrow[r, shift left] \arrow[r, shift right]& \mathbb{C}^2 \arrow[r, shift left] \arrow[r, shift right]& \mathbb{C}^2\\
\end{tikzcd}
\quad
I_{D}^\bullet \simeq
\begin{tikzcd}
\mathbb{C}^2 \arrow[r, shift left, ""] \arrow[r, shift right, "", labels = below] \arrow[d, "{\begin{psmallmatrix}0& 0\\ 1 & 0\\0 & 1\end{psmallmatrix}}", labels = left] & \mathbb{C}^2 \arrow[r, shift left] \arrow[r, shift right] \arrow[d, "{\begin{psmallmatrix}0& 1\end{psmallmatrix}}"]& \mathbb{C} \arrow[d] \\
\mathbb{C}^3 \arrow[r, shift left, "{\begin{psmallmatrix}1& 0 & 0 \end{psmallmatrix}}"] \arrow[r, shift right,"{\begin{psmallmatrix}0&0&1 \end{psmallmatrix}}", labels = below]\arrow[d, shift left,"{\begin{psmallmatrix}1\\0\\0\end{psmallmatrix}}", labels=left]& \mathbb{C}\arrow[d]\arrow[r, shift left] \arrow[r, shift right, labels = below]& 0\arrow[d] \\
\mathbb{C} \arrow[r, shift left] \arrow[r, shift right]& 0 \arrow[r, shift left] \arrow[r, shift right]& 0\\
\end{tikzcd}
\]
\end{lemma}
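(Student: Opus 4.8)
The plan is to verify directly that each displayed complex is exact with the prescribed cohomology; since every term is finite-dimensional, this reduces to identifying the terms and checking finitely many matrix identities. First I would identify each row of $P_{\tilde{C}}^\bullet$ and $I_D^\bullet$ as a sum of standard projective or injective modules from Lemma~\ref{lem:projinj}: comparing dimension vectors and structure maps, the three rows of $P_{\tilde{C}}^\bullet$ are $P_1$, $P_2\oplus P_3$ and $P_3$ (reading from bottom to top), while the three rows of $I_D^\bullet$ are $I_3$, $I_1\oplus I_2$ and $I_1$ (from top to bottom). In particular the alternating sums of dimension vectors satisfy $[P_1]-[P_2\oplus P_3]+[P_3]=(1,1,0)=[\tilde{C}]$ and $[I_3]-[I_1\oplus I_2]+[I_1]=(0,1,1)=[D]$, which is the required numerical consistency.

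Next I would check that every vertical arrow in the two displays is a genuine morphism of $kQ/I$-representations, i.e.\ that the indicated matrices intertwine the structure maps $\alpha_i,\beta_i$ of successive rows. This is a routine verification of commuting squares for each of the four arrows; for instance, in $P_{\tilde{C}}^\bullet$ one checks that the component $\mathbb{C}^3\to\mathbb{C}^2$, $(a,b,c)\mapsto(a,c)$, followed by the maps $\alpha_2,\beta_2$ of $P_1$, agrees with $\alpha_2,\beta_2$ of $P_2\oplus P_3$ followed by $\mathbb{C}\to\mathbb{C}^2$, $1\mapsto(1,0)$. Having confirmed these are maps of complexes, I would compute cohomology vertex by vertex. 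For $P_{\tilde{C}}^\bullet$ the differential $P_2\oplus P_3\to P_1$ is injective at vertex $2$ and has one-dimensional kernel $\langle(0,1,0)\rangle$ at vertex $3$, which is exactly the image of $P_3\to P_2\oplus P_3$; its cokernel is $(\mathbb{C},\mathbb{C},0)$ with $\alpha_1=0$ and $\beta_1=\mathrm{id}$, that is, $\tilde{C}$. The analogous computation for $I_D^\bullet$ shows that the kernel of $I_3\to I_1\oplus I_2$ is $(0,\mathbb{C},\mathbb{C})$ with $\alpha_2=\mathrm{id}$ and $\beta_2=0$, namely $D$, and that the remaining spots are exact.

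As a conceptual shortcut for the second half, I would note that the two statements are exchanged by the standard $k$-linear duality $\mathbb{D}=\mathrm{Hom}_k(-,k)$ composed with the symmetry of $Q$ that reverses all arrows and swaps the vertices $1$ and $3$. This symmetry preserves the relations $\beta_2\alpha_1=\alpha_2\beta_1=0$ and hence induces an auto-duality of $kQ/I$ interchanging projectives with injectives; on objects it sends $\tilde{C}$ to $D$. Consequently the injective resolution of $D$ is obtained by dualizing and transposing the projective resolution of $\tilde{C}$, so that only one of the two computations is genuinely independent and the other amounts to matching transposed matrices against the display.

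There is no serious obstacle here, as the content is entirely the bookkeeping of the explicit matrices. The one point worth emphasizing is that the first syzygy $\ker(P_1\to\tilde{C})$ is \emph{not} projective but splits as $D\oplus P_3$ (dually, the first cosyzygy of $D$ splits as $\tilde{C}\oplus I_1$); this is precisely why the resolutions have length two and reflects that $\tilde{C}$ and $D$ have projective, respectively injective, dimension two. Keeping track of this direct summand, rather than prematurely identifying the syzygy with $P_2$ (which has the same dimension vector $(0,1,2)$ but a different $\beta_2$), is the only step demanding any care.
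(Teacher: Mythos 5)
Your verification is correct and matches the paper's (unwritten) argument: the paper's proof is simply ``follows from a direct computation,'' and you have carried out exactly that computation, correctly identifying the terms as $P_3 \to P_2\oplus P_3 \to P_1$ and $I_3 \to I_1\oplus I_2 \to I_1$, checking exactness, and recovering $\tilde{C}$ and $D$ as the cokernel and kernel respectively. Your added observations --- the duality exchanging the two halves via $\mathrm{Hom}_k(-,k)$ and the arrow-reversing symmetry of $Q$, and the warning that the first syzygy is $D\oplus P_3$ rather than $P_2$ despite having the same dimension vector --- are both accurate and genuinely useful supplements to what the paper leaves implicit.
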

\begin{proof}
Follows from a direct computation.
\end{proof}
\section{The Serre functor}\label{sec:serre}
In the following $Q$ will denote the Bondal quiver~\eqref{eq:bondal}, and $D^b(Q)$ the bounded derived category of right modules over the path algebra with relations $kQ/I$ associated to the quiver $Q$, with Serre functor $S$. We study the admissible subcategory $P^\perp$ obtained from the semi-orthogonal decomposition $D^b(Q) = \langle P^\perp, P \rangle$. In this section, we prove the following.
\begin{theorem}\label{thm:serremutation}
There exists a natural isomorphism of functors 
\[S{-1}[1]|_{P^\perp} \simeq \mathbb{L}_{S(P)} \colon P^\perp \rightarrow \prescript{\perp}{}{P}
\]
\end{theorem}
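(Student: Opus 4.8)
The plan is to pass through the two semi-orthogonal decompositions $D^b(Q)=\langle P^\perp,P\rangle=\langle S(P),P^\perp\rangle$ and to reduce the claim to a statement about Serre functors alone. First I would note that, relative to the decomposition $\langle S(P),P^\perp\rangle$ of Lemma~\ref{lem:projective}, the mutation $\mathbb{L}_{S(P)}$ is computed on $X\in P^\perp$ by the functorial triangle $\mathrm{RHom}(S(P),X)\otimes S(P)\to X\to\mathbb{L}_{S(P)}X$, and that applying $\mathrm{RHom}(S(P),-)$ to it shows $\mathbb{L}_{S(P)}X$ is right-orthogonal to $S(P)$. Serre duality rewrites this right orthogonal as ${}^\perp(S^2P)$, and the fractional Calabi--Yau relation $S^2P\simeq P[4]$ of Lemma~\ref{lem:p} identifies it with $\prescript{\perp}{}{P}$; Serre duality likewise places $S^{-1}X[1]$ in $\prescript{\perp}{}{P}$, so both functors have the source and target asserted in the statement.

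The heart of the argument is a reduction. For any autoequivalence $\Phi$ and exceptional object $E$ there is a natural isomorphism $\Phi\circ\mathbb{L}_E\simeq\mathbb{L}_{\Phi E}\circ\Phi$, which comes directly from applying $\Phi$ to the defining triangle of $\mathbb{L}_E$; taking $\Phi=S$ and $E=P$ gives $\mathbb{L}_{S(P)}\simeq S\circ\mathbb{L}_P\circ S^{-1}$. For $X\in P^\perp$ the object $S^{-1}X$ lies in $\prescript{\perp}{}{P}$, and the isomorphism $S_{P^\perp}^{-1}\simeq\mathbb{L}_P\circ S^{-1}$ of \cite[Lemma 2.7]{Kuznetsov_2019} (already used in Lemma~\ref{lem:E}) identifies $\mathbb{L}_P(S^{-1}X)$ with $S_{P^\perp}^{-1}X$. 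Combining these, $\mathbb{L}_{S(P)}X\simeq S\bigl(S_{P^\perp}^{-1}X\bigr)$, so the theorem is equivalent to the identity of autoequivalences $S_{P^\perp}\simeq S^2[-1]\big|_{P^\perp}$, i.e.\ to the functorial duality $\mathrm{Hom}(X,Y)^\ast\simeq\mathrm{Hom}(Y,S^2X[-1])$ for $X,Y\in P^\perp$. The merit of this reformulation is that naturality is then free: every arrow above (the mutation triangle, the counit producing it, and Kuznetsov's isomorphism) is functorial, so it remains only to verify the displayed pairing.

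To establish that pairing I would compute directly on generators. By Lemma~\ref{lem:injective} the projections $\mathbb{L}_P I_i$ exhibit $\tilde C,A,D$ as generators of $P^\perp$; Lemma~\ref{lem:serre} gives the action of $S$ on objects and morphisms, and the resolutions of Lemmas~\ref{lem:aresolution} and~\ref{lem:cdresolution} let one iterate $S$ to obtain $S^2$ on these objects. Evaluating $S^2[-1]$ on the generators and on the morphisms among them, and matching it against the Serre pairing of $P^\perp$, identifies $S^2[-1]|_{P^\perp}$ with $S_{P^\perp}$ by uniqueness of Serre functors, completing the reduction.

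I expect the identification $S_{P^\perp}\simeq S^2[-1]|_{P^\perp}$ to be the main obstacle, and it is essential that it cannot be deduced formally. Indeed, the analogous statement on $D^b(\mathbb{P}^1)$ with $P=\mathcal{O}(1)$ already fails, precisely because there $S^2P$ is not a shift of $P$; the proof must therefore use the fractional Calabi--Yau property $S^2P\simeq P[4]$, which enters only through the explicit behaviour of $S$ on the generators of $P^\perp$. A secondary point requiring care is to track the evaluation and connecting maps so that the isomorphism produced on objects is the functorial one coming from the mutation triangle, rather than a term-by-term coincidence.
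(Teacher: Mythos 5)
Your reduction is correct and is essentially the same conjugation trick the paper uses internally: from $\mathbb{L}_{S(P)}\simeq S\circ\mathbb{L}_P\circ S^{-1}$ and $S_{P^\perp}^{-1}\simeq\mathbb{L}_P\circ S^{-1}|_{P^\perp}$ one does get that the theorem is equivalent to a natural isomorphism $S_{P^\perp}\simeq S^2[-1]|_{P^\perp}$, and every step of that reduction is functorial. You are also right that this last identity is not formal and must be extracted from the fractional Calabi--Yau property of $P$ by computation. Where your proposal differs from the paper is that you propose to stay entirely inside $P^\perp$ and verify the identity on the generators $\tilde C, A, D$, whereas the paper first extends $\mathbb{L}_{S(P)}$ to an autoequivalence $\Phi$ of all of $D^b(Q)$ (Lemma~\ref{lem:extension}, via the Fourier--Mukai gluing of \cite{li2020refined}) precisely so that it can work with the ambient algebra $kQ/I$.

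The gap is in the sentence ``matching it against the Serre pairing \dots identifies $S^2[-1]|_{P^\perp}$ with $S_{P^\perp}$ by uniqueness of Serre functors.'' Uniqueness of Serre functors applies only once you know that $S^2[-1]|_{P^\perp}$ \emph{is} a Serre functor, i.e.\ carries a bifunctorial pairing $\mathrm{Hom}(X,Y)^*\simeq\mathrm{Hom}(Y,S^2X[-1])$ for \emph{all} $X,Y\in P^\perp$; and agreement of two exact functors on a set of generators, together with compatibility on the morphisms among them, does not by itself extend to a natural isomorphism of triangulated functors, because cones are not functorial. This extension step is exactly where the paper spends most of its effort: it needs the \textbf{D}-standardness of the triangular algebra $kQ/I$ (Lemma~\ref{lem:bondaltriangular} and Proposition~\ref{prop:triangularstandard}, feeding into Lemma~\ref{lem:extensioniso}) to upgrade agreement on the injectives $I_1,I_2,I_3$ to a natural isomorphism on all of $D^b(Q)$, plus the reduction Lemmas~\ref{lem:reduction} and~\ref{lem:extendiso} and the explicit homotopy-category verifications of Section~\ref{sec:compatibility} to get the required commuting squares on a basis of morphisms. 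To repair your argument you would need either an analogous rigidity statement for $P^\perp$ itself (e.g.\ realizing it as $D^b$ of a triangular finite-dimensional algebra and rerunning the \textbf{D}-standard argument there), or to lift the comparison to the ambient $D^b(Q)$ as the paper does; as written, the passage from a pointwise check on generators to a natural isomorphism of functors is asserted rather than proved.
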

As a corollary, we obtain the following characterization of the Serre functor on the admissible subcategory $P^\perp$.
\begin{corollary}\label{cor:serrepperp}
The Serre functor on the category $P^\perp$ satisfies the following: $S^{-1}_{P^\perp} \simeq \mathbb{T}_E[-1]$.
\end{corollary}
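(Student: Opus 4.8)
The plan is to combine the mutation formula for the Serre functor of an admissible subcategory with Theorem~\ref{thm:serremutation}, and then to reinterpret the resulting composition of mutations as the spherical twist $\mathbb{T}_E$. Recall from the proof of Lemma~\ref{lem:E} that \cite[Lemma 2.7]{Kuznetsov_2019} gives a natural isomorphism $S_{P^\perp}^{-1}\simeq \mathbb{L}_P\circ S_Q^{-1}\circ i$, where $i\colon P^\perp\hookrightarrow D^b(Q)$ is the inclusion and $\mathbb{L}_P$ is the left adjoint projection onto $P^\perp$. Theorem~\ref{thm:serremutation} identifies $S_Q^{-1}\circ i\simeq \mathbb{L}_{S(P)}[-1]$, so the task reduces to establishing a natural isomorphism
\[
\mathbb{L}_P\circ\mathbb{L}_{S(P)}\big|_{P^\perp}\;\simeq\;\mathbb{T}_E,
\]
after which the desired formula $S_{P^\perp}^{-1}\simeq \mathbb{T}_E[-1]$ follows immediately.

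The first key step is the computation $\mathbb{L}_P\,S(P)\simeq E[1]$. By Serre duality $\mathrm{RHom}(P,S(P))\simeq \mathrm{Hom}(P,P)^{\vee}=\mathbb{C}$ is concentrated in degree $0$, so $\mathbb{L}_P\,S(P)=\mathrm{Cone}\big(P\xrightarrow{\mathrm{ev}}S(P)\big)$; since $\mathrm{Hom}(P,S(P))$ is one-dimensional the evaluation map agrees up to scalar with the nonzero connecting map of the defining triangle~\eqref{eq:E}, whence $\mathrm{Cone}(P\to S(P))\simeq E[1]$. Next I would restrict to $X\in P^\perp$, where $\mathbb{L}_P X\simeq X$, and apply the triangulated functor $\mathbb{L}_P$ to the defining triangle of $\mathbb{L}_{S(P)}X$. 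Using $\mathbb{L}_P X\simeq X$ together with $\mathbb{L}_P\,S(P)\simeq E[1]$ (and that $\mathbb{L}_P$ commutes with tensoring by the finite-dimensional graded space $\mathrm{RHom}(S(P),X)$), this yields a triangle
\[
\mathrm{RHom}(S(P),X)\otimes E[1]\longrightarrow X\longrightarrow \mathbb{L}_P\mathbb{L}_{S(P)}X.
\]
Finally, applying $\mathrm{RHom}(-,X)$ to~\eqref{eq:E} and using $\mathrm{RHom}(P,X)=0$ for $X\in P^\perp$ produces a natural isomorphism $\mathrm{RHom}(E,X)\simeq \mathrm{RHom}(S(P),X)[1]$, under which the left-hand term above becomes $\mathrm{RHom}(E,X)\otimes E$, exactly the source of the evaluation triangle defining $\mathbb{T}_E X$.

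The main obstacle is promoting this object-wise agreement of cones into a genuine natural isomorphism of functors, since cones are not functorial on the nose. I would handle this by expressing both functors as cones of explicit natural transformations of the identity: $\mathbb{T}_E$ is by definition the cone of the counit $\mathrm{RHom}(E,-)\otimes E\Rightarrow \mathrm{id}_{P^\perp}$, while the triangle above exhibits $\mathbb{L}_P\mathbb{L}_{S(P)}|_{P^\perp}$ as the cone of the natural transformation obtained from the evaluation $\mathrm{RHom}(S(P),-)\otimes S(P)\Rightarrow \mathrm{id}$ by applying $\mathbb{L}_P$ and using $\mathbb{L}_P\,S(P)\simeq E[1]$. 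It then remains to check that, under the identification $\mathrm{RHom}(S(P),X)[1]\simeq\mathrm{RHom}(E,X)$ of the preceding paragraph, these two natural transformations coincide; the naturality of the mutation functors and of the connecting map in~\eqref{eq:E} reduces this to an unwinding of adjunction counits rather than any further geometric input. Granting this compatibility, the cones are naturally isomorphic, giving $\mathbb{L}_P\mathbb{L}_{S(P)}|_{P^\perp}\simeq\mathbb{T}_E$ and hence the corollary.
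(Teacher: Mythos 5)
Your reduction is identical to the paper's: both pass through \cite[Lemma 2.7]{Kuznetsov_2019} and Theorem~\ref{thm:serremutation} to rewrite $S_{P^\perp}^{-1}\simeq \mathbb{L}_P\circ\mathbb{L}_{S(P)}[-1]$, and both rest on the computation $\mathbb{L}_P(S(P)[-1])\simeq E$ from the triangle~\eqref{eq:E}. Where you diverge is in the identification $\mathbb{L}_P\circ\mathbb{L}_{S(P)}|_{P^\perp}\simeq\mathbb{T}_E$: the paper simply invokes \cite[Theorem 3.11]{halpernleistner2016autoequivalences}, applied to the chain of decompositions $\langle S(P),P^\perp\rangle=\langle P^\perp,P\rangle=\langle P,\prescript{\perp}{}{P}\rangle$, whereas you carry out the cone comparison by hand. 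Your object-wise computation is correct: $\mathbb{L}_P S(P)\simeq E[1]$ since $\operatorname{Hom}^\bullet(P,S(P))=\mathbb{C}$ and the evaluation map is the connecting map of~\eqref{eq:E}, and the identification $\mathrm{RHom}(E,X)\simeq\mathrm{RHom}(S(P),X)[1]$ for $X\in P^\perp$ correctly matches the source of the twist triangle. What your argument buys is self-containedness and an explicit view of why $E$ appears; what it costs is that the step you correctly flag as the main obstacle --- upgrading the object-wise agreement of cones to a natural isomorphism of functors --- is precisely the content of the cited theorem, which is proved in an enhanced (dg/Fourier--Mukai) setting. At the purely triangulated level, showing that the two natural transformations $\mathrm{RHom}(E,-)\otimes E\Rightarrow\mathrm{id}$ agree does not by itself yield a natural isomorphism of the cones; you would additionally need a uniqueness-of-fill-in argument (or to work with kernels, which is available here since both functors are of Fourier--Mukai type by Corollary~\ref{cor:leftmutfm}). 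So your route is viable and morally a re-proof of the special case of the cited result, but as written the final functoriality step is a sketch rather than a complete argument.
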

\begin{proof}
We simply apply \cite[Theorem 3.11]{halpernleistner2016autoequivalences} to our case. By \cite[Lemma 2.7]{Kuznetsov_2019} and theorem~\ref{thm:serremutation}, we have
\[
S_{P^\perp}^{-1} \simeq \mathbb{L}_P \circ S^{-1} \simeq \mathbb{L}_P\circ \mathbb{L}_{S(P)}[-1]
\]
We have the following semi-orthogonal decompositions
\[
D^b(Q) = \langle S(P), P^\perp\rangle = \langle P^\perp, P \rangle = \langle P, \prescript{\perp}{}{P} \rangle = \langle \prescript{\perp}{}{P} , S(P) \rangle
\]
By sequence~\eqref{eq:E}, we have that $\mathbb{L}_P(S(P)[-1]) \simeq E$. Applying \cite[Theorem 3.11]{halpernleistner2016autoequivalences} with $\mathcal{A} = \langle S(P) \rangle, \mathcal{B} = P^\perp, \mathcal{A}' = \langle P \rangle, \mathcal{B}' = \prescript{\perp}{}{P}$, we obtain $\mathbb{L}_P\circ \mathbb{L}_{S(P)}[-1] \simeq \mathbb{T}_E[-1]$ and we conclude.
\end{proof}
As a consequence of the characterization of the Serre functor, we obtain a classification of spherical objects in $P^\perp$.
\begin{corollary}\label{cor:unique}
Assume that $F \in P^\perp$ is an $n$-spherical object. Then either there exists an isomorphism $F \simeq E[k]$ for some integer $k$ or it must be the case that $n = 1$.
\end{corollary}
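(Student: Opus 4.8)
The plan is to use Corollary~\ref{cor:serrepperp} to rephrase the spherical hypothesis in terms of the twist $\mathbb{T}_E$, and then to extract the conclusion directly from the defining triangle of $\mathbb{T}_E$, avoiding any explicit computation of $RHom(E,F)$. Since $F$ is $n$-spherical we have $S_{P^\perp}(F)\simeq F[n]$, and Corollary~\ref{cor:serrepperp} gives $S_{P^\perp}\simeq \mathbb{T}_E^{-1}[1]$; combining the two produces a natural isomorphism $\mathbb{T}_E(F)\simeq F[1-n]$. First I would write down the structural triangle of the spherical twist,
\[
RHom(E,F)\otimes E \xrightarrow{\ \mathrm{ev}\ } F \xrightarrow{\ w\ } \mathbb{T}_E(F),
\]
and transport $w$ across the isomorphism $\mathbb{T}_E(F)\simeq F[1-n]$, so that it becomes an element of $Hom(F,F[1-n]) = Ext^{1-n}(F,F)$.

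The crux is the vanishing of this self-extension group away from the spherical degree. As $F$ is $n$-spherical, $Ext^{j}(F,F)$ is nonzero only for $j\in\{0,n\}$; for any integer $n\neq 1$ one has $1-n\notin\{0,n\}$, whence $Ext^{1-n}(F,F)=0$ and $w=0$. (The solution $1-n=0$ is exactly the excluded alternative $n=1$, while $1-n=n$ has no integer solution, so this dichotomy is sharp.) A formal triangulated argument then upgrades the vanishing of $w$ to a splitting: applying $Hom(F,-)$ to the triangle kills $w_{*}$, so $\mathrm{ev}_{*}\colon Hom(F,RHom(E,F)\otimes E)\to Hom(F,F)$ is surjective and $\mathrm{id}_F$ admits a lift, i.e.\ $\mathrm{ev}$ is a split epimorphism. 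Its fiber is $\mathbb{T}_E(F)[-1]\simeq F[-n]$, so the triangle splits and
\[
RHom(E,F)\otimes E \simeq F\oplus F[-n].
\]

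I would finish with a Krull--Schmidt argument. Since $D^b(Q)$ is $Hom$-finite and idempotent complete, so is the admissible subcategory $P^\perp$, and Krull--Schmidt holds. The complex $V\coloneqq RHom(E,F)$ is formal over the field $k$, so the left-hand side above is a finite direct sum of shifts of the indecomposable object $E$. The object $F$ is nonzero and indecomposable because it is spherical (so $End(F)=k$), and $V\neq 0$ since otherwise the displayed isomorphism would force $F=0$; hence the indecomposable summand $F$ on the right must coincide with one of the summands $E[-i]$ on the left, giving $F\simeq E[k]$ for some integer $k$.

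I expect the only delicate points to be bookkeeping rather than conceptual: one must check that the chosen isomorphism $\mathbb{T}_E(F)\simeq F[1-n]$ really does carry the structural map $w$ into $Ext^{1-n}(F,F)$ (so that its vanishing is meaningful and independent of choices), and one must invoke Krull--Schmidt for $P^\perp$ rather than only for $D^b(Q)$. Everything else is formal, and as a consistency check the argument reproduces the expected identity $RHom(E,E)\otimes E\simeq E\oplus E[-3]$ when $F=E$ and $n=3$.
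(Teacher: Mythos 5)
Your proposal is correct and follows essentially the same route as the paper's proof: rewrite the twist via $\mathbb{T}_E(F)\simeq S_{P^\perp}^{-1}(F)[1]\simeq F[1-n]$ using Corollary~\ref{cor:serrepperp}, observe that the structural map $F\to\mathbb{T}_E(F)$ lives in $Ext^{1-n}(F,F)=0$ for $n\neq 1$, split the triangle, and conclude by Krull--Schmidt. The only difference is that you spell out a few details the paper leaves implicit (the degree bookkeeping $1-n\notin\{0,n\}$, formality of $RHom(E,F)$, and Krull--Schmidt descending to $P^\perp$), which is harmless.
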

\begin{proof}
By definition, we have the sequence
\[
\begin{tikzcd}
RHom(E,F) \otimes E \arrow[r] & F \arrow[r] & \mathbb{T}_E(F)
\end{tikzcd}
\]
By corollary~\ref{cor:serrepperp}, we have an isomorphism $\mathbb{T}_E(F) \simeq S_{P^\perp}^{-1}(F)[1] \simeq F[-n+1]$. If $n\neq 1$, then the second morphism in the above sequence must be zero by definition of a spherical object. Thus, there exists an isomorphism $RHom(E,F) \otimes E \simeq F \oplus \mathbb{T}_E(F)[-1]$. But $D^b(Q)$ is Krull-Schmidt, indeed for any finite dimensional $k$-algebra $A$, the derived category $D^b(\mathrm{mod}\text{-}A)$ is Krull-Schmidt. In particular, this must be true for any full subcategory and so $F$ must be isomorphic to $E$ up to shifts.
\end{proof}
\begin{corollary}\label{cor:serreinv}
There exists no Serre-invariant stability conditions on $P^\perp$. 
\end{corollary}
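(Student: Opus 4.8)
The plan is to argue by contradiction. Suppose $\sigma = (Z, \mathcal{P})$ is a Serre-invariant stability condition on $P^\perp$, so that $S_{P^\perp}\cdot\sigma = \sigma\cdot g$ for some $g = (M, f) \in \widetilde{GL}_2^+(\mathbb{R})$. The first step is purely numerical. By Lemma~\ref{lem:E} the class $[E]$ vanishes in $K_0(P^\perp)$, so the spherical twist $\mathbb{T}_E$ acts as the identity on $K_{\mathrm{num}}(P^\perp)$; combined with Corollary~\ref{cor:serrepperp}, i.e. $S_{P^\perp}^{-1} \simeq \mathbb{T}_E[-1]$, this forces $S_{P^\perp}$ to act as $-\mathrm{id}$ on $K_{\mathrm{num}}(P^\perp)$. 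Comparing central charges on the two sides of $S_{P^\perp}\cdot\sigma = \sigma\cdot g$ then shows that $M$ is a positive multiple of $-\mathrm{id}$, so $g$ projects to the rotation by $\pi$ and its phase function satisfies $f(\phi) = \phi + (2k+1)$ for some $k \in \mathbb{Z}$.

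Next I would pin down $g$ and extract structure on $E$. Since $[E]=0$ we have $Z(E)=0$, so $E$ cannot be $\sigma$-semistable; as $E \neq 0$ it has a nontrivial Harder--Narasimhan filtration with semistable factors $A_1, \dots, A_m$ ($m \geq 2$) of strictly decreasing phases. Because $S_{P^\perp}$ is an autoequivalence sending $\sigma$-semistable objects to $\sigma$-semistable objects and shifting phases by $f$, it carries the HN filtration of $E$ to that of $S_{P^\perp}(E) = E[3]$. Matching the two filtrations by uniqueness forces $k = 1$, hence $g = [3]$ and $S_{P^\perp}\cdot\sigma = \sigma\cdot[3]$, and moreover $S_{P^\perp}(A_i) \simeq A_i[3]$ for every factor. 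Rewriting through $S_{P^\perp}^{-1} = \mathbb{T}_E[-1]$, each $A_i$ is a $\sigma$-semistable, numerically nontrivial, $3$-Calabi--Yau object satisfying $\mathbb{T}_E(A_i) \simeq A_i[-2]$; in particular $\mathrm{RHom}(E, A_i) \neq 0$, since otherwise $\mathbb{T}_E$ would fix $A_i$.

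The final step is to contradict the uniqueness of the spherical object. The goal is to produce from the $A_i$ a genuine $3$-spherical object $Y$ with $[Y] \neq 0$. Analyzing the defining triangle $\mathrm{RHom}(E, A_i)\otimes E \to A_i \to A_i[-2]$, together with the relation $S_{P^\perp}A_i \simeq A_i[3]$ (which by Serre duality gives $\mathrm{ext}^3=\mathrm{hom}$ and $\mathrm{ext}^1=\mathrm{ext}^2$ on self-extensions) and the explicit extension groups with $E$ recorded in Lemmas~\ref{lem:extensions}--\ref{lem:cdresolution}, one expects to identify the relevant stable constituents of the $A_i$ with shifts of $E$. Any such identification $Y \simeq E[j]$ immediately contradicts $0 \neq [Y] = [E[j]] = 0$ by Corollary~\ref{cor:unique}, completing the proof.

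The main obstacle is precisely this last step. The HN factors $A_i$ are only semistable, and $S_{P^\perp}$ need only permute the Jordan--H\"older constituents of a given $A_i$ (up to the shift $[3]$) rather than fix each one, so an individual stable constituent is a priori only fractional Calabi--Yau and the spherical classification does not apply to it directly. Overcoming this requires controlling the $S_{P^\perp}$-orbits of these stable factors and showing that the constraint $\sum_i [A_i] = 0$ with strictly decreasing phases (which forces a phase spread of at least $1$) cannot be realized once $[E]=0$ is imposed. The antisymmetry of the Euler form on $K_{\mathrm{num}}(P^\perp) \simeq \mathbb{Z}^2$, under which every class is isotropic and which independently reproves $S_\ast = -\mathrm{id}$, is a useful bookkeeping device but does not by itself yield the contradiction; the genuine input is the explicit homological description of $P^\perp$ assembled in Section~\ref{sec:bondal}.
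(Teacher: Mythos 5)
Your argument is incomplete, and you have correctly located the failure point yourself. The reductions you carry out are sound: $[E]=0$ makes $\mathbb{T}_E$ act trivially on numerical $K$-theory, so Corollary~\ref{cor:serrepperp} forces $S_{P^\perp}$ to act as $-\mathrm{id}$, and the Harder--Narasimhan analysis of $E$ (not semistable, since $Z(E)=0$) pins down $f(\phi)=\phi+3$ and yields semistable factors $A_i$ with $S_{P^\perp}(A_i)\simeq A_i[3]$. But no contradiction is ever derived from this. The condition $S_{P^\perp}(F)\simeq F[3]$ is far weaker than $F$ being $3$-spherical: Corollary~\ref{cor:unique} applies only to objects with $\mathrm{Ext}^\bullet(F,F)=k\oplus k[-3]$, and nothing in your argument controls $\mathrm{Ext}^1$ and $\mathrm{Ext}^2$ of the $A_i$ or of their stable constituents; moreover $S_{P^\perp}$ need only permute the stable factors of each $A_i$ up to shift, and you would additionally have to produce a factor of nonzero class, which the relation $\sum_i[A_i]=[E]=0$ does not supply. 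As written, the proposal establishes constraints on a hypothetical Serre-invariant $\sigma$ but stops short of ruling one out.

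The paper itself proves the corollary by citing Corollary 6.9 and Proposition 6.17 of \cite{kuznetsov2021serre}, and remarks that it also follows from \cite[Lemma 6.3]{2019arXiv190109461E} once one observes that $P^\perp$ contains spherical objects of different Calabi--Yau dimensions. That observation is also the cheapest way to finish your argument: take any nonzero $T\in E^\perp$ (such objects exist, e.g.\ by Lemma~\ref{lem:Eintersection} the image of $\pi^*$ lies in $E^\perp$); since $\mathbb{T}_E$ fixes $T$, Corollary~\ref{cor:serrepperp} gives $S_{P^\perp}(T)\simeq T[1]$. Running your HN-filtration comparison on $T$ instead of $E$ forces $f(\psi)=\psi+1$ for the top phase $\psi$ of $T$, which is incompatible with $f(\phi)=\phi+3$ extracted from $E$, because $f$ is an increasing function commuting with $\phi\mapsto\phi+1$. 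This bypasses the spherical classification entirely and requires no object to be semistable a priori.
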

\begin{proof}
This follows directly from Corollary 6.9 and Proposition 6.17 in~\cite{kuznetsov2021serre}.
\end{proof}
We note that Corollary~\ref{cor:serreinv} also follows directly from the observation that there exists spherical objects of different dimensions in $P^\perp$, without the computation of the Serre functor. The result then follows from \cite[Lemma 6.3]{2019arXiv190109461E} applied to these spherical objects.

The strategy of the proof of theorem~\ref{thm:serremutation} is summarized by the following sequence of reductions.
\begin{enumerate}
\item
In Section~\ref{sec:extension}, we extend the left mutation functor $\mathbb{L}_{S(P)} \colon P^\perp \rightarrow \prescript{\perp}{}{P}$ to an equivalence $\Phi \colon D^b(Q) = \langle P^\perp, P \rangle \simeq D^b(Q) = \langle \prescript{\perp}{}{P}, S^{-1}(P)[1] \rangle$. More precisely, we establish the following:
\begin{replemma}{lem:extension}
There exists an exact functor $\Phi \colon D^b(Q) \rightarrow D^b(Q)$ satisfying $\restr{\Phi}{P^\perp} \simeq \restr{\mathbb{L}_{S(P)}}{P^\perp}$ and $\Phi(P) \simeq S^{-1}(P)[1]$. Moreover, $\Phi$ is an equivalence of categories.
\end{replemma}
\item
In Section~\ref{sec:extensioniso}, we reduce the assertion of the existence of a natural isomorphism $\Phi \simeq S^{-1}[1]$ to the full subcategory $\mathcal{I}$ on a representative set of injective objects. More precisely, we prove the following:
\begin{replemma}{lem:extensioniso}
Assume $F_1, F_2 \colon D^b(Q) \rightarrow D^b(Q)$ equivalences of categories such that there exists a natural isomorphism $F_1|_{\mathcal{I}} \simeq F_2|_{\mathcal{I}}$ on the restrictions. Then there exists a natural isomorphism $F_1 \simeq F_2$.
\end{replemma}
\item
In Section~\ref{sec:injectives}, we reduce the existence of the natural isomorphism on the injective objects to establishing the existence on their restrictions to $P^\perp$. More precisely,
\begin{replemma}{lem:reduction}
Assume that there exists isomorphisms $\eta_{P^\perp} \colon \Phi\mathbb{L}_P I_i \simeq S^{-1}\mathbb{L}_P I_i[1]$ for all $i,j$ such that the following diagram commutes for any morphism $f \colon I_i \rightarrow I_j$.
\[
\begin{tikzcd}
\Phi(\mathbb{L}_PI_i) \arrow[r,"\Phi \mathbb{L}_P f"]\isoarrow{d}& \Phi(\mathbb{L}_PI_j) \isoarrow{d}\\
S^{-1}(\mathbb{L}_PI_i)[1] \arrow[r,"S^{-1}{[1]}(\mathbb{L}_Pf)"] &S^{-1}(\mathbb{L}_PI_j)[1]
\end{tikzcd}
\]
Then there exists isomorphisms $\eta_i \colon \Phi I_i \simeq S^{-1}I_i[1]$ for all $i,j$ such that the following diagram commutes.
\[
\begin{tikzcd}
\Phi(I_i) \arrow[r,"\Phi(f)"]\isoarrow{d}& \Phi(I_j) \isoarrow{d}\\
S^{-1}(I_i)[1] \arrow[r,"S^{-1}f{[1]}"] &S^{-1}(I_j)[1]
\end{tikzcd}
\]
\end{replemma}
The utility of this Lemma is to allow one to remain agnostic to the details of the gluing functor obtained in Lemma~\ref{lem:extension}.
\item
Finally in Section~\ref{sec:compatibility}, we prove the existence of the natural isomorphism on the restrictions of the injective objects to the category $P^\perp$. More precisely,
\begin{replemma}{lem:computation}
There exists isomorphisms $\eta_{P_i} \colon \Phi\mathbb{L}_P I_i \simeq S^{-1}\mathbb{L}_P I_i[1]$ for all $i,j$ such that the following diagram commutes for any morphism $f \colon I_i \rightarrow I_j$.
\[
\begin{tikzcd}
\Phi(\mathbb{L}_PI_i) \arrow[r,"\Phi \mathbb{L}_P f"]\isoarrow{d}& \Phi(\mathbb{L}_PI_j) \isoarrow{d}\\
S^{-1}(\mathbb{L}_PI_i)[1] \arrow[r,"S^{-1}{[1]}(\mathbb{L}_Pf)"] &S^{-1}(\mathbb{L}_PI_j)[1]
\end{tikzcd}
\]
\end{replemma}
\end{enumerate}
\subsection{Extension of the mutation functor}\label{sec:extension}
In this section, we prove the following:
\begin{lemma}\label{lem:extension}
There exists an exact functor $\Phi \colon D^b(Q) \rightarrow D^b(Q)$ satisfying $\restr{\Phi}{P^\perp} \simeq \restr{\mathbb{L}_{S(P)}}{P^\perp}$ and $\Phi(P) \simeq S^{-1}(P)[1]$. Moreover, $\Phi$ is an equivalence of categories.
\end{lemma}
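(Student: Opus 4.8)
The plan is to obtain $\Phi$ by \emph{gluing} the equivalence $\mathbb{L}_{S(P)}$ on $P^\perp$ to the assignment $P \mapsto S^{-1}(P)[1]$ on the complementary factor. First I would record the two semi-orthogonal decompositions
\[ D^b(Q) = \langle P^\perp, P\rangle = \langle \prescript{\perp}{}{P}, S^{-1}(P)[1]\rangle. \]
The second is genuinely a decomposition: by the fractional Calabi--Yau relation $S^2(P)\simeq P[4]$ (Lemma~\ref{lem:p}) one has $S(P)\simeq S^{-1}(P)[4]$, so $S^{-1}(P)[1]$ generates the same admissible complement $\langle S(P)\rangle$ of $\prescript{\perp}{}{P}$, and the decomposition is a shift of $\langle \prescript{\perp}{}{P}, S(P)\rangle$. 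On the first factors, the same relation gives $\prescript{\perp}{}{S(P)} = P^\perp$ and $S(P)^\perp = \prescript{\perp}{}{P}$, so $\mathbb{L}_{S(P)}$ is precisely the standard mutation equivalence $P^\perp \xrightarrow{\sim} \prescript{\perp}{}{P}$ through the exceptional object $S(P)$; on the second factors, $P \mapsto S^{-1}(P)[1]$ is an equivalence $\langle P\rangle \simeq D^b(k) \simeq \langle S^{-1}(P)[1]\rangle$ of the one-object pieces.

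Working with the canonical dg enhancement of $D^b(Q) \simeq \mathrm{per}(kQ/I)$, I would then invoke the gluing formalism for semi-orthogonal decompositions: a decomposition $\langle \mathcal A, \mathcal B\rangle$ is equivalent to the data of $\mathcal A$, $\mathcal B$ together with the gluing bimodule $(A,B)\mapsto \mathrm{RHom}(A,B)$, and a pair of equivalences of the factors lifts to an equivalence of the ambient categories as soon as it intertwines the two gluing bimodules. Thus the construction of $\Phi$ reduces to producing a natural isomorphism of functors $P^\perp \to D^b(k)$,
\[ \mathrm{RHom}(A, P) \;\simeq\; \mathrm{RHom}\big(\mathbb{L}_{S(P)}A,\, S^{-1}(P)[1]\big), \qquad A \in P^\perp. \]

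To verify this I would compute both sides and match them. Serre duality together with $S^{-2}(P)\simeq P[-4]$ rewrites the right-hand side as $\big(\mathrm{RHom}(P,\mathbb{L}_{S(P)}A)\big)^{*}[-3]$. Applying $\mathrm{RHom}(P,-)$ to the defining mutation triangle $\mathrm{RHom}(S(P),A)\otimes S(P) \to A \to \mathbb{L}_{S(P)}A$, and using that $A\in P^\perp$ forces $\mathrm{RHom}(P,A)=0$ while $\mathrm{RHom}(P,S(P))=k$ is concentrated in degree $0$, this term collapses to $\mathrm{RHom}(P,\mathbb{L}_{S(P)}A)\simeq \mathrm{RHom}(S(P),A)[1]$. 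A final application of Serre duality and $S^2(P)\simeq P[4]$ identifies $\mathrm{RHom}(S(P),A)\simeq \big(\mathrm{RHom}(A,P)\big)^{*}[-4]$; threading these together, the shifts cancel and the left-hand side $\mathrm{RHom}(A,P)$ is recovered, establishing the desired isomorphism.

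Granting the bimodule compatibility, the gluing formalism assembles $\mathbb{L}_{S(P)}$ and $P\mapsto S^{-1}(P)[1]$ into an exact functor $\Phi$ with the stated restrictions, and $\Phi$ is an equivalence because it is glued from equivalences of the factors along an isomorphism of gluing data. The step I expect to be the main obstacle is not the numerical matching of the displayed complexes but its upgrade to a genuinely \emph{natural} isomorphism of the gluing functors, so that the glued $\Phi$ is well defined; all the identifications above are functorial in $A$ (the mutation triangle and Serre duality are natural), but making the dg-gluing argument rigorous requires care. It is precisely to avoid tracking this naturality across all of $P^\perp$ that the finer statement $\Phi \simeq S^{-1}[1]$ is later reduced, in Lemmas~\ref{lem:extensioniso}--\ref{lem:computation}, to the compatibility on the injective generators.
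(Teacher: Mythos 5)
Your strategy --- glue the mutation equivalence on $P^\perp$ to $P \mapsto S^{-1}(P)[1]$ across the two decompositions $\langle P^\perp, P\rangle$ and $\langle \prescript{\perp}{}{P}, S(P)\rangle$ --- is the same as the paper's, and your preliminary identifications are all correct: $S^{-1}(P)[1]$ generates $\langle S(P)\rangle$ by the $(4,2)$-Calabi--Yau property, $\prescript{\perp}{}{S(P)} = P^\perp$ and $S(P)^\perp = \prescript{\perp}{}{P}$, and the Serre-duality computation matching the two gluing functors checks out (the shifts do cancel). The difference is in how the gluing is implemented. The paper does not work with dg enhancements and bimodules; it embeds $D^b(Q)$ admissibly into $D^b(X)$ for $X$ a blowup of $\mathbb{P}^3$ (Proposition~\ref{prop:kuznetsovembed}), shows the relevant mutation functors are of Fourier--Mukai type (Corollary~\ref{cor:leftmutfm}), and invokes the kernel-level gluing criterion of \cite[Proposition 2.5]{li2020refined} (Proposition~\ref{prop:fmequiv}). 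The hypothesis of that criterion is a single isomorphism of objects, $\Phi(\alpha\alpha^!(P)) \simeq \beta\beta^!(S^{-1}(P)[1])$, which unwinds to $\mathbb{L}_{S(P)}(E) \simeq S^{-1}(E)[1]$ and is verified in Lemma~\ref{lem:sinverse}. Your bimodule identity $\mathrm{RHom}(A,P) \simeq \mathrm{RHom}(\mathbb{L}_{S(P)}A, S^{-1}(P)[1])$ is the corepresented version of the same compatibility, since $E = \alpha\alpha^!(P)$ governs the gluing functor on $P^\perp$; the two computations carry essentially the same content.

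The gap is the one you flag yourself, and it is genuine: the dg-gluing lemma you invoke requires an isomorphism of the gluing \emph{bimodules} in the derived category of bimodules (equivalently, of quasi-functors), not merely objectwise isomorphisms of their values, nor even a natural isomorphism of the induced cohomological functors $P^\perp \rightarrow D^b(k)$. Your chain of identifications is carried out on graded Hom-spaces; promoting it to the enhanced level would require realizing Serre duality and the mutation triangle by compatible dg (bi)module maps, which is exactly the bookkeeping the Fourier--Mukai route is designed to avoid: Proposition~\ref{prop:fmequiv} asks only for one object-level isomorphism and absorbs the enhancement issues into its proof. As written, your argument establishes the numerical compatibility but not the existence of the glued functor, so you would need either to supply the bimodule-level isomorphism or to fall back on a kernel-level criterion such as the one the paper cites (which in turn requires the geometricity inputs of Proposition~\ref{prop:kuznetsovembed} and Corollary~\ref{cor:leftmutfm} that your proposal omits).
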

To prove the Lemma, we will use the extension results established in \cite[Proposition 2.5]{li2020refined}. In order to identify with their setting, we will need to use the following geometric properties of $D^b(Q)$. 
\begin{prop}[\cite{kuznetsov2013simple}]\label{prop:kuznetsovembed}
There exists an admissible embedding $D^b(Q) \xhookrightarrow{} D^b(X)$ where $X$ is the sequential blowup of $\mathbb{P}^3$ along two rational curves.
\end{prop}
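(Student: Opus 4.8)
The plan is to realize $D^b(Q) = D^b(\mathrm{mod}\text{-}kQ/I)$ as the category generated by a strong exceptional collection of three objects inside the derived category of an iterated blow-up of $\mathbb{P}^3$. The starting observation is that, by Lemma~\ref{lem:projinj}, the projective modules form a \emph{strong} exceptional collection $\langle P_3, P_2, P_1\rangle$ whose endomorphism algebra is exactly $kQ/I$: the $\mathrm{Hom}$-spaces are concentrated in degree $0$, one has $\mathrm{Hom}(P_i,P_j)=0$ for $i<j$ and $\mathrm{Hom}(P_i,P_j)=\mathbb{C}^2$ for $i>j$, and the composition $\mathrm{Hom}(P_3,P_2)\otimes\mathrm{Hom}(P_2,P_1)\to\mathrm{Hom}(P_3,P_1)$ has two-dimensional image encoding the relations $\beta_2\alpha_1=\alpha_2\beta_1=0$. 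It therefore suffices to produce three objects $A_1,A_2,A_3\in D^b(X)$ on a smooth projective $X$ whose $\mathrm{RHom}$-algebra is isomorphic to $kQ/I$; since any exceptional collection generates an admissible subcategory, and strongness forces $\mathrm{RHom}(A_i,A_j)$ into degree $0$ so that $\bigoplus_i A_i$ is a tilting generator of $\langle A_3,A_2,A_1\rangle$ with endomorphism algebra $kQ/I$, the generated subcategory is automatically equivalent to $D^b(Q)$ and admissibly embedded.

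To build $X$ I would use Orlov's blow-up formula. Blowing up a smooth rational curve $C_1\cong\mathbb{P}^1$ in $\mathbb{P}^3$ (codimension $2$) yields $X_1=\mathrm{Bl}_{C_1}\mathbb{P}^3$ with a semiorthogonal decomposition $D^b(X_1)=\langle \Psi_1 D^b(\mathbb{P}^1),\, \pi_1^\ast D^b(\mathbb{P}^3)\rangle$, the first block contributing two exceptional objects coming from $\langle\mathcal{O},\mathcal{O}(1)\rangle$ on $\mathbb{P}^1$. Blowing up a second smooth rational curve $C_2\subset X_1$ produces $X=\mathrm{Bl}_{C_2}X_1$ and appends a further $D^b(\mathbb{P}^1)$-block, so that $D^b(X)$ carries a full exceptional collection of length $8$ in which the $\mathrm{Hom}$-spaces between the two objects of a single $\mathbb{P}^1$-block are two-dimensional, matching the two-arrow structure $\{\alpha_i,\beta_i\}$ of $Q$. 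The task is then to select three of these exceptional objects $A_3,A_2,A_1$, together with a configuration of $C_1,C_2$ (their degrees and mutual position), so that the surviving $\mathrm{Hom}$-spaces and their compositions reproduce $kQ/I$ on the nose.

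The main obstacle is precisely this last matching. One must arrange that $\mathrm{Hom}(A_i,A_j)=0$ for $i<j$ and $=\mathbb{C}^2$ for $i>j$ with all higher $\mathrm{Ext}$ vanishing, and, crucially, that the Yoneda product $\mathrm{Hom}(A_3,A_2)\otimes\mathrm{Hom}(A_2,A_1)\to\mathrm{Hom}(A_3,A_1)$ drops rank exactly as dictated by the relations, i.e.\ the two ``mixed'' products $\beta_2\alpha_1,\ \alpha_2\beta_1$ vanish while the two ``straight'' products remain linearly independent. This is a delicate geometric condition on how the exceptional divisors of the two blow-ups meet and on the restrictions of the tautological line bundles, and it is the genuine content of Kuznetsov's construction in \cite{kuznetsov2013simple}. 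Once such $A_1,A_2,A_3$ are exhibited, the vanishing of the higher $\mathrm{Ext}$'s makes the collection strong, so $\bigoplus_i A_i$ is a tilting generator of $\langle A_3,A_2,A_1\rangle$ with endomorphism algebra $kQ/I$, yielding the desired admissible equivalence $D^b(Q)\simeq\langle A_3,A_2,A_1\rangle\subset D^b(X)$.
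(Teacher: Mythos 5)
The paper does not actually prove this Proposition---it is imported wholesale from \cite{kuznetsov2013simple}, with the bracketed citation serving as the entire justification---so there is no internal argument to measure yours against. That said, your outline does capture the strategy of Kuznetsov's construction: realize $kQ/I$ as the endomorphism algebra of a strong exceptional triple on an iterated blow-up, then invoke tilting theory (Bondal's theorem, using that $kQ/I$ has finite global dimension) to identify the admissible subcategory generated by the triple with $D^b(Q)$. The reduction to exhibiting objects $A_1,A_2,A_3$ with the prescribed $\mathrm{RHom}$-algebra is sound, and using Orlov's blow-up formula to produce a supply of eight exceptional objects on $X=\mathrm{Bl}_{C_2}\mathrm{Bl}_{C_1}\mathbb{P}^3$ is the right starting point.

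As written, however, the proposal is not a proof: it stops exactly where the content of the Proposition lies. You never specify the curves $C_1,C_2$ (their degrees, whether and how they meet, how the second center sits relative to the first exceptional divisor), never name the three objects $A_1,A_2,A_3$ (which twists of pullbacks, which pushforwards from the exceptional divisors), and never carry out the cohomology computations showing that $\mathrm{Ext}^{>0}(A_i,A_j)=0$, that the $\mathrm{Hom}$-spaces have the stated dimensions, and---most delicately---that the Yoneda product $\mathrm{Hom}(A_3,A_2)\otimes\mathrm{Hom}(A_2,A_1)\to\mathrm{Hom}(A_3,A_1)$ has kernel exactly $\langle \beta_2\otimes\alpha_1,\ \alpha_2\otimes\beta_1\rangle$. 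You explicitly defer all of this to ``the genuine content of Kuznetsov's construction,'' which is an accurate self-assessment: without that construction your argument shows only that \emph{if} such a triple exists then the embedding follows, not that it exists. Since the Proposition is in any case a citation, the honest fix is either to point to the precise statement and construction in \cite{kuznetsov2013simple}, or to reproduce the explicit collection and the finite list of $\mathrm{Ext}$ computations verifying the relations.
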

\begin{corollary}\label{cor:leftmutfm}
The mutation functors $\mathbb{L}_{S(P)},\mathbb{L}_{P} \colon D^b(Q) \rightarrow D^b(Q)$ are of Fourier-Mukai type.
\end{corollary}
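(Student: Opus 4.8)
The plan is to transfer the mutation functors from $D^b(Q)$ to the ambient geometric category $D^b(X)$ supplied by Proposition~\ref{prop:kuznetsovembed}, where the Fourier--Mukai property is classical, and then descend back along the admissible embedding. Write $\iota\colon D^b(Q)\hookrightarrow D^b(X)$ for the admissible embedding and $\pi\colon D^b(X)\to D^b(Q)$ for an adjoint of $\iota$ (the projection functor of the associated semiorthogonal decomposition), so that $\pi\circ\iota\simeq \mathrm{id}_{D^b(Q)}$.

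First I would observe that, because $\iota$ is fully faithful and exact, it commutes with the formation of both $RHom$-complexes and cones. Consequently, for the exceptional object $P$, the defining triangle $RHom(P,-)\otimes P\to \mathrm{id}\to \mathbb{L}_P$ is carried by $\iota$ to the analogous triangle computed in $D^b(X)$ for the exceptional object $\iota(P)$. This yields a natural isomorphism $\iota\circ\mathbb{L}_P\simeq \mathbb{L}_{\iota(P)}^{X}\circ\iota$, where $\mathbb{L}_{\iota(P)}^{X}\colon D^b(X)\to D^b(X)$ denotes left mutation through $\iota(P)$ in the ambient category, and the identical identity holds for $S(P)$ and $\iota(S(P))$.

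The key geometric input is that $\mathbb{L}_{\iota(P)}^{X}$ is of Fourier--Mukai type: the projection onto the exceptional line $\langle \iota(P)\rangle$ has kernel $\iota(P)^\vee\boxtimes \iota(P)$, and $\mathbb{L}_{\iota(P)}^{X}$ is the cone on the evaluation counit, hence is represented by the kernel $\mathrm{Cone}\big(\iota(P)^\vee\boxtimes\iota(P)\to \mathcal{O}_\Delta\big)\in D^b(X\times X)$. Moreover, the projection functor of any semiorthogonal decomposition of the derived category of a smooth projective variety is of Fourier--Mukai type (Kuznetsov), so $\pi$ admits a kernel; and the fully faithful inclusion $\iota$ is likewise of Fourier--Mukai type by Orlov-type representability, since its source $D^b(Q)=D^b(\mathrm{mod}\text{-}kQ/I)$ carries a unique pretriangulated dg enhancement. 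Composing,
\[
\mathbb{L}_P \;\simeq\; \pi\circ\iota\circ\mathbb{L}_P \;\simeq\; \pi\circ \mathbb{L}_{\iota(P)}^{X}\circ\iota
\]
exhibits $\mathbb{L}_P$ as a composite of Fourier--Mukai functors, and this class is closed under composition by convolution of kernels. The identical argument, using $\iota(S(P))$, gives the statement for $\mathbb{L}_{S(P)}$.

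The step I expect to require the most care is the representability of $\iota$ and the Fourier--Mukai property of $\pi$, since $D^b(Q)$ is a priori only a module category rather than the derived category of a variety. This is exactly where Proposition~\ref{prop:kuznetsovembed} is essential: the embedding into the smooth projective $X$ places $D^b(Q)$ as an admissible component of a geometric semiorthogonal decomposition, so the enhanced representability theory applies. One may, if desired, avoid any discussion of an abstract inclusion kernel altogether by \emph{defining} the kernels of $\mathbb{L}_P$ and $\mathbb{L}_{S(P)}$ directly as the convolutions $\pi\circ\mathbb{L}_{\iota(P)}^{X}\circ\iota$ and $\pi\circ\mathbb{L}_{\iota(S(P))}^{X}\circ\iota$; once each constituent functor is known to admit a Fourier--Mukai kernel, closure under composition yields the claim.
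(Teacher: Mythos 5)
Your proof is correct and rests on the same two pillars as the paper's: the admissible embedding $D^b(Q)\hookrightarrow D^b(X)$ of Proposition~\ref{prop:kuznetsovembed} and Kuznetsov's theorem that projection functors on the derived category of a smooth projective variety are of Fourier--Mukai type. The one structural difference is that the paper observes that $\mathbb{L}_{S(P)}$ (resp.\ $\mathbb{L}_P$) \emph{is} the projection functor of $D^b(Q)=\langle \prescript{\perp}{}{P}, S(P)\rangle$ onto $\prescript{\perp}{}{P}$, so that precomposing with the projection $D^b(X)\to D^b(Q)$ yields a single projection functor of $D^b(X)$ onto an admissible subcategory, to which \cite[Theorem 7.1]{Kuznetsov_2011} applies in one step; you instead factor the mutation as $\pi\circ\mathbb{L}^X_{\iota(P)}\circ\iota$ and certify each factor separately. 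Your route has the virtue of producing an explicit kernel, namely the convolution of the kernels of the two projections with $\mathrm{Cone}\bigl(\iota(P)^\vee\boxtimes\iota(P)\to\mathcal{O}_\Delta\bigr)$, but it makes the representability of the inclusion $\iota$ (hence an appeal to uniqueness of enhancements) an extra ingredient that the paper's argument avoids; your closing remark, that one can sidestep this by working only with the composite endofunctor $\iota\circ\mathbb{L}_P\circ\pi$ of $D^b(X)$ (which is exactly what the paper's definition of Fourier--Mukai type for noncommutative schemes requires), is the cleanest way to finish and brings your argument essentially in line with the paper's.
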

\begin{proof}
Indeed, the functor $\mathbb{L}_{S(P)}$ is nothing but the projection functor of $D^b(Q) = \langle \prescript{\perp}{}{P}, S(P) \rangle$ onto the admissible subcategory $\prescript{\perp}{}{P}$. In particular, the composition of projection functors $D^b(X) \rightarrow D^b(Q) \rightarrow \prescript{\perp}{}{P}$ is a projection functor, where the first exists by Proposition~\ref{prop:kuznetsovembed}. By \cite[Theorem 7.1]{Kuznetsov_2011}, the composition is of Fourier-Mukai type. The claim for $\mathbb{L}_P$ follows from an identical argument.
\end{proof}
To use the gluing result, we will also need the following in order to verify assumption~(2) of  Proposition~\ref{prop:fmequiv}.
\begin{lemma}\label{lem:sinverse}
There is an isomorphism $\mathbb{L}_{S(P)}(E) \simeq S^{-1}(E)[1]$.
\end{lemma}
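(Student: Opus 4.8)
The plan is to realize both $\mathbb{L}_{S(P)}(E)$ and $S^{-1}(E)[1]$ as cones of morphisms lying in a single one-dimensional $Hom$-space, so that a nonvanishing check forces them to coincide.

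First I would compute the left-hand side. Applying the triangulated functor $\mathbb{L}_{S(P)}$ to the defining triangle $S(P)[-1] \to E \to P$ of \eqref{eq:E}, and using that $\mathbb{L}_{S(P)}$ annihilates $S(P)$ and all of its shifts, I obtain $\mathbb{L}_{S(P)}(E) \simeq \mathbb{L}_{S(P)}(P)$. The latter sits in the mutation triangle
\[
RHom(S(P),P) \otimes S(P) \to P \to \mathbb{L}_{S(P)}(P).
\]
To evaluate $RHom(S(P),P)$ I would invoke Serre duality together with the fractional Calabi--Yau relation $S^2 P \simeq P[4]$ and Lemma~\ref{lem:p}: since $Hom^i(S(P),P) \simeq Hom^{4-i}(P,P)^*$ and $P$ is exceptional, this group is $k$ concentrated in degree $4$, i.e. $RHom(S(P),P) \simeq k[-4]$. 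Hence $RHom(S(P),P)\otimes S(P) \simeq S(P)[-4] \simeq \tilde P[-2]$, and $\mathbb{L}_{S(P)}(E) \simeq Cone(\tilde P[-2] \to P)$ for the evaluation map.

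Next I would compute the right-hand side by applying $S^{-1}$ to the same triangle \eqref{eq:E}. From $S(P) \simeq \tilde P[2]$ and $S^2 P \simeq P[4]$ one deduces $S(\tilde P) \simeq P[2]$, and hence $S^{-1}(P) \simeq \tilde P[-2]$. Therefore $S^{-1}$ of \eqref{eq:E} reads $P[-1] \to S^{-1}(E) \to \tilde P[-2] \to P$; rotating and shifting by $[1]$ exhibits $S^{-1}(E)[1] \simeq Cone(\tilde P[-2] \to P)$ for the shifted connecting map of \eqref{eq:E}. Thus both $\mathbb{L}_{S(P)}(E)$ and $S^{-1}(E)[1]$ are cones of morphisms living in the single group $Hom(\tilde P[-2], P) \simeq Hom^2(\tilde P, P)$.

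The decisive input is that this group is one-dimensional: by Serre duality and $S(\tilde P) \simeq P[2]$ one has $Hom^2(\tilde P, P) \simeq Hom(P,P)^* \simeq k$. It then suffices to check that both defining morphisms are nonzero, since two nonzero vectors in a one-dimensional space differ by an invertible scalar, and scaling a morphism induces an isomorphism of its cone. The evaluation map is nonzero because $P \notin \prescript{\perp}{}{P}$, so that its projection $\mathbb{L}_{S(P)}(P)$ cannot contain $P$ as a summand; and the connecting map is nonzero because $E$ — being $3$-spherical with $Hom(E,E) \simeq k$ by Lemma~\ref{lem:E}, hence indecomposable — forces $S^{-1}(E)[1]$ to be indecomposable, so it cannot split as $P \oplus \tilde P[-1]$. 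I expect this last dichotomy to be the only genuine subtlety: establishing non-degeneracy of the connecting map through indecomposability, rather than any of the $Hom$-computations, which become routine once the fractional Calabi--Yau relation and Lemma~\ref{lem:p} are in hand.
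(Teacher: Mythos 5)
Your proof is correct and follows essentially the same route as the paper's: reduce $\mathbb{L}_{S(P)}(E)$ to $\mathbb{L}_{S(P)}(P)$, identify both sides as cones of morphisms $\tilde{P}[-2]\to P$ lying in the one-dimensional space $Hom(\tilde{P}[-2],P)\simeq\mathbb{C}$, and conclude. Your explicit check that both morphisms are nonzero (via $P\notin\prescript{\perp}{}{P}$ for the evaluation map, and via the indecomposability of $E$ for the connecting map) is the one step the paper leaves implicit, and it is a genuine, correctly handled refinement rather than a different approach.
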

\begin{proof}
Clearly, $\mathbb{L}_{S(P)}(E) \simeq \mathbb{L}_{S(P)}(P)$ by sequence~\eqref{eq:E}. Hence, this fits into an exact triangle
\[
\begin{tikzcd}
\text{RHom}(S(P),P)\otimes S(P) \arrow[r]& P \arrow[r]& \mathbb{L}_{S(P)}(E)
\end{tikzcd}
\]
By Lemma~\ref{lem:p}, we see that $RHom(S(P),P) = RHom(P,P[4]) = \mathbb{C}[-4]$ which yields the exact triangle
\[
\begin{tikzcd}
S(P)[-4] = \tilde{P}[-2] \arrow[r]& P \arrow[r]& \mathbb{L}_{S(P)}(E)
\end{tikzcd}
\]
On the other hand, applying the inverse Serre functor on sequence~\eqref{eq:E} and rotating, we obtain the following
\[
\begin{tikzcd}
\tilde{P}[-2] \arrow[r] & P \arrow[r] & S^{-1}(E)[1]
\end{tikzcd}
\]
As $Hom(\tilde{P}[-2],P) = \mathbb{C}$, there exists an isomorphism between the above two sequences and we conclude $\mathbb{L}_{S(P)}(E) \simeq S^{-1}(E)[1]$.
\end{proof}
In the setting of the following Proposition, $X$ and $Y$ will denote smooth projective varieties over a field $k$ together with admissible embeddings $\alpha \colon \mathcal{A} \xhookrightarrow{} D^b(X)$, $\beta \colon \mathcal{B} \xhookrightarrow{} D^b(Y)$. We denote the left and right adjoints with $\alpha^*, \beta^*$ and $\alpha^!, \beta^!$, respectively. 
\begin{prop}[\cite{li2020refined}, Proposition 2.5]\label{prop:fmequiv}
Let $E \in \prescript{\perp}{}{\mathcal{A}}$ be an exceptional object with counit of adjunction $\eta \colon \alpha \alpha^! (E) \rightarrow E$. Let $\Phi_{\mathcal{E}} \colon D^b(X) \rightarrow D^b(Y)$ be a Fourier-Mukai functor such that $\Phi_\mathcal{E}(\prescript{\perp}{}{\mathcal{A}}) \simeq 0$. Assume in addition the following:
\begin{enumerate}
\item
$\Phi_{\mathcal{E}}|_\mathcal{A}$ is an equivalence onto an admissible subcategory $\mathcal{B}$ with embedding $\beta \colon \mathcal{B} \xhookrightarrow{} D^b(Y)$.
\item
There exists an exceptional object $F$ and an isomorphism $\rho \colon \Phi(\alpha \alpha^!(E)) \simeq \beta \beta^!(F)$.
\end{enumerate}
Then there exists a Fourier-Mukai functor $\Phi' \colon D^b(X) \rightarrow D^b(Y)$ satisfying the following:
\begin{enumerate}
\item
There exists a natural isomorphism $\Phi'|_{\mathcal{A}} \simeq \Phi|_{\mathcal{A}}$.
\item
There exists an isomorphism $\Phi'(E) \simeq F$.
\end{enumerate}
which restricts to an equivalence of categories $\Phi' \colon \langle \mathcal{A}, E \rangle \isomto \langle \mathcal{B} ,F \rangle$.
\end{prop}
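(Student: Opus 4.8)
The plan is to construct $\Phi'$ as a Fourier--Mukai functor by modifying the kernel $\mathcal{E}$ of $\Phi = \Phi_{\mathcal{E}}$ through a cone of kernels, rather than by gluing abstract triangulated functors directly. This distinction is essential: a natural transformation of triangulated functors does not admit a canonically functorial cone, whereas a morphism in $D^b(X\times Y)$ does, and its cone is again a Fourier--Mukai kernel. This is exactly the point at which the hypotheses that $X,Y$ are smooth projective and that $\Phi$ is of Fourier--Mukai type enter, via the identification of natural transformations of such functors with morphisms of kernels.

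Concretely, I would work with the rank-one kernel $E^\vee \boxtimes F \in D^b(X\times Y)$, whose associated functor is $T \mapsto \mathrm{RHom}_X(E,T)\otimes_k F$, and seek a morphism of kernels $\nu$ relating it (up to shift) to $\mathcal{E}$, assembled from the adjunction data. The guiding principle is that $\Phi$ annihilates $E$ (since $E\in {}^{\perp}\mathcal{A}$ and $\Phi({}^{\perp}\mathcal{A})\simeq 0$), and in particular sends the counit $\eta\colon \alpha\alpha^!(E)\to E$ to the zero morphism, whereas $\Phi'$ must instead send $\eta$ to the nonzero composite $\mu\colon \Phi(\alpha\alpha^!(E)) \xrightarrow{\rho} \beta\beta^!(F) \xrightarrow{\eta_F} F$, where $\eta_F$ is the counit in the target. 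Thus $\Phi'$ should agree with $\Phi$ except that it ``fills in'' the morphism $\eta$ with $\mu$, and one sets $\mathcal{E}' := \mathrm{Cone}(\nu)$. By construction the correction term vanishes on $\mathcal{A}$ (because $\mathrm{RHom}(E,A)=0$ for $A\in\mathcal{A}$), giving $\Phi'|_{\mathcal{A}} \simeq \Phi|_{\mathcal{A}}$, while on $E$ the computation $\mathrm{RHom}(E,E)=k$ together with $\Phi(E)=0$ yields $\Phi'(E)\simeq F$ after accounting for the shift.

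It then remains to verify that $\Phi'$ restricts to an equivalence $\langle \mathcal{A}, E\rangle \isomto \langle \mathcal{B}, F\rangle$. Since these categories are generated by $\mathcal{A},E$ and by $\mathcal{B}=\Phi'(\mathcal{A}),F=\Phi'(E)$ respectively, full faithfulness reduces to bijectivity of $\Phi'$ on the four types of graded $\mathrm{Hom}$-spaces. The spaces internal to $\mathcal{A}$ are handled by the hypothesis that $\Phi|_{\mathcal{A}}$ is an equivalence onto $\mathcal{B}$; one has $\mathrm{Hom}^\bullet(E,E)=k=\mathrm{Hom}^\bullet(F,F)$ as $F$ is exceptional; and the vanishings $\mathrm{Hom}^\bullet(E,\mathcal{A})=0=\mathrm{Hom}^\bullet(F,\mathcal{B})$ hold by semiorthogonality of the two decompositions. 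The crucial case is the cross term: using that $\mathrm{Cone}(\eta)\in\mathcal{A}^\perp$ I would identify $\mathrm{Hom}^\bullet(A,E)\cong \mathrm{Hom}^\bullet(A,\alpha\alpha^!(E)) = \mathrm{Hom}^\bullet_{\mathcal{A}}(A,\alpha^!(E))$, then transport along the equivalence $\Phi|_{\mathcal{A}}$, apply the isomorphism $\rho$, and finally use the counit $\eta_F$ to obtain $\mathrm{Hom}^\bullet(\Phi A,\beta\beta^!(F))\cong \mathrm{Hom}^\bullet(\Phi A, F)$. This chain realizes exactly the isomorphism $\mathrm{Hom}^\bullet(A,E)\cong \mathrm{Hom}^\bullet(\Phi'A, F)$, and it is precisely assumption (2), i.e. $\rho$, that makes its two ends match. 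Essential surjectivity is then formal: the image of $\Phi'$ is a triangulated subcategory containing $\mathcal{B}$ and $F$, hence all of $\langle \mathcal{B}, F\rangle$, and together with full faithfulness this yields the equivalence.

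The main obstacle is the first step: producing the morphism of kernels $\nu$ so that the induced Fourier--Mukai functor genuinely realizes the gluing $\Phi'(\eta)=\mu$, and hence the isomorphism on the cross term above, rather than merely reproducing $\Phi$ together with a spurious direct summand. Equivalently, one must promote the abstract compatibility of gluing functors supplied by $\rho$ to an actual morphism in $D^b(X\times Y)$; this is the technical heart of the argument and the place where the Fourier--Mukai hypothesis is indispensable. Once $\nu$ is constructed, the verifications of the restriction to $\mathcal{A}$, the value on $E$, and the equivalence onto $\langle \mathcal{B}, F\rangle$ are comparatively routine.
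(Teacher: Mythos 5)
First, a point of comparison: the paper does not actually prove Proposition~\ref{prop:fmequiv}. It is quoted verbatim from \cite{li2020refined}, Proposition 2.5, and used as a black box in the proof of Lemma~\ref{lem:extension}, so there is no internal argument to measure your proposal against. On its own terms, your overall strategy --- realize the gluing at the level of Fourier--Mukai kernels, where cones exist, by coning a morphism between the kernel $\mathcal{E}$ of $\Phi$ and a ``rank one'' correction kernel that vanishes on $\mathcal{A}$ and outputs $F$ on $E$ --- has the right general shape, and you correctly isolate the construction of the kernel morphism $\nu$ as the crux.

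The gap is that the specific correction kernel you choose makes that crux unresolvable. For any shift $k$ one has $\mathrm{Hom}_{X\times Y}\bigl((E^\vee\boxtimes F)[k],\,\mathcal{E}\bigr)\cong \mathrm{Hom}_{Y}\bigl(F[k],\,\Phi_{\mathcal{E}}(E)\bigr)$ (move $p^*E$ to the other side and push forward to $Y$), and $\Phi_{\mathcal{E}}(E)=0$ because $E\in{}^{\perp}\mathcal{A}$ and $\Phi_{\mathcal{E}}({}^{\perp}\mathcal{A})\simeq 0$. Hence \emph{every} morphism of kernels $\nu\colon (E^\vee\boxtimes F)[k]\to\mathcal{E}$ is zero, so $\mathrm{Cone}(\nu)\cong\mathcal{E}\oplus(E^\vee\boxtimes F)[k+1]$ is forced, and the resulting functor annihilates all of $\mathrm{Hom}^\bullet(A,E)$: the $\mathcal{E}$-summand because the target $\Phi(E)$ vanishes, the other summand because the source $\mathrm{RHom}(E,A)$ vanishes. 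Since $\mathrm{Hom}^\bullet(A,E)\neq 0$ in general (e.g.\ Lemma~\ref{lem:extensions} in the intended application), fullness on $\langle\mathcal{A},E\rangle$ fails --- precisely the ``spurious direct summand'' failure you flag as the danger. The lesson is that $\eta$ and $\rho$ must enter the \emph{construction} of the correction kernel, not only its verification: the correction has to be assembled from the $\mathcal{A}$-component $\alpha\alpha^!(E)$ (equivalently, via $\rho$, from $\beta\beta^!(F)$) or from $\mathrm{Cone}(\eta)$, for which the analogous Hom space computes to $\mathrm{Hom}_Y(-,\Phi(\alpha\alpha^!E))\cong\mathrm{Hom}_Y(-,\beta\beta^!F)$ and is no longer forced to vanish. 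As you explicitly defer the construction of $\nu$ and your chosen kernel rules any nonzero $\nu$ out, the proposal has a genuine gap at its central point; the remaining verifications (vanishing of the correction on $\mathcal{A}$, the value on $E$, fully faithfulness via the four graded Hom computations, and essential surjectivity by generation) are sound modulo that.
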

Applying this Proposition to our setting of interest, we now prove Lemma~\ref{lem:extension}.
\begin{proof}[Proof of Lemma~\ref{lem:extension}]
We will extend the left mutation functor $\mathbb{L}_{S(P)} \colon P^\perp \isomto \prescript{\perp}{}P$.
We have the admissible embeddings
\begin{align*}
\alpha &\colon P^\perp \xhookrightarrow{\alpha_1} D^b(Q) = \langle P^\perp, P \rangle \xhookrightarrow{\alpha_2} D^b(X) \\
\beta &\colon \prescript{\perp}{}{P} \xhookrightarrow{\beta_1} D^b(Q) = \langle \prescript{\perp}{}{P}, S(P) \rangle \xhookrightarrow{\beta_2} D^b(X)
\end{align*}
where $X$ is the sequential blowup of $\mathbb{P}^3$ along two rational curves from Proposition~\ref{prop:kuznetsovembed}.

We first prove that the composition 
\[\Phi = \beta\circ \mathbb{L}_{S(P)}|_{P^\perp} \circ \alpha^* \colon D^b(X) \rightarrow D^b(X)
\] is Fourier-Mukai and that $\Phi(\langle P \rangle) \simeq 0$ where we note that $\alpha_1^* \simeq \mathbb{L}_P$. By corollary~\ref{cor:leftmutfm} and the fact that compositions of Fourier-Mukai functors are Fourier-Mukai, the functor $\Phi \colon D^b(X) \rightarrow D^b(X)$ is of Fourier-Mukai type. In particular, it is clear that $\Phi(\langle P \rangle) \simeq 0$ as this subcategory is annihilated by the functor $\mathbb{L}_P$. 

We now prove that the setting of Lemma~\ref{lem:extension} indeed satisfies the two additional assumptions of Proposition~\ref{prop:fmequiv}. As a consequence, the functor $\Phi$ will extend to an equivalence $\langle P^\perp, P \rangle \simeq \langle \prescript{\perp}{}P, S^{-1}P[1] \rangle$. To see that $\Phi|_{P^\perp}$ is an equivalence onto $\prescript{\perp}{}{P}$, we simply note that by \cite{Kuznetsov_2019}, the left mutation induces an equivalence $\mathbb{L}_{S(P)} \colon P^\perp \simeq  \prescript{\perp}{}{P}$. 

For the second claim, we note that as $S$ is an auto-equivalence, $S^{-1}(P)[1]$ is an exceptional object. So, it suffices to prove that there exists an isomorphism $\Phi(\alpha \alpha^!(P)) \simeq \beta \beta^!(S^{-1}P[1])$. To see this, observe that we have the following exact triangles:
\[
\begin{tikzcd}[row sep = tiny]
E\ \arrow[r] & P \arrow[r] & \tilde{P}[2]\\
S^{-1}E[1] \arrow[r] & \tilde{P}[-1] \arrow[r]& P[1]
\end{tikzcd}
\]
where the second line is obtained from the first by applying the functor $S^{-1}[1]$.
By uniqueness of the semi-orthogonal decomposition, we have that $\alpha_1\alpha_1^!(P) \simeq E$ and $\beta_1\beta_1^!(\tilde{P}[-1]) \simeq S^{-1}E[1]$. Then, from Lemma~\ref{lem:sinverse}, we have $\Phi(E) = \mathbb{L}_{S(P)}(E) \simeq S^{-1}(E)[1]$ and we conclude.
\end{proof}
\subsection{Extension of the natural isomorphism}\label{sec:extensioniso}
Let $\mathcal{I} \subset D^b(Q)$ be the full subcategory on the injective objects $I_1,I_2,I_3$ from Lemma~\ref{lem:projinj}. In this section, we prove the following:
\begin{lemma}\label{lem:extensioniso}
Assume $F_1, F_2 \colon D^b(Q) \rightarrow D^b(Q)$ equivalences of categories such that there exists a natural isomorphism $F_1|_{\mathcal{I}} \simeq F_2|_{\mathcal{I}}$ on the restrictions. Then there exists a natural isomorphism $F_1 \simeq F_2$.
\end{lemma}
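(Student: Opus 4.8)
The plan is to exploit that the injective objects generate $D^b(Q)$, and to propagate the given natural isomorphism from $\mathcal{I}$ to every object by dévissage along injective resolutions, using a Hom-vanishing property that rigidifies the construction. First I would record the generation statement: since the Bondal quiver is acyclic, the algebra $kQ/I$ is directed and hence of finite global dimension, so every object of $D^b(Q)$ admits a finite injective resolution and the natural functor $K^b(\mathrm{inj}\text{-}kQ/I) \to D^b(Q)$ is an equivalence. In particular the injectives $I_1, I_2, I_3$ of Lemma~\ref{lem:projinj} generate $D^b(Q)$ as a thick subcategory, so it suffices to extend the natural isomorphism to all iterated cones of shifts of the $I_i$.

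The crucial, and most important, input is the following negativity. Because $F_1$ and $F_2$ are equivalences and agree on $\mathcal{I}$, for all $i,j$ and every $m \neq 0$ one has
\[
Hom(F_1 I_i, F_2 I_j[m]) \simeq Hom(F_2 I_i, F_2 I_j[m]) \simeq Hom(I_i, I_j[m]) = Ext^m(I_i, I_j) = 0,
\]
where the first isomorphism uses the hypothesis $F_1 I_i \simeq F_2 I_i$, the second that $F_2$ is fully faithful, and the vanishing that $I_j$ is injective (no positive self-extensions) together with the absence of negative extensions in the module heart. Thus all $Hom$ groups between $F_1$- and $F_2$-images of shifts of injectives are concentrated in degree zero. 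Note that this step crucially combines both hypotheses: that the two functors already coincide on $\mathcal{I}$, and that the $I_i$ have no higher self-extensions.

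With this in hand I would carry out the dévissage. Writing $X \simeq (I^{0} \to \cdots \to I^{n})$ as a bounded complex of injectives, the brutal truncations exhibit $X$ as a Postnikov system whose graded pieces are the shifts $I^{k}[-k]$, and whose connecting maps are the differentials, i.e. honest morphisms of injectives on which $\eta$ is already natural. Applying the exact functors $F_1, F_2$ yields Postnikov systems for $F_1 X$ and $F_2 X$ with graded pieces $F_1 I^{k}[-k]$ and $F_2 I^{k}[-k]$, and the termwise isomorphisms $\eta_{I^k}[-k]$ assemble, stage by stage via the axioms of a triangulated category, into an isomorphism $\eta_X \colon F_1 X \to F_2 X$; the five lemma shows it is an isomorphism at each stage. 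The degree-zero concentration established above is exactly the hypothesis of the uniqueness-of-convolutions argument: it forces the map completing each relevant morphism of triangles to exist and to be unique, so $\eta_X$ is independent of the chosen resolution and of the fill-ins, and is natural with respect to arbitrary morphisms of $X$. Assembling over the filtration produces the desired natural isomorphism $F_1 \simeq F_2$.

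The main obstacle is precisely the well-definedness and naturality of the extended transformation: an abstract triangulated functor need not respect the complex-of-injectives presentation, so a priori the completions supplied by the triangulated axioms are non-canonical and need not glue into a functorial family. The entire content of the argument is that the vanishing of $Hom(F_1 I_i, F_2 I_j[m])$ for $m \neq 0$ removes this ambiguity, turning the inductive construction into a canonical one. I would therefore devote the bulk of the write-up to the bookkeeping of this induction and to the verification of the negativity, the generation and five-lemma steps being formal.
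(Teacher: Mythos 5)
Your setup is fine --- finite global dimension gives the generation by $\mathcal{I}$, and the computation $Hom(F_1I_i,F_2I_j[m])=0$ for $m\neq 0$ is correct --- but the pivotal claim, that this vanishing ``is exactly the hypothesis of the uniqueness-of-convolutions argument,'' is where the proof breaks. The uniqueness and functoriality of maps between convolutions are controlled by the negative-degree $Hom$'s between the graded pieces \emph{in their shifted positions}: for $X=(I^0\to\cdots\to I^n)$ and $Y=(J^0\to\cdots\to J^m)$ these are the groups
\[
Hom\bigl(F_1I^i[-i],\,F_2J^j[-j][r]\bigr)\;\cong\;Hom\bigl(I^i,\,J^j[r+i-j]\bigr),\qquad r<0,
\]
and your degree-zero concentration kills them only when $r\neq j-i$. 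For $i>j$ the surviving group sits in degree $r=j-i<0$ and equals $Hom(I^i,J^j)$, which is nonzero for essentially any resolution (e.g.\ $Hom(I_3,I_2)=\mathbb{C}^2$). These are exactly the groups in which the homotopies between two lifts of a morphism $X\to Y$ to morphisms of injective resolutions live (a homotopy has components in $\bigoplus_k Hom(I^k,J^{k-1})$, a degree $-1$ $Hom$ between graded pieces), and in which the ambiguity of the successive cone fill-ins lives. So the fill-ins are not unique, the induced map on convolutions may depend on the chosen lift and resolution, and the naturality squares for the assembled $\eta$ are not forced to commute. Note also that your argument uses nothing about the Bondal quiver beyond finite global dimension, while the vanishing $Ext^{m}(I_i,I_j)=0$ for $m\neq 0$ holds for the injectives of \emph{any} finite-dimensional algebra; if the d\'evissage were rigidified by it, every such algebra of finite global dimension would be \textbf{D}-standard in the sense of \cite{chen2017dstandard}, which is precisely the nontrivial point at stake here and the reason condition (iii) in the definition of an ample sequence appears in \cite[Proposition 4.23]{huybrechts2006fourier}.

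The paper separates the problem into the two steps your induction conflates: first the isomorphism is extended from $\mathcal{I}$ to the module heart $\mathrm{mod}\text{-}kQ/I$ (Lemma~\ref{lem:huybrechtsextend}, i.e.\ steps 1--3 of \cite[Proposition 4.23]{huybrechts2006fourier}), and then the passage from the heart to all of $D^b(Q)$ is delegated to the theorem that triangular algebras are \textbf{D}-standard (Proposition~\ref{prop:triangularstandard} together with Lemma~\ref{lem:bondaltriangular}), applied to the pseudo-identity $F_1\circ F_2^{-1}$. That second input is an essential, genuinely nontrivial theorem and cannot be replaced by the formal Postnikov induction; to salvage your approach you would have to prove the vanishing of the actual obstruction groups displayed above, which fails.
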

This extension result relies on the fact that the category $D^b(Q)$ is \textbf{D}-standard which is the algebraic incarnation of the statement that any equivalence $D^b(B-mod) \rightarrow D^b(B-mod)$ for $B$ a finite dimensional $k$-algebra is of Fourier-Mukai type, i.e. representable by a bi-module.
\begin{defn}{\cite[Definition 5.1]{chen2017dstandard}}
A $k$-linear abelian category $\mathcal{A}$ is \textbf{D}\textit{-standard} over $k$ if the following holds: for any $k$-linear triangle autoequivalence $(F,\omega)\colon D^b(\mathcal{A}) \rightarrow D^b(\mathcal{A})$ satisfying $F(\mathcal{A}) \subseteq \mathcal{A}$ and any natural isomorphism $\theta_0 \colon F|_{\mathcal{A}} \rightarrow Id_{\mathcal{A}}$, there exists a natural isomorphism $\theta \colon (F,\omega) \rightarrow Id_{D^b(\mathcal{A})}$ of triangle functors extending $\theta_0$. Such a pair $(F,\omega)$ is called a \textit{pseudo-identity}.
\end{defn}
We introduce a particular class of categories which are \textbf{D}-standard and prove that our category $D^b(Q)$ does in fact belong to this class.
\begin{defn}
Let $A$ be a finite dimensional $k$-algebra. Let $\{S_1, \ldots, S_n\}$ be a complete set of representatives of simple modules over $A$. The \textit{Ext-quiver} $Q_A$ has vertex set $\{1 \ldots n\}$ and there is a unique arrow $i \rightarrow j$ if $Ext^1_A(S_i,S_j) \neq 0$. Moreover, the algebra $A$ is \textit{triangular} if the extension quiver $Q_A$ has no oriented cycles.
\end{defn}
As a result of \cite{chen2015note}, such algebras naturally yield \textbf{D}\text{-standard} categories.
\begin{prop}{\cite[Proposition 5.13]{chen2017dstandard}}\label{prop:triangularstandard}
Let $A$ be a finite dimensional $k$-algebra. If $A$ is triangular, then the abelian category of left modules $A-mod$ is \bf{D}\it{-standard}.
\end{prop}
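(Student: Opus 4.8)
The plan is to prove the statement by induction on the number $n$ of isomorphism classes of simple $A$-modules, using the acyclicity of the Ext-quiver $Q_A$ to strip off one vertex at a time. Since $Q_A$ has no oriented cycle it has a sink vertex $v$; because the corresponding simple $S=S_v$ satisfies $\mathrm{Ext}^1_A(S,-)=0$ on all of $A\text{-}\mathrm{mod}$ (by dévissage from the simples), $S$ is \emph{projective}, and in fact $\mathrm{Ext}^m_A(S,-)=0$ for every $m\ge 1$. Let $e$ be the primitive idempotent at $v$ and set $A' = A/AeA$; then $A'$ is again triangular, its Ext-quiver being the full subquiver of $Q_A$ on the remaining $n-1$ vertices. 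In the base case $n=1$, triangularity forces $\mathrm{Ext}^1_A(S,S)=0$, whence $\mathrm{rad}(A)=\mathrm{rad}^2(A)$ and $A=k$ by Nakayama; here $D^b(A\text{-}\mathrm{mod})=D^b(k)$ and every pseudo-identity is trivially a triangle natural isomorphism onto the identity.

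For the inductive step I would work with the canonical recollement attached to $e$,
\[
D^b(A'\text{-}\mathrm{mod}) \longrightarrow D^b(A\text{-}\mathrm{mod}) \longrightarrow D^b(eAe\text{-}\mathrm{mod})=D^b(k),
\]
which is particularly well behaved because $S$ is projective and $eAe=k$. The first point is that a pseudo-identity $(F,\omega)$ with trivialisation $\theta_0\colon F|_{A\text{-}\mathrm{mod}}\simeq \mathrm{Id}$ preserves this recollement: $F$ fixes the heart and $\theta_0$ identifies $F(S)$ with $S$, so $F$ preserves the Serre subcategory $A'\text{-}\mathrm{mod}\subset A\text{-}\mathrm{mod}$ of modules annihilated by $e$, and hence the three recollement subcategories generated from it. Restricting and corestricting $F$ along the recollement then produces pseudo-identities on $D^b(A'\text{-}\mathrm{mod})$ and on $D^b(k)$, each compatibly extending the relevant restriction of $\theta_0$. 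By the inductive hypothesis applied to the triangular algebra $A'$, and by the trivial standardness of $D^b(k)$, both restricted pseudo-identities are triangle-naturally isomorphic to the corresponding identity functors, refining $\theta_0$.

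It remains to glue these two trivialisations into a single triangle natural isomorphism $\theta\colon F\simeq\mathrm{Id}$ on $D^b(A\text{-}\mathrm{mod})$ extending $\theta_0$, and this is where I expect the main difficulty to lie, since a priori the two partial natural isomorphisms need only agree on an overlap controlled by the gluing data of the recollement. The key observation is that for $X\in A'\text{-}\mathrm{mod}$ one has $\mathrm{Hom}_{D^b(A\text{-}\mathrm{mod})}(S,\,X[m])=\mathrm{Ext}^m_A(S,X)=0$ for all $m$, precisely because $S$ is a projective simple with no composition factor of $X$; this is exactly the directed vanishing furnished by choosing a \emph{sink} of the acyclic quiver $Q_A$. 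The obstruction to assembling a coherent $\theta$ from the two pieces is a class in groups of this shape together with the terms recording the discrepancy of $\theta_0$ against $\omega$, so acyclicity forces it to vanish, the glued $\theta$ exists, and the induction closes. An equivalent formulation of this final step phrases the obstruction as the triviality, modulo inner automorphisms, of the graded automorphism that $F$ induces on the Yoneda algebra $\bigoplus_{i,j,m}\mathrm{Ext}^m_A(S_i,S_j)$ while fixing its degree-zero and degree-one parts, which the same directedness again forces; either route can be made precise with the machinery of \cite{chen2015note,chen2017dstandard}.
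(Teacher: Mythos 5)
First, a point of order: the paper contains no proof of this proposition at all — it is imported as a citation of \cite[Proposition 5.13]{chen2017dstandard}, with the underlying content attributed to \cite{chen2015note} — so your attempt can only be judged on its own terms, and there is a genuine gap exactly where you flag ``the main difficulty.'' Your reduction steps are sound: a sink $v$ of the acyclic Ext-quiver gives a projective simple $S$ (dévissage plus the characterization of projectivity by $\mathrm{Ext}^1$-vanishing), $Ae$ is then one-dimensional, $AeA\cong Ae\otimes_k eA$ is a projective stratifying ideal, so $D^b(A'\text{-}\mathrm{mod})\hookrightarrow D^b(A\text{-}\mathrm{mod})$ is fully faithful with $A'$ again triangular, and a pseudo-identity preserves the standard t-structure and hence this subcategory. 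The failure is in the gluing. The vanishing you invoke, $\mathrm{Hom}_{D^b(A)}(S,X[m])=0$ for $X\in A'\text{-}\mathrm{mod}$, yields the semiorthogonal decomposition $D^b(A\text{-}\mathrm{mod})=\langle D^b(A'\text{-}\mathrm{mod}),\,S\rangle$, but it does not control the obstruction to naturality of a glued transformation. If $Y_S\to Y\to Y_{A'}\xrightarrow{\ \delta\ } Y_S[1]$ is the decomposition triangle of $Y$, a fill-in $\theta_Y$ compatible with $(\theta_{Y_S},\theta_{Y_{A'}})$ is ambiguous precisely up to the image of $\mathrm{Hom}(Y_{A'},Y_S)$, and naturality against morphisms from the $A'$-side into the $S$-side (and against the connecting maps $\delta$) is governed by groups of the form $\mathrm{Ext}^{*}_A(M,S)$ — the Ext's pointing \emph{into} the sink — which are nonzero whenever $v$ has incoming arrows. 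Acyclicity kills only one of the two directions. This one-sided situation is exactly the difficulty the present paper spends Sections~\ref{sec:injectives}--\ref{sec:compatibility} resolving by hand for its single decomposition $\langle P^\perp,P\rangle$: there the saving vanishings $\mathrm{Hom}(\mathbb{L}_PI_i,P)=0$ had to be established case by case (Lemma~\ref{lem:extensions}) and combined with explicit rescaling and uniqueness-of-fill-in arguments (Lemmas~\ref{lem:extendiso} and~\ref{lem:reduction}), plus lengthy verifications in the homotopy category; nothing of that sort is supplied, or automatic, in your inductive step.

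Your alternative formulation is also false as stated. Take the quiver $1\to 2\to 3$ with the composite path set to zero (a triangular algebra): then $\mathrm{Ext}^1(S_1,S_2)$, $\mathrm{Ext}^1(S_2,S_3)$, $\mathrm{Ext}^2(S_1,S_3)$ are each one-dimensional, the Yoneda product $\mathrm{Ext}^1\otimes\mathrm{Ext}^1\to\mathrm{Ext}^2$ vanishes, and the graded automorphism scaling $\mathrm{Ext}^2(S_1,S_3)$ by $\lambda\neq 1$ while fixing degrees zero and one is \emph{not} inner (inner automorphisms scale $\mathrm{Ext}^m(S_i,S_j)$ by $\lambda_j\lambda_i^{-1}$ independently of $m$, forcing multiplicativity along compositions). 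So directedness does not force the triviality you need; what rules out such twists is the higher ($A_\infty$/triangle-functor) structure, which is precisely the content being skipped. Finally, deferring the decisive step to ``the machinery of \cite{chen2015note,chen2017dstandard}'' is circular here, since that machinery \emph{is} the proof of the proposition in question. As written, the induction does not close.
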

\begin{lemma}\label{lem:bondaltriangular}
The path algebra $kQ/I$ associated with the Bondal quiver is a triangular algebra.
\end{lemma}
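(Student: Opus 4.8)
The plan is to read off the Ext-quiver $Q_A$ of $A = kQ/I$ directly and observe that it is acyclic. The simple right $A$-modules form a complete set of representatives given by the three one-dimensional representations $S_1, S_2, S_3$ supported at the vertices $1,2,3$ of $Q$. Since the defining relations $\beta_2\alpha_1$ and $\alpha_2\beta_1$ are paths of length two, the ideal $I$ is contained in the square of the arrow ideal $J \subset kQ$; and as $kQ$ has no paths of length $\geq 3$ (the underlying quiver being a directed path), we have $J^3 = 0 \subseteq I$, so $I$ is admissible. Consequently the radical filtration of each projective $P_i = e_iA$ is computed from paths in $Q$ modulo $I$, and $Ext^1(S_i,S_j)$ is identified with the degree-$j$ component of $\mathrm{rad}(P_i)/\mathrm{rad}^2(P_i)$, i.e. with the span of the arrows $i \to j$.

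I would make this explicit using the projectives of Lemma~\ref{lem:projinj}. Since $P_3 = S_3$ is both simple and projective, $Ext^{>0}(S_3, -) = 0$. For $S_2$ the minimal projective resolution begins $P_3^{\oplus 2} \to P_2 \to S_2$, so $Ext^1(S_2,S_3) \neq 0$ while $Ext^1(S_2, S_1) = Ext^1(S_2,S_2) = 0$. For $S_1$ the projective cover of $\mathrm{rad}(P_1)$ is $P_2^{\oplus 2}$, corresponding to the two arrows $\alpha_1, \beta_1$, which gives $Ext^1(S_1,S_2) \neq 0$; and since $Hom(P_2, S_3) = Hom(P_2,S_1) = 0$, also $Ext^1(S_1,S_3) = Ext^1(S_1,S_1) = 0$. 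In particular there are no self-extensions, so $Q_A$ has no loops.

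Collecting these computations, the only nonzero groups are $Ext^1(S_1,S_2)$ and $Ext^1(S_2,S_3)$, so $Q_A$ has a single arrow $1 \to 2$ and a single arrow $2 \to 3$ and nothing else. This is the directed path on three vertices, which evidently contains no oriented cycle; hence $A = kQ/I$ is triangular, as desired.

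The only point requiring genuine care, as opposed to bookkeeping, is confirming that the length-two relations contribute nothing to $Ext^1$ and therefore create no spurious arrow in $Q_A$. This is precisely the role of admissibility: the relations $\beta_2\alpha_1$ and $\alpha_2\beta_1$ surface one homological degree later, in $Ext^2(S_1, S_3)$, appearing as the kernel term $P_3^{\oplus 2}$ of the minimal resolution $P_3^{\oplus 2} \to P_2^{\oplus 2} \to P_1 \to S_1$, and so are invisible to the Ext-quiver, which by definition records only first extensions.
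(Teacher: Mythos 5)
Your proof is correct and follows essentially the same route as the paper: both arguments compute the Ext-quiver of the simples $S_1,S_2,S_3$ from minimal projective resolutions and observe that it contains no oriented cycle. The one refinement you add is the vanishing $Ext^1(S_1,S_3)=0$ (the length-two relations only surface in $Ext^2(S_1,S_3)$, via the syzygy $P_3^{\oplus 2}$), so the Ext-quiver is exactly the path $1\to 2\to 3$ rather than the three-arrow acyclic quiver drawn in the paper's proof; this extra precision is not needed for triangularity.
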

\begin{proof}
By definition, it suffices to verify that the extension quiver generated by the simple objects has no oriented cycles. The simple modules $S_i$ for $i = 1,2,3$ over the path algebra $kQ/I$ are given by the vertices of the quiver $Q$ and we claim that they satisfy the following:
\begin{itemize}
\item
$Ext^1(S_i,S_1) = 0$ for all $i$.
\item
$Ext^1(S_3,S_i) = 0$ for all $i$.
\item
$Ext^1(S_2,S_2) = 0$.
\end{itemize}
and in fact yield the following extension quiver
\[
\begin{tikzcd}
\bullet \arrow[r,bend left] \arrow[rr, bend right=30] & \bullet \arrow[r,bend right] & \bullet
\end{tikzcd}
\]
Indeed, the first two claims follow immediately as $S_1$ is injective and $S_3$ is projective. The third claim follows from the projective resolution of $S_2$:
\[
\begin{tikzcd}
P_3^{\oplus 2} \arrow[r] & P_2 \arrow[r] & S_2
\end{tikzcd}
\]
where the corresponding morphism on quiver representations are
\[
\begin{tikzcd}
0 \arrow[r,shift left]\arrow[r,shift right]& 0 \arrow[r,shift left]\arrow[r,shift right]& \mathbb{C}^2 \arrow[r] &0  \arrow[r,shift left]\arrow[r,shift right]& \mathbb{C}  \arrow[r,shift left]\arrow[r,shift right]& \mathbb{C}^2 \arrow[r] & 0  \arrow[r,shift left]\arrow[r,shift right]&  \mathbb{C}  \arrow[r,shift left]\arrow[r,shift right]& 0
\end{tikzcd}
\]
Taking the long exact sequence associated with the functor $Hom(\cdot, S_2)$, we find the exact sequence
\[
\begin{tikzcd}[column sep = 0.5cm]
Hom(S_2, S_2) \arrow[r] & Hom(P_2, S_2) \arrow[r] & Hom(P_3^{\oplus 2},S_2) \arrow[r] & Hom^1(S_2,S_2) \arrow[r] & Hom^1(P_2,S_2) \arrow[r] & \ldots
\end{tikzcd}
\]
But clearly, $Hom(P_3,S_2) = 0$, and as $P_2$ is projective, we have $Hom^1(P_2,S_2) = 0$. Thus, we have that $Hom^1(S_2,S_2) = 0$. The conclusion of the Lemma then follows immediately.
\end{proof}
We recall the standard fact that a natural isomorphism of two functors on the injective objects of an abelian category extends to the full abelian category.
\begin{lemma}\label{lem:huybrechtsextend}
Let $\mathcal{A}$ be a $k$-linear abelian category and let $F,G \colon D^b(\mathcal{A}) \rightarrow D^b(\mathcal{A})$ be exact autoequivalences and $\mathcal{I} \subset \mathcal{A}$ the full subcategory on a complete set of representatives of injective objects. Assume that any object in $D^b(A)$ admits a finite injective resolution. Assume that there exists a natural isomorphism $F|_\mathcal{I} \simeq G|_\mathcal{I} $. Then there exists a natural isomorphism of functors $F|_\mathcal{A} \simeq G|_\mathcal{A}$.
\end{lemma}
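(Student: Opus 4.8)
The plan is to produce the desired natural isomorphism $F|_{\mathcal{A}} \simeq G|_{\mathcal{A}}$ by induction on the length of a finite injective resolution, using the natural isomorphism on $\mathcal{I}$ as the base case. First I would fix, for each object $M \in \mathcal{A}$, a finite injective resolution $0 \to M \to I^0 \to I^1 \to \cdots \to I^n \to 0$, viewed as a complex $I^\bullet$ quasi-isomorphic to $M$ in $D^b(\mathcal{A})$. Since $F$ and $G$ are exact autoequivalences, $F(M) \simeq F(I^\bullet)$ and $G(M) \simeq G(I^\bullet)$, so it suffices to build a natural isomorphism between the images of the complexes $I^\bullet$ that is compatible with the (already-fixed) natural isomorphism on each injective term. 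The given natural isomorphism $\varphi \colon F|_{\mathcal{I}} \Rightarrow G|_{\mathcal{I}}$ supplies, termwise, compatible isomorphisms $\varphi_{I^k} \colon F(I^k) \to G(I^k)$, and naturality of $\varphi$ ensures these commute with the differentials $F(d^k)$ and $G(d^k)$. Hence $\varphi$ assembles into an isomorphism of complexes $F(I^\bullet) \to G(I^\bullet)$, giving an isomorphism $\eta_M \colon F(M) \to G(M)$ in $D^b(\mathcal{A})$.

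The remaining work is to check that $\eta_M$ is \emph{independent of the chosen resolution} and \emph{natural in} $M$. For independence, I would use that any two injective resolutions of $M$ are homotopy equivalent, and that homotopies are built from the injective terms; naturality of $\varphi$ on $\mathcal{I}$ guarantees that the comparison maps and homotopies are carried by $\varphi$ from the $F$-side to the $G$-side, so the induced map on $D^b(\mathcal{A})$ does not depend on the choice. For naturality, given a morphism $f \colon M \to N$ in $\mathcal{A}$, I would lift it to a morphism of chosen injective resolutions $f^\bullet \colon I_M^\bullet \to I_N^\bullet$ (possible since the target terms are injective), and then the square
\[
\begin{tikzcd}
F(M) \arrow[r,"F(f)"] \arrow[d,"\eta_M"'] & F(N) \arrow[d,"\eta_N"]\\
G(M) \arrow[r,"G(f)"] & G(N)
\end{tikzcd}
\]
commutes because it is the image under the assignment $I^\bullet \mapsto (\varphi_{I^\bullet})$ of the genuinely commuting square of complexes, again invoking termwise naturality of $\varphi$.

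The main obstacle is the \emph{bookkeeping of homotopies} in verifying resolution-independence: one must confirm that a chain homotopy $h$ between two lifts on the $F$-side is transported by $\varphi$ to a valid chain homotopy on the $G$-side, i.e.\ that $\varphi \circ F(h) = G(h) \circ \varphi$ holds termwise, which is exactly the naturality of $\varphi$ applied to each component of $h$. This is conceptually routine but is where the hypotheses are genuinely used, so I would state it as the crux. Everything else is the standard reduction, valid because the hypothesis that every object of $D^b(\mathcal{A})$ admits a finite injective resolution lets us replace arbitrary objects by bounded complexes of injectives, on which $F$ and $G$ act through the single natural transformation $\varphi$.
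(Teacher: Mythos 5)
Your overall strategy (induct on the length of a finite injective resolution, seeded by the given isomorphism on $\mathcal{I}$) is the right one, and it is essentially what the paper does by invoking steps 1--3 of the proof of \cite[Proposition 4.23]{huybrechts2006fourier} (in the dual, injective form). However, as written your argument has a genuine gap at its central step: the claim that $\varphi$ ``assembles into an isomorphism of complexes $F(I^\bullet) \to G(I^\bullet)$.'' The functors $F$ and $G$ are only assumed to be exact autoequivalences of the triangulated category $D^b(\mathcal{A})$; they are not given as functors on complexes, so there is no canonical complex ``$F(I^\bullet)$'' with terms $F(I^k)$ and differentials $F(d^k)$. An exact functor does not commute with totalization: the object $F(I^\bullet)$ is only isomorphic to \emph{some} convolution of the Postnikov system built from the $F(I^k)$, and convolutions in a bare triangulated category are neither unique nor functorial (this is the usual non-functoriality of cones). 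For the same reason, ``transporting a chain homotopy $h$ by $\varphi$'' via $\varphi \circ F(h) = G(h)\circ \varphi$ does not typecheck: $F(h)$ is not defined, since $h$ is not a morphism in $D^b(\mathcal{A})$. So the crux is not the bookkeeping of homotopies you flag, but the fact that you cannot work termwise at all.

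The standard repair, which is what the cited argument actually carries out, is to induct through the triangles rather than through the complexes. Given $M \in \mathcal{A}$, take the short exact sequence $0 \to M \to I^0 \to C \to 0$ with $C$ of strictly smaller injective dimension, view it as a triangle $C[-1] \to M \to I^0 \to C$, apply $F$ and $G$, and use the axiom (TR3) to fill in a morphism $\eta_M \colon F(M) \to G(M)$ compatible with $\eta_{I^0} = \varphi_{I^0}$ and the inductively constructed $\eta_C$. The filling is \emph{unique} because the difference of two fillings factors through $G(C)[-1]$, and
\[
\operatorname{Hom}\bigl(F(M), G(C)[-1]\bigr) \;\simeq\; \operatorname{Hom}\bigl(M, C[-1]\bigr) \;=\; \operatorname{Ext}^{-1}(M,C) \;=\; 0
\]
since $M$ and $C$ both lie in the heart $\mathcal{A}$ and $G$ is fully faithful. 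This uniqueness is also what gives independence of the chosen resolution and the naturality square for a morphism $f \colon M \to N$ (lift $f$ to the resolutions, then compare the two candidate fillings of the same diagram). With this replacement your proof goes through; without it, the termwise assembly is not justified for abstract exact equivalences.
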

\begin{proof}
The proof of this is standard and follows, for example, from steps $1,2$ and $3$ in the proof of \cite[Proposition 4.23]{huybrechts2006fourier}.
\end{proof}
The main obstruction to applying the full statement of \cite[Proposition 4.23]{huybrechts2006fourier} is property (iii) in the definition of an ample sequence~\cite[Definition 4.19]{huybrechts2006fourier}, and the main contribution of this section is to bypass the last step of the argument by applying Proposition~\ref{prop:triangularstandard} in our example.
\begin{proof}[Proof of Lemma~\ref{lem:extensioniso}]
Let $A$ denote the path algebra with relations associated to the Bondal quiver. We prove that $F_1 \circ F_2^{-1} \simeq id_{D^b(Q)}$. First, it is clear that $A \simeq A^{op}$ as reversing the arrows and relations of \eqref{eq:bondal} yields an identical quiver with relations. Thus, left modules over $A$ are equivalent to right modules over $A$ and applying Lemma~\ref{lem:bondaltriangular} and Proposition~\ref{prop:triangularstandard}, it suffices to prove that the composition $F_1 \circ F_2^{-1}$ is a pseudo-identity. But by assumption, we have that $F_1 \circ F_2^{-1} |_{\mathcal{I}} \simeq id_{\mathcal{I}}$ and so by Lemma~\ref{lem:huybrechtsextend}, we have that $F_1 \circ F_2^{-1} |_{\mathcal{A}} \simeq id_{\mathcal{A}}$. In particular, we have that $F_1 \circ F_2^{-1}$ is a pseudo-identity and we conclude.
\end{proof}

\subsection{Reduction step to the category $P^\perp$}\label{sec:injectives}
In this section, we prove the following Lemma reducing the claim of a natural isomorphism to a calculation on the category $P^\perp$:
\begin{lemma}\label{lem:reduction}
Assume that there exists isomorphisms $\eta_{P^\perp} \colon \Phi\mathbb{L}_P I_i \simeq S^{-1}\mathbb{L}_P I_i[1]$ for all $i,j$ such that the following diagram commutes for any morphism $f \colon I_i \rightarrow I_j$.
\[
\begin{tikzcd}
\Phi(\mathbb{L}_PI_i) \arrow[r,"\Phi \mathbb{L}_P f"]\isoarrow{d}& \Phi(\mathbb{L}_PI_j) \isoarrow{d}\\
S^{-1}(\mathbb{L}_PI_i)[1] \arrow[r,"S^{-1}{[1]}(\mathbb{L}_Pf)"] &S^{-1}(\mathbb{L}_PI_j)[1]
\end{tikzcd}
\]
Then there exists isomorphisms $\eta_i \colon \Phi I_i \simeq S^{-1}I_i[1]$ for all $i,j$ such that the following diagram commutes for any morphism $f \colon I_i \rightarrow I_j$.
\[
\begin{tikzcd}
\Phi(I_i) \arrow[r,"\Phi(f)"]\isoarrow{d}& \Phi(I_j) \isoarrow{d}\\
S^{-1}(I_i)[1] \arrow[r,"S^{-1}f{[1]}"] &S^{-1}(I_j)[1]
\end{tikzcd}
\]
\end{lemma}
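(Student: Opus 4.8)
The plan is to lift the given natural isomorphism on the objects $\mathbb{L}_P I_i \in P^\perp$ to the injectives $I_i$ through the functorial mutation triangle, writing $\Psi := S^{-1}[1]$ throughout. For each $i$ the decomposition of $I_i$ from Lemma~\ref{lem:injective} is the canonical mutation triangle
\[
\mathrm{RHom}(P,I_i)\otimes P \xrightarrow{a_i} I_i \xrightarrow{b_i} \mathbb{L}_P I_i \xrightarrow{c_i} \mathrm{RHom}(P,I_i)\otimes P[1],
\]
and the key structural point is that this triangle is functorial in $I_i$: a morphism $f\colon I_i \to I_j$ induces a morphism of triangles whose outer components are $\mathrm{RHom}(P,f)\otimes P$ and $\mathbb{L}_P f$. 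Applying the exact functors $\Phi$ and $\Psi$ produces, for each $i$, two triangles related on their outer vertices by natural isomorphisms: on the first vertex by $\phi_i := \mathrm{id}\otimes\theta$, where $\theta\colon \Phi P \simeq S^{-1}P[1] = \Psi P$ is the isomorphism of Lemma~\ref{lem:extension} (this is natural in $i$ since $\mathrm{RHom}(P,-)\otimes P$ is a functor and $\theta$ is fixed), and on the third vertex by $\mu_i := \eta_{P^\perp}$, natural in $i$ by hypothesis.

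The single input that drives the entire argument is the vanishing
\[
\mathrm{Hom}\big(\Phi\mathbb{L}_P I_i,\; S^{-1}P[k]\big) = 0 \quad \text{for all } k\in\mathbb{Z}.
\]
Indeed $\mathbb{L}_P I_i \in P^\perp$, so $\Phi\mathbb{L}_P I_i \simeq \mathbb{L}_{S(P)}\mathbb{L}_P I_i$ lies in $\prescript{\perp}{}{P}$, and by Lemma~\ref{lem:extension} the pair $\langle \prescript{\perp}{}{P}, S^{-1}P[1]\rangle$ is a semi-orthogonal decomposition of $D^b(Q)$, which yields the claimed orthogonality in every internal degree. First I would use this to produce the fill-in $\eta_i\colon \Phi I_i \to \Psi I_i$. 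Rotating the two triangles, a fill-in on the middle vertex exists as soon as the square relating $\mu_i$ and $\phi_i[1]$ through the connecting maps $\Phi c_i, \Psi c_i$ commutes; but both composites in that square lie in $\mathrm{Hom}(\Phi\mathbb{L}_P I_i, \Psi(\mathrm{RHom}(P,I_i)\otimes P)[1]) = \mathrm{RHom}(P,I_i)\otimes\mathrm{Hom}(\Phi\mathbb{L}_P I_i, S^{-1}P[2])$, which vanishes, so the square commutes automatically. The axiom (TR3) then supplies $\eta_i$, and it is an isomorphism by the five-lemma applied to the long exact sequences $\mathrm{Hom}(W,-)$, since $\phi_i$ and $\mu_i$ are isomorphisms.

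It remains to verify naturality, which I regard as the main obstacle, as it is precisely where the non-uniqueness of cones in a triangulated category would ordinarily obstruct the construction. Given $f\colon I_i\to I_j$, the two morphisms of triangles running from the $\Phi$-triangle of $I_i$ to the $\Psi$-triangle of $I_j$ — first through the vertical isomorphisms and then through $\Psi f$, or first through $\Phi f$ and then through the vertical isomorphisms — agree on both outer vertices, by naturality of $\phi$ on the $P$-vertex and of $\mu = \eta_{P^\perp}$ on the $\mathbb{L}_P$-vertex (the hypothesis). Hence $\Psi f\circ\eta_i$ and $\eta_j\circ\Phi f$ are both fill-ins of one and the same morphism of triangles. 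Since such fill-ins form a torsor under $\mathrm{Hom}(\Phi\mathbb{L}_P I_i, \Psi(\mathrm{RHom}(P,I_j)\otimes P)) = \mathrm{RHom}(P,I_j)\otimes\mathrm{Hom}(\Phi\mathbb{L}_P I_i, S^{-1}P[1])$, which again vanishes by the displayed orthogonality, the fill-in is unique and therefore $\Psi f\circ\eta_i = \eta_j\circ\Phi f$; this is exactly the asserted commuting square. The whole argument is thus engineered so that the same semi-orthogonality of $\langle\prescript{\perp}{}{P}, S^{-1}P[1]\rangle$ simultaneously forces existence of the fill-in (its obstruction square has zero target) and its uniqueness (the torsor group is zero), the latter delivering naturality for free and allowing one to remain agnostic to the precise form of the gluing functor $\Phi$ from Lemma~\ref{lem:extension}.
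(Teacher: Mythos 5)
Your uniqueness/naturality step is sound and is essentially the paper's: two fill-ins of the same morphism of triangles differ by an element of $\mathrm{Hom}(\Phi\mathbb{L}_PI_i,\,S^{-1}P[1])\cong \mathrm{Hom}(\mathbb{L}_PI_i,P)=0$ (Lemma~\ref{lem:extensions}), which forces $\Psi f\circ\eta_i=\eta_j\circ\Phi f$. The problem is the existence step, and it traces back to your ``single input'': the claimed vanishing $\mathrm{Hom}(\Phi\mathbb{L}_PI_i,\,S^{-1}P[k])=0$ for \emph{all} $k$ is false. A semi-orthogonal decomposition $\langle\prescript{\perp}{}{P},\,S^{-1}P[1]\rangle$ only gives $\mathrm{Hom}(S^{-1}P[1],T)=0$ for $T\in\prescript{\perp}{}{P}$; you are using orthogonality in the opposite direction, $\mathrm{Hom}(\prescript{\perp}{}{P},\,S^{-1}P[1])$, which does not vanish (if it did in both directions the category would split as a direct sum). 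Concretely, since $\Phi$ is an equivalence with $\Phi(P)\simeq S^{-1}P[1]$, one has $\mathrm{Hom}(\Phi\mathbb{L}_PI_i,\,S^{-1}P[2])\cong\mathrm{Hom}(\mathbb{L}_PI_i,P[1])=\mathbb{C}$ for each $i$ by Lemma~\ref{lem:extensions}. So the square relating $\mu_i$ and $\phi_i[1]$ through the connecting maps $\Phi c_i,\Psi c_i$ does \emph{not} commute for degree reasons: its obstruction lives in a one-dimensional, nonzero space.

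This is precisely the content of the paper's Lemma~\ref{lem:extendiso}, which your proposal skips. Because the relevant $\mathrm{Hom}$ is one-dimensional, the two composites around that square are proportional, and one can rescale $\eta_{P_i}$ to make the square commute for a fixed $i$; the delicate point is that the rescaling must be by a \emph{common} scalar for $i=1,2,3$, or else the hypothesized compatibility of the $\eta_{P_i}$ with $\mathbb{L}_Pf$ is destroyed. The paper achieves this by exhibiting morphisms $f\colon I_i\to I_j$ whose induced map $f_P\colon P\to P$ is nonzero (Lemma~\ref{lem:technical}) and chasing the cube~\eqref{eq:twonatural} to propagate one choice of scalar to all three indices. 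To repair your argument you would need to replace ``the square commutes automatically'' with this rescaling-and-propagation step; as written, the proof has a genuine gap at the existence of the fill-in.
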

The proof of this Lemma relies on certain properties of the decomposition of the injective objects $I_i$ with respect to the semi-orthogonal decomposition $\langle P^\perp, P \rangle$. We recall the following Lemma.
\begin{lemma}\label{lem:exact}
Assume $\mathcal{T} = \langle \mathcal{B}, \mathcal{A} \rangle$ a $k$-linear triangulated category admitting a semi-orthogonal decomposition. Let $E_1,E_2 \in \mathcal{T}$ be objects with semi-orthogonal components given by $A_1,B_1$ and $A_2,B_2$ respectively. Then there exists an exact sequence:
\[
\begin{tikzcd}
Hom(B_1,A_2) \arrow[r] & Hom(E_1,E_2) \arrow[r] & Hom(A_1,A_2) \oplus Hom(B_1,B_2) \arrow[r] & Hom(B_1,A_2[1])
\end{tikzcd}
\]
\end{lemma}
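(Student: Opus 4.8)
The plan is to compute $Hom(E_1,E_2)$ directly from the two decomposition triangles
\[
A_1 \to E_1 \to B_1 \xrightarrow{w_1} A_1[1], \qquad A_2 \to E_2 \to B_2 \xrightarrow{w_2} A_2[1],
\]
with $A_i \in \mathcal{A}$, $B_i \in \mathcal{B}$, by feeding them into the $Hom$ long exact sequences and using the semi-orthogonality vanishing $Hom(\mathcal{A},\mathcal{B}[k]) = 0$ (for all $k$) to collapse the cross terms. The first step is to record the two simplifications this vanishing buys. Applying $Hom(A_1,-)$ to the triangle for $E_2$ yields $Hom(A_1,E_2[k]) \simeq Hom(A_1,A_2[k])$, since the neighbouring terms $Hom(A_1,B_2[k])$ and $Hom(A_1,B_2[k-1])$ both vanish; dually, applying $Hom(-,B_2[k])$ to the triangle for $E_1$ yields $Hom(E_1,B_2[k]) \simeq Hom(B_1,B_2[k])$, since $Hom(A_1,B_2[k])$ and $Hom(A_1,B_2[k-1])$ vanish. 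These two isomorphisms are the only place the hypothesis $Hom(\mathcal{A},\mathcal{B})=0$ is used, and they are what isolates a single obstruction term rather than the two that appear in the general (non semi-orthogonal) situation.

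Next I would write down the three maps of the asserted sequence. The map $Hom(B_1,A_2) \to Hom(E_1,E_2)$ sends $\psi$ to the composite $E_1 \to B_1 \xrightarrow{\psi} A_2 \to E_2$; this is the ``off-diagonal'' morphism, built from the projection $E_1 \to B_1$ and the inclusion $A_2 \to E_2$. The middle map $\Theta \colon Hom(E_1,E_2) \to Hom(A_1,A_2)\oplus Hom(B_1,B_2)$ records the two components: its first coordinate is restriction along $A_1 \to E_1$ composed with the isomorphism $Hom(A_1,E_2)\simeq Hom(A_1,A_2)$ above, and its second coordinate is postcomposition with $E_2 \to B_2$ followed by $Hom(E_1,B_2)\simeq Hom(B_1,B_2)$. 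The final map $\Xi \colon Hom(A_1,A_2)\oplus Hom(B_1,B_2) \to Hom(B_1,A_2[1])$ sends $(a,b)$ to $a[1]\circ w_1 - w_2 \circ b$; its kernel is exactly the set of pairs making the square with sides $w_1,w_2,b,a[1]$ commute, i.e.\ the pairs compatible with the connecting maps.

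With these definitions, exactness is a diagram chase through the long exact sequences of the two triangles. Exactness at $Hom(E_1,E_2)$ unwinds cleanly: a morphism $\phi$ with $\Theta(\phi)=0$ first factors through $A_2 \to E_2$ (vanishing of the $B_2$-component), and the resulting $E_1 \to A_2$ then factors through $E_1 \to B_1$ (vanishing of the $A_1$-component), exhibiting $\phi$ as coming from $Hom(B_1,A_2)$; the reverse inclusion is immediate from $\pi_2\circ\iota_2 = 0$ and $\pi_1\circ\iota_1 = 0$. I expect the main obstacle to be exactness at the direct summand: given $(a,b)\in\ker\Xi$, one must produce an actual $\phi\colon E_1\to E_2$ with $\Theta(\phi)=(a,b)$. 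This is precisely the completion of a morphism of distinguished triangles and is supplied by the octahedral/$TR3$ axiom once the compatibility $a[1]\circ w_1 = w_2\circ b$ is known; the subtlety is that $TR3$ gives existence but not uniqueness of the filler, and the non-uniqueness is exactly the image of $Hom(B_1,A_2)$ established in the previous step, so the two halves of the chase fit together consistently. I would present the argument as a commutative ladder of the two long exact sequences and read off the four-term sequence, noting that no Krull–Schmidt or finiteness hypothesis is required, only the ambient triangulated structure and the single vanishing $Hom(\mathcal{A},\mathcal{B}[k])=0$.
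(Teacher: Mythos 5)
Your argument is correct, and it is in fact more self-contained than what the paper does: the paper's proof of Lemma~\ref{lem:exact} consists of a single sentence deferring to the proof of \cite[Lemma 2.7]{2006math8430H} applied to the morphism of decomposition triangles, so your write-up is essentially an expansion of that black box rather than a genuinely different strategy. The skeleton you propose --- the two semiorthogonality isomorphisms $Hom(A_1,E_2[k])\simeq Hom(A_1,A_2[k])$ and $Hom(E_1,B_2[k])\simeq Hom(B_1,B_2[k])$, the three explicit maps, and the chase through the two $Hom$ long exact sequences --- is exactly what is needed, and your treatment of exactness at the direct sum (TR3 supplies a filler for any pair in $\ker\Xi$, and the non-uniqueness of the filler is absorbed by the image of $Hom(B_1,A_2)$) is the right way to organize the argument. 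The one step you pass over silently is that $\Xi\circ\Theta=0$: for a given $\phi\colon E_1\to E_2$ the commutativity of the two squares $\phi\circ\iota_1=\iota_2\circ a_\phi$ and $\pi_2\circ\phi=b_\phi\circ\pi_1$ does not by itself force $a_\phi[1]\circ w_1=w_2\circ b_\phi$, since two commuting squares of a prospective morphism of triangles need not imply the third. To close this, apply TR3 once more in the other direction: completing the square $\phi\circ\iota_1=\iota_2\circ a_\phi$ to a morphism of triangles yields some $b'$ with $b'\circ\pi_1=\pi_2\circ\phi$ and $w_2\circ b'=a_\phi[1]\circ w_1$, and since precomposition with $\pi_1$ is an isomorphism $Hom(B_1,B_2)\simeq Hom(E_1,B_2)$ (your second simplification), $b'=b_\phi$ and the required identity follows. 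With that observation added your proof is complete and, as you note, uses only the triangulated structure and the vanishing $Hom(\mathcal{A},\mathcal{B}[k])=0$.
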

\begin{proof}
This follows directly from the proof of \cite[Lemma 2.7]{2006math8430H} applied to the morphism of triangles
\[
\begin{tikzcd}
A_1 \arrow[r] \arrow[d] & E_1 \arrow[r]\arrow[d] & B_1\arrow[d] \\
A_2 \arrow[r] & E_2 \arrow[r] &B_2
\end{tikzcd}
\]
\end{proof}

We will use the following notation in the rest of this section. Recall that for each pair of diagrams in Lemma~\ref{lem:injective} and any morphism $f \in Hom(I_i,I_j)$, there exists unique morphisms $f_P, \mathbb{L}_Pf$ making the following diagram commute. 
\[
\begin{tikzcd}
P \arrow[d,dotted,"f_P"]\arrow[r] &I_i \arrow[r]\arrow[d, "f"] &\mathbb{L}_PI_i\arrow[d, dotted,"\mathbb{L}_Pf"] \arrow[r,"a_i"] & P[1] \arrow[d,"f_p{[1]}",dotted]\\
P \arrow[r] &I_j \arrow[r] &\mathbb{L}_PI_j \arrow[r,"a_j"] & P[1]\\
\end{tikzcd}
\]
We first establish the following technical Lemmas.
\begin{lemma}\label{lem:technical}
For any $i \geq j$, there exists a morphism $f \colon I_i \rightarrow I_j$ such that the induced map $f_P \colon P \rightarrow P$ is nonzero.
\end{lemma}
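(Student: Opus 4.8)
The plan is to reduce the statement to the behavior of the mutation triangles $P \to I_i \to \mathbb{L}_P I_i$ from Lemma~\ref{lem:injective}, and then to produce, for each pair $i \geq j$, an explicit morphism $f \colon I_i \to I_j$ whose $P$-component $f_P \colon P \to P$ is nonzero. Since $P$ is exceptional, $\mathrm{Hom}(P,P) = k$, so ``$f_P$ nonzero'' means precisely that $f_P$ is an isomorphism, i.e. the scalar by which $f$ acts on the $P$-component of the semi-orthogonal decomposition is a unit. First I would set up the commuting ladder recorded just before the statement and observe that $f_P$ is exactly the morphism induced on the $\langle P \rangle$-components of $I_i$ and $I_j$ under the functoriality of the decomposition $D^b(Q) = \langle P^\perp, P \rangle$; concretely, $f_P$ is the image of $f$ under the composite $\mathrm{Hom}(I_i, I_j) \to \mathrm{Hom}(\alpha_1\alpha_1^! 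I_i,\, \alpha_1\alpha_1^! I_j) = \mathrm{Hom}(P,P)$.

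The key computational input is that each injective $I_i$ has $P$ as its right semi-orthogonal component (this is precisely the content of Lemma~\ref{lem:injective}, where the triangles are $P \to I_i \to \tilde{C}, A, D[1]$ respectively). Thus the $P$-component of every $I_i$ is canonically $P$ itself, and the task becomes: exhibit $f \colon I_i \to I_j$ that is ``nonzero on the copy of $P$.'' The natural candidates are the canonical injective maps in $\mathrm{mod}\text{-}kQ/I$. Indeed, for the diagonal case $i = j$ one takes $f = \mathrm{id}_{I_i}$, which visibly induces $f_P = \mathrm{id}_P$. For the off-diagonal cases $i > j$ I would use the explicit descriptions of $I_1, I_2, I_3$ from Lemma~\ref{lem:projinj} to write down the evident module homomorphisms $I_3 \to I_2$, $I_3 \to I_1$, and $I_2 \to I_1$ coming from the quotient/truncation maps of representations, and then verify by direct computation that the induced map on the common sub-object $P$ is the identity (equivalently, nonzero). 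This verification is routine given the matrices already displayed in Lemma~\ref{lem:projinj} and in the proof of Lemma~\ref{lem:injective}.

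To organize the computation cleanly, I would apply the functor $\mathrm{Hom}(P, -)$ to the morphism of triangles and use that $P \in {}^\perp(\mathbb{L}_P I_i)$, so that $\mathrm{Hom}(P, I_i) \cong \mathrm{Hom}(P, P) = k$ for each $i$; under this identification $f_P$ is simply the image of a generator of $\mathrm{Hom}(P,I_i)$ under $f_* \colon \mathrm{Hom}(P,I_i) \to \mathrm{Hom}(P,I_j)$. Hence it suffices to exhibit, for each $i \geq j$, a morphism $f$ for which $f_* \colon \mathrm{Hom}(P,I_i) \to \mathrm{Hom}(P,I_j)$ is nonzero, which is a finite-dimensional linear-algebra check against the explicit basis vectors.

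I expect the only real obstacle to be bookkeeping: one must confirm that the chosen module map $f$ actually respects the relations $\beta_2\alpha_1 = \alpha_2\beta_1 = 0$ and that its composite with the inclusion $P \hookrightarrow I_i$ does not vanish after passing to $I_j$. Since $\mathrm{Hom}(P,P)$ is one-dimensional, there is no room for cancellation once $f$ is shown to restrict nontrivially to $P$, so the argument terminates as soon as a single nonzero $f_P$ is produced in each case. This makes the lemma a direct, if slightly tedious, consequence of the explicit representation-theoretic data assembled in Lemmas~\ref{lem:projinj} and \ref{lem:injective}.
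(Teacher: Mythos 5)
Your proposal is correct and follows essentially the same route as the paper's proof: take the identity for $i=j$, write down explicit module homomorphisms $I_3 \to I_2$ and $I_2 \to I_1$ from the representations in Lemma~\ref{lem:projinj}, and check that the composite $P \to I_i \to I_j$ is nonzero, which forces $f_P$ to be (a nonzero multiple of) the identity since $\mathrm{Hom}(P,P)=\mathbb{C}$. The only cosmetic difference is that the paper obtains the case $i=3$, $j=1$ by composing the other two rather than writing a third map directly.
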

\begin{proof}
This is obvious for $i = j$ by functoriality of the semi-orthogonal decomposition by taking $f = id$. For $i = 3, j=2$ and $i=2, j=1$, we simply take the morphisms
\[
\begin{tikzcd}
\mathbb{C}^2 \arrow[r, shift left] \arrow[r, shift right]\arrow[d, "{\begin{psmallmatrix} 1&0\\0&0\end{psmallmatrix}}"]& \mathbb{C}^{2}\arrow[r, shift left] \arrow[r, shift right]\arrow[d, "{\begin{psmallmatrix} 1& 0\end{psmallmatrix}}"]& \mathbb{C}\arrow[d] & \mathbb{C}^2 \arrow[r, shift left] \arrow[r, shift right]\arrow[d, "{\begin{psmallmatrix} 1&0\end{psmallmatrix}}"]& \mathbb{C}\arrow[r, shift left] \arrow[r, shift right]\arrow[d]& 0\arrow[d] \\
\mathbb{C}^2 \arrow[r, shift left] \arrow[r, shift right]& \mathbb{C}\arrow[r, shift left] \arrow[r, shift right]& 0 &   \mathbb{C} \arrow[r, shift left] \arrow[r, shift right]& 0\arrow[r, shift left] \arrow[r, shift right]& 0 
\end{tikzcd}
\]
It is clear that for each of the above cases, there exists a morphism $g_i \colon P \rightarrow I_i, g_j \colon P \rightarrow I_j$ such that $f \circ g_i= g_j$. By uniqueness of the semi-orthogonal decomposition, the induced map $f_P$ must be the identity in each of the above cases. The case for $i=3,j=1$ follows immediately by composing the morphisms in the above two cases.
\end{proof}
In the following, we fix an isomorphism $\eta_P\colon\Phi(P) \simeq S^{-1}P[1]$ which exists by Proposition~\ref{prop:fmequiv}(2). 
\begin{lemma}\label{lem:extendiso}
Assume that there exists isomorphisms
\[
\eta_{P_i} \colon \Phi\mathbb{L}_P I_i \simeq S^{-1}\mathbb{L}_P I_i[1]
\]
such that for any pair of injective objects $I_i,I_j$ and any morphism $f \colon I_i \rightarrow I_j$, the left square in the following diagram commutes. Then up to a common rescaling $\eta_{P_i} \mapsto \alpha \eta_{P_i}$ of these isomorphisms, all squares in the following diagram commute for any $f$.
\begin{equation}\label{eq:twonatural}
\begin{tikzcd}[row sep=scriptsize, column sep=scriptsize]
& \Phi(\mathbb{L}_PI_i[-1])\arrow[dl,"\Phi(\mathbb{L}_Pf{[-1]})"'] \arrow[rr,"\Phi(a_i)"] \arrow[dd,"\eta_{P_i}" near start ] & & \Phi(P)\arrow[dl,"\Phi(f_P)"] \arrow[dd,"\eta_P"]\\
\Phi(\mathbb{L}_PI_j[-1])\arrow[rr, crossing over,"\Phi(a_j)" near end] \arrow[dd,"\eta_{P_j}"] & & \Phi(P) \arrow[dd,"\eta_P", near start] \\
& S^{-1}\mathbb{L}_PI_i \arrow[dl,"S^{-1}\mathbb{L}_Pf"' near start] \arrow[rr,"S^{-1}(a_i)" near start] & & S^{-1}P[1] \arrow[dl,"S^{-1}{[1]}(f_P)"] \\
S^{-1}\mathbb{L}_PI_j \arrow[rr,"S^{-1}(a_j)"] & & S^{-1}P[1] \arrow[from=uu, crossing over]  \\
\end{tikzcd}
\end{equation}
\end{lemma}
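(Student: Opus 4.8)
The plan is to regard the diagram~\eqref{eq:twonatural} as a cube and to show that four of its six faces commute unconditionally, so that the remaining two (the ``back'' and ``front'' faces carrying the connecting maps $a_i,a_j$) can be forced to commute by a single common rescaling of the $\eta_{P_i}$. First I would dispose of the free faces. The top and bottom faces are obtained by applying the exact functors $\Phi$ and $S^{-1}[1]$ to the $[-1]$-rotation of the rightmost square in the morphism of triangles of Lemma~\ref{lem:reduction}, which commutes by functoriality of the semi-orthogonal decomposition; hence both commute. The right-hand face commutes for every $f$ because $P$ is exceptional (Lemma~\ref{lem:p}), so $f_P \in \mathrm{End}(P) = k$ is a scalar and any additive functor sends it to the same scalar endomorphism, giving $\eta_P \circ \Phi(f_P) = S^{-1}[1](f_P)\circ \eta_P$. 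Finally, the left face is exactly the square assumed to commute in the hypothesis, and since we only ever rescale all the $\eta_{P_i}$ by a common factor $\alpha$, it remains commutative under any such rescaling.

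The crux is a one-dimensionality computation. Using that $\Phi$ is an equivalence (Lemma~\ref{lem:extension}) with $\Phi(P) \simeq S^{-1}P[1]$, I identify
\[
\mathrm{Hom}\big(\Phi(\mathbb{L}_P I_i[-1]),\, S^{-1}P[1]\big) \simeq \mathrm{Hom}\big(\mathbb{L}_P I_i[-1],\, P\big) = \mathrm{Hom}^1\big(\mathbb{L}_P I_i,\, P\big),
\]
and the identifications $\mathbb{L}_P I_3 = \tilde C$, $\mathbb{L}_P I_2 = A$, $\mathbb{L}_P I_1 = D[1]$ of Lemma~\ref{lem:injective} together with the extension algebras of Lemma~\ref{lem:extensions} show this space is one-dimensional in all three cases. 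Since $I_i$ is an indecomposable injective with $\mathbb{L}_P I_i \neq 0$, the connecting map $a_i$ cannot vanish, for otherwise the triangle $P \to I_i \to \mathbb{L}_P I_i$ would split; hence both $\eta_P\circ\Phi(a_i)$ and $S^{-1}(a_i)\circ\eta_{P_i}$ are nonzero vectors on this line, and I may define scalars $\lambda_i \in k^\times$ by $\eta_P\circ\Phi(a_i) = \lambda_i\, S^{-1}(a_i)\circ\eta_{P_i}$. By construction the back face commutes precisely when $\lambda_i = 1$, and the front face when $\lambda_j = 1$.

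It then remains to prove $\lambda_1 = \lambda_2 = \lambda_3$, after which I rescale by $\alpha$ equal to this common value so that every $\lambda_i$ becomes $1$. To compare $\lambda_i$ and $\lambda_j$ along a morphism $f\colon I_i \to I_j$, I chase the commutativity of the top face composed with $\eta_P$: substituting in turn the commuting right, back, bottom, and left faces rewrites $\eta_P\circ\Phi(f_P)\circ\Phi(a_i)$ as $\lambda_i\, S^{-1}(a_j)\circ\eta_{P_j}\circ\Phi(\mathbb{L}_P f[-1])$, while substituting the front face on the other side gives $\lambda_j\, S^{-1}(a_j)\circ\eta_{P_j}\circ\Phi(\mathbb{L}_P f[-1])$. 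Thus $(\lambda_i-\lambda_j)$ annihilates this common composite, and it suffices to choose $f$ making the composite nonzero. For $i \geq j$, Lemma~\ref{lem:technical} supplies $f$ with $f_P \neq 0$; the top face then yields $\Phi(a_j)\circ\Phi(\mathbb{L}_P f[-1]) = \Phi(f_P)\circ\Phi(a_i) \neq 0$, and unwinding the front-face relation shows the composite is a nonzero multiple of this. Taking $f\colon I_3 \to I_2$ and $f\colon I_2 \to I_1$ chains $\lambda_3 = \lambda_2 = \lambda_1$.

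I expect the main obstacle to be the bookkeeping of this cube chase, and in particular verifying that the connecting composite $S^{-1}(a_j)\circ\eta_{P_j}\circ\Phi(\mathbb{L}_P f[-1])$ remains nonzero; this is exactly what the one-dimensionality of the relevant $\mathrm{Hom}$-spaces and the existence statement of Lemma~\ref{lem:technical} are there to guarantee, so the argument reduces to carefully assembling these inputs rather than to any delicate new computation.
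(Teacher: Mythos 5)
Your proposal is correct and follows essentially the same route as the paper's proof: the free commutativity of the top, bottom, left, and right faces, the one-dimensionality of $\mathrm{Hom}(\mathbb{L}_PI_i[-1],P)$ via Lemma~\ref{lem:extensions} to compare the two composites on the front and back faces, and Lemma~\ref{lem:technical} to propagate the scalar between indices. The only cosmetic difference is bookkeeping --- you introduce scalars $\lambda_i$ for all $i$ and prove they coincide before rescaling, whereas the paper fixes one index, rescales, and then propagates --- and your explicit justification that $a_i\neq 0$ (non-splitting of the triangle against indecomposability of $I_i$) is a small improvement on the paper, which asserts this without comment.
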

\begin{proof}
We first claim that for any common scaling $\eta_{P_i} \mapsto \alpha \eta_{P_i}$ of the isomorphisms in the assumption, the left, right, top, and bottom squares all commute. Indeed, the top and bottom squares commute by functoriality. The left square commutes by assumption. For the right square, recall that $Hom(P,P) = \mathbb{C}$. So $f_P = \beta\, id$, for some $\beta \in \mathbb{C}$. The square then obviously commutes as $\Phi(id) = S^{-1}[1](id) = id$ and by linearity.

It suffices to prove that there exists a common scalar $\alpha \in \mathbb{C}^*$, such that the isomorphisms $\alpha\eta_{P_j}$ make the following diagrams commute for all $j$.
\begin{equation}\label{eq:onenatural}
\begin{tikzcd}
\Phi(\mathbb{L}_PI_j[-1]) \arrow[r,"\Phi(a_j)"]\arrow[d,"\alpha\eta_{P_j}"]& \Phi(P) \arrow[d,"\eta_{P}"]\\
S^{-1}(\mathbb{L}_PI_j) \arrow[r,"S^{-1}(a_j)"] &S^{-1}(P)[1]
\end{tikzcd}
\end{equation}

We first prove the claim for a fixed $j$. We may assume that $S^{-1}(a_j) \circ \eta_{P_j} = g$ for some $g \neq 0$, as $\eta_{P_j}$ is an isomorphism and $a_j\neq 0$. On the other hand, we have that $Hom(\Phi(\mathbb{L}_PI_j[-1]),S^{-1}P[1]) = Hom(\Phi(\mathbb{L}_PI_j[-1]), \Phi(P)) = Hom(\mathbb{L}_PI_j[-1],P)= \mathbb{C}$ where the first equality follows from the isomorphism $S^{-1}P[1] \simeq \Phi(P)$, the second follows as $\Phi$ is an equivalence, and the third follows from Lemma~\ref{lem:extensions}(1,2,3). So $g = \alpha \eta_P \circ \Phi(a_j)$ for some nonzero $\alpha \in \mathbb{C}$ as $\eta_P$ is an isomorphism and $a_j$ is nontrivial. After making the redefinitions $\eta_{P_j} \mapsto \alpha \eta_{P_j}$ for all $j$, we see that the above diagram commutes and all previous diagrams in diagram~\eqref{eq:twonatural} still commute for any $f$.

We now prove that if the claim holds for the front square in diagram~\eqref{eq:twonatural}, then the claim holds for both the front and back squares. In the following, we redefine $\eta_{P_j}' = \alpha\eta_{P_j}, \eta_{P_i}' = \alpha\eta_{P_i}$ where $\alpha$ is obtained by applying the argument of the above paragraph to the front square. We first observe by Lemma~\ref{lem:technical} that there always exists $f$ such that $f_P \neq 0$ in which case it must be an isomorphism. Then, we have $S^{-1}[1](f_P) \circ S^{-1}(a_i) \circ \eta_{P_i}' = S^{-1}(a_j) \circ S^{-1}\mathbb{L}_P f \circ \eta_{P_i}'  = S^{-1}(a_j) \circ \eta_{P_j}' \circ \Phi(\mathbb{L}_Pf[-1])  = \eta_P \circ \Phi(a_j)\circ \Phi(\mathbb{L}_P f[-1])= \eta_P \circ \Phi(f_P) \circ \Phi(a_i) = S^{-1}[1](f_P) \circ \eta_P \circ \Phi(a_i) $. As $S^{-1}[1](f_P)$ is an isomorphism, we have that $S^{-1}(a_i)\circ \eta_{P_i}' = \eta_P\circ\Phi(a_i)$ and we conclude.

Applying the argument in the above paragraph in the cases $i= 2, j=1$, and $i = 3, j=2$, the claim follows.
\end{proof}
We now turn to proving Lemma~\ref{lem:reduction}.
\begin{proof}[Proof of Lemma~\ref{lem:reduction}]
We consider the following diagram:
\[
\begin{tikzcd}[row sep=scriptsize, column sep=scriptsize]
& \Phi(\mathbb{L}_PI_i)[-1]\arrow[dl,"\Phi(f)"'] \arrow[rr,"\Phi(a_i)"] \arrow[dd,"\eta_{P_i}" near start ] \arrow[dddl]& & \Phi(P)\arrow[dl,"\Phi(f_P)"'] \arrow[dd,"\eta_P", near start] \arrow[rr]\arrow[dddl]& & \Phi(I_i)\arrow[dl,"\Phi(f)"']\arrow[dd,"\eta_i",dotted]\\
\Phi(\mathbb{L}_PI_j)[-1]\arrow[rr, crossing over,"\Phi(a_j)" near end] \arrow[dd,"\eta_{P_j}"] & & \Phi(P) \arrow[dd,"\eta_P", near start] \arrow[rr, crossing over] & & \Phi(I_j) \\
& S^{-1}\mathbb{L}_PI_i \arrow[dl,"S^{-1}f"' near start] \arrow[rr,"S^{-1}(a_i)" near start] & & S^{-1}P[1] \arrow[dl,"S^{-1}{[1]}(f_P)"] \arrow[rr] & & S^{-1}I_i[1]\arrow[dl,"S^{-1}{[1]}(f)"]\\
S^{-1}\mathbb{L}_PI_j \arrow[rr,"S^{-1}(a_j)"] & & S^{-1}P[1] \arrow[from=uu, crossing over] \arrow[rr, crossing over] & & S^{-1}I_j[1] \arrow[from=uu, crossing over,"\eta_j" near start,dotted]\arrow[from=uuur, crossing over,dotted]\\
\end{tikzcd}
\]
We first prove that all squares other than the rightmost square in the diagram commute. By Lemma~\ref{lem:extendiso}, all squares in the left cube commute. The top and bottom squares on the right cube commute by functoriality. It suffices to prove that the front and back squares on the right cube commute.

To prove the existence of the morphisms $\eta_j,\eta_i$, we apply Lemma~\ref{lem:exact} to the front and back morphisms of distinguished triangles. For fixed $i$, this implies the exact sequence
\[
\begin{tikzcd}[row sep=scriptsize, column sep=scriptsize]
Hom(\Phi(\mathbb{L}_PI_i), S^{-1}P[1]) \arrow[r] & Hom(\Phi(I_i),S^{-1}I_i[1]) \arrow[ld,"\alpha"']\\ Hom(\Phi(P),S^{-1}P[1]) \oplus Hom(\Phi(\mathbb{L}_PI_i),S^{-1}\mathbb{L}_PI_i[1]) \arrow[r,"\beta"] & Hom(\Phi(\mathbb{L}_PI_i),S^{-1}P[2])
\end{tikzcd}
\]
By Lemma~\ref{lem:extensions}(1,2,3), $Hom(\Phi(\mathbb{L}_PI_i),S^{-1}P[1]) = Hom(\mathbb{L}_PI_i, P) = 0$ for all $i$. Thus, the map $\alpha$ in the above exact sequence is injective. On the other hand, $\beta(\eta_{P_i}[1],\eta_P) = 0$ by assumption and so there exists unique morphisms $\eta_i$ making the front and back diagrams commute on the right cube. In particular, these must be isomorphisms as $\eta_P,\eta_{P_i}$ are both isomorphisms and the front and back are morphisms of distinguished triangles.

Finally, to prove that the rightmost square commutes, we apply Lemma~\ref{lem:exact} to the diagonal morphism of distinguished triangles. By the same argument as in the above paragraph, there exists a unique diagonal morphism on the right square making the diagonal diagram commute. On the other hand, as all other squares commute, the two compositions $\eta_j \circ \Phi(f), S^{-1}[1](f) \circ \eta_i$ also make the diagonal digram commute. Since this must be unique, we conclude that $\eta_j \circ \Phi(f)  =S^{-1}[1](f) \circ \eta_i$.
\end{proof}
\subsection{Verification of compatibility on $P^\perp$}\label{sec:compatibility}
In this section, we prove that the assumption of Lemma~\ref{lem:reduction} holds via a direct computation. Specifically, we will prove the following:
\begin{lemma}\label{lem:computation}
There exists a natural isomorphism of functors $\Phi \mathbb{L}_P|_{\mathcal{I}} \simeq S^{-1} [1]\mathbb{L}_P|_{\mathcal{I}}$ where $\mathcal{I} \subset D^b(Q)$ is the full subcategory on the three injective objects $\{ I_1, I_2, I_3 \} \subset \mathrm{mod}\text{-}kQ$. 
\end{lemma}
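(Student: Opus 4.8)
The plan is to transport the statement into $P^\perp$ and reduce it to a comparison of two triangles with common outer terms. Since $\mathbb{L}_P I_i \in P^\perp$ for every $i$, Lemma~\ref{lem:extension} supplies a natural isomorphism $\Phi\mathbb{L}_P|_{\mathcal{I}} \simeq \mathbb{L}_{S(P)}\mathbb{L}_P|_{\mathcal{I}}$, so it is enough to build a natural isomorphism $\mathbb{L}_{S(P)}\mathbb{L}_P I_i \simeq S^{-1}(\mathbb{L}_P I_i)[1]$ compatible with the induced morphisms $\mathbb{L}_P f$. By Lemma~\ref{lem:injective} the three relevant objects are concrete: $\mathbb{L}_P I_3 \simeq \tilde{C}$, $\mathbb{L}_P I_2 \simeq A$ and $\mathbb{L}_P I_1 \simeq D[1]$.

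For the object-level isomorphisms I would present both sides as cones built from the same pair $\{P_i[1], \tilde{P}\}$. Applying $\mathbb{L}_{S(P)}$ to the decomposition triangle $\mathbb{L}_P I_i \to P_i[1] \to \tilde{P}[1]$ of Lemma~\ref{lem:projective} and using $\mathbb{L}_{S(P)}(\tilde{P}[1]) \simeq 0$ gives $\mathbb{L}_{S(P)}\mathbb{L}_P I_i \simeq \mathbb{L}_{S(P)}(P_i)[1]$; since $\mathrm{RHom}(S(P), P_i) \simeq \mathbb{C}[-4]$ (Serre duality together with Lemma~\ref{lem:p}), the mutation triangle $\mathrm{RHom}(S(P),P_i)\otimes S(P) \to P_i \to \mathbb{L}_{S(P)}(P_i)$ becomes $\tilde{P}[-2] \to P_i \to \mathbb{L}_{S(P)}(P_i)$, so after shifting $\mathbb{L}_{S(P)}\mathbb{L}_P I_i$ fits into a triangle $P_i[1] \to \mathbb{L}_{S(P)}\mathbb{L}_P I_i \to \tilde{P} \to P_i[2]$. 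On the other hand, applying $S^{-1}[1]$ to the triangle $P \to I_i \to \mathbb{L}_P I_i$ of Lemma~\ref{lem:injective} and using $S^{-1}(P) \simeq \tilde{P}[-2]$ and $S^{-1}(I_i) \simeq P_i$ places $S^{-1}(\mathbb{L}_P I_i)[1]$ in a triangle $P_i[1] \to S^{-1}(\mathbb{L}_P I_i)[1] \to \tilde{P} \to P_i[2]$ with the same outer terms. Now $\mathrm{Hom}(\tilde{P}, P_i[2]) \simeq \mathrm{Hom}(P_i, P)^* \simeq \mathbb{C}$ by Serre duality and $S(\tilde{P}) \simeq P[2]$; moreover both connecting maps are nonzero, since a vanishing one would split the cone as $P_i[1]\oplus\tilde{P}$, which is impossible because $\mathbb{L}_{S(P)}\mathbb{L}_P I_i \in \prescript{\perp}{}{P}$ while $\mathrm{Hom}(P_i[1]\oplus\tilde{P}, P[1]) \supseteq \mathrm{Hom}(P_i, P) \neq 0$ (and symmetrically $\mathbb{L}_P I_i \in P^\perp$ rules out the splitting of the $S^{-1}$-side after applying $S[-1]$). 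Hence the two connecting maps coincide up to scalar and the triangles are isomorphic, producing isomorphisms $\eta_{P_i}$.

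For naturality I would use that, for $f\colon I_i \to I_j$, both horizontal maps are obtained by applying $\mathbb{L}_{S(P)}$, respectively $S^{-1}[1]$, to the single morphism $\mathbb{L}_P f$. Functoriality of the two triangles above turns $f$ into a morphism of triangles whose outer data are the Serre-dual map $S^{-1}(f)\colon P_i \to P_j$ on the $P_i$-terms and multiplication by a scalar on the one-dimensional $\tilde{P}$-terms. Because $\mathrm{Hom}(\tilde{P}, P_i[2])$ is one-dimensional, the exact sequence of Lemma~\ref{lem:exact} shows the morphism of triangles filling in prescribed outer maps is unique, so the naturality square commutes once the two scalars attached to the $\tilde{P}$-terms are matched; this is arranged by rescaling $\eta_{P_i} \mapsto \lambda_i \eta_{P_i}$ and fixing the $\lambda_i$ inductively along the chain $I_3 \to I_2 \to I_1$.

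The main obstacle is precisely this scalar bookkeeping: verifying that the scalar by which $f$ acts on the $\langle S(P)\rangle$-component appearing in $\mathbb{L}_{S(P)}(P_i)$ is, uniformly in $f$, proportional to the scalar of its $\langle P\rangle$-component $f_P$, so that one global choice of the $\lambda_i$ works and the identification of the outer $P_i$-maps is indeed the same on both sides. I expect to settle this by the chain-level model provided by the projective and injective resolutions of $A$, $\tilde{C}$ and $D$ in Lemmas~\ref{lem:aresolution} and~\ref{lem:cdresolution}: realizing $\mathbb{L}_{S(P)}$ and $S^{-1}$ as explicit complexes of projectives and injectives and tracking the generators of $\mathrm{Hom}(I_i, I_j) \simeq \mathrm{Hom}(P_i, P_j)$ makes the two scalars directly comparable, after which the remaining verifications are formal.
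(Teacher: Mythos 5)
Your reduction is sound and is essentially the one the paper itself uses: you transport the problem to $P^\perp$ via Lemma~\ref{lem:extension}, realize both $\mathbb{L}_{S(P)}\mathbb{L}_P I_i$ and $S^{-1}(\mathbb{L}_P I_i)[1]$ as middle terms of triangles with outer terms $P_i[1]$ and $\tilde{P}$ and nonzero connecting maps in the one-dimensional space $\mathrm{Hom}(\tilde{P},P_i[2])$, and then invoke uniqueness of fill-ins (valid here since $\mathrm{Hom}(\tilde{P},P_j[1])=0$) to reduce naturality to matching the induced outer maps. All of these individual assertions check out, and this is the same skeleton as Lemmas~\ref{lem:calculation0reduction}, \ref{lem:simplify} and \ref{lem:calculation} together with the triangles of Lemmas~\ref{lem:injective} and \ref{lem:projective}.

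The gap is that what you set aside as ``scalar bookkeeping'' is in fact the entire content of the lemma, and it is not settled by your argument. Concretely, you need two things: (i) that the map induced on the $P_i[1]$-components by $\mathbb{L}_P f$ under the decomposition $\mathbb{L}_P I_i \to P_i[1]\to\tilde{P}[1]$ of Lemma~\ref{lem:projective} is $S^{-1}f[1]$, and (ii) that the resulting scalars on the $\tilde{P}$-components agree, uniformly over a basis of $\mathrm{Hom}(I_3,I_2)$ and $\mathrm{Hom}(I_2,I_1)$, with the scalars $f_P$ coming from the $\langle P^\perp,P\rangle$-decomposition. Neither follows from functoriality or from the one-dimensionality of $\mathrm{Hom}(\tilde{P},P_i[2])$: since $\mathrm{Hom}(P_i,P_j)=\mathbb{C}^2$ for $i>j$, knowing the $\tilde{P}$-scalar does not pin down the $P_i[1]$-map, and (i) is precisely the commutativity of the right-hand squares in diagrams~\eqref{eq:first} and~\eqref{eq:second}, i.e., a genuine compatibility between the two semiorthogonal decompositions $\langle P^\perp,P\rangle$ and $\langle S(P),P^\perp\rangle$ and the Serre functor. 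The paper establishes exactly this in Sections~\ref{verification1} and~\ref{verification2} by roughly twenty explicit computations in the homotopy category, passing through the cones and the resolutions of Lemmas~\ref{lem:aresolution} and~\ref{lem:cdresolution} and verifying several homotopy equivalences along the way (e.g., Steps 3, 7, 8 of each verification). Your proposal correctly names the right tool but does not carry out the verification, so as written the proof is incomplete at its decisive step.
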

In Section~\ref{simplification}, we prove Lemma~\ref{lem:calculation} reducing Lemma~\ref{lem:computation} to a verification of compatibility of diagrams~\eqref{eq:first} and \eqref{eq:second}. In sections~\ref{verification1} and \ref{verification2}, we perform the explicit computations in the homotopy category of complexes to verify the compatibility.
\subsubsection{Simplification of computation}\label{simplification}
The goal of this section is to simplify the statement of Lemma~\ref{lem:computation}. Specifically, we prove the following.
{\setlength{\emergencystretch}{0.5\textwidth}\par}
\begin{lemma}\label{lem:calculation}
Recall the exact triangles from Lemmas~\ref{lem:injective} and \ref{lem:projective} corresponding to the semi-orthogonal decompositions $D^b(Q) = \langle P^\perp, P \rangle = \langle S(P), P^\perp \rangle$ respectively.
\begin{equation}\label{eq:first}
\begin{tikzcd}
P \arrow[r] \arrow[d]&I_3 \arrow[r]\arrow[d,"f"]& \tilde{C} \arrow[d,"f'"] & \tilde{C} \arrow[r] \arrow[d,"f'"]& P_3[1] \arrow[r]\arrow[d,"S^{-1}f{[1]}"] & \tilde{P}[1]\arrow[d] \\
P \arrow[r]& I_2 \arrow[r]&A & A \arrow[r] & P_2[1] \arrow[r] & \tilde{P}[1] 
\end{tikzcd}
\end{equation}
\begin{equation}\label{eq:second}
\begin{tikzcd}
P \arrow[r]\arrow[d]& I_2 \arrow[r]\arrow[d,"g"]&A \arrow[d,"g'"] & A \arrow[r] \arrow[d,"g'"]& P_2[1] \arrow[r]\arrow[d,"S^{-1}g{[1]}"] & \tilde{P}[1] \arrow[d]\\
P \arrow[r] & I_1 \arrow[r] & D[1] & D[1] \arrow[r] & P_1[1] \arrow[r] & \tilde{P}[1]
\end{tikzcd}
\end{equation}
Assume that for each element of a basis $f \colon I_3 \rightarrow I_2$, there exists an $f'$ making both of the top two diagrams commute. Similarly, assume that for each element of a basis $g \colon I_2 \rightarrow I_1$, there exists a $g'$ making the both of the bottom two diagrams commute. Then there exists a natural isomorphism of functors $\Phi \mathbb{L}_P|_{\mathcal{I}} \simeq S^{-1} [1]\mathbb{L}_P|_{\mathcal{I}}$. 
\end{lemma}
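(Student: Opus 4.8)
The plan is to build the objectwise isomorphisms $\eta_{P_i}\colon \Phi\mathbb{L}_P I_i \simeq S^{-1}\mathbb{L}_P I_i[1]$ by hand and then to read off their naturality directly from the assumed commutativity of~\eqref{eq:first} and~\eqref{eq:second}. Write $X_i \coloneqq \mathbb{L}_P I_i \in P^\perp$, so that $X_3 = \tilde{C}$, $X_2 = A$, $X_1 = D[1]$ by Lemma~\ref{lem:injective}, and recall $\Phi|_{P^\perp} \simeq \mathbb{L}_{S(P)}$ from Lemma~\ref{lem:extension}. The construction hinges on the object $P_i[1]$, which sits in two triangles: the triangle $X_i \xrightarrow{\iota_i} P_i[1] \to \tilde{P}[1]$ of Lemma~\ref{lem:projective}, and the triangle $\tilde{P}[-1] \to P_i[1] \to S^{-1}X_i[1]$ obtained by applying $S^{-1}[1]$ to the defining triangle $P \to I_i \to X_i$ of Lemma~\ref{lem:injective}, using $S^{-1}I_i \simeq P_i$ from Lemma~\ref{lem:serre} and $S^{-1}P \simeq \tilde{P}[-2]$ from the relation $S^2 P = P[4]$.

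First I would identify the second of these triangles with the left-mutation triangle of $P_i[1]$ through $S(P)$. A computation parallel to Lemma~\ref{lem:sinverse} gives $\mathrm{RHom}(S(P), P_i[1]) \simeq k[-3]$ by Serre duality together with $S^2P = P[4]$, whence $\mathrm{RHom}(S(P),P_i[1])\otimes S(P) \simeq \tilde{P}[-1]$ and the triangle exhibits a canonical isomorphism $\theta_i\colon \mathbb{L}_{S(P)}(P_i[1]) \simeq S^{-1}X_i[1]$. On the other hand, applying $\mathbb{L}_{S(P)}$ to the triangle of Lemma~\ref{lem:projective} annihilates $\tilde{P}[1]$ (a shift of $S(P)$), yielding $\mathbb{L}_{S(P)}\iota_i\colon \mathbb{L}_{S(P)}X_i \xrightarrow{\sim} \mathbb{L}_{S(P)}(P_i[1])$. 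Composing the two isomorphisms defines $\eta_{P_i} = \theta_i \circ \mathbb{L}_{S(P)}\iota_i$.

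For naturality with respect to a basis morphism $f\colon I_3 \to I_2$, note first that the induced map $f'\colon X_3 \to X_2$ on cones is unique --- the ambiguity lies in $\mathrm{Hom}(P, A[-1]) = 0$ since $A \in P^\perp$ --- so the $f'$ supplied by the hypothesis equals $\mathbb{L}_P f$. The naturality square for $\eta$ factors as two squares through $\mathbb{L}_{S(P)}(P_i[1])$. The upper square, namely $\mathbb{L}_{S(P)}\iota_2 \circ \mathbb{L}_{S(P)}f' = \mathbb{L}_{S(P)}(S^{-1}f[1])\circ \mathbb{L}_{S(P)}\iota_3$, is just $\mathbb{L}_{S(P)}$ applied to the identity $\iota_2 \circ f' = S^{-1}f[1]\circ\iota_3$, which is the right-hand square of~\eqref{eq:first}. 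The lower square asserts that $\theta_2 \circ \mathbb{L}_{S(P)}(S^{-1}f[1]) = S^{-1}f'[1] \circ \theta_3$; this follows from the functoriality of the mutation triangle combined with $S^{-1}[1]$ applied to the left-hand square of~\eqref{eq:first}. I expect the genuine work to lie here, in cancelling the mutation counit $\mu_i\colon P_i[1] \to \mathbb{L}_{S(P)}(P_i[1])$: the two candidate maps agree after precomposition with $\mu_3$, so their difference factors through $\mathrm{cone}(\mu_3)\simeq\tilde{P}$, a shift of $S(P)$, and hence vanishes because $\mathbb{L}_{S(P)}(P_2[1]) \in \prescript{\perp}{}{P} = \{Z : \mathrm{Hom}(S(P), Z[i]) = 0\}$. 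This uniqueness argument is cleanest to organize exactly as in the proof of Lemma~\ref{lem:reduction}, via Lemma~\ref{lem:exact} and the vanishing of Lemma~\ref{lem:extensions}. The identical argument applied to~\eqref{eq:second} handles each basis morphism $g\colon I_2 \to I_1$.

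Finally I would assemble the $\eta_{P_i}$ into a natural transformation on all of $\mathcal{I}$. Since $\mathrm{Hom}(I_i,I_i) = k\cdot\mathrm{id}$ and $\mathrm{Hom}(I_i,I_j)=0$ for $i<j$, the only morphisms not yet treated lie in $\mathrm{Hom}(I_3,I_1)$; but the relations $\beta_2\alpha_1 = \alpha_2\beta_1 = 0$ leave precisely the two length-two paths $\alpha_2\alpha_1$ and $\beta_2\beta_1$, so $\mathrm{Hom}(I_3,I_1)$ is spanned by composites $g\circ f$ of the chosen basis morphisms. Naturality for such composites follows by pasting the squares for $f$ and $g$, naturality for identities is automatic, and the general case follows by bilinearity. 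Hence the $\eta_{P_i}$ constitute a natural isomorphism $\Phi\mathbb{L}_P|_{\mathcal{I}} \simeq S^{-1}[1]\mathbb{L}_P|_{\mathcal{I}}$, as asserted.
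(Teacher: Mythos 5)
Your proposal is correct, and it follows the same basic skeleton as the paper's proof --- both arguments pivot on the object $P_i[1]$, which carries the two triangles $\mathbb{L}_PI_i \to P_i[1] \to \tilde{P}[1]$ (Lemma~\ref{lem:projective}) and $\tilde{P}[-1] \to P_i[1] \to S^{-1}\mathbb{L}_PI_i[1]$ (the image of $P \to I_i \to \mathbb{L}_PI_i$ under $S^{-1}[1]$, via $S^{-1}I_i \simeq P_i$), and both use the hypothesis that $f'$ is compatible with $f$ and with $S^{-1}f[1]$ on each of these. The organization differs in one genuine respect: the paper first establishes the intermediate natural isomorphism $\mathbb{L}_P|_{\mathcal{I}} \simeq \mathbb{R}_{S(P)}S^{-1}[1]|_{\mathcal{I}}$ by reading $\tilde{C}, A, D[1]$ as the $P^\perp$-components of $P_i[1]$ in $\langle S(P), P^\perp\rangle$, and then converts to the desired statement via Lemma~\ref{lem:calculation0reduction}, which invokes the abstract commutation $S\circ\mathbb{L}_P\circ S^{-1}\simeq\mathbb{L}_{S(P)}$ from \cite{Kuznetsov_2019}. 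You instead bypass that reduction entirely by computing the left-mutation triangle of $P_i[1]$ directly ($\mathrm{RHom}(S(P),P_i[1])\simeq k[-3]$, which checks out: $\mathrm{Hom}(S(P),P_i[1+j])\simeq\mathrm{Hom}(P_i,P[3-j])^*$ by Serre duality and $S^2P=P[4]$) and matching it against the $S^{-1}[1]$-image of the defining triangle of $\mathbb{L}_PI_i$. Your version has the merit of making explicit the uniqueness arguments that the paper compresses into the phrase ``uniqueness of a semi-orthogonal decomposition'' --- in particular the identification $f'=\mathbb{L}_Pf$ via $\mathrm{Hom}(P[1],A)=0$, and the cancellation of the mutation counit $\mu_i$ using $\mathrm{Hom}(\tilde{P},S^{-1}\mathbb{L}_PI_j[1])=0$ --- at the cost of one extra explicit $\mathrm{RHom}$ computation; the paper's route is shorter on the page because it outsources that step to a cited general isomorphism of functors. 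Your assembly step (identities, vanishing of $\mathrm{Hom}(I_i,I_j)$ for $i<j$, and surjectivity of composition onto $\mathrm{Hom}(I_3,I_1)$) is exactly the paper's Lemma~\ref{lem:simplify}.
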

We begin with the following observation.
\begin{lemma}\label{lem:calculation0reduction}
Consider the full subcategory $\mathcal{I}$ on the injective objects $\{I_1,I_2,I_3 \}$ and $\mathcal{P}$ on the projective objects $\{P_1,P_2,P_3\}$. Assume that there exists an isomorphism of functors $\mathbb{L}_P \simeq \mathbb{R}_{S(P)} S^{-1}[1]$ on $\mathcal{I}$. Then there exists an isomorphism of functors $\Phi\mathbb{L}_P |_{\mathcal{I}} \simeq S^{-1}[1] \mathbb{L}_P |_{\mathcal{I}}$.
\end{lemma}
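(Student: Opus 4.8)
The plan is to first trade $\Phi$ for a mutation functor, and then to recognise that every mutation occurring in the statement is a mutation through a shift of the single object $\tilde{P}$. By Lemma~\ref{lem:extension} we have $\restr{\Phi}{P^\perp} \simeq \restr{\mathbb{L}_{S(P)}}{P^\perp}$, and since $\mathbb{L}_P$ is the projection onto $P^\perp$, each object $\mathbb{L}_P I_i$ lies in $P^\perp$. Hence $\Phi\mathbb{L}_P|_{\mathcal{I}} \simeq \mathbb{L}_{S(P)}\mathbb{L}_P|_{\mathcal{I}}$ as functors, and it suffices to produce a natural isomorphism $\mathbb{L}_{S(P)}\mathbb{L}_P|_{\mathcal{I}} \simeq S^{-1}[1]\mathbb{L}_P|_{\mathcal{I}}$.

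The crucial observation is that the two mutation functors in play coincide. By Lemma~\ref{lem:p}, $S(P) \simeq \tilde{P}[2]$; feeding this into the fractional Calabi--Yau relation $S^2 P \simeq P[4]$ gives $S(\tilde{P}) \simeq P[2]$ and therefore $S^{-1}(P) \simeq \tilde{P}[-2]$. As left mutation is unchanged under shifting the object one mutates through, $\mathbb{L}_{S(P)} \simeq \mathbb{L}_{\tilde{P}} \simeq \mathbb{L}_{S^{-1}(P)}$. Combining the standard conjugation relation $S^{-1}\mathbb{L}_P \simeq \mathbb{L}_{S^{-1}(P)}S^{-1}$ (valid because $S$ is an autoequivalence and $\Psi\mathbb{L}_E\Psi^{-1}\simeq \mathbb{L}_{\Psi E}$ for any autoequivalence $\Psi$) with $S^{-1}(I_i)\simeq P_i$ from Lemma~\ref{lem:serre}, the right-hand side simplifies to $S^{-1}[1]\mathbb{L}_P I_i \simeq \mathbb{L}_{\tilde{P}}P_i[1]$, while the left-hand side is $\mathbb{L}_{S(P)}\mathbb{L}_P I_i \simeq \mathbb{L}_{\tilde{P}}\mathbb{L}_P I_i$. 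The claim thus reduces to the natural isomorphism $\mathbb{L}_{\tilde{P}}\mathbb{L}_P I_i \simeq \mathbb{L}_{\tilde{P}}P_i[1]$.

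This is exactly where the hypothesis enters. The assumed isomorphism $\mathbb{L}_P \simeq \mathbb{R}_{S(P)}S^{-1}[1]$ on $\mathcal{I}$ gives $\mathbb{L}_P I_i \simeq \mathbb{R}_{S(P)}P_i[1]$, and the defining triangle of the right mutation exhibits this as a functorial triangle $\mathbb{L}_P I_i \rightarrow P_i[1] \rightarrow M_i$ whose third term $M_i$ is a finite direct sum of shifts of $S(P)\simeq \tilde{P}[2]$. Applying the exact functor $\mathbb{L}_{\tilde{P}}$, which annihilates $\tilde{P}$ and hence every shift and finite sum of copies of $\tilde{P}$, kills $M_i$ and yields $\mathbb{L}_{\tilde{P}}\mathbb{L}_P I_i \simeq \mathbb{L}_{\tilde{P}}P_i[1]$. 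Chaining the identifications above produces the sought natural isomorphism on $\mathcal{I}$.

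I expect the only real difficulty to be bookkeeping rather than substance: one must pin down the shift identities $S(P)\simeq\tilde{P}[2]$ and $S^{-1}(P)\simeq\tilde{P}[-2]$ together with the conjugation relation, tracking every shift precisely, and one must check that the isomorphisms assembled above are natural in $I_i$ rather than merely pointwise. The naturality is ensured because $\mathbb{L}_{\tilde{P}}$ is a functor, the right-mutation triangle is functorial, and the hypothesis is an isomorphism of functors; this is nonetheless the step that deserves the most care.
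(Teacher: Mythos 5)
Your argument is correct and follows essentially the same route as the paper's proof: first replace $\Phi\mathbb{L}_P$ by $\mathbb{L}_{S(P)}\mathbb{L}_P$, then use the conjugation relation $S^{-1}\mathbb{L}_P \simeq \mathbb{L}_{S^{-1}(P)}S^{-1}$ together with $S^{-1}I_i \simeq P_i$, and finally note that $\mathbb{L}_{S(P)}$ (which you correctly identify with $\mathbb{L}_{\tilde{P}} \simeq \mathbb{L}_{S^{-1}(P)}$, a fact the paper uses implicitly) collapses the right-mutation triangle $\mathbb{R}_{S(P)}P_i[1] \to P_i[1] \to \tilde{P}[1]$ so that the hypothesis $\mathbb{L}_P \simeq \mathbb{R}_{S(P)}S^{-1}[1]$ on $\mathcal{I}$ closes the chain. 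The only difference is presentational: the paper tracks naturality by writing the commuting squares for each $f\colon I_i \to I_j$ explicitly, whereas you argue it from functoriality of the mutation triangles, which amounts to the same thing.
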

\begin{proof}
By definition, there exists a natural isomorphism $\Phi\mathbb{L}_P \simeq \mathbb{L}_{S(P)}\mathbb{L}_P$. Thus, we have the following chain of isomorphisms for any morphism of injective objects $f \colon I_i \rightarrow I_j$:
\[
\begin{tikzcd}
S^{-1}\mathbb{L}_PI_i[1] \arrow[d] \arrow[r,"\sim"] & \mathbb{L}_{S(P)}S^{-1}I_i[1] \arrow[d] \arrow[r,"\sim"] & \mathbb{L}_{S(P)}\mathbb{R}_{S(P)}S^{-1}I_i[1] \arrow[r,"\sim"] \arrow[d]& \mathbb{L}_{S(P)}\mathbb{L}_P I_i \arrow[d]\\
S^{-1} \mathbb{L}_PI_j[1] \arrow[r,"\sim"] & \mathbb{L}_{S(P)}S^{-1}I_j[1] \arrow[r,"\sim"] & \mathbb{L}_{S(P)}\mathbb{R}_{S(P)}S^{-1}I_j[1] \arrow[r,"\sim"] & \mathbb{L}_{S(P)}\mathbb{L}_PI_j
\end{tikzcd}
\]
where the first square follows from the standard isomorphism of functors~\cite[Lemma 2.3(ii)]{Kuznetsov_2019} applied to the case $S \circ \mathbb{L}_P \circ S^{-1} \simeq \mathbb{L}_{S(P)}$, the middle square of isomorphisms follows by applying the functor $\mathbb{L}_{S(P)}$ to the following diagram by Lemmas~\ref{lem:serre} and \ref{lem:projective}:
\[
\begin{tikzcd}
\mathbb{R}_{S(P)}P_i[1] \arrow[r] \arrow[d] & P_i[1] \arrow[r]\arrow[d] & \tilde{P}[1]\arrow[d] \\
\mathbb{R}_{S(P)}P_j[1] \arrow[r]& P_j[1] \arrow[r]& \tilde{P}[1]
\end{tikzcd}
\]
and the last square of isomorphisms follows by the assumption.
\end{proof}
In particular, it suffices to verify the isomorphism $\mathbb{L}_P |_{\mathcal{I}} \simeq \mathbb{R}_{S(P)}S^{-1}[1] |_{\mathcal{I}}$ on a basis of morphisms between distinct injective objects.
\begin{lemma}\label{lem:simplify}
Let $f_1,f_2 \colon I_3 \rightarrow I_2$ and $g_1,g_2 \colon I_2 \rightarrow I_1$ be a basis of morphisms. Assume that there exists isomorphisms $\eta_{P^\perp} \colon \mathbb{L}_P I_i \simeq \mathbb{R}_{S(P)}S^{-1} I_i[1]$ for all $i$ such that the following diagram commutes with $f$ any of the above four morphisms.
\[
\begin{tikzcd}
\mathbb{L}_PI_i \arrow[r," \mathbb{L}_P f"]\isoarrow{d}& \mathbb{L}_PI_j \isoarrow{d}\\
\mathbb{R}_{S(P)}S^{-1}(I_i)[1] \arrow[r,"\mathbb{R}_{S(P)}S^{-1}(f){[1]}"] &\mathbb{R}_{S(P)}S^{-1}(I_j)[1]
\end{tikzcd}
\]
Then the diagram commutes for any pair $i,j$ and any morphism $f \in Hom(I_i,I_j)$.
\end{lemma}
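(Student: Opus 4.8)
The plan is to fix the two $k$-linear exact functors $F \coloneqq \mathbb{L}_P$ and $G \coloneqq \mathbb{R}_{S(P)}S^{-1}[1]$ together with the isomorphisms $\eta_i \colon F(I_i) \xrightarrow{\sim} G(I_i)$ supplied by the hypothesis, and to study, for each pair $(i,j)$, the \emph{commuting locus}
\[
N(i,j) \coloneqq \{ f \in Hom(I_i,I_j) : \eta_j \circ F(f) = G(f) \circ \eta_i \} \subseteq Hom(I_i,I_j).
\]
The goal is to show that $N(i,j)$ is all of $Hom(I_i,I_j)$ for every pair. I would exploit two structural features: that $N$ behaves formally like the morphisms of a subcategory, and that the Hom-spaces between injectives are small and generated elementarily from the assumed bases.

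First I would record the formal properties of $N$. Since $F$ and $G$ are $k$-linear and the $\eta_i$ are fixed, the assignment $f \mapsto \eta_j \circ F(f) - G(f) \circ \eta_i$ is $k$-linear, so each $N(i,j)$ is its kernel and hence a linear subspace. Moreover $N$ is closed under composition: for $f \in N(i,j)$ and $g \in N(j,k)$ one computes
\[
\eta_k \circ F(g \circ f) = \eta_k \circ F(g) \circ F(f) = G(g) \circ \eta_j \circ F(f) = G(g) \circ G(f) \circ \eta_i = G(g\circ f)\circ \eta_i,
\]
so $g \circ f \in N(i,k)$, while $\mathrm{id}_{I_i} \in N(i,i)$ trivially. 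It therefore suffices to exhibit, for each $(i,j)$, a generating set of $Hom(I_i,I_j)$ lying in $N$.

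Next I would invoke the Hom-structure of the injectives. By Lemma~\ref{lem:serre} we have $I_i \simeq S(P_i)$, and since $S$ is an autoequivalence, $Hom(I_i,I_j) \simeq Hom(P_i,P_j)$; by Lemma~\ref{lem:projinj} these spaces equal $\mathbb{C}^2$ for $i>j$, $\mathbb{C}$ for $i=j$, and $0$ for $i<j$. The cases $i<j$ are vacuous, and the diagonal cases follow from $\mathrm{id}_{I_i} \in N(i,i)$ together with linearity. The hypothesis places the bases $f_1,f_2$ and $g_1,g_2$ into $N(3,2)$ and $N(2,1)$, so linearity yields $N(3,2) = Hom(I_3,I_2)$ and $N(2,1) = Hom(I_2,I_1)$. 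The only remaining space is $Hom(I_3,I_1) \simeq \mathbb{C}^2$, and by the composition property its four composites $g_a \circ f_b$ all lie in $N(3,1)$.

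The crux, and the one step requiring the explicit geometry of the quiver, is that these composites actually span $Hom(I_3,I_1)$. Transporting through Serre duality, the composition map $Hom(I_2,I_1) \otimes Hom(I_3,I_2) \to Hom(I_3,I_1)$ corresponds to concatenation of arrows in $kQ/I$. The four length-two paths $\alpha_2\alpha_1$, $\alpha_2\beta_1$, $\beta_2\alpha_1$, $\beta_2\beta_1$ are cut down by the relations $\beta_2\alpha_1 = \alpha_2\beta_1 = 0$ to the two linearly independent survivors $\alpha_2\alpha_1$ and $\beta_2\beta_1$, which span the two-dimensional target. Hence the composites $g_a \circ f_b$ span $Hom(I_3,I_1)$, and linearity upgrades this to $N(3,1) = Hom(I_3,I_1)$. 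This exhausts all pairs $(i,j)$ and establishes the naturality for every morphism. The only genuine content beyond formal bookkeeping is this surjectivity of composition, so that is where I would concentrate the verification.
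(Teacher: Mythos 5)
Your proof is correct and follows essentially the same route as the paper's: the diagonal cases are handled via the identity and linearity, the off-diagonal cases $N(3,2)$, $N(2,1)$ by the hypothesis and linearity, and $Hom(I_3,I_1)$ by surjectivity of the composition map $Hom(I_3,I_2)\times Hom(I_2,I_1)\to Hom(I_3,I_1)$ together with compatibility of both functors with composition. The only difference is that you explicitly justify this surjectivity via the two surviving length-two paths $\alpha_2\alpha_1$, $\beta_2\beta_1$ in $kQ/I$, whereas the paper simply asserts it.
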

\begin{proof}
The claim is obvious if $i = j$. Indeed, any $f \in Hom(I_i,I_i) = \mathbb{C}$ is of the form $f = \alpha\cdot id$ as the injective objects $I_i$ are simple. The diagram then commutes as $\mathbb{L}_P(id) = \mathbb{R}_{S(P)}S^{-1}[1](id) = id$ and by linearity.

As the composition map $Hom(I_3,I_2) \times Hom(I_2,I_1) \rightarrow Hom(I_3,I_1)$ is surjective, the remaining claim again follows by linearity and additivity of the functors $\mathbb{L}_P, \mathbb{R}_{S(P)}S^{-1}[1]$ and of compositions.
\end{proof}

We now prove Lemma~\ref{lem:calculation}
\begin{proof}[Proof of Lemma~\ref{lem:calculation}] 
The claim essentially follows from the uniqueness of a semi-orthogonal decomposition. For the sequences in \eqref{eq:first}, we have the following sequence of natural isomorphisms
\[
\begin{tikzcd}
\mathbb{L}_PI_3 \arrow[r,"\sim"] \arrow[d,"\mathbb{L}_Pf"]& \tilde{C} \arrow[r,"\sim"] \arrow[d,"f'"]& \mathbb{R}_{S(P)}P_3[1] \arrow[r, "\sim"] \arrow[d] & \mathbb{R}_{S(P)}S^{-1}I_3[1]\arrow[d,"S^{-1}f{[1]}"]\\
\mathbb{L}_PI_2 \arrow[r,"\sim"] & A \arrow[r,"\sim"] & \mathbb{R}_{S(P)}P_2[1] \arrow[r,"\sim"]  & \mathbb{R}_{S(P)}S^{-1}I_2[1]
\end{tikzcd}
\]
where the first and second squares follow from the uniqueness of semi-orthogonal decompositions and the third follows from Lemma~\ref{lem:serre}. The conclusion for the sequences in ~\eqref{eq:second} follows from an identical argument.

Then by Lemma~\ref{lem:simplify}, we conclude that there exists a natural isomorphism of functors $\mathbb{L}_P \simeq \mathbb{R}_{S(P)}S^{-1}[1]$ on the full subcategory $\mathcal{I}$ of injective objects. The conclusion then follows from Lemma~\ref{lem:calculation0reduction}.
{\setlength{\emergencystretch}{0.5\textwidth}\par}
\end{proof}
\subsubsection{Verification of compatibility for sequences in~\eqref{eq:first}}\label{verification1}
In this section, we prove part 1 in the assumption of Lemma~\ref{lem:calculation}. Our calculation is particularly lengthy, in part, because we must explicitly verify the equality of the two morphisms by passing through cones and projective/injective resolutions.
\begin{lemma}\label{lem:calculation1}
For each element of a basis $f \colon I_3 \rightarrow I_2$, there exists an $f'$ making the following two diagrams commute.
\[
\begin{tikzcd}
P \arrow[r] \arrow[d]&I_3 \arrow[r,"i_3"]\arrow[d,"f"]& \tilde{C} \arrow[d,"f'"] & \tilde{C} \arrow[r,"p_3"] \arrow[d,"f'"]& P_3[1] \arrow[r]\arrow[d,"S^{-1}f{[1]}"] & \tilde{P}[1]\arrow[d] \\
P \arrow[r]& I_2 \arrow[r,"i_2"]&A & A \arrow[r,"p_2"] & P_2[1] \arrow[r] & \tilde{P}[1] 
\end{tikzcd}
\]
\end{lemma}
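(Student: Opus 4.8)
The plan is to take $f'$ to be the map induced by $f$ on $P^\perp$-components and to verify the two squares separately: the left one formally, and the right one after a reduction that turns it into a concrete $\operatorname{Ext}^1$-computation.

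Set $f' := \mathbb{L}_P f \colon \tilde{C} = \mathbb{L}_P I_3 \to \mathbb{L}_P I_2 = A$, the morphism induced by $f$ under the projection onto $P^\perp$ attached to the decomposition $\langle P^\perp, P\rangle$. Since the projection triangle $P \to I_i \xrightarrow{i_i} \mathbb{L}_P I_i$ is functorial, $f'$ satisfies $f' \circ i_3 = i_2 \circ f$, so the left square commutes automatically; note also that $f'$ is the unique completion, as any two differ by a map factoring through $\operatorname{cone}(i_3) = P[1]$ and $\operatorname{Hom}(P[1], A) = \operatorname{Hom}(P, A[-1]) = 0$ because $A \in P^\perp$. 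It therefore remains to prove $p_2 \circ f' = (S^{-1}f[1]) \circ p_3$ in $\operatorname{Hom}(\tilde{C}, P_2[1])$. The key simplification is that precomposition with $i_3$ is injective on this space: its kernel is the image of $\operatorname{Hom}(P[1], P_2[1]) = \operatorname{Hom}(P, P_2)$, and a direct inspection of the representations of $P$ and $P_2$ in Lemmas~\ref{lem:p} and \ref{lem:projinj} gives $\operatorname{Hom}(P, P_2) = 0$. Using $f' \circ i_3 = i_2 \circ f$ again, the desired identity is thus equivalent to the single equation
\[
(p_2 \circ i_2) \circ f = (S^{-1}f[1]) \circ (p_3 \circ i_3) \quad \text{in } \operatorname{Hom}(I_3, P_2[1]).
\]
Writing $\kappa_i := p_i \circ i_i \in \operatorname{Hom}(I_i, P_i[1]) = \operatorname{Ext}^1(I_i, P_i)$, this is exactly the naturality square $\kappa_2 \circ f = (S^{-1}f[1]) \circ \kappa_3$.

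To establish this naturality I would first fix an explicit basis $\{f_1, f_2\}$ of $\operatorname{Hom}(I_3, I_2) \cong \mathbb{C}^2$ as morphisms of quiver representations, and read off the Serre duals $S^{-1}f_k \colon P_3 \to P_2$ from the explicit action of $S$ on $\operatorname{Hom}(P_3, P_2)$ recorded in Lemma~\ref{lem:serre}, using $S^{-1}I_i \simeq P_i$. I would then identify the classes $\kappa_i$ concretely: $p_i$ is the connecting morphism of the short exact sequences of Lemma~\ref{lem:projective} and $i_i$ the structure map of Lemma~\ref{lem:injective}, so $\kappa_i$ is represented by an explicit cochain map once $\tilde{C}$ and $A$ are replaced by the projective resolutions of Lemmas~\ref{lem:cdresolution} and \ref{lem:aresolution} and $P_2[1], P_3[1]$ by the corresponding shifted projectives. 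With these models in hand, $\kappa_2 \circ f_k = (S^{-1}f_k[1]) \circ \kappa_3$ becomes an equality of maps of complexes to be checked up to homotopy; once it holds for $f_1, f_2$, the case of arbitrary $f$ follows by linearity.

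The main obstacle is this final chain-level verification. Because $A$ and $\tilde{C}$ have non-minimal, length-three projective resolutions and $\kappa_i$ is only well defined up to homotopy, one must carefully realize the connecting morphisms $p_i$ and the lift of $\mathbb{L}_P f$ as honest cochain maps and then exhibit the homotopy witnessing that the two composites agree. The two-dimensionality of $\operatorname{Hom}(\tilde{C}, P_2[1]) \cong \operatorname{Hom}(P_3, P_2) \cong \mathbb{C}^2$ (which one sees from the triangle $\tilde{C} \to P_3[1] \to \tilde{P}[1]$ together with $\operatorname{Hom}^{\le 1}(\tilde{P}, P_2) = 0$) means the identity is not forced by any dimension count, so each basis vector must be matched explicitly rather than up to a single scalar.
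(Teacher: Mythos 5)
Your proposal is logically sound and takes a genuinely cleaner route than the paper, though it stops short of the decisive computation. The paper's proof never isolates a reduction: it computes the morphism $f'\colon \tilde{C}\to I_A^\bullet$ induced by $i_2\circ f$ through injective resolutions (steps 1--3), then separately computes $(S^{-1}f[1])\circ p_3$ and $p_2\circ f'$ on the projective resolution $P_{\tilde{C}}^\bullet$ by passing through the cones $Cone(P_3\to\tilde{P})$ and $Cone(P_2\to\tilde{P})$ (steps 4--9), and compares the results (step 10). Your observation that precomposition with $i_3$ is injective on $\operatorname{Hom}(\tilde{C},P_2[1])$ --- because the kernel is the image of $\operatorname{Hom}(P[1],P_2[1])=\operatorname{Hom}(P,P_2)=0$ --- is correct (a morphism of representations $P\to P_2$ vanishes at the first vertex and hence everywhere, since $\alpha_1,\alpha_2$ act by $1$ on $P$), and it collapses the problem to the single identity $\kappa_2\circ f=(S^{-1}f[1])\circ\kappa_3$ in $\operatorname{Ext}^1(I_3,P_2)$, with $\kappa_i=p_i\circ i_i$. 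This avoids ever writing down $f'$ or resolving $\tilde{C}$ and $A$, which is where most of the paper's labor lies; your uniqueness remark for $f'$ (via $\operatorname{Hom}(P[1],A)=0$) and the dimension count $\operatorname{Hom}(\tilde{C},P_2[1])\cong\operatorname{Hom}(P_3,P_2)\cong\mathbb{C}^2$ also check out.

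The one caveat is that the remaining identity $\kappa_2\circ f_k=(S^{-1}f_k[1])\circ\kappa_3$ for the two basis vectors $f_1,f_2$ is the entire mathematical content of this lemma, and you correctly note it is not forced by any dimension count: it must be verified on the nose (not up to a scalar depending on $k$), since Lemma~\ref{lem:calculation} needs a single system of identifications working simultaneously for the whole basis. Your write-up describes how this verification would go but does not carry it out, so as it stands the proof is a (correct and simplifying) reduction rather than a complete argument. To finish, you would need to represent $\kappa_3$ as the pullback of the extension class of $P_3\to\tilde{P}\to\tilde{C}$ along $I_3\twoheadrightarrow\tilde{C}$, represent $\kappa_2$ via the quasi-isomorphism $A\simeq Cone(P_2\to\tilde{P})$, read off $S^{-1}f_k$ from Lemma~\ref{lem:serre}, and exhibit the homotopies --- a shorter but still unavoidable analogue of the paper's steps 4--10.
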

We first summarize the steps, illustrated in figure~\ref{fig:1}, describing the explicit computations involved in the homotopy category of complexes. It suffices to check the following for a basis of the vector space $Hom(I_3,I_2)$ given by the morphisms described in Lemma~\ref{lem:serre}.
\begin{enumerate}[(a)]
\item
In steps $1$ and $2$, we compute the composition of the morphism $i_2 \circ f \colon I_3 \rightarrow A$ with the injective resolution $A \rightarrow I_A^\bullet$ described in Lemma~\ref{lem:aresolution}.
\item
In step $3$, we compute the unique induced morphism $\tilde{C} \rightarrow I_A^\bullet$ whose composition with $i_3$ agrees with the results of (a).
\item
In steps $4$ and $5$, we compute the composition of the morphism $S^{-1}f[1] \circ p_3$ with the projective resolution $P_{\tilde{C}}^\bullet \xrightarrow{\sim} \tilde{C}$ of Lemma~\ref{lem:cdresolution}.
\item
In steps $6,7$ and $8$, we compute the composition $P_{\tilde{C}}^\bullet \xrightarrow{f'} A \xrightarrow{\sim} Cone(P_2 \rightarrow \tilde{P})$ with $f'$ obtained from part (b), where we recall that the morphism $p_2$ factors into the composition $A \xrightarrow{\sim} Cone(P_2\rightarrow \tilde{P}) \rightarrow P_2[1]$ by definition.
\item
In steps $9$ and $10$, we use the above step to deduce the composition of the morphism $p_2 \circ f'$ with the projective resolution $P_{\tilde{C}}^\bullet \xrightarrow{\sim} \tilde{C}$ and check that this agrees with the result from part (c).
\end{enumerate}
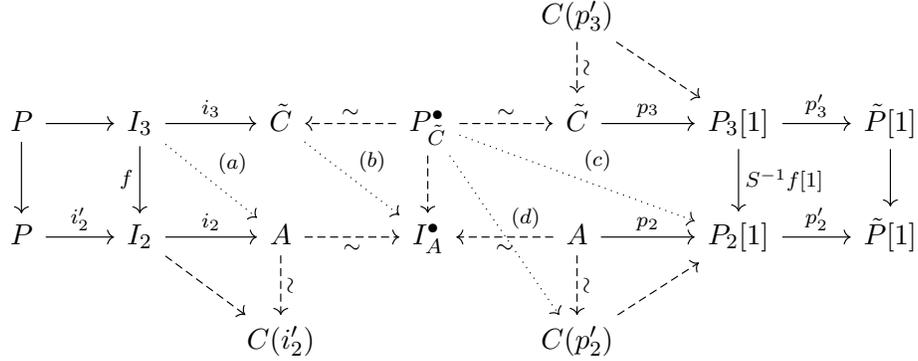
\begin{figure}[H]
\begin{tikzcd}
& & & &C(p_3') \isoarrow{d,dashed} \arrow[rd,dashed] & &\\
P \arrow[r] \arrow[d]&I_3 \arrow[r,"i_3"]\arrow[d,"f"']\arrow[rd,dotted,"(a)"]& \tilde{C}  \arrow[rd,dotted,"(b)"] &[-5pt]P_{\tilde{C}}^\bullet \arrow[rrd,dotted,"(c)"] \arrow[ rdd,dotted,"(d)"]\arrow[l,dashed,"\sim"'] \arrow[r,dashed,"\sim"] \arrow[d,dashed]&[-5pt]\tilde{C} \arrow[r,"p_3"]& P_3[1] \arrow[r,"p_3'"]\arrow[d,"S^{-1}f{[1]}"] & \tilde{P}[1]\arrow[d] \\
P \arrow[r,"i_2'"]& I_2 \arrow[r,"i_2"] \arrow[rd,dashed]&A  \arrow[r,dashed,"\sim"'] \isoarrow{d,dashed}&  I_{A}^\bullet & A \isoarrow{d,dashed} \arrow[l,dashed,"\sim"] \arrow[r,"p_2"] & P_2[1] \arrow[r,"p_2'"] & \tilde{P}[1] \\
& & C(i_2')& &C(p_2') \arrow[ru,dashed] & & 
\end{tikzcd}
\caption{Illustration of the steps involved in the verification. In the diagram, $C(f)$ denotes the cone of a morphism $f$, the dashed arrows denote the canonical morphisms, the dashed isomorphisms come from Lemma~\ref{lem:aresolution},~\ref{lem:cdresolution}, and the dotted arrows correspond to the indicated steps.}
\label{fig:1}
\end{figure}
In more detail, the proof of Lemma~\ref{lem:calculation1} is broken down into the following steps for each map $f \colon I_3 \rightarrow I_2$.
\begin{lemma}[Step 1]
The composition $Cone(P \rightarrow I_2) \xleftarrow{\sim} A \xrightarrow{\sim} I_A^\bullet$ is given by the following morphism of complexes.
\[
\begin{tikzcd}
\mathbb{C}\arrow[r, shift left, "1"] \arrow[r, shift right, "0", labels = below] \arrow[d,"{\begin{psmallmatrix}1 \\0 \end{psmallmatrix}}"] &\mathbb{C} \arrow[r, shift left, "1"] \arrow[r, shift right, "0", labels = below]\arrow[d, "{\begin{psmallmatrix}1\\0 \end{psmallmatrix}}"]& \mathbb{C} \arrow[d, "1"]
\arrow[r]& \mathbb{C}^2 \arrow[d, "{\begin{psmallmatrix}1&0\\0&1\\0&0\\0&0 \end{psmallmatrix}}"] \arrow[r, shift left] \arrow[r, shift right] &\mathbb{C} \arrow[d, "{\begin{psmallmatrix}1\\0 \end{psmallmatrix}}"] \arrow[r, shift left] \arrow[r, shift right]  & 0 \arrow[d]\\
\mathbb{C}^2 \arrow[r, shift left] \arrow[r, shift right]  & \mathbb{C}^2 \arrow[r, shift left] \arrow[r, shift right] & \mathbb{C} \arrow[r] & \mathbb{C}^4 \arrow[r, shift left] \arrow[r, shift right] & \mathbb{C}^2 \arrow[r, shift left] \arrow[r, shift right] & 0 \arrow[r] & \mathbb{C} \arrow[r, shift left] \arrow[r, shift right] & 0 \arrow[r, shift left] \arrow[r, shift right] & 0
\end{tikzcd}
\]
\end{lemma}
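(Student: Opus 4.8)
The plan is to realize the roof $Cone(P \to I_2) \xleftarrow{\sim} A \xrightarrow{\sim} I_A^\bullet$ as a single honest chain map and then to match it against the map asserted in the statement. Write $\phi \colon A = Cone(D \to \tilde{C}) \to Cone(P \to I_2)$ for the quasi-isomorphism displayed in the proof of Lemma~\ref{lem:injective}, and $\psi \colon A \to I_A^\bullet$ for the injective resolution of Lemma~\ref{lem:aresolution}, so that the composition in $D^b(Q)$ is $\psi \circ \phi^{-1}$. Since $I_A^\bullet$ is a bounded complex of injectives, morphisms in $D^b(Q)$ with target $I_A^\bullet$ are computed in the homotopy category; and because $\phi$ is a quasi-isomorphism its cone is acyclic, whence $\mathrm{Hom}_{K(Q)}(Cone(\phi),I_A^\bullet) = 0$. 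The associated long exact sequence then shows that precomposition with $\phi$ is a bijection $\mathrm{Hom}_{K(Q)}(Cone(P\to I_2),I_A^\bullet) \xrightarrow{\sim} \mathrm{Hom}_{K(Q)}(A,I_A^\bullet)$. Hence the composition is represented by the unique chain map $\chi$, up to homotopy, satisfying $\chi \circ \phi \simeq \psi$, and it suffices to produce $\chi$ and confirm this relation.

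First I would take $\chi$ to be the map written in the statement, viewed as a chain map $[P \to I_2] \to [I_3 \to I_2^{\oplus 2} \to I_1]$ whose degree-zero component is the morphism $P \to I_3$ with vertex maps $\begin{psmallmatrix}1\\0\end{psmallmatrix}$, $\begin{psmallmatrix}1\\0\end{psmallmatrix}$, $1$, and whose degree-one component is the evident inclusion of a direct summand $I_2 \hookrightarrow I_2^{\oplus 2}$. I would first check that each of these two vertical components is a genuine morphism of quiver representations, and then that the square against the differential of $Cone(P \to I_2)$ and the first differential of $I_A^\bullet$ commutes; both are short matrix computations using the structure maps recorded in Lemmas~\ref{lem:projinj} and \ref{lem:aresolution}.

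Next I would verify $\chi \circ \phi = \psi$ directly at the level of complexes. Both sides are chain maps out of the two-term complex $A$, so the check reduces to comparing, degree by degree and vertex by vertex, the composite of the explicit matrices of $\phi$ (from the proof of Lemma~\ref{lem:injective}) with those of $\chi$ against the matrices of $\psi$ (from Lemma~\ref{lem:aresolution}). Wherever the two chain maps fail to agree on the nose I would exhibit an explicit contracting homotopy; by the bijection of the first paragraph such a homotopy is forced to exist, so the verification is finite and mechanical. Once $\chi \circ \phi \simeq \psi$ is established, the uniqueness statement identifies $\chi$ as the composition, which is precisely the content of the lemma.

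The hard part will be purely organizational rather than conceptual: one must reconcile the three-term shape of the injective resolution $I_A^\bullet$ with the two-term cone $Cone(P \to I_2)$, and transport the identification $A \cong Cone(P \to I_2)$ of Lemma~\ref{lem:injective} correctly through $\psi$. The conceptual step — that the composite lifts to a chain map into the injective resolution, unique up to homotopy — is immediate; the labor lies entirely in confirming that the specific matrices displayed assemble into that lift.
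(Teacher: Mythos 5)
Your proposal is correct and follows essentially the same route as the paper: the paper's (one-line) proof likewise obtains the map by combining the quasi-isomorphism $A \xrightarrow{\sim} Cone(P\to I_2)$ of Lemma~\ref{lem:injective} with the injective resolution of Lemma~\ref{lem:aresolution}, with your appeal to uniqueness up to homotopy of chain maps into a bounded complex of injectives making explicit why the displayed matrices are forced. No gap.
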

\begin{proof}
This comes from combining the quasi-isomorphism $A \xrightarrow{\sim} Cone(P\rightarrow I_2)$ described in Lemma~\ref{lem:injective} with Lemma~\ref{lem:aresolution}.
{\setlength{\emergencystretch}{0.5\textwidth}\par}
\end{proof}
\begin{lemma}[Step 2]
The composition 
 $I_3 \rightarrow I_2 \rightarrow Cone(P\rightarrow I_2) \xrightarrow{\sim} I_A^\bullet$ is given by the following morphism of complexes for each map $f_1, f_2 \colon I_3 \rightarrow I_2$ from Lemma~\ref{lem:serre}.
\[
\begin{tikzcd}
&&& \mathbb{C}^2 \arrow[d, "{\begin{psmallmatrix}1&0\\0&0\\0&0\\0&0 \end{psmallmatrix}}"] \arrow[r, shift left] \arrow[r, shift right] &\mathbb{C}^2 \arrow[d, "{\begin{psmallmatrix}1&0\\0&0 \end{psmallmatrix}}"] \arrow[r, shift left] \arrow[r, shift right]  & \mathbb{C}\arrow[d] \\
\mathbb{C}^2 \arrow[r, shift left] \arrow[r, shift right]  & \mathbb{C}^2 \arrow[r, shift left] \arrow[r, shift right] & \mathbb{C} \arrow[r] & \mathbb{C}^4 \arrow[r, shift left] \arrow[r, shift right] & \mathbb{C}^2 \arrow[r, shift left] \arrow[r, shift right] & 0 \arrow[r] & \mathbb{C} \arrow[r, shift left] \arrow[r, shift right] & 0 \arrow[r, shift left] \arrow[r, shift right] & 0
\end{tikzcd}
\]
\[
\begin{tikzcd}
&&& \mathbb{C}^2 \arrow[d, "{\begin{psmallmatrix}0&0\\0&1\\0&0\\0&0 \end{psmallmatrix}}"] \arrow[r, shift left] \arrow[r, shift right] &\mathbb{C}^2 \arrow[d, "{\begin{psmallmatrix}0&1\\0&0 \end{psmallmatrix}}"] \arrow[r, shift left] \arrow[r, shift right]  & \mathbb{C}\arrow[d] \\
\mathbb{C}^2 \arrow[r, shift left] \arrow[r, shift right]  & \mathbb{C}^2 \arrow[r, shift left] \arrow[r, shift right] & \mathbb{C} \arrow[r] & \mathbb{C}^4 \arrow[r, shift left] \arrow[r, shift right] & \mathbb{C}^2 \arrow[r, shift left] \arrow[r, shift right] & 0 \arrow[r] & \mathbb{C} \arrow[r, shift left] \arrow[r, shift right] & 0 \arrow[r, shift left] \arrow[r, shift right] & 0
\end{tikzcd}
\]
\end{lemma}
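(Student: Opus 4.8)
The plan is to reduce the whole computation to the degree-zero component of the quasi-isomorphism already produced in Step 1, after which only matrix multiplication at each vertex remains. First I would recall from Lemma~\ref{lem:serre} that a basis of $Hom(I_3,I_2)$ is furnished by the two chain maps $f_1,f_2\colon I_3\to I_2$, regarded as morphisms of complexes concentrated in degree $0$, whose components at the three vertices are
\[
f_1=\Bigl(\begin{psmallmatrix}1&0\\0&0\end{psmallmatrix},\ \begin{psmallmatrix}1&0\end{psmallmatrix},\ 0\Bigr),\qquad
f_2=\Bigl(\begin{psmallmatrix}0&0\\0&1\end{psmallmatrix},\ \begin{psmallmatrix}0&1\end{psmallmatrix},\ 0\Bigr).
\]

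Next I would observe that, under the identification $A\simeq Cone(P\to I_2)$ of Lemma~\ref{lem:injective}, the structure map $i_2\colon I_2\to A$ is nothing but the inclusion of $I_2$ as the degree-zero cochain of the two-term complex $[P\to I_2]$ with $P$ placed in degree $-1$. Since $I_3$ is concentrated in degree $0$, the composite $i_2\circ f$ lands entirely in the degree-zero term of the cone; hence postcomposing with the quasi-isomorphism $Cone(P\to I_2)\xrightarrow{\sim}I_A^\bullet$ of Step 1 only invokes its degree-zero component, namely the map $I_2\to(I_A^\bullet)^0$ with vertex matrices $\begin{psmallmatrix}1&0\\0&1\\0&0\\0&0\end{psmallmatrix}$ and $\begin{psmallmatrix}1\\0\end{psmallmatrix}$. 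The entire composite $I_3\xrightarrow{f}I_2\xrightarrow{i_2}A\xrightarrow{\sim}I_A^\bullet$ is therefore computed by postcomposing $f$ with this degree-zero map.

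The computation then reduces to multiplying the relevant matrices vertex by vertex. For $f_1$ one finds $\begin{psmallmatrix}1&0\\0&1\\0&0\\0&0\end{psmallmatrix}\begin{psmallmatrix}1&0\\0&0\end{psmallmatrix}=\begin{psmallmatrix}1&0\\0&0\\0&0\\0&0\end{psmallmatrix}$ at the first vertex and $\begin{psmallmatrix}1\\0\end{psmallmatrix}\begin{psmallmatrix}1&0\end{psmallmatrix}=\begin{psmallmatrix}1&0\\0&0\end{psmallmatrix}$ at the second, which is exactly the first asserted morphism of complexes; the same multiplication for $f_2$ yields $\begin{psmallmatrix}0&0\\0&1\\0&0\\0&0\end{psmallmatrix}$ and $\begin{psmallmatrix}0&1\\0&0\end{psmallmatrix}$, the second asserted morphism. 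To confirm that these degree-zero assignments really do define morphisms of complexes (and not merely degreewise maps), I would check that their composite with the degree-zero differential of $I_A^\bullet$ from Lemma~\ref{lem:aresolution}---which at the first vertex reads off the third coordinate of $\mathbb{C}^4$---vanishes; for both $f_1$ and $f_2$ the image at the first vertex has zero third row, so this holds and no contribution in degree $\pm1$ is forced.

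The main obstacle here is organizational rather than conceptual: one must track the cohomological placement of every summand---the degree-$(-1)$ copy of $P$ against the degree-$0$ copy of $I_2$ inside $Cone(P\to I_2)$, together with the three cohomological degrees of the injective resolution $I_A^\bullet$---so that the Step 1 quasi-isomorphism is applied in the correct degree, and one must verify the chain-map condition above rather than stopping at a degreewise match. Once this bookkeeping is in place, the two displayed morphisms follow immediately from the matrix identities.
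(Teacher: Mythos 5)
Your proposal is correct and follows the same route as the paper: the paper's proof is exactly "combine the obvious maps $I_3 \to I_2 \to Cone(P\to I_2)$ with the result of Step 1," and you have simply made explicit that only the degree-zero component of the Step 1 quasi-isomorphism is invoked and carried out the vertexwise matrix products, which match the displayed morphisms. The additional check that the composite with the differential $(I_A^\bullet)^0 \to (I_A^\bullet)^1$ vanishes is a worthwhile piece of bookkeeping that the paper leaves implicit.
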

\begin{proof}
This comes from combining the obvious maps $I_3 \rightarrow I_2 \rightarrow Cone(P \rightarrow I_2)$ with the result of step 1.
\end{proof}
\begin{lemma}[Step 3]
The unique morphism
$\tilde{C} \rightarrow I_A^\bullet$ making the precomposition $I_3 \rightarrow \tilde{C} \rightarrow I_A^\bullet$ agree with step 2 is given by the following morphism of complexes.
\[
\begin{tikzcd}
&&& \mathbb{C} \arrow[d, "{\begin{psmallmatrix}0 \\0\\0\\-1 \end{psmallmatrix}}"'] \arrow[r, shift left,"0"] \arrow[r, shift right,"1"'] &\mathbb{C}\arrow[d, "{\begin{psmallmatrix}0\\-1 \end{psmallmatrix}}"] \arrow[r, shift left] \arrow[r, shift right]  & 0 \arrow[d]\\
\mathbb{C}^2 \arrow[r, shift left] \arrow[r, shift right]  & \mathbb{C}^2 \arrow[r, shift left] \arrow[r, shift right] & \mathbb{C} \arrow[r] & \mathbb{C}^4 \arrow[r, shift left] \arrow[r, shift right] & \mathbb{C}^2 \arrow[r, shift left] \arrow[r, shift right] & 0 \arrow[r] & \mathbb{C} \arrow[r, shift left] \arrow[r, shift right] & 0 \arrow[r, shift left] \arrow[r, shift right] & 0
\end{tikzcd}
\]
\[
\begin{tikzcd}
&&& \mathbb{C} \arrow[d, "{\begin{psmallmatrix}0\\1\\0\\0 \end{psmallmatrix}}"'] \arrow[r, shift left,"0"] \arrow[r, shift right,"1"'] &\mathbb{C}\arrow[d, "{\begin{psmallmatrix}1\\0 \end{psmallmatrix}}"] \arrow[r, shift left] \arrow[r, shift right]  & 0\arrow[d] \\
\mathbb{C}^2 \arrow[r, shift left] \arrow[r, shift right]  & \mathbb{C}^2 \arrow[r, shift left] \arrow[r, shift right] & \mathbb{C} \arrow[r] & \mathbb{C}^4 \arrow[r, shift left] \arrow[r, shift right] & \mathbb{C}^2 \arrow[r, shift left] \arrow[r, shift right] & 0 \arrow[r] & \mathbb{C} \arrow[r, shift left] \arrow[r, shift right] & 0 \arrow[r, shift left] \arrow[r, shift right] & 0
\end{tikzcd}
\]
\end{lemma}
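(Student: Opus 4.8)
The plan is to fix the lift $f'$ by a uniqueness argument and then to confirm that the displayed chain maps realize it; no homotopies will be needed, since the target is a complex of injectives. To begin, I would record that $i_3 \colon I_3 \to \tilde{C}$ is the cokernel map of the short exact sequence $0 \to P \to I_3 \to \tilde{C} \to 0$ of Lemma~\ref{lem:injective}, so that $\tilde{C} \simeq Cone(P \to I_3)$. A lift exists for formal reasons: commutativity of the left square of~\eqref{eq:first} (functoriality of the semi-orthogonal decomposition, i.e.\ the existence of $f_P$) gives $i_2 \circ f \circ (P \to I_3) = i_2 \circ (P \to I_2) \circ f_P$, and since $i_2 \circ (P \to I_2) = 0$ in the triangle $P \to I_2 \to A$, the morphism $i_2 \circ f$ kills $P \to I_3$ and therefore factors through $\tilde{C}$.

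For uniqueness, I would apply $Hom(-,A)$ to the triangle $P \to I_3 \xrightarrow{i_3} \tilde{C} \to P[1]$ to get the exact sequence
\[
Hom(P[1],A) \to Hom(\tilde{C},A) \xrightarrow{\,i_3^*\,} Hom(I_3,A),
\]
so that it suffices to show $Hom(P[1],A) = 0$. By Serre duality together with $S(P) \simeq \tilde{P}[2]$ from Lemma~\ref{lem:p}, this group is dual to $Hom(A, S(P)[1]) \simeq Ext^3(A,\tilde{P})$, which vanishes because $A$ has projective dimension at most $2$ by the three-term resolution $P_A^\bullet$ of Lemma~\ref{lem:aresolution}. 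Hence $i_3^*$ is injective, and the displayed map is the only candidate for $f'$ once it has been shown to be a lift.

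It then remains to verify the explicit form. Realizing $i_3$ at the chain level as the quotient $I_3 \twoheadrightarrow \tilde{C}$ and writing $I_A^\bullet$ as in Lemma~\ref{lem:aresolution}, with nonzero terms $I_3$, $I_2^{\oplus 2}$, $I_1$ and with $\tilde{C}$ mapping into the middle term $I_2^{\oplus 2}$, I would, for each of the two basis maps $f_1, f_2 \colon I_3 \to I_2$ of Lemma~\ref{lem:serre}, first check that the displayed assignment is a genuine morphism of complexes (the two squares against the internal differentials of $I_2^{\oplus 2}$ commute), and then compute $f' \circ i_3$ and compare it entrywise with the morphism produced in Step~2. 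Equality of these two chain maps, combined with the uniqueness above, identifies the displayed map with $f'$.

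The main obstacle is bookkeeping rather than anything conceptual: one must carry the identifications $A \simeq Cone(P \to I_2) \simeq I_A^\bullet$ through the quasi-isomorphisms of Lemmas~\ref{lem:injective} and~\ref{lem:aresolution}, so that the matrices of Step~2 and those of the proposed $f'$ are written in a single fixed basis of the term $I_2^{\oplus 2}$. Once this is done, the chain-square condition forces the entries uniquely; in particular the sign $-1$ appearing in the lift of $f_1$ is not a matter of choice but is dictated by compatibility with the internal differentials of $\tilde{C}$ and $I_2^{\oplus 2}$.
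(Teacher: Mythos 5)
Your existence and uniqueness arguments are sound, and the uniqueness step is a genuine improvement on the paper, which asserts uniqueness without justification: $Hom(P[1],A)\cong Hom(A,\tilde{P}[3])^{*}=Ext^{3}(A,\tilde{P})^{*}=0$ by the length-two projective resolution of Lemma~\ref{lem:aresolution}, so $i_3^{*}$ is injective on $Hom(\tilde{C},A)$; moreover $Hom(\tilde{C},I_3)$ is dual to the vector space of $\tilde{C}$ at vertex $3$, which is zero, so there are no nonzero null-homotopies out of $\tilde{C}$ and the chain-level representative of $f'$ is itself unique. The gap is in your final verification. You claim that ``no homotopies will be needed'' and that $f'\circ i_3$ can be compared \emph{entrywise} with the Step~2 morphism. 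That comparison fails, and must fail for any candidate $f'$: the chain-level $i_3$ at vertex $1$ is the projection $\begin{psmallmatrix}0&1\end{psmallmatrix}\colon\mathbb{C}^2\to\mathbb{C}$ whose kernel is the image of $P\hookrightarrow I_3$ (the first coordinate line), so every composite $g\circ i_3$ annihilates the first basis vector of $(I_3)_1$, whereas the Step~2 matrix at vertex $1$ is $\begin{psmallmatrix}1&0\\0&0\\0&0\\0&0\end{psmallmatrix}$, which does not. The factorization through $\tilde{C}$ that you deduce from $i_2\circ(P\to I_2)=0$ exists only in the homotopy (equivalently derived) category, because the Step~2 chain map vanishes on the submodule $P\subset I_3$ only up to homotopy. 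Injectivity of the terms of $I_A^\bullet$ gives you $Hom_{K}(-,I_A^\bullet)=Hom_{D}(-,A)$, i.e.\ uniqueness of homotopy classes; it does not make distinct representatives of the same class equal on the nose.

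The missing homotopy is precisely the content of the paper's proof: for $f_1$ the difference between the Step~2 morphism and $f'\circ i_3$ is the map $I_3\to I_2^{\oplus2}$ with components $\begin{psmallmatrix}1&0\\0&0\\0&0\\0&1\end{psmallmatrix}$ and $\begin{psmallmatrix}1&0\\0&1\end{psmallmatrix}$, which is exactly the differential $I_A^{-1}=I_3\to I_A^{0}=I_2^{\oplus2}$ of the resolution in Lemma~\ref{lem:aresolution} and hence null-homotopic via the identity of $I_3$ (and similarly for $f_2$). Your argument becomes correct, and your observation that the sign $-1$ is forced remains valid, once ``equality of chain maps'' is replaced by ``equality up to an explicit homotopy''; exhibiting that homotopy is the computation you cannot avoid.
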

\begin{proof}
The morphism $I_3 \rightarrow I_A^\bullet$ specified by the following
\[
\begin{tikzcd}
&&&\mathbb{C}^2 \arrow[d, "{\begin{psmallmatrix}1&0\\0&0\\0&0\\0&1 \end{psmallmatrix}}"] \arrow[r, shift left] \arrow[r, shift right] &\mathbb{C}^2 \arrow[d, "{\begin{psmallmatrix}1&0\\0&1 \end{psmallmatrix}}"] \arrow[r, shift left] \arrow[r, shift right]  & \mathbb{C}\arrow[d] \\
\mathbb{C}^2 \arrow[r, shift left] \arrow[r, shift right]  & \mathbb{C}^2 \arrow[r, shift left] \arrow[r, shift right] & \mathbb{C} \arrow[r] & \mathbb{C}^4 \arrow[r, shift left] \arrow[r, shift right] & \mathbb{C}^2 \arrow[r, shift left] \arrow[r, shift right] & 0 \arrow[r] & \mathbb{C} \arrow[r, shift left] \arrow[r, shift right] & 0 \arrow[r, shift left] \arrow[r, shift right] & 0
\end{tikzcd}
\]
is homotopy equivalent to the zero morphism. The result follows by checking that the composition of the above morphisms with the map $I_3 \rightarrow \tilde{C}$ agrees with step 2 up to homotopy equivalence.
\end{proof}
\begin{lemma}[Step 4]
The composition $P_{\tilde{C}}^\bullet \xrightarrow{\sim} \tilde{C} \xleftarrow{\sim} Cone(P_3 \rightarrow \tilde{P})$ is given by the following morphism of complexes.
\[
\begin{tikzcd}
0 \arrow[r, shift left] \arrow[r, shift right] & 0 \arrow[r, shift left] \arrow[r, shift right] & \mathbb{C}\arrow[r] &0\arrow[d] \arrow[r, shift left] \arrow[r, shift right] & \mathbb{C}\arrow[d] \arrow[r, shift left] \arrow[r, shift right] & \mathbb{C}^3 \arrow[r] \arrow[d,"{\begin{psmallmatrix}0\\0\\1 \end{psmallmatrix}}"]& \mathbb{C}\arrow[d,"1"] \arrow[r, shift left] \arrow[r, shift right]  & \mathbb{C}^2 \arrow[d,"{\begin{psmallmatrix}0&1\end{psmallmatrix}}"]\arrow[r, shift left] \arrow[r, shift right] & \mathbb{C}^2 \arrow[d,"{\begin{psmallmatrix}0&1\end{psmallmatrix}}"]\\
&&& 0 \arrow[r, shift left] \arrow[r, shift right] & 0 \arrow[r, shift left] \arrow[r, shift right] & \mathbb{C} \arrow[r] & \mathbb{C} \arrow[r, shift left,"0"] \arrow[r, shift right,"1"'] & \mathbb{C} \arrow[r, shift left,"0"] \arrow[r, shift right,"1"'] & \mathbb{C}
\end{tikzcd}
\]
\end{lemma}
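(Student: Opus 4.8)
The plan is to use that $P_{\tilde{C}}^\bullet$ is a bounded complex of projective modules, hence homotopy-projective, so that
\[
\mathrm{Hom}_{D^b(Q)}\big(P_{\tilde{C}}^\bullet,\, Cone(P_3 \to \tilde{P})\big) \;\simeq\; \mathrm{Hom}_{K(Q)}\big(P_{\tilde{C}}^\bullet,\, Cone(P_3 \to \tilde{P})\big),
\]
where $K(Q)$ denotes the homotopy category of complexes. In other words, the composition in the statement, which a priori is only a morphism in $D^b(Q)$, is represented by an honest morphism of complexes $\phi$, unique up to homotopy. Both $P_{\tilde{C}}^\bullet$ and $Cone(P_3 \to \tilde{P})$ have cohomology concentrated in degree $0$ equal to $\tilde{C}$, and the two structural quasi-isomorphisms $P_{\tilde{C}}^\bullet \xrightarrow{\sim} \tilde{C}$ and $Cone(P_3 \to \tilde{P}) \xrightarrow{\sim} \tilde{C}$ induce the canonical identifications on $H^0$. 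Since $\mathrm{Hom}_{D^b(Q)}(P_{\tilde{C}}^\bullet, Cone(P_3 \to \tilde{P})) \simeq \mathrm{End}_{D^b(Q)}(\tilde{C})$ via these quasi-isomorphisms, a morphism of complexes $\phi$ represents the required composition if and only if $H^0(\phi) = \mathrm{id}_{\tilde{C}}$. The proof therefore reduces to two elementary verifications about the displayed vertical maps: that they commute with the differentials, and that they induce the identity on $H^0$.

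First I would record the two differentials explicitly from the cited lemmas. By Lemma~\ref{lem:cdresolution}, $P_{\tilde{C}}^\bullet$ is placed in degrees $-2,-1,0$ with terms $P_3$, $P_2 \oplus P_3$ and $P_1$; by Lemma~\ref{lem:projective}, $Cone(P_3 \to \tilde{P})$ is placed in degrees $-1,0$ with terms $P_3$ and $\tilde{P}$ and differential the canonical inclusion $P_3 \hookrightarrow \tilde{P}$. Since the cone vanishes in degree $-2$, the degree $-2$ component of $\phi$ is necessarily zero. The remaining two components are the projection $P_2 \oplus P_3 \twoheadrightarrow P_3$ onto the second summand in degree $-1$ and the canonical surjection $P_1 \twoheadrightarrow \tilde{P}$ in degree $0$, exactly as displayed.

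The chain-map condition then amounts to a single commuting square of quiver representations: the differential $P_2 \oplus P_3 \to P_1$ of $P_{\tilde{C}}^\bullet$ followed by $P_1 \twoheadrightarrow \tilde{P}$ must agree with the projection $P_2 \oplus P_3 \twoheadrightarrow P_3$ followed by the inclusion $P_3 \hookrightarrow \tilde{P}$. This is a short matrix identity at each of the three vertices, which I would verify directly. Finally, pushing a degree-$0$ cocycle of $P_{\tilde{C}}^\bullet$ through $\phi$ and comparing with the two augmentations shows that the induced endomorphism of $\tilde{C}$ is the identity; by the characterization above, this identifies $\phi$ as the desired composition.

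The entire content of the argument is this explicit linear-algebra bookkeeping, and there is no conceptual obstacle. The only point that requires genuine care is to keep the chosen bases and the two quasi-isomorphisms to $\tilde{C}$ mutually consistent, so that the computation of $H^0(\phi)$ is carried out against a single fixed identification of $\tilde{C}$ on both sides; this is precisely what distinguishes the claimed map from a nonzero scalar multiple of it as the correct representative.
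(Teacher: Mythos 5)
Your proposal is correct and takes essentially the same route as the paper, whose one-line proof simply asserts the direct verification that the displayed map is a chain map lifting the two augmentations to $\tilde{C}$; your observation that $Hom_{D^b(Q)}(P_{\tilde{C}}^\bullet, Cone(P_3\to\tilde{P}))\simeq End(\tilde{C})=\mathbb{C}$, so that it suffices to check the chain-map condition together with the normalization on $H^0$, is a clean justification of that verification rather than a different argument. One small correction: the chain-map condition involves two squares, not one --- the square between degrees $-2$ and $-1$ also imposes the (easily checked, since the differential $P_3\to P_2\oplus P_3$ lands in the $P_2$ summand) condition that its composite with the projection onto $P_3$ vanishes.
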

\begin{proof}
This comes from combining the resolution of Lemma~\ref{lem:cdresolution} with the natural map $Cone(P_3 \rightarrow \tilde{P}) \xrightarrow{\sim} \tilde{C}$ of Lemma~\ref{lem:projective}.
\end{proof}
\begin{lemma}[Step 5]
The composition $P_{\tilde{C}}^\bullet \rightarrow Cone(P_3 \rightarrow \tilde{P}) \rightarrow P_3[1] \rightarrow P_2[1]$ is given by the following morphism of complexes.
\[
\begin{tikzcd}
0 \arrow[r, shift left] \arrow[r, shift right] & 0 \arrow[r, shift left] \arrow[r, shift right] & \mathbb{C}\arrow[r] &0\arrow[d] \arrow[r, shift left] \arrow[r, shift right] & \mathbb{C}\arrow[d,"0"] \arrow[r, shift left] \arrow[r, shift right] & \mathbb{C}^3\arrow[d,"{\begin{psmallmatrix}0&0&1\\0&0&0\end{psmallmatrix}}"] \arrow[r] & \mathbb{C} \arrow[r, shift left] \arrow[r, shift right]  & \mathbb{C}^2 \arrow[r, shift left] \arrow[r, shift right] & \mathbb{C}^2 \\
&&& 0 \arrow[r, shift left] \arrow[r, shift right] & \mathbb{C} \arrow[r, shift left] \arrow[r, shift right] & \mathbb{C}^2 
\end{tikzcd}
\]
\[
\begin{tikzcd}
0 \arrow[r, shift left] \arrow[r, shift right] & 0 \arrow[r, shift left] \arrow[r, shift right] & \mathbb{C}\arrow[r] &0\arrow[d] \arrow[r, shift left] \arrow[r, shift right] & \mathbb{C}\arrow[d,"0"] \arrow[r, shift left] \arrow[r, shift right] & \mathbb{C}^3\arrow[d,"{\begin{psmallmatrix}0&0&0\\0&0&1\end{psmallmatrix}}"] \arrow[r] & \mathbb{C} \arrow[r, shift left] \arrow[r, shift right]  & \mathbb{C}^2 \arrow[r, shift left] \arrow[r, shift right] & \mathbb{C}^2 \\
&&& 0 \arrow[r, shift left] \arrow[r, shift right] & \mathbb{C} \arrow[r, shift left] \arrow[r, shift right] & \mathbb{C}^2 
\end{tikzcd}
\]
\end{lemma}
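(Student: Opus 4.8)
The plan is to read off the stated morphism of complexes by composing three explicit chain maps, so that the whole verification collapses to matrix multiplication in the homotopy category with no new homotopy to be chosen. Concretely, I would factor the composition $P_{\tilde{C}}^\bullet \to Cone(P_3 \to \tilde{P}) \to P_3[1] \to P_2[1]$ as follows: the first arrow is the genuine morphism of complexes produced in Step 4; the second is the canonical projection off the cone; and the third is the shift of the Serre dual $S^{-1}f \colon P_3 \to P_2$. Since every one of these maps is already explicit (or standard), no derived-category roof needs to be inverted.

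First I would dispose of the projection $Cone(P_3 \to \tilde{P}) \to P_3[1]$. As $Cone(P_3 \to \tilde{P})$ is the two-term complex with $P_3$ in degree $-1$ and $\tilde{P}$ in degree $0$, while $P_3[1]$ is concentrated in degree $-1$, this projection is the identity on the degree $-1$ term $P_3$ and zero on $\tilde{P}$. Precomposing with the Step 4 map therefore annihilates every component landing in degree $0$ and keeps only the vertical map into the degree $-1$ term, namely the projection $\mathbb{C}^3 \to \mathbb{C}$ onto the $P_3$-summand of the middle term $P_2 \oplus P_3$ of the resolution $P_{\tilde{C}}^\bullet$ (this is the map $\begin{psmallmatrix}0&0&1\end{psmallmatrix}$ read off from Step 4). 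Hence the composite $P_{\tilde{C}}^\bullet \to P_3[1]$ is supported entirely in cohomological degree $-1$.

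Next I would identify $S^{-1}f[1]$ via Lemma~\ref{lem:serre}. Because $S(P_i) \simeq I_i$, the functor $S^{-1}$ carries the chosen basis $f_1, f_2$ of $Hom(I_3, I_2)$ to the dual basis of $Hom(P_3, P_2)$ exhibited in the first row of Lemma~\ref{lem:serre}; explicitly $S^{-1}f_1$ and $S^{-1}f_2$ act on the vertex-$3$ component $\mathbb{C} \to \mathbb{C}^2$ of $P_3 \to P_2$ by $\begin{psmallmatrix}1\\0\end{psmallmatrix}$ and $\begin{psmallmatrix}0\\1\end{psmallmatrix}$ respectively. Shifting these into degree $-1$ and composing with the projection of the previous paragraph yields the vertex-$3$ map $\mathbb{C}^3 \to \mathbb{C}^2$ equal to $\begin{psmallmatrix}0&0&1\\0&0&0\end{psmallmatrix}$ for $f_1$ and $\begin{psmallmatrix}0&0&0\\0&0&1\end{psmallmatrix}$ for $f_2$, all other components vanishing; this is exactly the morphism of complexes in the statement.

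The only step demanding genuine care is the bookkeeping of the Serre-duality pairing: one must correctly match $f_1, f_2$ with the dual basis of $Hom(P_3, P_2)$ from Lemma~\ref{lem:serre}, and keep straight which coordinate of the three-dimensional vertex-$3$ term of $P_2 \oplus P_3$ is the $P_3$-summand that survives the cone projection. Once these two identifications are pinned down, the remaining computation is purely mechanical matrix arithmetic.
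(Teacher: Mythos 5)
Your proposal is correct and follows the same route as the paper's (much terser) proof: compose the chain map from Step 4 with the cone projection onto $P_3[1]$ and with the shift of the Serre-dual basis morphisms $S^{-1}f_i\colon P_3 \rightarrow P_2$ read off from Lemma~\ref{lem:serre}. Your explicit bookkeeping --- that the cone projection kills the degree-$0$ components and retains only the $\begin{psmallmatrix}0&0&1\end{psmallmatrix}$ projection onto the $P_3$-summand of $P_2 \oplus P_3$, which then composes with $\begin{psmallmatrix}1\\0\end{psmallmatrix}$ resp.\ $\begin{psmallmatrix}0\\1\end{psmallmatrix}$ at vertex $3$ --- matches the stated matrices exactly.
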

\begin{proof}
This comes from combining step 4 with the obvious maps where the map $P_3[1] \rightarrow P_2[1]$ is induced from the map $f \colon I_3 \rightarrow I_2$ under the Serre functor described in Lemma~\ref{lem:serre}.
\end{proof}
\begin{lemma}[Step 6]
The composition $P_{\tilde{C}}^\bullet \rightarrow \tilde{C} \rightarrow I_A^\bullet$ is given by the following morphism of complexes.
\[
\begin{tikzcd}[column sep = 0.75cm]
0 \arrow[r, shift left] \arrow[r, shift right] & 0 \arrow[r, shift left] \arrow[r, shift right] & \mathbb{C}\arrow[r] &0\arrow[d] \arrow[r, shift left] \arrow[r, shift right] & \mathbb{C} \arrow[d,"0"]\arrow[r, shift left] \arrow[r, shift right] & \mathbb{C}^3 \arrow[d,"0"]\arrow[r] & \mathbb{C} \arrow[d,"{\begin{psmallmatrix}0\\0\\0\\-1 \end{psmallmatrix}}"'] \arrow[r, shift left] \arrow[r, shift right]  & \mathbb{C}^2 \arrow[d,"{\begin{psmallmatrix}0&0\\0 &-1 \end{psmallmatrix}}"']\arrow[r, shift left] \arrow[r, shift right] & \mathbb{C}^2 \arrow[d] \\
&&&\mathbb{C}^2 \arrow[r, shift left] \arrow[r, shift right]  & \mathbb{C}^2 \arrow[r, shift left] \arrow[r, shift right] & \mathbb{C} \arrow[r] & \mathbb{C}^4 \arrow[r, shift left] \arrow[r, shift right] & \mathbb{C}^2 \arrow[r, shift left] \arrow[r, shift right] & 0 \arrow[r] & \mathbb{C} \arrow[r, shift left] \arrow[r, shift right] & 0 \arrow[r, shift left] \arrow[r, shift right] & 0
\end{tikzcd}
\]
\[
\begin{tikzcd}[column sep = 0.75cm]
0 \arrow[r, shift left] \arrow[r, shift right] & 0 \arrow[r, shift left] \arrow[r, shift right] & \mathbb{C}\arrow[r] &0\arrow[d] \arrow[r, shift left] \arrow[r, shift right] & \mathbb{C} \arrow[d,"0"]\arrow[r, shift left] \arrow[r, shift right] & \mathbb{C}^3 \arrow[d,"0"]\arrow[r] & \mathbb{C} \arrow[d,"{\begin{psmallmatrix}0\\1\\0\\0 \end{psmallmatrix}}"'] \arrow[r, shift left] \arrow[r, shift right]  & \mathbb{C}^2 \arrow[d,"{\begin{psmallmatrix}0&1\\0&0 \end{psmallmatrix}}"']\arrow[r, shift left] \arrow[r, shift right] & \mathbb{C}^2 \arrow[d] \\
&&&\mathbb{C}^2 \arrow[r, shift left] \arrow[r, shift right]  & \mathbb{C}^2 \arrow[r, shift left] \arrow[r, shift right] & \mathbb{C} \arrow[r] & \mathbb{C}^4 \arrow[r, shift left] \arrow[r, shift right] & \mathbb{C}^2 \arrow[r, shift left] \arrow[r, shift right] & 0 \arrow[r] & \mathbb{C} \arrow[r, shift left] \arrow[r, shift right] & 0 \arrow[r, shift left] \arrow[r, shift right] & 0
\end{tikzcd}
\]
\end{lemma}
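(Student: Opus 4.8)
The plan is to obtain the asserted morphism of complexes by directly composing the two morphisms already made explicit in the preceding steps. Lemma~\ref{lem:cdresolution} provides the projective resolution $P_{\tilde{C}}^\bullet \xrightarrow{\sim} \tilde{C}$, and Step 3 provides a morphism of complexes $\tilde{C} \to I_A^\bullet$ representing $f' \colon \tilde{C} \to A$ followed by the injective resolution $A \xrightarrow{\sim} I_A^\bullet$ of Lemma~\ref{lem:aresolution}. Both are honest morphisms of complexes, so their composite $P_{\tilde{C}}^\bullet \to I_A^\bullet$ is again a morphism of complexes, and I claim it is precisely the map whose components the statement records, for each of the two basis elements $f_1, f_2$ of $Hom(I_3, I_2)$ from Lemma~\ref{lem:serre}.

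The computation is short because the intermediate object $\tilde{C}$ is a module, hence concentrated in cohomological degree $0$. The augmentation $P_{\tilde{C}}^\bullet \to \tilde{C}$ is therefore nonzero only out of its degree-$0$ term $P_1$, while the Step 3 morphism carries $\tilde{C}$ into the degree-$0$ term $I_2^{\oplus 2}$ of $I_A^\bullet$ (recall that $A = Cone(D \to \tilde{C})$ has cohomology in degrees $-1$ and $0$, since $D \to \tilde{C}$ is neither injective nor surjective, so that $I_A^\bullet$ is nonzero in degrees $-1,0,1$ with degree-$0$ term $I_2^{\oplus 2}$). Consequently the composite $P_{\tilde{C}}^\bullet \to I_A^\bullet$ is supported entirely in degree $0$: its components in the negative degrees vanish identically, which is exactly what the zero entries in the displayed diagrams record, and, being a composite of honest chain maps, it requires no chain-homotopy correction.

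What remains is purely the degree-$0$ calculation: one multiplies the augmentation $P_1 \twoheadrightarrow \tilde{C}$ read off from Lemma~\ref{lem:cdresolution} with the target datum $\tilde{C} \to I_2^{\oplus 2}$ read off from Step 3, vertex by vertex and separately for $f_1$ and $f_2$. For $f_1$ this reproduces the vertical maps $\begin{psmallmatrix}0\\0\\0\\-1\end{psmallmatrix}$ and $\begin{psmallmatrix}0&0\\0&-1\end{psmallmatrix}$, exactly as asserted, and the case $f_2$ is entirely analogous. I expect no conceptual obstacle here; the only genuine difficulty is the clerical one of threading the explicit quasi-isomorphisms of Lemmas~\ref{lem:cdresolution} and~\ref{lem:aresolution} correctly through the vertex-wise linear algebra and keeping the two basis vectors distinct, just as in the verification already performed in Step 3.
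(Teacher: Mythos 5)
Your proposal is correct and follows exactly the paper's (one-line) argument: precompose the Step 3 morphism $\tilde{C}\rightarrow I_A^\bullet$ with the augmentation $P_{\tilde{C}}^\bullet\rightarrow\tilde{C}$ of Lemma~\ref{lem:cdresolution}, the composite of honest chain maps being an honest chain map concentrated in degree $0$. Your explicit check that the degree-$0$ component is the product of the augmentation $P_1\twoheadrightarrow\tilde{C}$ with the Step 3 map into $I_2^{\oplus 2}$, reproducing $\begin{psmallmatrix}0\\0\\0\\-1\end{psmallmatrix}$ and $\begin{psmallmatrix}0&0\\0&-1\end{psmallmatrix}$ for $f_1$ (and analogously for $f_2$), matches the displayed diagrams.
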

\begin{proof}
This comes from combining the result of step 3 with the projective resolution~\ref{lem:cdresolution}.
\end{proof}
\begin{lemma}[Step 7]
The composition $Cone(P_2 \rightarrow \tilde{P}) \xleftarrow{\sim} A \xrightarrow{\sim} I_A^\bullet$ is given by the following morphism of complexes.
\[
\begin{tikzcd}
0\arrow[d] \arrow[r, shift left] \arrow[r, shift right] & \mathbb{C}\arrow[d,"{\begin{psmallmatrix}1\\0\end{psmallmatrix}}"] \arrow[r, shift left] \arrow[r, shift right] & \mathbb{C}^2 \arrow[r] \arrow[d,"{\begin{psmallmatrix}1 & 0\end{psmallmatrix}}"]& \mathbb{C}\arrow[d,"{\begin{psmallmatrix}0\\1\\0\\0 \end{psmallmatrix}}"] \arrow[r, shift left,"0"] \arrow[r, shift right,"1"']  & \mathbb{C} \arrow[d,"{\begin{psmallmatrix}1\\0\end{psmallmatrix}}"]\arrow[r, shift left,"0"] \arrow[r, shift right,"1"'] & \mathbb{C} \arrow[d]\\
\mathbb{C}^2 \arrow[r, shift left] \arrow[r, shift right]  & \mathbb{C}^2 \arrow[r, shift left] \arrow[r, shift right] & \mathbb{C} \arrow[r] & \mathbb{C}^4 \arrow[r, shift left] \arrow[r, shift right] & \mathbb{C}^2 \arrow[r, shift left] \arrow[r, shift right] & 0 \arrow[r] & \mathbb{C} \arrow[r, shift left] \arrow[r, shift right] & 0 \arrow[r, shift left] \arrow[r, shift right] & 0
\end{tikzcd}
\]
\end{lemma}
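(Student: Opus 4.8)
The plan is to carry out Step 7 in exact parallel with Step 1, replacing the quasi-isomorphism $A \xrightarrow{\sim} Cone(P \to I_2)$ from Lemma~\ref{lem:injective} by the chain-level quasi-isomorphism $A = Cone(D \to \tilde{C}) \xrightarrow{\sim} Cone(P_2 \to \tilde{P})$ exhibited in the proof of Lemma~\ref{lem:projective}, and reusing the injective resolution $A \xrightarrow{\sim} I_A^\bullet$ from Lemma~\ref{lem:aresolution}. The only genuinely derived-categorical subtlety is that the composite
\[
Cone(P_2 \to \tilde{P}) \xleftarrow{\sim} A \xrightarrow{\sim} I_A^\bullet
\]
requires inverting the left-hand quasi-isomorphism. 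This is harmless here: since $I_A^\bullet$ is a bounded complex of injective $kQ/I$-modules, every morphism of $D^b(Q)$ with target $I_A^\bullet$ is represented by an honest chain map, unique up to homotopy, so the composite admits a chain-level representative $\psi \colon Cone(P_2 \to \tilde{P}) \to I_A^\bullet$.

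To pin down $\psi$ explicitly, I would lift the resolution map $r \colon A \xrightarrow{\sim} I_A^\bullet$ through the quasi-isomorphism $q \colon A \xrightarrow{\sim} Cone(P_2 \to \tilde{P})$; that is, produce $\psi$ with $\psi \circ q$ homotopic to $r$. Because $q$ is a quasi-isomorphism and $I_A^\bullet$ is termwise injective, such a $\psi$ exists and is unique up to homotopy by the standard lifting property, and concretely one solves the commutativity constraints degree by degree. Feeding in the explicit terms of $Cone(P_2 \to \tilde{P})$ from Lemma~\ref{lem:projective} and of $I_A^\bullet$ from Lemma~\ref{lem:aresolution}, these constraints reduce to a small system of linear equations over $\mathbb{C}$ whose solution is precisely the displayed morphism of complexes, with the block entries on the degree-zero term $\mathbb{C}^4$ forced by the embedding of the two cones.

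The remaining work is verification rather than discovery: one checks that the displayed arrows genuinely constitute a chain map (each square commutes), and confirms that precomposition with $q$ recovers the injective resolution up to homotopy. I expect the main obstacle to be purely the bookkeeping of the block-matrix entries and signs arising from the two mapping cones and the four-dimensional degree-zero term of $I_A^\bullet$; conceptually there is no difficulty, as the argument is formally identical to that of Step 1.
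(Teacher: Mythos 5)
Your proposal is correct and matches the paper's one-line proof: both reduce Step 7 to combining the explicit quasi-isomorphism between $A$ and $Cone(P_2 \rightarrow \tilde{P})$ from Lemma~\ref{lem:projective} with the injective resolution $A \rightarrow I_A^\bullet$ of Lemma~\ref{lem:aresolution}, and then reading off the block matrices. The only cosmetic difference is that the chain map actually displayed in the proof of Lemma~\ref{lem:projective} runs $Cone(P_2 \rightarrow \tilde{P}) \rightarrow A$, so the paper obtains the stated morphism by direct composition with $A \rightarrow I_A^\bullet$; your lifting argument through the bounded complex of injectives $I_A^\bullet$ is valid and yields the same representative up to homotopy, but no inversion of a quasi-isomorphism is actually required here.
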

\begin{proof}
This comes from combining the resolution~\ref{lem:aresolution} with the natural morphism $A \xrightarrow{\sim} Cone(P_2 \rightarrow \tilde{P})$ of Lemma~\ref{lem:projective}.
\end{proof}
\begin{lemma}[Step 8]
The composition $P_{\tilde{C}}^\bullet \rightarrow Cone(P_2 \rightarrow \tilde{P})$ is given by the following morphism of complexes.
\[
\begin{tikzcd}
0 \arrow[r, shift left] \arrow[r, shift right] & 0 \arrow[r, shift left] \arrow[r, shift right] & \mathbb{C}\arrow[r] &0\arrow[d] \arrow[r, shift left] \arrow[r, shift right] & \mathbb{C} \arrow[d,"0"']\arrow[r, shift left] \arrow[r, shift right] & \mathbb{C}^3 \arrow[d,"{\begin{psmallmatrix}0& 0 & 1\\0 & 0 & 0\end{psmallmatrix}}"']\arrow[r] & \mathbb{C} \arrow[d,"0"] \arrow[r, shift left] \arrow[r, shift right]  & \mathbb{C}^2 \arrow[d,"0"]\arrow[r, shift left] \arrow[r, shift right] & \mathbb{C}^2 \arrow[d,"0"] \\
&&&0\arrow[r, shift left] \arrow[r, shift right] & \mathbb{C}\arrow[r, shift left] \arrow[r, shift right] & \mathbb{C}^2 \arrow[r] & \mathbb{C}\arrow[r, shift left,"0"] \arrow[r, shift right,"1"']  & \mathbb{C} \arrow[r, shift left,"0"] \arrow[r, shift right,"1"'] & \mathbb{C} 
\end{tikzcd}
\]
\[
\begin{tikzcd}
0 \arrow[r, shift left] \arrow[r, shift right] & 0 \arrow[r, shift left] \arrow[r, shift right] & \mathbb{C}\arrow[r] &0\arrow[d] \arrow[r, shift left] \arrow[r, shift right] & \mathbb{C} \arrow[d,"0"']\arrow[r, shift left] \arrow[r, shift right] & \mathbb{C}^3 \arrow[d,"{\begin{psmallmatrix}0& 0 & 0\\0 & 0 & 1\end{psmallmatrix}}"']\arrow[r] & \mathbb{C} \arrow[d,"1"] \arrow[r, shift left] \arrow[r, shift right]  & \mathbb{C}^2 \arrow[d,"{\begin{psmallmatrix}0& 1\end{psmallmatrix}}"]\arrow[r, shift left] \arrow[r, shift right] & \mathbb{C}^2 \arrow[d,"{\begin{psmallmatrix}0&1\end{psmallmatrix}}"] \\
&&&0\arrow[r, shift left] \arrow[r, shift right] & \mathbb{C}\arrow[r, shift left] \arrow[r, shift right] & \mathbb{C}^2 \arrow[r] & \mathbb{C}\arrow[r, shift left,"0"] \arrow[r, shift right,"1"']  & \mathbb{C} \arrow[r, shift left,"0"] \arrow[r, shift right,"1"'] & \mathbb{C} 
\end{tikzcd}
\]
\end{lemma}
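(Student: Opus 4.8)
The plan is to realize the displayed chain map as the essentially unique lift of the morphism computed in Step~6 along the quasi-isomorphism of Step~7. Write $\phi \colon P_{\tilde{C}}^\bullet \to I_A^\bullet$ for the chain map produced in Step~6, which represents the composite $P_{\tilde{C}}^\bullet \xrightarrow{\sim} \tilde{C} \xrightarrow{f'} A \xrightarrow{\sim} I_A^\bullet$, and write $\psi \colon Cone(P_2 \to \tilde{P}) \to I_A^\bullet$ for the quasi-isomorphism exhibited in Step~7. Since $P_{\tilde{C}}^\bullet$ is a bounded complex of projective $kQ/I$-modules and $\psi$ is a quasi-isomorphism, postcomposition with $\psi$ induces a bijection $Hom_{K}(P_{\tilde{C}}^\bullet, Cone(P_2 \to \tilde{P})) \xrightarrow{\sim} Hom_{K}(P_{\tilde{C}}^\bullet, I_A^\bullet)$ in the homotopy category. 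Consequently there is a chain map $\theta \colon P_{\tilde{C}}^\bullet \to Cone(P_2 \to \tilde{P})$, unique up to homotopy, with $\psi \circ \theta \simeq \phi$, and the content of the Lemma is that $\theta$ may be represented by the displayed morphism.

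To extract the explicit $\theta$, I would first write out $Cone(P_2 \to \tilde{P})$ with its cone differential from Lemma~\ref{lem:projective} and record $\psi$ from Step~7 componentwise. Working degree by degree over the range where $P_{\tilde{C}}^\bullet$ is supported (Lemma~\ref{lem:cdresolution}), the equation $\psi \circ \theta = \phi$ becomes a system of linear equations in the component matrices of $\theta$; solving it while imposing compatibility with the differentials pins down $\theta$. For the two basis maps $f_1, f_2 \colon I_3 \to I_2$ this yields the two displayed morphisms, whose only nonzero interior blocks are $\begin{psmallmatrix} 0 & 0 & 1 \\ 0 & 0 & 0 \end{psmallmatrix}$ and $\begin{psmallmatrix} 0 & 0 & 0 \\ 0 & 0 & 1 \end{psmallmatrix}$ respectively, together with the corresponding identity and projection entries forced on the rightmost terms.

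The main difficulty is bookkeeping rather than conceptual: because the target is a mapping cone, its differential couples the $P_2$ and $\tilde{P}$ summands, so one should not expect $\psi \circ \theta = \phi$ on the nose but only up to a chain homotopy, and the delicate point is to produce that homotopy explicitly in the one or two degrees where the two composites differ. I would verify directly that the displayed $\theta$ is a chain map and that $\psi \circ \theta$ and $\phi$ agree up to this homotopy; uniqueness of the lift then identifies $\theta$ with the asserted morphism. Finally, I would record that the chosen homotopy is compatible with the later projection $Cone(P_2 \to \tilde{P}) \to P_2[1]$, so that it does not affect the comparison carried out in the subsequent step.
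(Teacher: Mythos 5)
Your proposal is correct and follows essentially the same route as the paper: the paper also characterizes the displayed map as the (homotopy-)unique lift of the Step~6 morphism along the Step~7 quasi-isomorphism, and verifies it by exhibiting an explicit null-homotopic morphism $P_{\tilde{C}}^\bullet \to I_A^\bullet$ accounting for the discrepancy between $\psi\circ\theta$ and the Step~6 composite. Your added justification for uniqueness (a bounded complex of projectives mapped along a quasi-isomorphism induces a bijection on homotopy classes) is the standard fact implicitly used there.
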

\begin{proof}
The morphism $P_{\tilde{C}}^\bullet \rightarrow I_A^\bullet$ specified by the following
\[
\begin{tikzcd}[column sep = 0.75cm]
0 \arrow[r, shift left] \arrow[r, shift right] & 0 \arrow[r, shift left] \arrow[r, shift right] & \mathbb{C}\arrow[r] &0\arrow[d] \arrow[r, shift left] \arrow[r, shift right] & \mathbb{C} \arrow[d,"0"']\arrow[r, shift left] \arrow[r, shift right] & \mathbb{C}^3 \arrow[d,"{\begin{psmallmatrix}0& 0 & 1\end{psmallmatrix}}"']\arrow[r] & \mathbb{C} \arrow[d,"{\begin{psmallmatrix}0\\0\\0\\1\end{psmallmatrix}}"] \arrow[r, shift left] \arrow[r, shift right]  & \mathbb{C}^2 \arrow[d,"{\begin{psmallmatrix}0& 0 \\ 0&1\end{psmallmatrix}}"]\arrow[r, shift left] \arrow[r, shift right] & \mathbb{C}^2 \arrow[d,"0"] \\
&&&\mathbb{C}^2\arrow[r, shift left] \arrow[r, shift right] & \mathbb{C}^2\arrow[r, shift left] \arrow[r, shift right] & \mathbb{C} \arrow[r] & \mathbb{C}^4\arrow[r, shift left,"0"] \arrow[r, shift right,"1"']  & \mathbb{C}^2 \arrow[r, shift left,"0"] \arrow[r, shift right,"1"'] & 0 \arrow[r] &  \mathbb{C} \arrow[r, shift left] \arrow[r, shift right] & 0 \arrow[r, shift left] \arrow[r, shift right] & 0
\end{tikzcd}
\]
is homotopy equivalent to the zero morphism. The result follows by simply verifying that the composition of the above maps with step 7 agrees with the maps of step 6 up to homotopy equivalence.
\end{proof}
\begin{lemma}[Step 9]
The composition $P_{\tilde{C}}^\bullet \rightarrow Cone(P_2\rightarrow \tilde{P}) \rightarrow P_2[1]$ is given by the following morphism of complexes.
\[
\begin{tikzcd}
0 \arrow[r, shift left] \arrow[r, shift right] & 0 \arrow[r, shift left] \arrow[r, shift right] & \mathbb{C}\arrow[r] &0\arrow[d] \arrow[r, shift left] \arrow[r, shift right] & \mathbb{C} \arrow[d,"0"']\arrow[r, shift left] \arrow[r, shift right] & \mathbb{C}^3 \arrow[d,"{\begin{psmallmatrix}0& 0 & 1\\0 & 0 & 0\end{psmallmatrix}}"']\arrow[r] & \mathbb{C}\arrow[r, shift left] \arrow[r, shift right]  & \mathbb{C}^2 \arrow[r, shift left] \arrow[r, shift right] & \mathbb{C}^2  \\
&&&0\arrow[r, shift left] \arrow[r, shift right] & \mathbb{C}\arrow[r, shift left] \arrow[r, shift right] & \mathbb{C}^2 \\[-10pt]
0 \arrow[r, shift left] \arrow[r, shift right] & 0 \arrow[r, shift left] \arrow[r, shift right] & \mathbb{C}\arrow[r] &0\arrow[d] \arrow[r, shift left] \arrow[r, shift right] & \mathbb{C} \arrow[d,"0"']\arrow[r, shift left] \arrow[r, shift right] & \mathbb{C}^3 \arrow[d,"{\begin{psmallmatrix}0& 0 & 0\\0 & 0 & 1\end{psmallmatrix}}"']\arrow[r] & \mathbb{C}  \arrow[r, shift left] \arrow[r, shift right]  & \mathbb{C}^2 \arrow[r, shift left] \arrow[r, shift right] & \mathbb{C}^2  \\
&&&0\arrow[r, shift left] \arrow[r, shift right] & \mathbb{C}\arrow[r, shift left] \arrow[r, shift right] & \mathbb{C}^2
\end{tikzcd}
\]
\end{lemma}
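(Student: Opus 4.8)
The plan is to obtain the asserted morphism by postcomposing the morphism $P_{\tilde{C}}^\bullet \to Cone(P_2 \to \tilde{P})$ produced in Step 8 with the canonical projection $\pi \colon Cone(P_2 \to \tilde{P}) \to P_2[1]$ arising from the defining triangle of the mapping cone; no computation beyond tracking which chain-level components survive $\pi$ is required.

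First I would fix the explicit form of the cone. At the level of complexes of quiver representations, $Cone(P_2 \xrightarrow{g} \tilde{P})$ carries $P_2$ in cohomological degree $-1$ and $\tilde{P}$ in degree $0$, with differential induced by $g$; this is precisely the target complex appearing in the bottom rows of Steps 7 and 8, where the block $(0 \to \mathbb{C} \to \mathbb{C}^2)$ is the $P_2$-summand and the block $(\mathbb{C} \xrightarrow{0,1} \mathbb{C} \xrightarrow{0,1} \mathbb{C})$ is the $\tilde{P}$-summand. The projection $\pi$ is the chain map that is the identity on the $P_2$-summand in degree $-1$ and zero on $\tilde{P}$; since $\tilde{P}$ is the degree-$0$ term while $P_2[1]$ is concentrated in degree $-1$, the compatibility square with the differentials reduces to the trivial identity $0 = 0$, so $\pi$ is indeed a chain map realizing $Cone(P_2 \to \tilde{P}) \to P_2[1]$.

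Next I would apply $\pi$ to the output of Step 8: this discards every vertical component of the Step 8 morphism landing in the degree-$0$ block $\tilde{P}$ and retains only those landing in the degree-$(-1)$ block $P_2$. For the basis vector $f_1$ the surviving component is $\mathbb{C}^3 \xrightarrow{\begin{psmallmatrix}0&0&1\\0&0&0\end{psmallmatrix}} \mathbb{C}^2$ together with zeros in the remaining positions, while for $f_2$ it is $\mathbb{C}^3 \xrightarrow{\begin{psmallmatrix}0&0&0\\0&0&1\end{psmallmatrix}} \mathbb{C}^2$; these reproduce the two displayed morphisms, and one checks immediately that they coincide with the morphisms computed in Step 5.

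The only point requiring care is the bookkeeping of the summand decomposition of the cone: one must confirm that the nonzero components of the Step 8 morphism landing in the degree-$0$ block $\tilde{P}$ (which are present for $f_2$) are exactly those killed by $\pi$, so that the projection leaves precisely the indicated matrices. As the splitting of the underlying graded object $Cone(P_2 \to \tilde{P}) = P_2[1] \oplus \tilde{P}$ is fixed once and for all by Step 7, this is a purely mechanical verification, and the resulting coincidence with Step 5 is exactly what is needed to complete Step 10 and hence Lemma~\ref{lem:calculation1}.
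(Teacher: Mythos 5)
Your proposal is correct and follows the paper's own argument: the paper likewise obtains Step 9 by postcomposing the Step 8 morphism with the canonical map $Cone(P_2 \rightarrow \tilde{P}) \rightarrow P_2[1]$ from Lemma~\ref{lem:projective}, which amounts to projecting onto the degree $-1$ summand $P_2$ and discarding the components landing in $\tilde{P}$. Your additional bookkeeping of the graded splitting of the cone is exactly the mechanical verification the paper leaves implicit, and the comparison with Step 5 that you mention is indeed the content of Step 10 rather than of this step.
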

\begin{proof}
The result follows from composing the morphisms of step 8 with the natural map $Cone(P_2 \rightarrow \tilde{P}) \rightarrow P_2[1]$ as in Lemma~\ref{lem:projective}.
\end{proof}
\begin{corollary}[Step 10]
The compositions of steps 5 and 9 agree.
\end{corollary}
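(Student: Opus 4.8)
The plan is simply to compare the two explicit chain maps already produced in Step~5 and Step~9 and to observe that they coincide. Both are morphisms of complexes sharing the common source $P_{\tilde{C}}^\bullet$ from Lemma~\ref{lem:cdresolution} and the common target $P_2[1]$: Step~5 realizes the composite $P_{\tilde{C}}^\bullet \xrightarrow{\sim} \tilde{C} \xleftarrow{\sim} Cone(P_3 \to \tilde{P}) \to P_3[1] \xrightarrow{S^{-1}f[1]} P_2[1]$, whereas Step~9 realizes $P_{\tilde{C}}^\bullet \to Cone(P_2 \to \tilde{P}) \to P_2[1]$, the latter built by transporting the morphism $f'$ of Step~3 through the injective resolution $I_A^\bullet$ of Lemma~\ref{lem:aresolution}. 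Since both are honest maps in the homotopy category of complexes of $kQ/I$-modules with the same source and target, it is enough to match the underlying chain maps degree by degree.

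First I would treat the two basis vectors $f_1, f_2 \in Hom(I_3,I_2)$ separately. Reading off the matrices displayed in the two steps, the only cohomological degree in which either chain map is nonzero is the one supporting the term $\mathbb{C}^3 \to \mathbb{C}^2$, and in that degree both Step~5 and Step~9 return precisely $\begin{psmallmatrix}0&0&1\\0&0&0\end{psmallmatrix}$ for $f_1$ and precisely $\begin{psmallmatrix}0&0&0\\0&0&1\end{psmallmatrix}$ for $f_2$, with all remaining components zero. Thus the two chain maps are literally equal, not merely homotopic, so they represent the same morphism $\tilde{C} \to P_2[1]$ after composing with the fixed quasi-isomorphisms of Lemmas~\ref{lem:aresolution} and \ref{lem:cdresolution}. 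Extending from the basis to all of $Hom(I_3,I_2)$ is then immediate by additivity of the functors $\mathbb{L}_P$ and $\mathbb{R}_{S(P)}S^{-1}[1]$, which furnishes the common $f'$ asserted in Lemma~\ref{lem:calculation1}.

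I do not anticipate a genuine obstacle at this final step, since all of the substantive work, namely the resolutions, the passage through the cones $Cone(P_i \to \tilde{P})$ and through $I_A^\bullet$, and the null-homotopies exhibited in Steps~3 and~8, has already been discharged; Step~10 is a bookkeeping comparison. The single point I would verify carefully before declaring equality is that the nonzero entries of the two computations sit in the same cohomological degree of $P_2[1]$, i.e. that the internal gradings of $P_{\tilde{C}}^\bullet$ used in Step~5 and in Step~9 have been aligned, since a shift there would be the only way for this otherwise purely visual comparison to fail.
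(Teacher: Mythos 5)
Your proposal is correct and matches the paper's (implicit) argument: the paper offers no separate proof for Step 10 precisely because it is a direct entrywise comparison of the chain maps displayed in Steps 5 and 9, which agree on the nose in the single nonzero degree $\mathbb{C}^3 \to \mathbb{C}^2$ for each basis morphism $f_1, f_2$. Your added caution about aligning the internal gradings of $P_{\tilde{C}}^\bullet$ is sensible but poses no issue here, since both compositions are built from the same fixed resolution of Lemma~\ref{lem:cdresolution}.
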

\subsubsection{Verification of compatibility for sequence~\eqref{eq:second}}\label{verification2}
In this section, we perform an analogous calculation as in the above section to prove part 2 in the assumption of Lemma~\ref{lem:calculation}.
\begin{lemma}\label{lem:calculation2}
For each element of a basis $g \colon I_2 \rightarrow I_1$, there exists a unique $g'$ making the following two diagrams commute.
\[
\begin{tikzcd}
P \arrow[r] \arrow[d]&I_2 \arrow[r,"i_2"]\arrow[d,"g"]& A \arrow[d,"g'"] & A \arrow[r,"p_2"] \arrow[d,"g'"]& P_2[1] \arrow[r]\arrow[d,"S^{-1}g{[1]}"] & \tilde{P}[1]\arrow[d] \\
P \arrow[r]& I_1 \arrow[r,"i_1"]&D[1] & D[1] \arrow[r,"p_1"] & P_1[1] \arrow[r] & \tilde{P}[1] 
\end{tikzcd}
\]
\end{lemma}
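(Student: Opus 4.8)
The plan is to mirror, step for step, the computation carried out for Lemma~\ref{lem:calculation1}, with the roles of the two resolutions interchanged. Here $A$ plays the part of the \emph{source} object and $D[1]$ the part of the \emph{target}, so I would work with the projective resolution $P_A^\bullet$ from Lemma~\ref{lem:aresolution} and the injective resolution $I_D^\bullet$ from Lemma~\ref{lem:cdresolution}, in place of $P_{\tilde C}^\bullet$ and $I_A^\bullet$. Since $S(P_i)\simeq I_i$ (Lemma~\ref{lem:serre}) and $S$ is an equivalence, one has $\operatorname{Hom}(I_2,I_1)\simeq\operatorname{Hom}(P_2,P_1)=\mathbb{C}^2$, so it suffices to verify both diagrams for the two basis elements $g_1,g_2\colon I_2\to I_1$ obtained by applying the Serre functor to the basis of $\operatorname{Hom}(P_2,P_1)$ recorded in Lemma~\ref{lem:serre}.

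I would first dispose of the existence and uniqueness of $g'$. Functoriality of the projection onto the component $P^\perp$ of the semi-orthogonal decomposition $\langle P^\perp,P\rangle$, applied to $g$, produces a filler $g'\colon A\to D[1]$ of the triangles $P\to I_2\to A$ and $P\to I_1\to D[1]$ of Lemma~\ref{lem:injective} making the left-hand square commute. Applying $\operatorname{Hom}(-,D[1])$ to the triangle $P\to I_2\xrightarrow{i_2}A\to P[1]$, the ambiguity in such a filler is a quotient of $\operatorname{Hom}(P[1],D[1])\simeq\operatorname{Hom}(P,D)$, which vanishes since $D\in P^\perp$; hence $g'$ is unique. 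The entire remaining content of the Lemma is the commutativity of the right-hand square, that is, the compatibility of $g'$ with the decompositions $A\to P_2[1]\to\tilde P[1]$ and $D[1]\to P_1[1]\to\tilde P[1]$ of Lemma~\ref{lem:projective} and with $S^{-1}g[1]\colon P_2[1]\to P_1[1]$; equivalently, the equality $S^{-1}g[1]\circ p_2 = p_1\circ g'$ of maps $A\to P_1[1]$.

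For this last equality I would run the analogue of the ten steps used for Lemma~\ref{lem:calculation1}, all in the homotopy category of complexes. Concretely: (a) compute $i_1\circ g\colon I_2\to D[1]$ and lift it along the injective resolution to a chain map $I_2\to I_D^\bullet[1]$; (b) using the factorisation through $i_2\colon I_2\to A$, extract the induced representative of $g'\colon A\to I_D^\bullet[1]$; (c) precompose $S^{-1}g[1]\circ p_2$ with the projective resolution $P_A^\bullet\xrightarrow{\sim}A$ and record the resulting chain map $P_A^\bullet\to P_1[1]$; (d) precompose $p_1\circ g'$ with $P_A^\bullet\xrightarrow{\sim}A$, using the identification $D[1]\simeq Cone(P_1\to\tilde P)$ of Lemma~\ref{lem:projective} to realise $p_1$ at the chain level; and (e) verify that the two chain maps from (c) and (d) coincide on $g_1$ and $g_2$, as was done in Step~10 of the previous section.

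The main obstacle is precisely the homotopy bookkeeping that made Lemma~\ref{lem:calculation1} long: the lift in (a) and the induced map in (b) are only well defined up to homotopy, so at each stage one must exhibit explicit null-homotopies for the auxiliary chain maps---the analogues of the null-homotopic morphisms appearing in Steps~3 and 8 of the previous section---in order to single out the correct representative and to conclude the final equality in $K(\mathrm{mod}\text{-}kQ/I)$. A conceptual shortcut worth attempting is the vertex-reversing self-duality $kQ/I\simeq (kQ/I)^{op}$ used in the proof of Lemma~\ref{lem:extensioniso}: combined with the Serre functor it yields a contravariant autoequivalence of $D^b(Q)$ exchanging $I_1\leftrightarrow I_3$ and $P_1\leftrightarrow P_3$ while fixing $I_2,P_2$ up to shift, and hence carrying the data of \eqref{eq:second} to that of \eqref{eq:first}. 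Making this precise---in particular tracking its interaction with $S^{-1}[1]$ and the mutation functors, where the shifts could intervene---would reduce the present Lemma to Lemma~\ref{lem:calculation1} and bypass the second computation.
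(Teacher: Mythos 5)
Your proposal follows essentially the same route as the paper: the paper's proof of this lemma is exactly the sequence of chain-level computations you outline, namely lifting $i_1\circ g$ along the injective resolution $I_{D[1]}^\bullet$ of Lemma~\ref{lem:cdresolution} to extract $g'$, precomposing both $S^{-1}g[1]\circ p_2$ and $p_1\circ g'$ with the projective resolution $P_A^\bullet$ of Lemma~\ref{lem:aresolution} via the identifications $A\simeq Cone(P_2\to\tilde P)$ and $D[1]\simeq Cone(P_1\to\tilde P)$, and checking agreement on the two basis elements of $\operatorname{Hom}(I_2,I_1)$ after exhibiting the needed null-homotopies. Your supplementary uniqueness argument (the fillers form a torsor under $\operatorname{Hom}(P[1],D[1])\simeq\operatorname{Hom}(P,D)=0$ since $D\in P^\perp$) is correct and makes explicit a point the paper leaves implicit; only the concluding matrix verifications remain to be carried out, as in Steps 1--9 of the paper.
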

We first summarize the steps, illustrated in figure~\ref{fig:2}, describing the explicit computations involved in the homotopy category of complexes. It suffices to check the following for a basis of the vector space $Hom(I_2,I_1)$ described in Lemma~\ref{lem:serre}.
\begin{enumerate}[(a)]
\item
In step $1$, we compute the composition of the morphism $i_1\circ g$ with the injective resolution $D[1] \xrightarrow{\sim} I_{D[1]}^\bullet$ of Lemma~\ref{lem:cdresolution}.
\item
In steps $2$ and $3$, we compute the unique morphism $A \rightarrow I_{D[1]}^\bullet$ making the right square in the left map of triangles commute.
\item
In steps $4,5$, and $6$, we compute the composition of the projective resolution $P_A^\bullet \xrightarrow{\sim} A$ with the composition $S^{-1}g[1] \circ p_2$.
\item
In steps $7$ and $8$, we compute the composition of the projective resolution $P_A^\bullet \xrightarrow{\sim} A$ with the composition $p_1 \circ g'$.
\item
In step $9$, we verify that the compositions from parts (c) and (d) agree.
\end{enumerate}
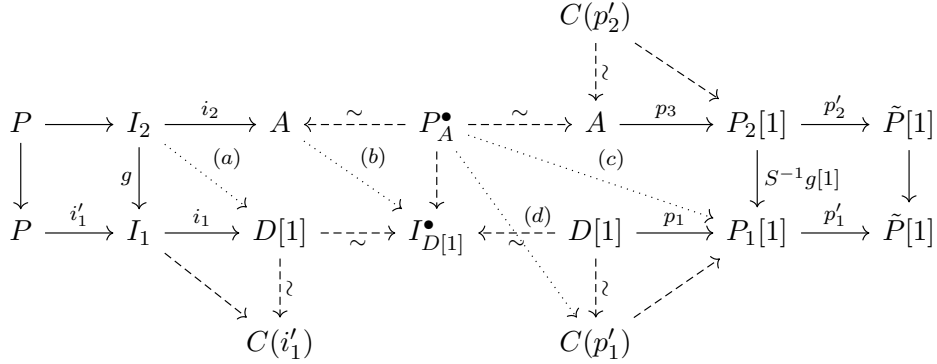
\begin{figure}[H]
\begin{tikzcd}
& & & &C(p_2') \isoarrow{d,dashed} \arrow[rd,dashed] & &\\
P \arrow[r] \arrow[d]&I_2 \arrow[r,"i_2"]\arrow[d,"g"']\arrow[rd,dotted,"(a)"]& A  \arrow[rd,dotted,"(b)"] &[-5pt]P_{A}^\bullet \arrow[rrd,dotted,"(c)"] \arrow[ rdd,dotted,"(d)"]\arrow[l,dashed,"\sim"'] \arrow[r,dashed,"\sim"] \arrow[d,dashed]&[-5pt]A \arrow[r,"p_3"]& P_2[1] \arrow[r,"p_2'"]\arrow[d,"S^{-1}g{[1]}"] & \tilde{P}[1]\arrow[d] \\
P \arrow[r,"i_1'"]& I_1 \arrow[r,"i_1"] \arrow[rd,dashed]&D[1]  \arrow[r,dashed,"\sim"'] \isoarrow{d,dashed}&  I_{D[1]}^\bullet & D[1] \isoarrow{d,dashed} \arrow[l,dashed,"\sim"] \arrow[r,"p_1"] & P_1[1] \arrow[r,"p_1'"] & \tilde{P}[1] \\
& & C(i_1')& &C(p_1') \arrow[ru,dashed] & & 
\end{tikzcd}
\caption{Illustration of the steps involved in the verification. In the diagram, $C(f)$ denotes the cone of a morphism $f$, the dashed arrows denote the canonical morphisms, the dashed isomorphisms come from Lemma~\ref{lem:aresolution},~\ref{lem:cdresolution}, and the dotted arrows correspond to the indicated steps.}
\label{fig:2}
\end{figure}
In more detail, the proof of Lemma~\ref{lem:calculation2} is broken into the following steps.
\begin{lemma}[Step 1]
The composition $I_2 \rightarrow I_1 \rightarrow D[1] \xrightarrow{\sim} I_{D[1]}^\bullet$ is given by the following morphism of complexes.
\[
\begin{tikzcd}
&&&\mathbb{C}^2 \arrow[d, "{\begin{psmallmatrix}0&0\\1&0\\0&0 \end{psmallmatrix}}"] \arrow[r, shift left] \arrow[r, shift right] &\mathbb{C} \arrow[d, "0"] \arrow[r, shift left] \arrow[r, shift right]  & 0\arrow[d] \\
\mathbb{C}^2 \arrow[r, shift left] \arrow[r, shift right]  & \mathbb{C}^2 \arrow[r, shift left] \arrow[r, shift right] & \mathbb{C} \arrow[r] & \mathbb{C}^3 \arrow[r, shift left] \arrow[r, shift right] & \mathbb{C} \arrow[r, shift left] \arrow[r, shift right] & 0 \arrow[r] & \mathbb{C} \arrow[r, shift left] \arrow[r, shift right] & 0 \arrow[r, shift left] \arrow[r, shift right] & 0
\end{tikzcd}
\]
\[
\begin{tikzcd}
&&&\mathbb{C}^2 \arrow[d, "{\begin{psmallmatrix}0&0\\0&1\\0&0 \end{psmallmatrix}}"] \arrow[r, shift left] \arrow[r, shift right] &\mathbb{C} \arrow[d, "0"] \arrow[r, shift left] \arrow[r, shift right]  & 0\arrow[d] \\
\mathbb{C}^2 \arrow[r, shift left] \arrow[r, shift right]  & \mathbb{C}^2 \arrow[r, shift left] \arrow[r, shift right] & \mathbb{C} \arrow[r] & \mathbb{C}^3 \arrow[r, shift left] \arrow[r, shift right] & \mathbb{C} \arrow[r, shift left] \arrow[r, shift right] & 0 \arrow[r] & \mathbb{C} \arrow[r, shift left] \arrow[r, shift right] & 0 \arrow[r, shift left] \arrow[r, shift right] & 0
\end{tikzcd}
\]
\end{lemma}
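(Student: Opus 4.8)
The plan is to exploit that $I_2$ is an injective module concentrated in degree $0$ and that $I_{D[1]}^\bullet$ is a bounded complex of injectives quasi-isomorphic to $D[1]$ (Lemma~\ref{lem:cdresolution}); hence the composite $I_2 \xrightarrow{g} I_1 \to D[1]$, a priori only a morphism in the derived category, is represented by an honest chain map $\psi\colon I_2 \to I_{D[1]}^\bullet$, unique up to homotopy. Since $I_2$ sits in degree $0$ while $I_{D[1]}^\bullet = I_D^\bullet[1]$ has terms $I_3,\ I_2\oplus I_1,\ I_1$ in degrees $-1,0,1$, such a chain map is the datum of a single module homomorphism $\psi^0\colon I_2 \to I_2\oplus I_1$ satisfying the cocycle condition $d^0\circ\psi^0 = 0$, where $d^0$ is the degree-$0$ differential of $I_{D[1]}^\bullet$; two such representatives are homotopic precisely when they differ by $d^{-1}\circ h$ for some $h\colon I_2 \to I_3$. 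The assertion is then that, after this homotopy normalization, $\psi^0$ is the displayed inclusion and the components in degrees $-1$ and $1$ vanish.

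To produce $\psi$ I would first fix the two basis elements $g_1,g_2\colon I_2\to I_1$ as the module maps $(x,y)\mapsto x$ and $(x,y)\mapsto y$ at the first vertex, which are the Serre duals of the chosen basis of $\mathrm{Hom}(P_2,P_1)$ recorded in Lemma~\ref{lem:serre}. Next I would realize the connecting morphism $I_1\to D[1]$ arising from the triangle $P\to I_1\to D[1]$ of Lemma~\ref{lem:injective}; equivalently, this is the class in $\mathrm{Ext}^1(I_1,D)$ of the short exact sequence $0\to D\to P\to I_1\to 0$. Lifting this class to a chain map $I_1\to I_{D[1]}^\bullet$ through the injective resolution $D\to I_D^\bullet$ of Lemma~\ref{lem:cdresolution} (a standard comparison computation, using that $I_1$ is its own injective resolution) and then precomposing with $g_i$ yields $\psi$. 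As an independent check, Yoneda composition identifies $\psi$ with the pullback class $g_i^*\,[\,0\to D\to P\to I_1\to 0\,]\in\mathrm{Ext}^1(I_2,D)$, which should match the module map computed directly.

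Finally I would evaluate the resulting chain map vertex by vertex and reduce it modulo the homotopies $d^{-1}\circ h$ supported on the degree-$(-1)$ term $I_3$, matching the stated normal form: at the first vertex $\psi^0$ sends the $\mathbb{C}^2$ of $I_2$ into the $\mathbb{C}^3$ of $I_2\oplus I_1$ by $\begin{psmallmatrix}0&0\\1&0\\0&0\end{psmallmatrix}$ for $g_1$ and by $\begin{psmallmatrix}0&0\\0&1\\0&0\end{psmallmatrix}$ for $g_2$, and is zero at the second vertex. The main obstacle is the bookkeeping forced by the degree shift: because $I_1\to D[1]$ is an $\mathrm{Ext}^1$-class rather than a module map, its chain representative lands in the degree-$0$ term $I_2\oplus I_1$ of the shifted resolution, so one must carefully track the cocycle condition and the available degree-$(-1)$ homotopies through the explicit matrices of $I_D^\bullet$ to be certain the representative has genuinely been brought into the stated normal form, rather than merely a homotopic one.
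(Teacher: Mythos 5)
Your proposal is correct and takes essentially the same route as the paper: the paper's proof likewise composes the basis $g_1,g_2$ of Lemma~\ref{lem:serre} with a chain-level representative $I_1\to I_{D[1]}^\bullet$ of the connecting morphism of the triangle $D\to P\to I_1$, using that this representative is unique up to scalar (indeed $\mathrm{Hom}(I_1,I_3)=\mathrm{Hom}(I_1,I_2)=0$, so there are no nontrivial homotopies and the degree-zero cocycle must be the inclusion into the $I_1$-summand of the middle term of $I_{D[1]}^\bullet$). Your identification of that connecting map with the Yoneda class of $0\to D\to P\to I_1\to 0$ is exactly what the paper leaves implicit, and it correctly pins down the answer up to the harmless overall scalar.
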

\begin{proof}
This follows from composing the basis of morphisms $g_1,g_2 \colon I_2 \rightarrow I_1$ of Lemma~\ref{lem:serre} with the morphism $I_1 \rightarrow I_{D[1]}^\bullet$ which is unique up to scalar multiplication.
\end{proof}
\begin{lemma}[Step 2]
The unique morphism $Cone(P\rightarrow I_2) \rightarrow I_{D[1]}^\bullet$ making the composition with $I_2 \rightarrow Cone(P\rightarrow I_2)$ agree with step 1 is given by the following morphism of complexes.
\[
\begin{tikzcd}
\mathbb{C}\arrow[r, shift left,"1"] \arrow[r, shift right,"0"']\arrow[d,"{\begin{psmallmatrix}-1\\0 \end{psmallmatrix}}"'] &\mathbb{C}\arrow[d,"{\begin{psmallmatrix}-1\\0 \end{psmallmatrix}}"'] \arrow[r, shift left,"1"] \arrow[r, shift right,"0"'] &\mathbb{C} \arrow[d,"-1"']\arrow[r]& \mathbb{C}^2 \arrow[d, "{\begin{psmallmatrix}0&0\\1&0\\0&0 \end{psmallmatrix}}"] \arrow[r, shift left] \arrow[r, shift right] &\mathbb{C} \arrow[d, "0"] \arrow[r, shift left] \arrow[r, shift right]  & 0\arrow[d] \\
\mathbb{C}^2 \arrow[r, shift left] \arrow[r, shift right]  & \mathbb{C}^2 \arrow[r, shift left] \arrow[r, shift right] & \mathbb{C} \arrow[r] & \mathbb{C}^3 \arrow[r, shift left] \arrow[r, shift right] & \mathbb{C} \arrow[r, shift left] \arrow[r, shift right] & 0 \arrow[r] & \mathbb{C} \arrow[r, shift left] \arrow[r, shift right] & 0 \arrow[r, shift left] \arrow[r, shift right] & 0
\end{tikzcd}
\]
\[
\begin{tikzcd}
\mathbb{C}\arrow[r, shift left,"1"] \arrow[r, shift right,"0"']\arrow[d,"0"] &\mathbb{C}\arrow[d,"0"] \arrow[r, shift left,"1"] \arrow[r, shift right,"0"'] &\mathbb{C} \arrow[d,"0"]\arrow[r]& \mathbb{C}^2 \arrow[d, "{\begin{psmallmatrix}0&0\\0&1\\0&0 \end{psmallmatrix}}"] \arrow[r, shift left] \arrow[r, shift right] &\mathbb{C} \arrow[d, "0"] \arrow[r, shift left] \arrow[r, shift right]  & 0\arrow[d] \\
\mathbb{C}^2 \arrow[r, shift left] \arrow[r, shift right]  & \mathbb{C}^2 \arrow[r, shift left] \arrow[r, shift right] & \mathbb{C} \arrow[r] & \mathbb{C}^3 \arrow[r, shift left] \arrow[r, shift right] & \mathbb{C} \arrow[r, shift left] \arrow[r, shift right] & 0 \arrow[r] & \mathbb{C} \arrow[r, shift left] \arrow[r, shift right] & 0 \arrow[r, shift left] \arrow[r, shift right] & 0
\end{tikzcd}
\]
\end{lemma}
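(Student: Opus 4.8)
The plan is to realize the claimed morphism as the map induced on cones and then to pin it down at the chain level, deferring the explicit matrices to a direct computation. First I would record the abstract existence and uniqueness. Applying $Hom(-,D[1])$ to the distinguished triangle $P\to I_2\xrightarrow{\,i_2\,} Cone(P\to I_2)\to P[1]$ of Lemma~\ref{lem:injective} gives the exact sequence
\[
Hom(P[1],D[1])\to Hom(Cone(P\to I_2),D[1])\xrightarrow{\,i_2^*\,} Hom(I_2,D[1])\to Hom(P,D[1]).
\]
Since $Hom^\bullet(P,D)=0$ by Lemma~\ref{lem:cd}, both outer terms vanish, so $i_2^*$ is an isomorphism: the morphism computed in Step~1 lifts \emph{uniquely} to a morphism $Cone(P\to I_2)\to D[1]$, and this lift is exactly $g'$, i.e.\ the map induced on cones by the commuting left square of Lemma~\ref{lem:calculation2}. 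Thus both the existence and the uniqueness asserted in the statement are free; what remains is to exhibit an explicit chain-level representative.

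Next I would pass to complexes. I write $Cone(P\to I_2)$ as the two-term complex with $P$ in degree $-1$ and $I_2$ in degree $0$, whose sole nonzero differential is the (signed) structure map $P\to I_2$, so that $i_2$ is the inclusion of the degree-$0$ term. For the target I take $I_{D[1]}^\bullet=I_D^\bullet[1]$ from Lemma~\ref{lem:cdresolution}, a bounded complex of injectives sitting in degrees $-1,0,1$ with leading term $I_3$ in degree $-1$ and $I_2\oplus I_1$ in degree $0$; because the target is a complex of injectives, morphisms in $D^b(Q)$ into it are computed in the homotopy category of complexes, so it suffices to produce an honest chain map. Its degree-$0$ component is forced to equal the map $I_2\to I_{D[1]}^\bullet$ of Step~1, and its degree-$(-1)$ component $P\to I_3$ is then determined by the chain-map condition relating the cone differential $P\to I_2$ to the leading differential $I_3\to I_2\oplus I_1$ of the resolution. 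Carrying this out for each of the two basis vectors $g_1,g_2\in Hom(I_2,I_1)$ of Lemma~\ref{lem:serre} yields precisely the two displayed morphisms.

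The verification that the displayed maps are genuine chain maps is the main obstacle, and it is purely computational: one checks commutativity with the differentials vertex by vertex, the only delicate point being the sign carried by the structure map $P\to I_2$ inside the cone differential, which is exactly what fixes the signs appearing in the degree-$(-1)$ column of the answer. Once this is verified, precomposition with $i_2$ simply returns the degree-$0$ component, which agrees with Step~1 on the nose, so no further homotopy correction is needed; should the naive extension fail to commute strictly for either basis vector, I would absorb the discrepancy into an explicit null-homotopy, exactly as in the analogous verification for the sequences in~\eqref{eq:first}. Uniqueness of the lift is already guaranteed by the vanishing $Hom^\bullet(P,D)=0$ established in the first paragraph, which completes the identification.
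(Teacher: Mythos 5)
Your proposal is correct and follows essentially the same route as the paper, which likewise just exhibits the chain-level map and checks that precomposition with $I_2 \rightarrow Cone(P\rightarrow I_2)$ recovers the morphism of Step 1. Your opening paragraph, deducing uniqueness of the lift from the vanishing of $Hom^\bullet(P,D)$ via the long exact sequence of the defining triangle, is a worthwhile supplement: the paper asserts uniqueness in the statement but leaves this justification implicit.
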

\begin{proof}
This follows by simply composing the above morphism with the natural map $I_2 \rightarrow Cone(P \rightarrow I_2)$ and checking agreement with step 1.
\end{proof}
\begin{lemma}[Step 3]
The morphism $A\rightarrow I_{D[1]}^\bullet$ given by composing the natural map $Cone(P \rightarrow I_2) \xleftarrow{\sim} A$ with step 2 is given by the following morphism of complexes.
\[
\begin{tikzcd}
0\arrow[r, shift left] \arrow[r, shift right]\arrow[d] &\mathbb{C}\arrow[d,"{\begin{psmallmatrix}-1\\0 \end{psmallmatrix}}"'] \arrow[r, shift left,"1"] \arrow[r, shift right,"0"'] &\mathbb{C} \arrow[d,"-1"']\arrow[r]& \mathbb{C} \arrow[d, "0"] \arrow[r, shift left,"0"] \arrow[r, shift right,"1"'] &\mathbb{C} \arrow[d, "0"] \arrow[r, shift left] \arrow[r, shift right]  & 0\arrow[d] \\
\mathbb{C}^2 \arrow[r, shift left] \arrow[r, shift right]  & \mathbb{C}^2 \arrow[r, shift left] \arrow[r, shift right] & \mathbb{C} \arrow[r] & \mathbb{C}^3 \arrow[r, shift left] \arrow[r, shift right] & \mathbb{C} \arrow[r, shift left] \arrow[r, shift right] & 0 \arrow[r] & \mathbb{C} \arrow[r, shift left] \arrow[r, shift right] & 0 \arrow[r, shift left] \arrow[r, shift right] & 0
\end{tikzcd}
\]
\[
\begin{tikzcd}
0\arrow[r, shift left] \arrow[r, shift right]\arrow[d] &\mathbb{C}\arrow[d,"0"] \arrow[r, shift left,"1"] \arrow[r, shift right,"0"'] &\mathbb{C} \arrow[d,"0"']\arrow[r]& \mathbb{C} \arrow[d, "{\begin{psmallmatrix}0\\1\\0 \end{psmallmatrix}}"'] \arrow[r, shift left,"0"] \arrow[r, shift right,"1"'] &\mathbb{C} \arrow[d, "0"] \arrow[r, shift left] \arrow[r, shift right]  & 0\arrow[d] \\
\mathbb{C}^2 \arrow[r, shift left] \arrow[r, shift right]  & \mathbb{C}^2 \arrow[r, shift left] \arrow[r, shift right] & \mathbb{C} \arrow[r] & \mathbb{C}^3 \arrow[r, shift left] \arrow[r, shift right] & \mathbb{C} \arrow[r, shift left] \arrow[r, shift right] & 0 \arrow[r] & \mathbb{C} \arrow[r, shift left] \arrow[r, shift right] & 0 \arrow[r, shift left] \arrow[r, shift right] & 0
\end{tikzcd}
\]
\end{lemma}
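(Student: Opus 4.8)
The plan is to recognize the asserted morphism as a literal composite of two maps already made explicit, so that Step~3 reduces to a single degreewise matrix computation. The proof of Lemma~\ref{lem:injective} supplies an explicit chain-level quasi-isomorphism $\psi \colon A \xrightarrow{\sim} Cone(P \rightarrow I_2)$, whose source is precisely the standard representative $Cone(D \rightarrow \tilde{C})$ of $A$ drawn in the top row of the two displayed diagrams. First I would record $\psi$ alongside the chain map $Cone(P \rightarrow I_2) \rightarrow I_{D[1]}^\bullet$ produced in Step~2, treating each of the two basis vectors $g_1, g_2 \colon I_2 \rightarrow I_1$ of Lemma~\ref{lem:serre} separately. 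Since the orientation $Cone(P \rightarrow I_2) \xleftarrow{\sim} A$ already points the correct way, no homotopy inverse is required, and the composite $A \rightarrow I_{D[1]}^\bullet$ is defined on the nose.

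Next I would carry out the composition degree by degree. Both maps are concentrated in finitely many homological degrees with small terms, so this amounts to a short chain of matrix products. The only place where anything of substance happens is the degree in which $\psi$ collapses the copy of $\mathbb{C}^2$ occurring in $Cone(P \rightarrow I_2)$ onto the single $\mathbb{C}$ of $A$; this collapse is exactly what fixes the shape of the nonzero vertical maps in the displayed answer. Reading off the resulting entries should reproduce, for $g_1$ and $g_2$ respectively, the two morphisms of complexes claimed in the statement.

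The principal difficulty I anticipate is one of bookkeeping rather than of ideas: keeping the gradings of $\psi$, of the Step~2 map, and of the target resolution $I_{D[1]}^\bullet$ from Lemma~\ref{lem:cdresolution} correctly aligned, and confirming that the raw composite already sits in the reduced form displayed rather than differing from it by a null-homotopy. Should such a discrepancy arise, I would exhibit the relevant null-homotopy explicitly, exactly as in the analogous step of Section~\ref{verification1}, and subtract it before comparing entries. Carrying this out for both $g_1$ and $g_2$ and checking term-by-term agreement with the two displayed diagrams completes the verification.
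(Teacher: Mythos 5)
Your proposal is correct and matches the paper's argument: the map $A \to Cone(P\to I_2)$ from Lemma~\ref{lem:injective} points the right way, so Step~3 is just the degreewise matrix composition of that chain map with the Step~2 morphism, which the paper likewise dispatches as a straightforward computation. (In fact the raw composite already equals the displayed answer on the nose, so the null-homotopy contingency you mention never arises here.)
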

\begin{proof}
This follows from a straightforward computation.
\end{proof}
\begin{lemma}[Step 4]
The composition $P_A^\bullet \xrightarrow{\sim} A \rightarrow I_{D[1]}^\bullet$ is given by the following morphism of complexes.
\[
\begin{tikzcd}[column sep = 0.75cm]
0\arrow[r, shift left] \arrow[r, shift right]&0\arrow[r, shift left] \arrow[r, shift right]&\mathbb{C} \arrow[r] &0\arrow[r, shift left] \arrow[r, shift right]\arrow[d] &\mathbb{C}^2\arrow[d,"{\begin{psmallmatrix}0&-1\\ 0 &0 \end{psmallmatrix}}"'] \arrow[r, shift left,"1"] \arrow[r, shift right,"0"'] &\mathbb{C}^4 \arrow[d,"{\begin{psmallmatrix}0\\0\\-1\\0 \end{psmallmatrix}}"]\arrow[r]& \mathbb{C} \arrow[d, "0"] \arrow[r, shift left] \arrow[r, shift right] &\mathbb{C}^2 \arrow[d] \arrow[r, shift left] \arrow[r, shift right]  & \mathbb{C}^2\arrow[d] \\
&&&\mathbb{C}^2 \arrow[r, shift left] \arrow[r, shift right]  & \mathbb{C}^2 \arrow[r, shift left] \arrow[r, shift right] & \mathbb{C} \arrow[r] & \mathbb{C}^3 \arrow[r, shift left] \arrow[r, shift right] & \mathbb{C} \arrow[r, shift left] \arrow[r, shift right] & 0 \arrow[r] & \mathbb{C} \arrow[r, shift left] \arrow[r, shift right] & 0 \arrow[r, shift left] \arrow[r, shift right] & 0
\end{tikzcd}
\]
\[
\begin{tikzcd}[column sep = 0.75cm]
0\arrow[r, shift left] \arrow[r, shift right]&0\arrow[r, shift left] \arrow[r, shift right]&\mathbb{C} \arrow[r] &0\arrow[r, shift left] \arrow[r, shift right]\arrow[d] &\mathbb{C}^2\arrow[d] \arrow[r, shift left,"1"] \arrow[r, shift right,"0"'] &\mathbb{C}^4 \arrow[d]\arrow[r]& \mathbb{C} \arrow[d, "{\begin{psmallmatrix}0\\1\\0 \end{psmallmatrix}}"'] \arrow[r, shift left] \arrow[r, shift right] &\mathbb{C}^2 \arrow[d] \arrow[r, shift left] \arrow[r, shift right]  & \mathbb{C}^2\arrow[d] \\
&&&\mathbb{C}^2 \arrow[r, shift left] \arrow[r, shift right]  & \mathbb{C}^2 \arrow[r, shift left] \arrow[r, shift right] & \mathbb{C} \arrow[r] & \mathbb{C}^3 \arrow[r, shift left] \arrow[r, shift right] & \mathbb{C} \arrow[r, shift left] \arrow[r, shift right] & 0 \arrow[r] & \mathbb{C} \arrow[r, shift left] \arrow[r, shift right] & 0 \arrow[r, shift left] \arrow[r, shift right] & 0
\end{tikzcd}
\]
\end{lemma}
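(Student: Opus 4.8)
The plan is to obtain the displayed morphism by directly composing the two pieces of explicit data already in hand: the projective resolution quasi-isomorphism $P_A^\bullet \xrightarrow{\sim} A$ recorded in Lemma~\ref{lem:aresolution}, and the morphism $A \to I_{D[1]}^\bullet$ computed in Step 3. Because $I_{D[1]}^\bullet$ is a bounded complex of injectives and $P_A^\bullet$ a bounded complex of projectives, the image of this composite in $D^b(Q)$ is represented by the honest composite of maps of complexes, so it suffices to carry out the composition in the homotopy category $K^b(\mathrm{mod}\text{-}kQ/I)$ and read off a representative.

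First I would match the presentation of $A$ appearing as the target of the quasi-isomorphism in Lemma~\ref{lem:aresolution} with the presentation of $A$ serving as the source of the Step 3 morphism; since both arise from the cone $A = Cone(D \to \tilde{C})$, this identification is canonical up to homotopy. With the two maps aligned in each cohomological degree, the component of the composite in a given degree is simply the product of the corresponding component matrices from Lemma~\ref{lem:aresolution} and Step 3. Performing these small matrix multiplications term by term, and doing so separately for each of the two basis morphisms $g_1, g_2 \colon I_2 \to I_1$ carried over from Step 1, produces exactly the two morphisms of complexes asserted in the statement.

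The only genuine subtlety, exactly as in the corresponding Step 6 of Section~\ref{verification1}, is that maps of complexes are determined only up to chain homotopy, so the raw composite may differ from the displayed representative by a null-homotopy supported on the injective terms of $I_{D[1]}^\bullet$. I would therefore finish by exhibiting the relevant elementary homotopy and checking that the surviving squares commute on the nose. This, together with careful tracking of the signs inherited from the cone defining $A$, is the main bookkeeping obstacle, though no conceptual difficulty arises and the verification reduces to a finite check in low-dimensional vector spaces.
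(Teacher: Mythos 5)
Your proposal is correct and matches the paper's own (one-line) argument: the paper likewise obtains Step 4 by composing the explicit quasi-isomorphism $P_A^\bullet \xrightarrow{\sim} A$ from Lemma~\ref{lem:aresolution} with the morphism $A \rightarrow I_{D[1]}^\bullet$ of Step 3 and reading off the degreewise matrix products for each of the two basis morphisms. The extra care you describe about homotopy representatives and cone signs is sensible bookkeeping but not a departure from the paper's route.
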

\begin{proof}
This follows by composing the projective resolution of Lemma~\ref{lem:aresolution} with step 3.
\end{proof}
\begin{lemma}[Step 5]
The composition $P_A^\bullet \xrightarrow{\sim} A \xrightarrow{\sim} Cone(P_2 \rightarrow \tilde{P})$ is given by the following morphism of complexes.
\[
\begin{tikzcd}
0\arrow[r, shift left] \arrow[r, shift right]&0\arrow[r, shift left] \arrow[r, shift right]&\mathbb{C} \arrow[r] &0\arrow[r, shift left] \arrow[r, shift right]\arrow[d] &\mathbb{C}^2\arrow[d,"{\begin{psmallmatrix}0\\1 \end{psmallmatrix}}"'] \arrow[r, shift left,"1"] \arrow[r, shift right,"0"'] &\mathbb{C}^4 \arrow[d,"{\begin{psmallmatrix}0&0&1&0\\0&0&0&1 \end{psmallmatrix}}"]\arrow[r]& \mathbb{C} \arrow[d, "1"] \arrow[r, shift left] \arrow[r, shift right] &\mathbb{C}^2 \arrow[d,"{\begin{psmallmatrix}0&1 \end{psmallmatrix}}"] \arrow[r, shift left] \arrow[r, shift right]  & \mathbb{C}^2\arrow[d,"{\begin{psmallmatrix}0&1 \end{psmallmatrix}}"] \\
&&&0\arrow[r, shift left] \arrow[r, shift right]  & \mathbb{C} \arrow[r, shift left] \arrow[r, shift right] & \mathbb{C}^2 \arrow[r] & \mathbb{C}\arrow[r, shift left] \arrow[r, shift right] & \mathbb{C} \arrow[r, shift left] \arrow[r, shift right] & \mathbb{C}
\end{tikzcd}
\]
\end{lemma}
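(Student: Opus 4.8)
The plan is to realise the asserted chain map as the literal composite of two morphisms of complexes that have already been made explicit in the paper, and then to read it off one cohomological degree at a time. First I would invoke Lemma~\ref{lem:aresolution}, which records the projective resolution $P_A^\bullet = [P_3 \to P_2^{\oplus 2} \to P_1] \xrightarrow{\sim} A$ together with the explicit comparison chain map in each degree. Next I would invoke the second sequence of Lemma~\ref{lem:projective}, which supplies the quasi-isomorphism $A = Cone(D \to \tilde{C}) \xrightarrow{\sim} Cone(P_2 \to \tilde{P})$ as an explicit map of two-term complexes. The composition in the statement is then nothing more than the composite of these two chain maps, and I would compute it by multiplying the relevant structure matrices termwise.

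Because $P_A^\bullet$ is concentrated in degrees $-2,-1,0$ whereas $Cone(P_2 \to \tilde{P})$ is supported only in degrees $-1$ and $0$, the degree $-2$ component of the composite is forced to vanish, and only the components in degrees $-1$ and $0$ carry information. Composing the two maps in those degrees and restricting to the nonzero terms yields precisely the displayed matrices. Since both maps point in the forward direction, the composite is an honest chain map, so no homotopy replacement or roof of quasi-isomorphisms is needed here, in contrast to the steps where the comparison must pass through a quasi-isomorphism in the wrong direction.

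The only genuine work is bookkeeping: one must track the chosen bases of $P_2^{\oplus 2}$ and $P_1$ consistently through both resolutions and verify that the products of the structure matrices agree with those displayed. This is entirely parallel to the corresponding computations of the previous subsection, where the projective resolution of Lemma~\ref{lem:aresolution} was composed with the same quasi-isomorphism $A \xrightarrow{\sim} Cone(P_2 \to \tilde{P})$ of Lemma~\ref{lem:projective}, so no new idea is required and the verification follows from a direct calculation.
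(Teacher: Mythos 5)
There is a genuine gap at the central step of your argument. You assert that both maps in the composite $P_A^\bullet \xrightarrow{\sim} A \xrightarrow{\sim} \mathrm{Cone}(P_2 \to \tilde{P})$ are explicitly given, forward-pointing chain maps, so that the answer is obtained by termwise matrix multiplication with ``no homotopy replacement or roof of quasi-isomorphisms.'' But the quasi-isomorphism supplied by Lemma~\ref{lem:projective} is an explicit morphism of complexes only in the direction $\mathrm{Cone}(P_2 \to \tilde{P}) \to [D \to \tilde{C}] = A$: in the displayed diagram the vertical arrows go from the two-term complex $[P_2 \to \tilde{P}]$ down to $[D \to \tilde{C}]$. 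Moreover, no honest chain map exists in the direction you need, since $\mathrm{Hom}(D, P_2) = 0$ and $\mathrm{Hom}(\tilde{C}, \tilde{P}) = 0$ (both are immediate checks on the quiver representations using the relations $\beta_2\alpha_1 = \alpha_2\beta_1 = 0$), so both components of any chain map $[D \to \tilde{C}] \to [P_2 \to \tilde{P}]$ vanish. Hence the termwise product of structure matrices cannot even be set up, and the claim that this step avoids inverting a wrong-way quasi-isomorphism is exactly backwards.

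What the displayed matrices actually encode is a lift: writing $\phi \colon \mathrm{Cone}(P_2 \to \tilde{P}) \to A$ for the quasi-isomorphism of Lemma~\ref{lem:projective}, one uses that $P_A^\bullet$ is a bounded complex of projectives to produce a chain map $\psi \colon P_A^\bullet \to \mathrm{Cone}(P_2 \to \tilde{P})$, unique up to homotopy, such that $\phi \circ \psi$ is homotopic to the resolution map $P_A^\bullet \to A$ of Lemma~\ref{lem:aresolution}; the verification then consists of exhibiting the displayed $\psi$ and checking this homotopy agreement. This is the same pattern as Step~8 of the preceding subsection and Steps~2, 3 and~7 of the present one, not the direct-composition pattern of Steps~1 and~4 that you appeal to. Your observation that the degree $-2$ component vanishes for support reasons is correct, but the substance of the proof is precisely the lifting-plus-homotopy check that your proposal disclaims.
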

\begin{proof}
This follows by composing the projective resolution of Lemma~\ref{lem:aresolution} with the natural map $A \xleftarrow{\sim} Cone(P_2 \rightarrow \tilde{P})$ from Lemma~\ref{lem:projective}.
\end{proof}
\begin{lemma}[Step 6]
The composition $P_A^\bullet \xrightarrow{\sim}Cone(P_2 \rightarrow \tilde{P}) \rightarrow P_2[1] \rightarrow P_1[1]$ is given by the following morphism of complexes.
\[
\begin{tikzcd}
0\arrow[r, shift left] \arrow[r, shift right]&0\arrow[r, shift left] \arrow[r, shift right]&\mathbb{C} \arrow[r] &0\arrow[r, shift left] \arrow[r, shift right]\arrow[d] &\mathbb{C}^2\arrow[d,"{\begin{psmallmatrix}0&1\\0&0 \end{psmallmatrix}}"'] \arrow[r, shift left,"1"] \arrow[r, shift right,"0"'] &\mathbb{C}^4 \arrow[d,"{\begin{psmallmatrix}0&0&1&0\\0&0&0&0 \end{psmallmatrix}}"]\arrow[r]& \mathbb{C} \arrow[r, shift left] \arrow[r, shift right] &\mathbb{C}^2 \arrow[r, shift left] \arrow[r, shift right]  & \mathbb{C}^2\\
&&&\mathbb{C}\arrow[r, shift left] \arrow[r, shift right]  & \mathbb{C}^2 \arrow[r, shift left] \arrow[r, shift right] & \mathbb{C}^2
\end{tikzcd}
\]
\[
\begin{tikzcd}
0\arrow[r, shift left] \arrow[r, shift right]&0\arrow[r, shift left] \arrow[r, shift right]&\mathbb{C} \arrow[r] &0\arrow[r, shift left] \arrow[r, shift right]\arrow[d] &\mathbb{C}^2\arrow[d,"{\begin{psmallmatrix}0&0\\0&1 \end{psmallmatrix}}"'] \arrow[r, shift left,"1"] \arrow[r, shift right,"0"'] &\mathbb{C}^4 \arrow[d,"{\begin{psmallmatrix}0&0&0&0\\0&0&0&1 \end{psmallmatrix}}"]\arrow[r]& \mathbb{C} \arrow[r, shift left] \arrow[r, shift right] &\mathbb{C}^2 \arrow[r, shift left] \arrow[r, shift right]  & \mathbb{C}^2\\
&&&\mathbb{C}\arrow[r, shift left] \arrow[r, shift right]  & \mathbb{C}^2 \arrow[r, shift left] \arrow[r, shift right] & \mathbb{C}^2
\end{tikzcd}
\]
\end{lemma}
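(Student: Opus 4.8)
The plan is to obtain Step~6 as an immediate post-composition of Step~5, so that no new resolutions or homotopies are introduced. Step~5 already furnishes the explicit morphism of complexes representing $P_A^\bullet \xrightarrow{\sim} Cone(P_2 \rightarrow \tilde{P})$, and the two remaining arrows $Cone(P_2 \rightarrow \tilde{P}) \rightarrow P_2[1]$ and $P_2[1] \xrightarrow{S^{-1}g[1]} P_1[1]$ are both honest (non-null-homotopic) maps of complexes. Consequently the composite can be assembled termwise, and the entire step reduces to reading off matrices from data already computed.

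First I would recall from Lemma~\ref{lem:projective} that the map $Cone(P_2 \rightarrow \tilde{P}) \rightarrow P_2[1]$ is the canonical projection of the cone onto its shifted $P_2$-summand; applied to the morphism of Step~5 it simply extracts the $P_2[1]$-component of each vertical arrow, yielding a morphism $P_A^\bullet \rightarrow P_2[1]$ with no further choices involved. Next I would identify the Serre-dual map $S^{-1}g[1] \colon P_2[1] \rightarrow P_1[1]$ for each of the two basis elements $g \colon I_2 \rightarrow I_1$. Since $S(P_i) \simeq I_i$ by Lemma~\ref{lem:serre}, the chosen basis of $Hom(I_2,I_1)$ corresponds under $S^{-1}$ to the Serre-dual basis of $Hom(P_2,P_1)$ displayed explicitly in the second row of Lemma~\ref{lem:serre}. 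Post-composing the morphism of the previous paragraph with each of these two maps then produces the two displayed morphisms of complexes, exactly as asserted.

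The argument carries no conceptual difficulty, and the only real content is careful bookkeeping. The single point to verify with care is that the Serre-dual identification of the two basis morphisms $g$ is the correct one and that the composite is assembled in the proper homological degrees, so that the resulting termwise matrices agree with the two displays. Because every map in the composite is a genuine morphism of complexes rather than a class defined only up to homotopy, this matching is a finite, purely mechanical check, which is what makes Step~6 a routine consequence of Step~5 rather than a separate computation in the homotopy category.
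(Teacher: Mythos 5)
Your proposal is correct and follows the paper's own route exactly: the paper likewise obtains Step~6 by post-composing the explicit morphism of Step~5 with the canonical projection $Cone(P_2 \rightarrow \tilde{P}) \rightarrow P_2[1]$ and with the map $P_2[1] \rightarrow P_1[1]$ induced from the Serre-dual basis in Lemma~\ref{lem:serre}, reducing everything to termwise matrix bookkeeping. No new resolutions or homotopies are needed, just as you observe.
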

\begin{proof}
This follows by composing step 5 with the natural maps induced from Lemma~\ref{lem:serre}.
\end{proof}
\begin{lemma}[Step 7]
The unique morphism $P_A^\bullet \rightarrow D[1]$ whose composition with the injective resolution of Lemma~\ref{lem:cdresolution} agrees with step 4 is given by the following morphism of complexes.
\[
\begin{tikzcd}
0\arrow[r, shift left] \arrow[r, shift right]&0\arrow[r, shift left] \arrow[r, shift right]&\mathbb{C} \arrow[r] &0\arrow[r, shift left] \arrow[r, shift right]\arrow[d] &\mathbb{C}^2\arrow[d,"{\begin{psmallmatrix}0&-1\end{psmallmatrix}}"'] \arrow[r, shift left,"1"] \arrow[r, shift right,"0"'] &\mathbb{C}^4 \arrow[d,"{\begin{psmallmatrix}0\\0\\-1\\0 \end{psmallmatrix}}"]\arrow[r]& \mathbb{C} \arrow[r, shift left] \arrow[r, shift right] &\mathbb{C}^2 \arrow[r, shift left] \arrow[r, shift right]  & \mathbb{C}^2\\
&&&0\arrow[r, shift left] \arrow[r, shift right]  & \mathbb{C} \arrow[r, shift left,"1"] \arrow[r, shift right,"0"'] & \mathbb{C}
\end{tikzcd}
\]
\[
\begin{tikzcd}
0\arrow[r, shift left] \arrow[r, shift right]&0\arrow[r, shift left] \arrow[r, shift right]&\mathbb{C} \arrow[r] &0\arrow[r, shift left] \arrow[r, shift right]\arrow[d] &\mathbb{C}^2\arrow[d,"{\begin{psmallmatrix}1&0 \end{psmallmatrix}}"'] \arrow[r, shift left,"1"] \arrow[r, shift right,"0"'] &\mathbb{C}^4 \arrow[d,"{\begin{psmallmatrix}1\\0\\0\\0\end{psmallmatrix}}"]\arrow[r]& \mathbb{C} \arrow[r, shift left] \arrow[r, shift right] &\mathbb{C}^2 \arrow[r, shift left] \arrow[r, shift right]  & \mathbb{C}^2\\
&&&0\arrow[r, shift left] \arrow[r, shift right]  & \mathbb{C} \arrow[r, shift left,"1"] \arrow[r, shift right,"0"'] & \mathbb{C}
\end{tikzcd}
\]
\end{lemma}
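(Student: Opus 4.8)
The plan is to first establish the existence and uniqueness of the lift abstractly, and then to pin down its explicit matrix form by a direct chain-level computation. First I would note that the injective resolution $\iota \colon D[1] \xrightarrow{\sim} I_{D[1]}^\bullet$ furnished by Lemma~\ref{lem:cdresolution} is a quasi-isomorphism, hence an isomorphism in $D^b(Q)$, so that post-composition with $\iota$ identifies $\mathrm{Hom}_{D^b(Q)}(P_A^\bullet, D[1])$ with $\mathrm{Hom}_{D^b(Q)}(P_A^\bullet, I_{D[1]}^\bullet)$. Since $P_A^\bullet$ of Lemma~\ref{lem:aresolution} is a bounded complex of projectives, derived morphisms out of it are computed by homotopy classes of chain maps; consequently the morphism of step 4 admits a unique preimage, represented by an honest chain map $P_A^\bullet \to D[1]$. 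This already yields the existence and uniqueness asserted in the statement.

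To identify this lift explicitly, I would exhibit the candidate chain map displayed in the statement and verify directly that $\iota$ composed with it reproduces the step 4 morphism up to homotopy. Since $D[1]$ is concentrated in a single cohomological degree, such a chain map is determined by a single module homomorphism out of the corresponding term of $P_A^\bullet$, and compatibility with the differential reduces to checking the two relevant squares adjacent to that degree. The verification then splits across the two basis elements $g_1, g_2 \colon I_2 \to I_1$ of Lemma~\ref{lem:serre}, each treated identically.

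The hard part will be exhibiting the explicit null-homotopy witnessing that $\iota$ composed with the candidate and the step 4 morphism coincide: this is a routine but unavoidable computation of homotopy operators between the terms of $P_A^\bullet$ and $I_{D[1]}^\bullet$, entirely analogous to the null-homotopy arguments appearing in Steps 3 and 8 of Section~\ref{verification1}. Given the explicit resolutions already in hand, this reduces to solving a small linear system in each degree, and I would record the resulting homotopy exactly as in those earlier steps. Once the composite is shown to agree with step 4, the uniqueness established in the first paragraph certifies that the displayed morphism is the one sought, completing the proof.
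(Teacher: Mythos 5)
Your proposal is correct and follows essentially the same route as the paper: the paper likewise verifies the displayed lifts by a direct chain-level check, handling the second basis element by exhibiting an explicit null-homotopic morphism $P_A^\bullet \rightarrow I_{D[1]}^\bullet$ and subtracting it from the step 4 composite. Your preliminary remark that uniqueness follows because $P_A^\bullet$ is a bounded complex of projectives (so maps out of it in $D^b(Q)$ are homotopy classes of chain maps, and post-composition with the quasi-isomorphism $D[1]\xrightarrow{\sim} I_{D[1]}^\bullet$ is bijective) is left implicit in the paper but is the correct justification.
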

\begin{proof}
The first morphism follows by directly checking the composition. For the second, we observe that the following morphism $P_A^\bullet \rightarrow I_{D[1]}^\bullet$ is homotopy equivalent to the zero morphism.
\[
\begin{tikzcd}[column sep = 0.75cm]
0\arrow[r, shift left] \arrow[r, shift right]&0\arrow[r, shift left] \arrow[r, shift right]&\mathbb{C} \arrow[r] &0\arrow[r, shift left] \arrow[r, shift right]\arrow[d] &\mathbb{C}^2\arrow[d,"{\begin{psmallmatrix}-1&0 \\0&0\end{psmallmatrix}}"'] \arrow[r, shift left,"1"] \arrow[r, shift right,"0"'] &\mathbb{C}^4 \arrow[d,"{\begin{psmallmatrix}-1&0&0&0 \end{psmallmatrix}}"']\arrow[r]& \mathbb{C} \arrow[d, "{\begin{psmallmatrix}0\\1\\0 \end{psmallmatrix}}"'] \arrow[r, shift left] \arrow[r, shift right] &\mathbb{C}^2 \arrow[d] \arrow[r, shift left] \arrow[r, shift right]  & \mathbb{C}^2\arrow[d] \\
&&&\mathbb{C}^2 \arrow[r, shift left] \arrow[r, shift right]  & \mathbb{C}^2 \arrow[r, shift left] \arrow[r, shift right] & \mathbb{C} \arrow[r] & \mathbb{C}^3 \arrow[r, shift left] \arrow[r, shift right] & \mathbb{C} \arrow[r, shift left] \arrow[r, shift right] & 0 \arrow[r] & \mathbb{C} \arrow[r, shift left] \arrow[r, shift right] & 0 \arrow[r, shift left] \arrow[r, shift right] & 0
\end{tikzcd}
\]
Subtracting the above from the second result of step 4, we see that the second morphism works.
\end{proof}
\begin{lemma}[Step 8]
The composition $P_A^\bullet \rightarrow D[1] \rightarrow P_1[1]$ obtained from composing step 7 with the natural map $D[1] \rightarrow P_1[1]$ in Lemma~\ref{lem:projective} is given by the following morphism of complexes.
\[
\begin{tikzcd}
0\arrow[r, shift left] \arrow[r, shift right]&0\arrow[r, shift left] \arrow[r, shift right]&\mathbb{C} \arrow[r] &0\arrow[r, shift left] \arrow[r, shift right]\arrow[d] &\mathbb{C}^2\arrow[d,"{\begin{psmallmatrix}0&1 \\0&0\end{psmallmatrix}}"'] \arrow[r, shift left,"1"] \arrow[r, shift right,"0"'] &\mathbb{C}^4 \arrow[d,"{\begin{psmallmatrix}0&0&1&0\\0&0&0&0 \end{psmallmatrix}}"]\arrow[r]& \mathbb{C} \arrow[r, shift left] \arrow[r, shift right] &\mathbb{C}^2 \arrow[r, shift left] \arrow[r, shift right]  & \mathbb{C}^2\\
&&&\mathbb{C}\arrow[r, shift left] \arrow[r, shift right]  & \mathbb{C}^2 \arrow[r, shift left,"1"] \arrow[r, shift right,"0"'] & \mathbb{C}^2
\end{tikzcd}
\]
\[
\begin{tikzcd}
0\arrow[r, shift left] \arrow[r, shift right]&0\arrow[r, shift left] \arrow[r, shift right]&\mathbb{C} \arrow[r] &0\arrow[r, shift left] \arrow[r, shift right]\arrow[d] &\mathbb{C}^2\arrow[d,"{\begin{psmallmatrix}0&0\\0&1 \end{psmallmatrix}}"'] \arrow[r, shift left,"1"] \arrow[r, shift right,"0"'] &\mathbb{C}^4 \arrow[d,"{\begin{psmallmatrix}0&0&0&0\\0&0&0&1 \end{psmallmatrix}}"]\arrow[r]& \mathbb{C} \arrow[r, shift left] \arrow[r, shift right] &\mathbb{C}^2 \arrow[r, shift left] \arrow[r, shift right]  & \mathbb{C}^2\\
&&&\mathbb{C}\arrow[r, shift left] \arrow[r, shift right]  & \mathbb{C}^2 \arrow[r, shift left] \arrow[r, shift right] & \mathbb{C}^2
\end{tikzcd}
\]
\end{lemma}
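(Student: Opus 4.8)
The plan is to obtain the asserted morphism of complexes by postcomposing the explicit chain representative $P_A^\bullet \to D[1]$ constructed in Step 7 with a chain-level model of the natural morphism $D[1] \to P_1[1]$ coming from Lemma~\ref{lem:projective}. In contrast to Steps~3 and~7, the essential simplification here is that both factors are already genuine morphisms of complexes with representatives in hand, so their composite is again a genuine morphism of complexes and no null-homotopy needs to be adjusted; the whole computation collapses to multiplying the two families of matrices degree by degree.

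Concretely, I would first realize the map $D \to P_1$ underlying the third triangle of Lemma~\ref{lem:projective} as an honest morphism of quiver representations. Since that triangle arises from the short exact sequence $D \to P_1 \to \tilde{P}$, the map $D \to P_1$ is precisely the kernel inclusion of the surjection $P_1 \twoheadrightarrow \tilde{P}$. Using the descriptions of $P_1$ in Lemma~\ref{lem:projinj}, of $D$ in Lemma~\ref{lem:cd}, and of $\tilde{P}$ in Lemma~\ref{lem:p}, compatibility with the arrow actions $\alpha_2,\beta_2$ and the relations pins down this inclusion uniquely up to scalar, sending the copies of $\mathbb{C}$ at the second and third vertices of $D$ into the prescribed coordinate lines of the corresponding $\mathbb{C}^2$'s in $P_1$. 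Shifting by $[1]$ produces the chain map $D[1] \to P_1[1]$ to be composed against.

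Next, for each basis element $g_1,g_2 \colon I_2 \to I_1$ of Lemma~\ref{lem:serre}, I would take the corresponding chain map $P_A^\bullet \to D[1]$ recorded in Step 7 and multiply its components with those of $D[1] \to P_1[1]$ in each cohomological degree. Because the Step~7 representative is supported in the single degree carrying the middle term $(0,\mathbb{C}^2,\mathbb{C}^4)$ of the resolution of Lemma~\ref{lem:aresolution}, and is nonzero only in its vertex-two and vertex-three parts $\mathbb{C}^2 \to \mathbb{C}$ and $\mathbb{C}^4 \to \mathbb{C}$, the entire composite is governed by postcomposing those two matrices with the coordinate inclusions $\mathbb{C} \hookrightarrow \mathbb{C}^2$ at the respective vertices. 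Reading the resulting maps $\mathbb{C}^2 \to \mathbb{C}^2$ and $\mathbb{C}^4 \to \mathbb{C}^2$ back into $P_1[1]$ should yield exactly the two displayed morphisms of complexes.

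The only genuine obstacle is bookkeeping rather than anything conceptual: one must keep the cohomological gradings of the shifted complexes $D[1]$ and $P_1[1]$ aligned with that of $P_A^\bullet$ from Lemma~\ref{lem:aresolution}, so that the single nonzero term of each Step~7 chain map sits in the intended degree, and one must fix the coordinate and sign conventions on the $\mathbb{C}^2$'s consistently with Lemma~\ref{lem:projinj}. Once these are fixed, the verification is a one-line matrix multiplication in each degree, the two cases $g_1,g_2$ differing only in which component of the source is detected; the output is then set up to be compared against the computation of Step~6 in the concluding comparison of this section.
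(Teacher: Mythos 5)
Your central simplifying claim --- that since both factors are honest chain maps the composite is obtained by degree-by-degree matrix multiplication and ``no null-homotopy needs to be adjusted'' --- is exactly where the argument breaks for the second basis morphism $g_2$. If you literally postcompose the second representative from Step~7 (whose nonzero components are $\begin{psmallmatrix}1&0\end{psmallmatrix}$ on the vertex-two part and $\begin{psmallmatrix}1\\0\\0\\0\end{psmallmatrix}$ on the vertex-three part) with the chain-level inclusion $D[1]\to P_1[1]$, you obtain the morphism with components $\begin{psmallmatrix}-1&0\\0&0\end{psmallmatrix}$ and $\begin{psmallmatrix}-1&0&0&0\\0&0&0&0\end{psmallmatrix}$, which is \emph{not} the morphism displayed in the Lemma; the displayed one has components $\begin{psmallmatrix}0&0\\0&1\end{psmallmatrix}$ and $\begin{psmallmatrix}0&0&0&0\\0&0&0&1\end{psmallmatrix}$. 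The paper's own proof handles precisely this: it exhibits an explicit morphism $P_A^\bullet\to P_1[1]$ with components $\begin{psmallmatrix}1&0\\0&1\end{psmallmatrix}$ and $\begin{psmallmatrix}1&0&0&0\\0&0&0&1\end{psmallmatrix}$ that is null-homotopic, and \emph{adds} it to the naive composite to land on the stated representative. The two representatives agree only up to homotopy, so your computation does not establish the Lemma as stated, and --- more importantly for the structure of the section --- it would derail Step~9, whose proof is a literal, on-the-nose comparison of the matrices from Steps~6 and~8. With your representative that comparison fails at the level of matrices and you would be forced to produce the very homotopy you claimed was unnecessary.

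The first basis morphism $g_1$ does go through by direct composition (the paper says as much), and your setup of the chain-level model of $D\to P_1$ as the kernel inclusion of $P_1\twoheadrightarrow\tilde P$ is fine, modulo a sign convention you would need to fix consistently with Step~7. But the proposal as written omits the one genuinely nontrivial ingredient of this step, namely the homotopy correction for the second case, and explicitly asserts it is not needed. That is a concrete gap.
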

\begin{proof}
The first morphism follows from a direct calculation. For the second, we observe that the following morphism $P_A^\bullet \rightarrow P_1[1]$ is homotopy equivalent to the zero morphism.
\[
\begin{tikzcd}
0\arrow[r, shift left] \arrow[r, shift right]&0\arrow[r, shift left] \arrow[r, shift right]&\mathbb{C} \arrow[r] &0\arrow[r, shift left] \arrow[r, shift right]\arrow[d] &\mathbb{C}^2\arrow[d,"{\begin{psmallmatrix}1&0 \\0&1\end{psmallmatrix}}"'] \arrow[r, shift left,"1"] \arrow[r, shift right,"0"'] &\mathbb{C}^4 \arrow[d,"{\begin{psmallmatrix}1&0&0&0\\0&0&0&1\end{psmallmatrix}}"]\arrow[r]& \mathbb{C} \arrow[r, shift left] \arrow[r, shift right] &\mathbb{C}^2 \arrow[r, shift left] \arrow[r, shift right]  & \mathbb{C}^2\\
&&&\mathbb{C}\arrow[r, shift left] \arrow[r, shift right]  & \mathbb{C}^2 \arrow[r, shift left,"1"] \arrow[r, shift right,"0"'] & \mathbb{C}^2
\end{tikzcd}
\]
Composing the second result of step 7 with the morphism $D[1] \rightarrow P_1[1]$, we obtain the morphism
\[
\begin{tikzcd}
0\arrow[r, shift left] \arrow[r, shift right]&0\arrow[r, shift left] \arrow[r, shift right]&\mathbb{C} \arrow[r] &0\arrow[r, shift left] \arrow[r, shift right]\arrow[d] &\mathbb{C}^2\arrow[d,"{\begin{psmallmatrix}-1&0 \\0&0\end{psmallmatrix}}"'] \arrow[r, shift left,"1"] \arrow[r, shift right,"0"'] &\mathbb{C}^4 \arrow[d,"{\begin{psmallmatrix}-1&0&0&0\\0&0&0&0\end{psmallmatrix}}"]\arrow[r]& \mathbb{C} \arrow[r, shift left] \arrow[r, shift right] &\mathbb{C}^2 \arrow[r, shift left] \arrow[r, shift right]  & \mathbb{C}^2\\
&&&\mathbb{C}\arrow[r, shift left] \arrow[r, shift right]  & \mathbb{C}^2 \arrow[r, shift left,"1"] \arrow[r, shift right,"0"'] & \mathbb{C}^2
\end{tikzcd}
\]
Adding the above two morphisms, we obtain the second result.
\end{proof}
\begin{corollary}[Step 9]
The compositions of steps $6$ and $8$ agree.
\end{corollary}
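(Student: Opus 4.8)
The plan is to prove Step~9 by a direct term-by-term comparison of the two morphisms of complexes exhibited in Step~6 and Step~8, carried out for each basis vector $g_1,g_2\colon I_2\to I_1$ of Lemma~\ref{lem:serre}. Reading off the nonzero vertical components in each cohomological degree, the Step~6 representative of $S^{-1}g[1]\circ p_2$ and the Step~8 representative of $p_1\circ g'$ are governed by the same matrices: for $g_1$ both give $\begin{psmallmatrix}0&1\\0&0\end{psmallmatrix}$ together with $\begin{psmallmatrix}0&0&1&0\\0&0&0&0\end{psmallmatrix}$, and for $g_2$ both give $\begin{psmallmatrix}0&0\\0&1\end{psmallmatrix}$ together with $\begin{psmallmatrix}0&0&0&0\\0&0&0&1\end{psmallmatrix}$. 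Thus the two maps $P_A^\bullet\to P_1[1]$ are equal on the nose as morphisms of complexes, hence a fortiori equal in the homotopy category $K^b(\mathrm{mod}\text{-}kQ/I)$.

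Next I would make explicit why this term-by-term agreement is exactly what is needed. Both Step~6 and Step~8 compute their respective compositions only after precomposing with the projective resolution $P_A^\bullet\xrightarrow{\sim}A$ of Lemma~\ref{lem:aresolution}. Since $P_A^\bullet$ is a bounded complex of projectives, every map out of it is already represented in $K^b$, and since $P_A^\bullet\to A$ is a quasi-isomorphism, precomposition induces an isomorphism $Hom_{D^b(Q)}(A,P_1[1])\xrightarrow{\sim}Hom_{D^b(Q)}(P_A^\bullet,P_1[1])$. Equality of the two representatives therefore forces $p_1\circ g'=S^{-1}g[1]\circ p_2$ in $D^b(Q)$, which is precisely the commutativity of the right-hand square of Lemma~\ref{lem:calculation2} for the basis element $g$.

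Combining this with the left-hand square, whose commutativity is built into the construction of $g'$ as the induced map on cones in Steps~1--3, completes the verification of Lemma~\ref{lem:calculation2} on the chosen basis; Lemma~\ref{lem:simplify} then upgrades this to all of $Hom(I_2,I_1)$ by linearity, and feeds into part~2 of Lemma~\ref{lem:calculation}. I do not expect a genuine obstacle at this last step: all of the difficulty has been front-loaded into Steps~1--8, where one must faithfully track the quasi-isomorphisms of Lemmas~\ref{lem:aresolution} and~\ref{lem:cdresolution} and cancel the null-homotopic corrections (as was done explicitly, e.g., in Steps~7 and~8). Given those computations, Step~9 is a matter of visual inspection; the only point requiring care is to confirm the coincidence for \emph{both} basis elements, since it is the linearity argument of Lemma~\ref{lem:simplify}, not a single representative, that propagates the conclusion to arbitrary morphisms $I_2\to I_1$.
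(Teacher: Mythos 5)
Your proposal is correct and takes essentially the same approach as the paper, whose proof of Step 9 is simply a direct comparison of the vertical components produced in Steps 6 and 8 for each of the two basis morphisms $g_1,g_2\colon I_2\to I_1$. The additional justification you give---that precomposition with the quasi-isomorphism $P_A^\bullet\xrightarrow{\sim}A$ identifies $Hom_{D^b(Q)}(A,P_1[1])$ with $Hom_{K^b}(P_A^\bullet,P_1[1])$, so equality of representatives yields the required commutativity in $D^b(Q)$---is accurate and merely makes explicit what the paper leaves implicit.
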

\begin{proof}
This follows by directly comparing the results of steps 6 and 8.
\end{proof}
\section{Spherical functors associated to a categorical resolution}\label{sec:spherical}
In this section, we study a natural functor associated to any projective singular scheme. More precisely, we will be interested in the following situation.
\begin{replemma}{lem:resolutiondiagram}
Let $X_0 \xhookrightarrow{} \mathbb{P}^n$ be a projective scheme. Then there exists a diagram
\[
\begin{tikzcd}[row sep = 0.01cm]
& Perf(X_0) \arrow[ld, "\pi^*"']  &\\
\mathcal{D}^c \arrow[rd, "\pi_*"'] && D^b(\mathbb{P}^n)\arrow[lu, "i^*"']\\
& D^b(X_0)  \arrow[uu,symbol=\supseteq]\arrow[ru, "i_*"']&
\end{tikzcd}
\]
where $\mathcal{D}$ is a categorical resolution of $X_0$ equipped with a pair of functors $(\pi^*, \pi_*)$  and $i_*$ is induced from the closed immersion $i \colon X \xhookrightarrow{} \mathbb{P}^n$ with left adjoint $i^*$.
\end{replemma}
The goal of this section is to prove the following Proposition.
\begin{repprop}{prop:bondalsphericalfunctor}
Assume $\mathcal{D} = P^\perp \subset D^b(Q)$ with $Q$ the Bondal quiver and $X_0$ the nodal cubic in $\mathbb{P}^2$. Then the composition $F \coloneqq i_* \circ \pi_*$ is a spherical functor.
\end{repprop}
This section is organized as follows. In subSection~\ref{catresolution}, we review the notion of a categorical resolution, prove Lemma~\ref{lem:resolutiondiagram}, and record some general facts about spanning classes in our setting. In subSection~\ref{spherical}, we review the notion of a spherical functor and set up the main approach to Proposition~\ref{prop:bondalsphericalfunctor}. In subSection~\ref{bondalresolution}, we prove Proposition~\ref{prop:bondalsphericalfunctor}. Aside from subSection~\ref{bondalresolution}, we will always assume $X_0 \xhookrightarrow{} \mathbb{P}^n$ an integral projective scheme.
\subsection{Categorical Resolutions}\label{catresolution}
We first review the basic notion of a categorical resolution.
\begin{defn}[\cite{10.1093/imrn/rnu072}, Definition 1.3]\label{def:catresolution}
A {\em categorical resolution} of a scheme $Y$ is a smooth, cocomplete, compactly generated triangulated category $\mathcal{D}$ together with an adjoint pair of triangulated functors
\[
\pi^* \colon D(Y) \rightarrow \mathcal{D}, \quad  \pi_* \colon \mathcal{D} \rightarrow D(Y)
\]
satisfying the following
\begin{enumerate}
\item
$\pi_* \circ \pi^* = id$
\item
$\pi^*$ and $\pi_*$ commute with arbitrary direct sums
\item
$\pi_*(\mathcal{D}^c) \subset D^b(coh(Y))$
\end{enumerate}
\end{defn}
The utility of this definition is the following theorem.
\begin{theorem}\label{thm:catresolution}[\cite{10.1093/imrn/rnu072}, Theorem 1.4]
Any separated scheme $Y$ of finite type over a field of characteristic $0$ admits a categorical resolution.
\end{theorem}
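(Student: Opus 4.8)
The plan is to prove existence by induction on $\dim Y$, leaning on Hironaka's resolution of singularities, which is available since $\mathrm{char}\, k = 0$. First I would perform the standard geometric reductions: by dévissage one may assume $Y$ is reduced, and working one irreducible component at a time, integral. The separatedness and finite-type hypotheses then guarantee that resolution of singularities applies and yields a proper birational morphism $\pi \colon \tilde Y \to Y$ with $\tilde Y$ smooth and reduced exceptional locus $E \subset \tilde Y$ a simple normal crossings divisor, whose image $Z = \pi(E) \subset Y$ is the singular locus, of dimension $\dim Z < \dim Y$.

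The base case is the rational singularities situation. If $R\pi_* \mathcal{O}_{\tilde Y} \simeq \mathcal{O}_Y$, then I claim that $\mathcal{D} = D(\mathrm{Qcoh}\, \tilde Y)$ with the adjoint pair $(\pi^*, \pi_*) = (L\pi^*, R\pi_*)$ is a categorical resolution. Indeed, smoothness of $\tilde Y$ gives smoothness of $\mathcal{D}$ in the required sense, compact generation is standard, and the crucial identity $\pi_* \circ \pi^* = \mathrm{id}$ follows from the projection formula $R\pi_*(L\pi^* F) \simeq F \otimes R\pi_* \mathcal{O}_{\tilde Y} \simeq F$; conditions (2) and (3) of Definition~\ref{def:catresolution} are then immediate, the latter because $\pi$ is proper. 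This disposes of schemes with at worst rational singularities.

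For the general, possibly non-rational, case the naive resolution fails precisely because $R\pi_*\mathcal{O}_{\tilde Y} \neq \mathcal{O}_Y$, so I would instead build $\mathcal{D}$ by a gluing construction that corrects the higher direct images along the exceptional locus. By the inductive hypothesis $Z$ admits a categorical resolution $\mathcal{D}_Z$ with functors $(\rho^*, \rho_*)$, and the exceptional divisor carries an induced morphism $E \to Z$. I would glue $D(\mathrm{Qcoh}\,\tilde Y)$ to $\mathcal{D}_Z$ along the category of the formal neighborhood of $E$, realizing $\mathcal{D}$ as modules over a suitable sheaf of algebras on $\tilde Y$, equivalently as the gluing of the two triangulated categories via a gluing functor factoring through $D^b(E)$. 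The pushforward $\pi_*$ is then defined to combine $R\pi_*$ on the $\tilde Y$-part with the correction supplied by $\rho_*$, arranged so that the unwanted higher cohomology of $\mathcal{O}_{\tilde Y}$ is cancelled.

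The main obstacle, and the heart of the argument, is verifying that this glued category is genuinely a categorical resolution: that $\mathcal{D}$ is smooth, which requires the sheaf of algebras to have finite homological dimension and uses both the SNC hypothesis on $E$ and smoothness of $\tilde Y$; that it is compactly generated; and above all that the corrected adjunction satisfies $\pi_* \circ \pi^* = \mathrm{id}$. This last identity is delicate, since it is exactly the property that fails for the naive resolution, and forcing it to hold is what necessitates the gluing enhancement. Checking it amounts to a careful computation with the recollement relating $D(\tilde Y)$, $D(E)$, and $\mathcal{D}_Z$, together with the base change and projection formulas for $E \to Z$ and $\tilde Y \to Y$. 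Finally one reassembles the irreducible components and removes the reducedness reduction to obtain the statement for an arbitrary separated finite-type $Y$.
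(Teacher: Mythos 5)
First, a point of order: the paper does not prove Theorem~\ref{thm:catresolution} at all --- it is quoted verbatim from Kuznetsov--Lunts \cite{10.1093/imrn/rnu072}, so your proposal has to be measured against \emph{their} argument. At the level of architecture you have reconstructed it correctly: induction on dimension, Hironaka's resolution, the observation that $D(\mathrm{Qcoh}\,\tilde Y)$ with the adjoint pair $(L\pi^*, R\pi_*)$ already works when $R\pi_*\mathcal{O}_{\tilde Y}\simeq\mathcal{O}_Y$ (this special case does appear in their paper, though it is a special case rather than the base of the induction, which is dimension zero), and, in general, a gluing of $D(\mathrm{Qcoh}\,\tilde Y)$ with a categorical resolution of a smaller-dimensional closed subscheme along a bimodule factoring through the exceptional divisor, arranged so that $\pi_*\pi^*=\mathrm{id}$ survives the correction. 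Your identification of that identity, smoothness, and compact generation of the glued category as the main things to check is also accurate.

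The genuine gap is the opening d\'evissage, and it is not a technicality. A categorical resolution of $Y_{\mathrm{red}}$ does not yield one of $Y$: writing $\iota\colon Y_{\mathrm{red}}\hookrightarrow Y$, the natural candidate functors compose to $\pi_*\pi^*\simeq \iota_*\iota^*$, which is not the identity on $D(Y)$ when $Y$ is nonreduced. Likewise $D(Y)$ does not decompose along irreducible components that meet, so ``one component at a time'' does not parse --- the intersections of components are part of the locus where $\mathcal{O}_Y\to R\pi_*\mathcal{O}_{\tilde Y}$ fails to be an isomorphism, and must be glued in, not ignored. Worse, the reduction is incompatible with your own induction: the subscheme $Z$ one must glue in is not the reduced singular locus but the nonrational locus of $\pi$, which carries a canonically \emph{nonreduced} scheme structure in general, and its infinitesimal thickenings enter the gluing; hence the inductive hypothesis must cover nonreduced schemes, which is exactly why Kuznetsov--Lunts prove the theorem for arbitrary separated finite-type schemes. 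The missing idea that makes nonreducedness tractable is their Auslander-type construction: replace a scheme $Y$ with nilradical $\mathcal{I}$, $\mathcal{I}^n=0$, by sheaves of modules over $\mathcal{E}nd_{\mathcal{O}_Y}\bigl(\bigoplus_{i=1}^{n}\mathcal{O}_Y/\mathcal{I}^i\bigr)$; this algebra is filtered with subquotients living over $Y_{\mathrm{red}}$, and it is this construction --- not d\'evissage --- that reduces the nonreduced case to the reduced one. Without it your induction has no base: already in dimension zero a fat point such as $\mathrm{Spec}\,k[x]/(x^2)$ is singular, admits no geometric resolution, and is not covered by your sketch.
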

By using the above theorem, we state the main setting of interest.
\begin{lemma}\label{lem:resolutiondiagram}
There exists a diagram
\[
\begin{tikzcd}[row sep = 0.01cm]
& Perf(X_0) \arrow[ld, "\pi^*"']  &\\
\mathcal{D}^c \arrow[rd, "\pi_*"'] && D^b(\mathbb{P}^n)\arrow[lu, "i^*"']\\
& D^b(X_0)  \arrow[uu,symbol=\supseteq]\arrow[ru, "i_*"']&
\end{tikzcd}
\]
where $\mathcal{D}$ is a categorical resolution of $X_0$ equipped with a pair of functors $(\pi^*, \pi_*)$  and $i_*$ is induced from the closed immersion $i \colon X_0 \xhookrightarrow{} \mathbb{P}^n$ with left adjoint $i^*$.
\end{lemma}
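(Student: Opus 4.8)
The plan is to assemble the diagram from two essentially independent inputs: the general existence theorem for categorical resolutions applied to $X_0$, and the standard adjunction $(Li^*, Ri_*)$ attached to the closed immersion $i \colon X_0 \hookrightarrow \mathbb{P}^n$. Since $X_0 \subseteq \mathbb{P}^n$ is integral and projective over the field $k$ of characteristic $0$, it is in particular separated and of finite type over a field of characteristic $0$, so Theorem~\ref{thm:catresolution} supplies a categorical resolution $\mathcal{D}$ of $X_0$ together with an adjoint pair $(\pi^*, \pi_*)$ as in Definition~\ref{def:catresolution}. This already produces the left half of the diagram at the level of the ambient categories $D(X_0) \rightleftarrows \mathcal{D}$, and it remains to descend to compact objects and to glue in the right half.

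Next I would restrict to compact objects. By condition~(2) of Definition~\ref{def:catresolution}, $\pi_*$ commutes with arbitrary coproducts; since $\pi^*$ is its left adjoint, $\pi^*$ therefore preserves compactness, and using that the compact objects of $D(X_0) = D(\mathrm{Qcoh}(X_0))$ are precisely the perfect complexes one gets $\pi^*(Perf(X_0)) \subseteq \mathcal{D}^c$. Dually, condition~(3) gives $\pi_*(\mathcal{D}^c) \subseteq D^b(\mathrm{coh}(X_0)) = D^b(X_0)$. The vertical containment appearing in the diagram is then the standard inclusion $Perf(X_0) \subseteq D^b(X_0)$ of perfect complexes into the bounded coherent derived category. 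This accounts for the functors $\pi^*, \pi_*$ and the inclusion in the stated diagram.

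For the right half, I would use that the closed immersion $i$ is finite, hence proper, so that the derived pushforward $Ri_* = i_*$ is exact on coherent sheaves and restricts to a functor $i_* \colon D^b(X_0) \to D^b(\mathbb{P}^n)$; its left adjoint is the derived pullback $Li^*$. The one point requiring a word of justification is that $Li^*$ lands in $Perf(X_0)$ rather than merely in $D^b(X_0)$: since $\mathbb{P}^n$ is smooth we have $D^b(\mathbb{P}^n) = Perf(\mathbb{P}^n)$, and the derived pullback of a perfect complex along any morphism is again perfect, so $i^* \coloneqq Li^* \colon D^b(\mathbb{P}^n) \to Perf(X_0)$ is well-defined and $(i^*, i_*)$ is an adjoint pair. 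Assembling these functors produces the full diagram.

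I do not expect a genuine obstacle here, as the statement is a structural bookkeeping assembly of the existence result Theorem~\ref{thm:catresolution} together with standard facts of the derived functor formalism. The only step that is not completely automatic is the claim that $i^*$ factors through $Perf(X_0)$, which hinges precisely on the smoothness of the ambient $\mathbb{P}^n$ (forcing every object of $D^b(\mathbb{P}^n)$ to be perfect) together with the stability of perfect complexes under derived pullback. Were one content with the weaker target $i^* \colon D^b(\mathbb{P}^n) \to D^b(X_0)$ this would be immediate, but the sharper statement $i^* \colon D^b(\mathbb{P}^n) \to Perf(X_0)$ is exactly what makes the later composition $\pi^* \circ i^*$, and hence the functor $F = i_* \circ \pi_*$ of Proposition~\ref{prop:bondalsphericalfunctor}, meaningful.
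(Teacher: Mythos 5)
Your proposal is correct and follows essentially the same route as the paper: apply Theorem~\ref{thm:catresolution} for the left pair, use conditions (2) and (3) of Definition~\ref{def:catresolution} to descend to $\mathcal{D}^c$ and $D^b(X_0)$, and use the standard adjunction for the closed immersion together with smoothness of $\mathbb{P}^n$ to land $i^*$ in $Perf(X_0)$. The only cosmetic difference is that you spell out the compactness argument (left adjoint of a coproduct-preserving functor preserves compacts) where the paper cites \cite[Lemma 2.10]{10.1093/imrn/rnu072}, and you phrase the perfection of $i^*$ via stability of perfect complexes under derived pullback where the paper invokes finite locally free resolutions; these are the same arguments.
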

\begin{proof}
For the left pair of functors, we simply apply theorem~\ref{thm:catresolution}. Restricting to the subcategory of compact objects $\mathcal{D}^c \subseteq \mathcal{D}$ and $D^b(X_0) \subseteq D(X_0)$ respectively, we have that $\pi_*(\mathcal{D}^c) \subset D^b(X_0)$ by definition~\ref{def:catresolution}(3). On the other hand, definition~\ref{def:catresolution}(2) implies that $\pi^*(Perf(X_0)) \subset \mathcal{D}^c$ by \cite[Lemma 2.10]{10.1093/imrn/rnu072}.

For the right pair of functors, the adjunction $(i^*, i_*) \colon D^b(X_0) \leftrightarrows D^b(\mathbb{P}^n)$ is standard. In particular, $i^*(D^b(\mathbb{P}^n)) \subseteq Perf(X_0)$ as any object $\mathcal{F}^\bullet \in D^b(\mathbb{P}^n)$ admits a finite, locally free resolution and the pullbacks of locally free sheaves are locally free. Thus, the claim follows.
\end{proof}

In general, it is desirable to work with categories and functors of a geometric nature. We recall the following definition.
\begin{defn}
A $k$-linear triangulated category $\mathcal{D}$ is a {\em geometric noncommutative scheme} if there exists a fully faithful functor $F\colon\mathcal{D} \xhookrightarrow{} D^b(X)$ admitting left and right adjoints $F_L, F_R$ with $X$ a smooth, projective variety.
\end{defn}
The theory of Fourier-Mukai transforms admits a natural definition in the setting of geometric noncommutative schemes as well.
\begin{defn}
Given geometric noncommutative schemes $\mathcal{A}, \mathcal{B}$ with embeddings 
\[
G \colon \mathcal{A} \xhookrightarrow{} D^b(X),\quad H \colon \mathcal{B} \xhookrightarrow{} D^b(Y)
\]
a functor $F \colon \mathcal{A} \rightarrow \mathcal{B}$ is of Fourier-Mukai type if the composition
\[
D^b(X) \xrightarrow{G_L} \mathcal{A} \xrightarrow{F} \mathcal{B} \xrightarrow{H} D^b(Y)
\]
is of Fourier-Mukai type.
\end{defn}
Similarly, given a geometric noncommutative scheme $G \colon \mathcal{D} \rightarrow D^b(X)$ and a possibly singular projective scheme $X_0$, we will say that a functor $F \colon D^b(X_0) \rightarrow \mathcal{D}$ is Fourier-Mukai, if the composition
\[
D^b(X_0) \xrightarrow{F} \mathcal{D} \xrightarrow{G} D^b(X)
\]
is representable by a kernel $E \in D^b(X_0 \times X)$, and similarly for functors $F' \colon \mathcal{D} \rightarrow D^b(X_0)$.

\begin{lemma}\label{lem:fouriermukai}
In the setting of Lemma~\ref{lem:resolutiondiagram}, assume in addition that $\mathcal{D}^c$ is a geometric noncommutative scheme. Then the functors $(\pi^*, \pi_*)$ and $(i^*, i_*)$ are of Fourier-Mukai type. 
\end{lemma}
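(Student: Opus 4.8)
The plan is to handle the two adjoint pairs separately. The pair $(i^*, i_*)$ is induced by an honest morphism of schemes and will be of Fourier–Mukai type essentially by inspection, so the real content lies in the pair $(\pi^*,\pi_*)$. For the latter I would exploit the geometric noncommutative scheme structure on $\mathcal{D}^c$ to transport everything into a representability statement for functors out of the derived category of the smooth projective variety $X$, and then invoke a derived Morita / Lunts–Orlov type theorem. Throughout I assume the functors are enhanceable (they are built from geometric and Fourier–Mukai pieces, which carry canonical dg-enhancements).

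First I would dispose of $(i^*, i_*)$. The pushforward $i_* = Ri_* \colon D^b(X_0) \to D^b(\mathbb{P}^n)$ is the integral transform with kernel $\gamma_{i,*}\mathcal{O}_{X_0} \in D^b(X_0 \times \mathbb{P}^n)$, where $\gamma_i \colon X_0 \hookrightarrow X_0 \times \mathbb{P}^n$ is the graph of $i$: since $\gamma_i$ projects isomorphically to $X_0$ via the first factor and via $i$ to $\mathbb{P}^n$ via the second, the associated transform recovers $Ri_*$. As $D^b(\mathbb{P}^n)$ is tautologically a geometric noncommutative scheme (the identity embedding of the smooth projective $\mathbb{P}^n$), this exhibits $i_*$ as Fourier–Mukai. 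Its left adjoint $i^* = Li^*$ is then Fourier–Mukai as well, being represented by the transpose of the same kernel; and since $\mathbb{P}^n$ is smooth every object of $D^b(\mathbb{P}^n)$ is perfect, so $Li^*$ lands in $Perf(X_0)$ as required.

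For $(\pi^*,\pi_*)$ I would first use that $\mathcal{D}^c$ is a geometric noncommutative scheme to obtain $G \colon \mathcal{D}^c \hookrightarrow D^b(X)$ realizing $\mathcal{D}^c$ as an admissible subcategory of the smooth projective $X$, so that $G$, $G_L$, $G_R$ are all of Fourier–Mukai type by \cite[Theorem 7.1]{Kuznetsov_2011}. By the definition of the Fourier–Mukai property for functors to $D^b(X_0)$, the functor $\pi_*$ is Fourier–Mukai as soon as $\pi_* \circ G_L \colon D^b(X) \to D^b(X_0)$ is. This composite goes out of $D^b(X) = Perf(X)$, the derived category of a smooth projective variety, and lands in $D^b(X_0)$ by Definition~\ref{def:catresolution}(3); invoking the representability theorem of Lunts–Orlov for functors out of the smooth and proper dg-category $D^b(X)$ (equivalently Toën's derived Morita theory), it is represented by a bounded coherent kernel on $X \times X_0$. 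The identical argument applies to $\pi_* \circ G_R$. For $\pi^*$, a short adjunction computation shows that $G \circ \pi^*$ is left adjoint to $\pi_* \circ G_R$; since the latter is Fourier–Mukai and $X$ is smooth projective, its left adjoint $G \circ \pi^*$ is again Fourier–Mukai by the standard description of adjoints of integral functors (using the dualizing complex of $X_0$ via Grothendieck duality to accommodate the singular $X_0$). Hence $\pi^*$ is Fourier–Mukai. The adjoints used here exist: $(\pi^*,\pi_*)$ is an adjoint pair with $\pi_*$ commuting with coproducts, so Brown representability on the compactly generated $\mathcal{D}$ furnishes $\pi^!$, while $(G_L,G,G_R)$ is an adjoint triple.

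The main obstacle is precisely the representability step, since $X_0$ is singular and Orlov's classical theorem, which requires both varieties smooth and projective, does not apply. The way around it is to use only the smoothness and properness of $X$: this makes $D^b(X)$ a smooth and proper enhancement, which suffices to represent exact functors out of (and, via adjunction, into) it even with a singular partner, and it also guarantees that the adjoint kernels stay bounded and coherent. A secondary point I would check carefully is that all the relevant functors genuinely arise from dg-functors so that the enhancement hypotheses of Lunts–Orlov are met, and that the produced kernels lie in $D^b(\mathrm{Coh})$ rather than merely $D(\mathrm{QCoh})$; here the properness of all schemes together with the containments $\pi_*(\mathcal{D}^c) \subseteq D^b(X_0)$ and $\pi^*(Perf(X_0)) \subseteq \mathcal{D}^c$ from Lemma~\ref{lem:resolutiondiagram} keep everything inside the bounded coherent world.
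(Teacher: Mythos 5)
Your treatment of $(i^*,i_*)$ via the graph kernel $\mathcal{O}_{\Gamma_i}$ matches the paper. The gap is in the pair $(\pi^*,\pi_*)$, and it is concentrated in the step where you represent $\pi_*\circ G_L\colon D^b(X)\to D^b(X_0)$ by ``the representability theorem of Lunts--Orlov for functors out of the smooth and proper dg-category $D^b(X)$.'' No such theorem is available at the level at which you are working: in this lemma $(\pi^*,\pi_*)$ is only an abstract adjunction of triangulated functors coming from Definition~\ref{def:catresolution}, so the enhanceability you assume at the outset (``they are built from geometric and Fourier--Mukai pieces'') is precisely what is not given --- $\pi_*$ is the functor whose Fourier--Mukai nature you are trying to establish, not a composite of known kernels. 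Without an enhancement, representability of a general (non-fully-faithful) exact functor out of $D^b(X)$, even for $X$ smooth projective, is false: there exist exact functors between bounded derived categories of smooth projective varieties that are not integral transforms (Rizzardo--Van den Bergh--Neeman). So the sentence ``this makes $D^b(X)$ a smooth and proper enhancement, which suffices to represent exact functors out of it'' does not hold for the triangulated functor $\pi_*\circ G_L$, and the subsequent adjunction step for $\pi^*$ inherits the problem.

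The missing idea is to run the argument in the opposite order and to exploit full faithfulness rather than smoothness of the source. Since $\pi_*\pi^*\simeq \mathrm{id}$ and $F\colon\mathcal{D}^c\hookrightarrow D^b(X)$ is fully faithful, the composite $F\circ\pi^*\colon Perf(X_0)\to D^b(X)$ is \emph{fully faithful}, and this is exactly the hypothesis under which \cite[Corollary 9.13]{2010JAMS...23..853L} produces a Fourier--Mukai kernel with no enhanceability assumption on the functor (full faithfulness is what substitutes for it). This gives $\pi^*$ first. One then obtains $\pi_*$ by observing that $\pi_*\circ F_R$ is an adjoint of the integral transform $F\circ\pi^*$ and invoking \cite[Lemmas 3.17 and 3.21]{ballard2009equivalences}, which show that such adjoints are again integral transforms with bounded coherent kernels even though $X_0$ is singular. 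If you reorganize your proof along these lines --- full faithfulness of $F\circ\pi^*$ as the entry point, adjunction for $\pi_*$ afterwards --- the rest of what you wrote (the graph kernel for $i_*$, the bookkeeping that everything stays in $D^b(\mathrm{Coh})$) goes through.
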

\begin{proof}
We begin again with the pair of functors $(\pi^*,\pi_*)$. By assumption, there exists a fully faithful functor with left and right adjoints $F \colon \mathcal{D}^c \xhookrightarrow{} D^b(X)$ where $X$ is a smooth, projective variety. Taking the composition with the categorical resolution $\pi^*$, there exists a fully faithful functor $F \circ \pi^* \colon Perf(X_0) \xhookrightarrow{} D^b(X)$. By \cite[Corollary 9.13]{2010JAMS...23..853L}, the composition $F \circ \pi^*$ is of Fourier-Mukai type and so by definition, the resolution functor $\pi^*$ is Fourier-Mukai. 

For $\pi_*$, it suffices to prove that the composition $\pi_* \circ F_R \colon D^b(X) \rightarrow Perf(X_0)$ is of Fourier-Mukai type. Noting that $F \circ \pi^*$ satisfies the assumption of \cite[Lemma 3.17]{ballard2009equivalences} and combining the result with \cite[Lemma 3.21]{ballard2009equivalences}, it follows that this composition is indeed Fourier-Mukai.

For the pair of functors $(i^*, i_*)$, the standard proof that these are representable by the kernel $\mathcal{O}_{\Gamma_i} \in D^b(X_0 \times \mathbb{P}^n)$ implies the result, where $\Gamma_i \subset X_0 \times \mathbb{P}^n$ is the graph of the inclusion $i \colon X_0 \xhookrightarrow{} \mathbb{P}^n$. 
\end{proof}
For later use in Section~\ref{bondalresolution}, we will record the following facts about spanning classes in the generality of the present setting.
\begin{lemma}[\cite{2010JAMS...23..853L}, Proposition 9.2]\label{lem:spanning}
The subset of objects $\{\mathcal{O}_{X_0}(i) = i^* \mathcal{O}_{\mathbb{P}^n}(i) \,|\, i \in \mathbb{Z} \}$ is an ample sequence in $Coh(X_0)$.
\end{lemma}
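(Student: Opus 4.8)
The plan is to verify the three defining conditions of an ample sequence \cite[Definition 4.19]{huybrechts2006fourier} directly from Serre's theorems, indexing so that $i \to -\infty$. Write $L \coloneqq \mathcal{O}_{X_0}(1)$, which is ample since $X_0$ is a closed subscheme of $\mathbb{P}^n$, and note $\mathcal{O}_{X_0}(i) = L^{\otimes i}$. Fixing $A \in Coh(X_0)$, I must produce an integer $i_0$ such that the three conditions hold for all $i < i_0$, and then take $i_0$ to be the minimum of the three thresholds obtained below.

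For the surjectivity condition, observe that the canonical morphism $Hom(\mathcal{O}_{X_0}(i), A) \otimes \mathcal{O}_{X_0}(i) \to A$ is, after twisting by $\mathcal{O}_{X_0}(-i)$, exactly the evaluation map $H^0(X_0, A(-i)) \otimes \mathcal{O}_{X_0} \to A(-i)$. Since $L$ is ample and $A$ is coherent, $A(-i)$ is globally generated for $-i \gg 0$ by Serre's theorem, so this map is surjective for $i$ sufficiently negative. For the higher-extension vanishing, I use that $\mathcal{O}_{X_0}(i)$ is a line bundle, hence locally free, so $\mathcal{E}xt^{j}(\mathcal{O}_{X_0}(i), A) = 0$ for $j > 0$ and the local-to-global spectral sequence collapses to give $Ext^j(\mathcal{O}_{X_0}(i), A) \cong H^j(X_0, A(-i))$; these groups vanish for all $j > 0$ once $-i \gg 0$ by Serre vanishing, and vanish for $j < 0$ automatically, as both arguments lie in the heart $Coh(X_0)$.

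The main obstacle is the remaining condition, namely $Hom(A, \mathcal{O}_{X_0}(i)) = 0$ for $i \ll 0$, which is precisely where the integrality hypothesis on $X_0$ enters and which cannot be deduced from Serre vanishing alone. Here the plan is a degree-bounding argument. A nonzero homomorphism $\phi \colon A \to \mathcal{O}_{X_0}(i)$ has nonzero image $\mathcal{G} = \mathrm{im}(\phi)$, a coherent subsheaf of the rank-one torsion-free sheaf $\mathcal{O}_{X_0}(i)$ (using that $X_0$ is integral), hence itself generically of rank one; the $L$-degree of $\mathcal{G}$ is then bounded above by $\deg_L \mathcal{O}_{X_0}(i) = i \cdot \deg_L X_0$, which tends to $-\infty$ as $i \to -\infty$. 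On the other hand $\mathcal{G}$ is a quotient of the fixed sheaf $A$, so $\deg_L \mathcal{G} = \deg_L A - \deg_L(\ker \phi)$, and the $L$-degrees of coherent subsheaves of $A$ of a given generic rank are bounded above by the standard boundedness of subsheaves of a fixed coherent sheaf on a projective scheme; hence $\deg_L \mathcal{G}$ is bounded below by a constant depending only on $A$. Comparing the two bounds shows that no nonzero $\phi$ can exist once $i$ is sufficiently negative, which gives the desired vanishing and completes the verification; alternatively, the statement may simply be quoted from \cite[Proposition 9.2]{2010JAMS...23..853L}.
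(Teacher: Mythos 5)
Your verification is correct, but note that the paper offers no proof of this lemma at all: it is imported verbatim as \cite[Proposition 9.2]{2010JAMS...23..853L}, so your self-contained argument is a genuinely different (more explicit) route, and your closing remark that one may simply quote the reference is exactly what the paper does. Your treatment of the first two conditions of \cite[Definition 4.19]{huybrechts2006fourier} is the standard one --- global generation of $A(-i)$ and Serre vanishing for $H^{j}(X_0, A(-i))$ with $-i \gg 0$, using that $\mathcal{O}_{X_0}(i)$ is locally free to collapse the local-to-global spectral sequence --- and you correctly isolate $Hom(A, \mathcal{O}_{X_0}(i)) = 0$ as the only substantive point; this is precisely where the hypothesis of the cited proposition (no zero-dimensional torsion in $\mathcal{O}_{X_0}$, guaranteed here by the standing assumption that $X_0$ is integral of positive dimension) enters. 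Your degree-bounding argument for that condition is sound: the image of a nonzero map is a rank-one subsheaf of the line bundle $\mathcal{O}_{X_0}(i)$, whose $L$-degree is at most $i\cdot\deg_L X_0$ plus a constant (the quotient is a torsion sheaf of nonnegative degree) and hence tends to $-\infty$, while Grothendieck's boundedness lemma bounds $\deg_L(\ker\phi)$ above and hence $\deg_L(\mathrm{im}\,\phi)$ below by a constant depending only on $A$. Two small points worth making explicit if you write this up: the degree must be taken as the appropriate coefficient of the Hilbert polynomial with respect to $L$, since $X_0$ may be singular and need not be a curve in the general setting of this section; and the argument (and the statement) genuinely requires $\dim X_0 \geq 1$, since for a zero-dimensional scheme condition (iii) fails --- this is implicit in the paper's application to the nodal cubic but is the one place where ``integral projective scheme'' alone would not suffice.
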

\begin{lemma}\label{lem:orthogonalample}
Let $D \in \mathcal{D}^c$ be any object. Then the following are equivalent.
\begin{enumerate}
\item
$Hom(\pi^*\mathcal{O}_{X_0}(i),D) = 0$ for all $i \in \mathbb{Z}$.
\item
$\pi_*D = 0$.
\item
$D \in Im(\pi^*)^\perp$.
\end{enumerate}
\end{lemma}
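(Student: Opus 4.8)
The plan is to prove the cycle of implications $(1)\Rightarrow(2)\Rightarrow(3)\Rightarrow(1)$, the only substantive step being $(1)\Rightarrow(2)$; the remaining two are formal consequences of the adjunction $(\pi^*,\pi_*)$. Throughout I read $Hom$ as the graded $Hom^\bullet=\bigoplus_k Hom(-,-[k])$, which is the meaning forced by the orthogonal $Im(\pi^*)^\perp$ in $(3)$ under the convention fixed in Corollary~\ref{cor:nonequiv}. This is the right interpretation: an object of $D^b(coh(X_0))$ cannot in general be detected by degree-zero maps from an ample sequence alone, so the shifts are genuinely needed.

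First I would record the adjunction isomorphism. Since $(\pi^*,\pi_*)$ is an adjoint pair (Definition~\ref{def:catresolution}), for every $A\in Perf(X_0)$ and every shift there is a natural isomorphism $Hom_{\mathcal{D}}(\pi^*A,D)\simeq Hom_{D(X_0)}(A,\pi_*D)$. Two of the three implications then drop out immediately. For $(2)\Rightarrow(3)$: if $\pi_*D=0$ then $Hom(\pi^*A,D)\simeq Hom(A,\pi_*D)=0$ for every $A\in Perf(X_0)$ and every shift (using that $\pi_*$ is exact), so $D\in Im(\pi^*)^\perp$. For $(3)\Rightarrow(1)$: each $\pi^*\mathcal{O}_{X_0}(i)$ lies in $Im(\pi^*)$, so membership in $Im(\pi^*)^\perp$ specializes directly to the vanishing asserted in $(1)$.

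The heart of the argument is $(1)\Rightarrow(2)$. Applying the adjunction to $A=\mathcal{O}_{X_0}(i)$ converts $(1)$ into $Hom^\bullet(\mathcal{O}_{X_0}(i),\pi_*D)=0$ for all $i\in\mathbb{Z}$. By Definition~\ref{def:catresolution}(3) together with the hypothesis $D\in\mathcal{D}^c$, the object $\pi_*D$ lies in $D^b(coh(X_0))$, and by Lemma~\ref{lem:spanning} the twists $\{\mathcal{O}_{X_0}(i)\}_{i\in\mathbb{Z}}$ form an ample sequence in $Coh(X_0)$. I would then invoke the standard fact that an ample sequence is a spanning class for $D^b(coh(X_0))$ to conclude $\pi_*D=0$. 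Concretely, if $\pi_*D\neq 0$ one takes the top nonvanishing cohomology sheaf $\mathcal{H}^q(\pi_*D)$; for $i\ll 0$ the ample twists kill all higher $Ext$'s, so the hypercohomology spectral sequence degenerates and gives $Hom(\mathcal{O}_{X_0}(i),\pi_*D[q])\simeq H^0\!\big(\mathcal{H}^q(\pi_*D)(-i)\big)$, which is nonzero for $i\ll 0$ by ampleness. This contradicts the degree-$q$ vanishing in $(1)$, so $\pi_*D=0$, which is $(2)$.

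The main (and essentially the only) obstacle is this detection step: one must ensure that degreewise $Hom$-vanishing against every twist genuinely forces $\pi_*D=0$. This is exactly where the ampleness of Lemma~\ref{lem:spanning} and the finiteness $\pi_*D\in D^b(coh(X_0))$ from Definition~\ref{def:catresolution}(3) are used in tandem; everything else is bookkeeping with the adjunction $(\pi^*,\pi_*)$.
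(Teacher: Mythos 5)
Your proposal is correct and follows essentially the same route as the paper: the cycle $(1)\Rightarrow(2)\Rightarrow(3)\Rightarrow(1)$ via the adjunction $(\pi^*,\pi_*)$, with the key step $(1)\Rightarrow(2)$ resting on the ample sequence of Lemma~\ref{lem:spanning} detecting nonzero objects of $D^b(coh(X_0))$. The only difference is cosmetic: you unpack the top-cohomology/Serre-vanishing argument explicitly, whereas the paper cites it as step 1 of the proof of \cite[Proposition 2.73]{huybrechts2006fourier}.
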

\begin{proof}
$(1) \implies (2)\colon$ Assume that $Hom(\pi^*\mathcal{O}_{X_0}(i), D) = 0$ for all $i \in \mathbb{Z}$. Then by adjunction, $Hom(\mathcal{O}_{X_0}(i), \pi_* D) = 0$ in $D^b(X_0)$ for all $ i \in \mathbb{Z}$. By Lemma~\ref{lem:spanning}, the set $\{\mathcal{O}_{X_0}(i)\}_{i \in \mathbb{Z}}$ is an ample sequence in $Coh(X_0)$. By part $1$ of the proof of \cite[Proposition 2.73]{huybrechts2006fourier}, we see that $Hom(\mathcal{O}_{X_0}(i), \pi_*D) = 0 \implies \pi_*D = 0$ without assumptions on the homological dimension of $Coh(X_0)$.

$(2) \implies (3) \colon$This follows immediately by adjunction.

$(3) \implies (1) \colon$This is immediate.
\end{proof}
\subsection{Spherical Functors}\label{spherical}
As a result of Lemma~\ref{lem:fouriermukai}, it makes sense to talk about spherical functors in our context. In this subsection, we will always assume $F \colon \mathcal{A} \rightarrow \mathcal{B}$ an exact functor between triangulated categories, with left and right adjoints $L, R \colon \mathcal{B} \rightarrow \mathcal{A}$.
\begin{defn}\cite{Addington2016NewDS}
\begin{enumerate}
\item
The {\em twist} $T_F$ associated to $F$ is the cone of the counit of adjunction
\[
\begin{tikzcd}
FR \arrow[r] & id_\mathcal{B} \arrow[r] & T_F
\end{tikzcd}
\]
\item
The {\em cotwist} $C_F$ associated to $F$ is the cone of the unit of adjunction
\[
\begin{tikzcd}
id_\mathcal{A} \arrow[r] & RF \arrow[r] & C_F
\end{tikzcd}
\]
\item
$F$ is a {\em spherical} if both $T_F$ and $C_F$ are equivalences of categories.
\end{enumerate}
\end{defn}
We now restate the criterion of \cite[Theorem 1]{Addington2016NewDS} in a form directly applicable to our setting.
\begin{theorem}[\cite{Addington2016NewDS}, Theorem 1]\label{lem:spherical}
Assume the following:
\begin{enumerate}
\item
The cotwist $C_F$ is an equivalence.
\item
There exists an equivalence $S_B F C \simeq FS_A$.
\end{enumerate}
Then $F$ is a spherical functor. 
\end{theorem}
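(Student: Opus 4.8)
The statement is a restatement of \cite[Theorem 1]{Addington2016NewDS}, so the plan is to reduce it to the Anno--Logvinenko characterization of spherical functors, supplying the Serre-duality bookkeeping that converts hypothesis (2) into the standard ``the two adjoints agree up to the cotwist'' condition. Recall that $F$ is spherical precisely when \emph{both} $T_F$ and $C_F$ are equivalences; since hypothesis (1) already guarantees that $C_F$ is an equivalence, the entire content is to deduce that $T_F$ is an equivalence as well.

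First I would record the relation between the two adjoints coming from the Serre functors. Because $\mathcal{A}$ and $\mathcal{B}$ possess Serre functors $S_A, S_B$ (implicit in the hypotheses, as they appear in (2)) and $F$ admits both a left adjoint $L$ and a right adjoint $R$, the defining property of a Serre functor together with the uniqueness of adjoints yields the standard isomorphism $R \simeq S_A\, L\, S_B^{-1}$, equivalently $L \simeq S_A^{-1} R\, S_B$.

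Next I would translate hypothesis (2) by taking right adjoints of both sides of the equivalence $S_B F C_F \simeq F S_A$. Since $S_A$, $S_B$, and $C_F$ are all equivalences (the last by hypothesis (1)), their right adjoints are their inverses, while the right adjoint of $F$ is $R$; using $(G\circ H)^R \simeq H^R \circ G^R$ gives $C_F^{-1} R\, S_B^{-1} \simeq S_A^{-1} R$. Composing with $C_F$ on the left and $S_B$ on the right rearranges this to $R \simeq C_F\,(S_A^{-1} R\, S_B) \simeq C_F\, L$, where the last step inserts the Serre-duality relation from the previous paragraph. This is exactly the second of the spherical conditions: the cotwist is an autoequivalence and the two adjoints are identified through it. Invoking the Anno--Logvinenko theorem (any two of the four spherical conditions force the remaining two) then upgrades this to the assertion that $T_F$ is an equivalence, so $F$ is spherical.

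The main obstacle is coherence rather than the formal manipulation above. The Anno--Logvinenko criterion does not merely require that $R$ and $C_F L$ be abstractly isomorphic functors; it requires the canonical comparison morphism $R \to C_F L$ assembled from the adjunction units and counits to be an isomorphism. The delicate point is therefore to verify that the isomorphism produced by dualizing (2) and applying Serre duality is compatible with---indeed agrees, up to the standard coherence data, with---this canonical natural transformation. Making this precise requires working at the level of the enhancement (dg or stable $\infty$-categorical), where the Serre-duality isomorphism for adjoints and the behaviour of (co)twists under passage to adjoints can be tracked functorially; this is precisely the setting in which \cite{Addington2016NewDS} establishes the criterion, so once the translation of the hypotheses is in place I would simply appeal to that result.
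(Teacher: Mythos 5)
The first thing to note is that the paper contains no proof of this statement: it is quoted, with attribution, as Theorem 1 of \cite{Addington2016NewDS} (the preceding sentence in the paper reads ``We now restate the criterion of \cite[Theorem 1]{Addington2016NewDS} in a form directly applicable to our setting''). So there is no internal argument to compare yours against; the paper's ``proof'' is the citation itself.

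Judged on its own terms, your formal reduction is correct and is exactly the bridge between the two standard formulations of the criterion. Since the paper defines $F$ to be spherical when both $T_F$ and $C_F$ are equivalences, the content is indeed to produce $T_F$. Taking right adjoints in hypothesis (2) gives $C_F^{-1} R\, S_B^{-1} \simeq S_A^{-1} R$, and combining this with the Serre-duality relation $L \simeq S_A^{-1} R\, S_B$ (which you state correctly) yields $R \simeq C_F L$; together with hypothesis (1) these are two of the four Anno--Logvinenko conditions, and the two-out-of-four theorem then makes $T_F$ an equivalence. You are also right that the delicate point is that the Anno--Logvinenko theorem concerns the \emph{canonical} morphism $R \to C_F L$ assembled from units and counits, not the mere existence of an abstract isomorphism of functors. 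The weakness is your resolution of that point: you propose to close it by ``appealing to that result,'' where the result in question is precisely \cite[Theorem 1]{Addington2016NewDS}, i.e.\ the statement being proven. As an independent proof this is circular; what you have actually produced is a correct explanation of how the Serre-functor form of the criterion reduces to the Anno--Logvinenko form, followed by a citation of the statement itself. In the context of this paper that is harmless --- the paper does exactly the same thing, minus the explanation --- but a self-contained proof would still require the genuinely technical step you identified and then deferred: upgrading the abstract isomorphism $R \simeq C_F L$ extracted from (2) to the canonical natural transformation, which is where the dg-enhancement-level work in the cited sources actually lives.
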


\subsection{Bondal Quiver}\label{bondalresolution}
We now finally specialize to the case of the Bondal quiver $Q$. We take $X_0 \subset \mathbb{P}^2$ the nodal cubic. The main goal of this section is to prove the following Proposition.
\begin{prop}\label{prop:bondalsphericalfunctor}
Assume $\mathcal{D} = P^\perp \subset D^b(Q)$ with $Q$ the Bondal quiver and $X_0$ the nodal cubic in $\mathbb{P}^2$. Then the composition $F \coloneqq i_* \circ \pi_*$ is a spherical functor.
\end{prop}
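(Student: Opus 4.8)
The plan is to verify the two hypotheses of Theorem~\ref{lem:spherical} for $F = i_*\pi_*$, with $\mathcal{A} = P^\perp$, $\mathcal{B} = D^b(\mathbb{P}^2)$, $S_A = S_{P^\perp}$ and $S_B = (-)\otimes\omega_{\mathbb{P}^2}[2]$. The adjoints of $F$ are $L = \pi^*i^*$ and $R = \pi^!i^!$, obtained by composing the adjoint triples $(\pi^*,\pi_*,\pi^!)$ and $(i^*,i_*,i^!)$; all functors are of Fourier--Mukai type by Lemma~\ref{lem:fouriermukai}, so the Serre-functor formalism applies. The conceptual point is that $X_0$ is an \emph{anticanonical} divisor: the nodal cubic lies in $|\mathcal{O}_{\mathbb{P}^2}(3)| = |-K_{\mathbb{P}^2}|$, so $\omega_{X_0}\simeq\mathcal{O}_{X_0}$, and the Proposition is the categorical-resolution analogue of the classical fact that pushforward from a Calabi--Yau anticanonical divisor is spherical. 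The new feature is that the smooth divisor category $D^b(X_0)$ is replaced by the resolution $P^\perp$, whose Serre functor is not a shift but the spherical twist $\mathbb{T}_E^{-1}[1]$ of Corollary~\ref{cor:serrepperp}.

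For hypothesis (1) I would compute the cotwist $C_F = \operatorname{cone}(\operatorname{id}_{P^\perp}\to RF)$ on the two pieces of the semiorthogonal decomposition $P^\perp = \langle \operatorname{Im}\pi^*,\ \ker\pi_*\rangle$ furnished by Lemma~\ref{lem:orthogonalample}, where $\ker\pi_* = \operatorname{Im}(\pi^*)^\perp$ is generated by the spherical object $E$ (paralleling \cite{2209.12853}; in particular $\pi_*E = 0$). Because $i$ is the regular embedding of a Cartier divisor, one has $i^!\simeq Li^*(-)\otimes\mathcal{O}_{X_0}(3)[-1]$ together with the standard triangle $\mathcal{G}\to i^!i_*\mathcal{G}\to\mathcal{G}\otimes\mathcal{O}_{X_0}(3)[-1]$. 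Using crepancy of the minimal resolution ($\pi^!\simeq\pi^*$, hence $\pi_*\pi^! = \pi_*\pi^* = \operatorname{id}$) one finds $C_F(\pi^*\mathcal{G})\simeq\pi^*(\mathcal{G}\otimes\mathcal{O}_{X_0}(3))[-1]$ on $\operatorname{Im}\pi^*$, while $C_F(D)\simeq D[1]$ on $\ker\pi_*$ (there $FD=0$). Since $(-)\otimes\mathcal{O}_{X_0}(3)$ is an autoequivalence of $D^b(X_0)\simeq\operatorname{Im}\pi^*$ and the shift is an autoequivalence of $\ker\pi_*$, the exact functor $C_F$ preserves this decomposition and restricts to an equivalence on each component, hence is an equivalence.

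For hypothesis (2), $S_BFC_F\simeq FS_A$, I would evaluate both sides. On the right, $S_A = S_{P^\perp} = \mathbb{T}_E^{-1}[1]$, and since $\pi_*E = 0$ the spherical twist collapses after projection, $\pi_*\mathbb{T}_E^{-1}\simeq\pi_*$, so $FS_A\simeq i_*\pi_*[1] = F[1]$. On the left, the cotwist computation yields $\pi_*C_F\simeq(-\otimes\mathcal{O}_{X_0}(3))[-1]\circ\pi_*$, whence by the projection formula
\[
S_BFC_F(M)\;\simeq\;i_*\bigl(\pi_*(C_FM)\otimes\mathcal{O}_{X_0}(-3)\bigr)[2]\;\simeq\;i_*\bigl((\pi_*M)\otimes\mathcal{O}_{X_0}(3)\otimes\mathcal{O}_{X_0}(-3)\bigr)[1]\;=\;i_*\pi_*M[1]\;=\;F[1],
\]
the cancellation of $\mathcal{O}(3)$ against $\omega_{\mathbb{P}^2}|_{X_0}=\mathcal{O}_{X_0}(-3)$ being precisely the anticanonical condition. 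Both sides equal $F[1]$, so (2) holds and Theorem~\ref{lem:spherical} applies.

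The main obstacle is hypothesis (1): proving that $C_F$ is an equivalence requires the two inputs peculiar to the singular/noncommutative setting --- the crepancy $\pi^!\simeq\pi^*$ of the minimal resolution, and the identification $\ker\pi_* = \langle E\rangle$ with $\pi_*E = 0$ --- together with the verification that $C_F$ respects the decomposition $\langle\operatorname{Im}\pi^*,\ker\pi_*\rangle$ rather than introducing off-diagonal terms. The classical smooth argument simply identifies the cotwist with a line-bundle twist, an equivalence by inspection; here the ``$3$-dimensional'' behaviour ($E$ is $3$-spherical and $S_{P^\perp}$ is a genuine spherical twist) means that the cotwist is visibly an equivalence only after decomposing along the semiorthogonal components. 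Once (1) is in place, hypothesis (2) follows cleanly from the anticanonical cancellation and the fact that $\pi_*$ annihilates $E$.
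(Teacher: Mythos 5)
Your strategy (Addington's criterion) is the right one, but you apply it to $F = i_*\pi_*$ itself, and the resulting cotwist computation on $P^\perp$ rests on inputs that are not available. The central gap is the semiorthogonal decomposition $P^\perp = \langle Im(\pi^*), Ker(\pi_*)\rangle$: Lemma~\ref{lem:orthogonalample} only gives the semiorthogonality $Ker(\pi_*) = Im(\pi^*)^\perp$, not the decomposition triangles, and the natural candidate for the projection onto $Im(\pi^*)$, namely $\pi^*\pi_*$, is not even defined on all of $P^\perp$, since $\pi_*(P^\perp)\subset D^b(X_0)$ is not contained in $Perf(X_0)$ and only perfect complexes can be pulled back to the compact part. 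Even granting the decomposition, the inference ``$C_F$ preserves both components and restricts to an equivalence on each, hence is an equivalence'' is false in general: one must also check that $C_F$ is fully faithful on the cross-terms $Hom(Ker(\pi_*), Im(\pi^*)[\bullet])$, i.e.\ compatible with the gluing data, and your argument does not address this (you flag the ``off-diagonal terms'' issue but do not resolve it). Finally, the identifications $Ker(\pi_*) = \langle E\rangle$ and $\pi^!\simeq\pi^*$ are themselves unproved inputs at this stage; the crepancy is only established in the paper in a remark \emph{after} the proposition, and the kernel statement is never proved there at all.

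The paper sidesteps all of this by verifying Theorem~\ref{lem:spherical} for the adjoint functor $\pi^*\circ i^*\colon D^b(\mathbb{P}^2)\to P^\perp$ instead (a functor is spherical if and only if its adjoint is, which reconciles the statement with the proof). In that direction the cotwist lives on $D^b(\mathbb{P}^2)$ and is computed in one line: $RF = i_*\pi_*\pi^*i^* \simeq i_*i^*$ by Definition~\ref{def:catresolution}(1), so $C \simeq (-)\otimes\mathcal{O}_{\mathbb{P}^2}(-3)[1]$, visibly an equivalence --- no decomposition of $P^\perp$ is needed. The compatibility $S_{P^\perp}FC\simeq FS_{\mathbb{P}^2}$ then follows from Lemma~\ref{lem:Eintersection}: $Im(\pi^*)\subset E^\perp$, where $S_{P^\perp}\simeq\mathbb{T}_E^{-1}[1]$ acts as the shift $[1]$, and the twist by $\mathcal{O}(-3)[2]$ coming from $C$ and $S_{\mathbb{P}^2}$ cancels exactly as in your anticanonical computation; that part of your intuition is correct and is where it actually gets used. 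To salvage your version you would either have to prove the decomposition of $P^\perp$ together with the gluing compatibility of $C_F$, or simply transport the problem to the adjoint as the paper does.
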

Though one may work with the prescription of \cite{10.1093/imrn/rnu072} to construct the categorical resolution explicitly, we will instead use the established results of \cite{2009arXiv0905.1231B} for convenience.
\begin{theorem}[\cite{2009arXiv0905.1231B}]\label{thm:quiverresolution}
There exists a pair of adjoint functors $(\pi^*, \pi_*) \colon D^b(Q) \leftrightarrows Perf(X_0), D^b(X_0)$, satisfying the following:
\begin{enumerate}[(a)]
\item
The pair $(\pi^*, \pi_*)$ together with $D^b(Q)$ is a categorical resolution of $X_0$ in the sense of definition~\ref{def:catresolution}.
\item
The functor $\pi^*$ maps:
\begin{enumerate}[(1)]
\item
torsion sheaves of regular points on $X_0$ to the family of quivers
\[
\begin{tikzcd}
\mathbb{C} \arrow[r, shift left, "a"]\arrow[r, shift right, "b"'] & \mathbb{C} \arrow[r, shift left] \arrow[r, shift right] & 0
\end{tikzcd}
\]
with $a, b \neq 0$.
\item
the Jacobian $Pic^0(E)$ to the family of complexes
\[
\begin{tikzcd}
P_3 \arrow[r,"{\begin{psmallmatrix} k\alpha_1\\ \beta_1\end{psmallmatrix}}"] & P_2 \oplus P_2 \arrow[r,"{\begin{psmallmatrix}\alpha_2 & \beta_2\end{psmallmatrix}}"] & P_1
\end{tikzcd}
\]
where $\alpha_i,\beta_j$ are the canonical morphisms on the projective modules induced by the paths in the quiver and $k \in \mathbb{C}^*$.
\end{enumerate}
\end{enumerate}
\end{theorem}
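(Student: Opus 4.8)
The plan is to exhibit $(\pi^*,\pi_*)$ as the resolution functors attached to a noncommutative order on $X_0$, to identify the resulting module category with $\mathrm{mod}\text{-}kQ/I$ by a tilting object whose endomorphism algebra is $kQ/I$, and then to read off the two distinguished families in part (b) by transporting the corresponding geometric sheaves across this equivalence.

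I would first fix the geometry. Let $\nu\colon \mathbb{P}^1 \to X_0$ be the normalization, so that $\nu^{-1}(\text{node}) = \{p_1,p_2\}$, the smooth locus satisfies $X_0^{\mathrm{sm}}\cong \mathbb{C}^*$, and the generalized Jacobian is $\mathrm{Pic}^0(X_0)\cong \mathbb{C}^*$. The normalization alone is not a categorical resolution, since $\nu^*\mathcal{O}_{X_0}$ acquires torsion at the node and $\nu_*\nu^*\neq\mathrm{id}$. To repair this I would, following the gluing framework of \cite{10.1093/imrn/rnu072} and Theorem~\ref{thm:catresolution}, introduce the sheaf of $\mathcal{O}_{X_0}$-orders $\mathcal{A}$ that equals $\mathcal{O}_{X_0}$ on $X_0^{\mathrm{sm}}$ and equals the Auslander order of the node singularity $R\cong k[[x,y]]/(xy)$ at the node. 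This order has finite global dimension, so $\mathcal{D}:=D(\mathrm{Qcoh}\,\mathcal{A})$ is smooth and cocomplete with $\mathcal{D}^c=D^b(\mathrm{coh}\,\mathcal{A})$, and I set $\pi^*:=\mathcal{A}\otimes^{L}_{\mathcal{O}_{X_0}}(-)$ and $\pi_*$ the restriction of scalars to $\mathcal{O}_{X_0}$-modules.

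Next I would establish the equivalence $\mathcal{D}^c\simeq D^b(Q)$ and verify the axioms of Definition~\ref{def:catresolution}. Since $X_0\subset\mathbb{P}^2$ is an anticanonical cubic, a Beilinson-type collection on $\mathbb{P}^2$ restricts to three objects on $X_0$ which I would lift to $\mathcal{A}$-modules; the key point is that the two branches of the node separate each restriction into two maps, so the endomorphism algebra of the resulting three-summand tilting object is generated by two arrows in each slot subject to the vanishing of the two crossed branch compositions, i.e. it is exactly $kQ/I$ with $P_1,P_2,P_3$ the three summands. Conditions (2) and (3) of Definition~\ref{def:catresolution} are then formal, since $\pi^*,\pi_*$ are given by an $\mathcal{A}$-bimodule and $\pi_*$ preserves boundedness and coherence. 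The essential point is condition (1), $\pi_*\pi^*=\mathrm{id}$: this amounts to $\mathcal{A}\otimes^{L}_{\mathcal{O}_{X_0}}\mathcal{F}\cong\mathcal{F}$ as $\mathcal{O}_{X_0}$-modules, which holds because $\mathcal{A}$ is birational to $\mathcal{O}_{X_0}$ (they agree on the dense smooth locus) and the Auslander order is chosen so that the node contribution collapses back to $\mathcal{O}_{X_0}$ under $\pi_*$; this is precisely the minimality of the resolution.

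Finally, for part (b) I would transport the two geometric families through the tilting equivalence. For a skyscraper $\mathcal{O}_x$ at a smooth point $x\in X_0^{\mathrm{sm}}\cong\mathbb{C}^*$ we have $\pi^*\mathcal{O}_x\cong\mathcal{O}_x$, as $\mathcal{A}=\mathcal{O}_{X_0}$ near $x$; computing $\mathrm{RHom}$ from the three tilting summands then presents $\pi^*\mathcal{O}_x$ as the representation $\mathbb{C}\xrightarrow{(a,b)}\mathbb{C}\to 0$ supported on the first two vertices, where the pair $(a:b)$ records the coordinate of $x$ and the non-vanishing $a,b\neq 0$ is the statement that $x$ avoids the node. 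For $L\in\mathrm{Pic}^0(X_0)\cong\mathbb{C}^*$, I would compute a projective (tilting) resolution of the perfect complex $\pi^* L$; its summand multiplicities are $(1,2,1)$, yielding the three-term complex $P_3\to P_2^{\oplus 2}\to P_1$, and the monodromy of $L$ around the node enters as the scalar $k\in\mathbb{C}^*$ in the differential, with the relations of $Q$ guaranteeing that the composite vanishes. The main obstacle I anticipate is twofold: verifying $\pi_*\pi^*=\mathrm{id}$, i.e. the birationality and minimality of the order, and pinning down the tilting object so that its endomorphism algebra is $kQ/I$ exactly rather than a Morita-equivalent or deformed algebra; once these are in place, matching the two $\mathbb{C}^*$-parameters $k$ and $(a:b)$ to the geometric moduli is a direct computation.
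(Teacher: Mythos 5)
Your overall route --- realize $(\pi^*,\pi_*)$ via the Auslander order of the node, identify the derived category of that order with $D^b(Q)$ by a tilting object whose endomorphism algebra is $kQ/I$, and then transport skyscrapers and degree-zero line bundles across the equivalence --- is exactly the strategy of \cite{2009arXiv0905.1231B}, and that reference is all the paper itself offers here: its ``proof'' of this theorem is a one-line citation of Theorem 2.6, Proposition 7.2 and Proposition 7.4 of that paper. So the architecture of your argument is the right one, and parts (b)(1)--(2) of your sketch are plausible reconstructions of the cited computations, though they are asserted rather than proved.

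However, there is a genuine gap at the step you yourself flag as essential. With your definitions $\pi^* = \mathcal{A}\otimes^{L}_{\mathcal{O}_{X_0}}(-)$ and $\pi_* = {}$restriction of scalars, condition (1) of Definition~\ref{def:catresolution} is false, not merely hard: taking the test object $\mathcal{O}_{X_0}$ gives $\pi_*\pi^*\mathcal{O}_{X_0} \cong \mathcal{A}$ viewed as an $\mathcal{O}_{X_0}$-module, and the Auslander order $\mathcal{A}=\mathcal{E}nd_{X_0}(\mathcal{O}_{X_0}\oplus\nu_*\mathcal{O}_{\mathbb{P}^1})$ strictly contains $\mathcal{O}_{X_0}$, so this is never the identity. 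Birationality cannot rescue this (agreement on the smooth locus says nothing at the node), and the appeal to ``minimality'' is circular. The standard repair, which is what the cited construction actually uses, is the idempotent $e\in\mathcal{A}$ given by projection onto the summand $\mathcal{O}_{X_0}$, so that $e\mathcal{A}e\cong\mathcal{O}_{X_0}$: one sets $\pi^* = \mathcal{A}e\otimes^{L}_{\mathcal{O}_{X_0}}(-)$ and $\pi_* = e(-)\cong\mathcal{H}om_{\mathcal{A}}(\mathcal{A}e,-)$. Adjointness is then the usual tensor-Hom adjunction, and condition (1) becomes formal rather than delicate: $e(-)$ is exact, so applying it to $\mathcal{A}e\otimes_{\mathcal{O}_{X_0}}Q^{\bullet}$ for a flat resolution $Q^{\bullet}$ of $M$ yields $e\mathcal{A}e\otimes_{\mathcal{O}_{X_0}}Q^{\bullet}\simeq M$, with no minimality or birationality input. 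Once the functors are corrected in this way, the remaining items --- smoothness and compact generation, conditions (2) and (3), the tilting identification of the endomorphism algebra with $kQ/I$, and the two families in part (b) with their $\mathbb{C}^*$-parameters --- are precisely the content of the results of \cite{2009arXiv0905.1231B} that the paper cites, but your verification of the central axiom as written would not go through.
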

\begin{proof}
This is essentially a rephrasing of Theorem 2.6, Proposition 7.2, and Proposition 7.4 of \cite{2009arXiv0905.1231B}.
\end{proof}
In the rest of this subsection, we fix the pair of adjoint functors $(\pi^*, \pi_*) \colon  D^b(Q) \leftrightarrows Perf(X_0), D^b(X_0)$ as in theorem~\ref{thm:quiverresolution}. The identification of the sheaves in theorem~\ref{thm:quiverresolution}(b) with the corresponding objects in $D^b(Q)$ allows us to easily deduce the following.
{\setlength{\emergencystretch}{0.5\textwidth}\par}
\begin{lemma}\label{lem:KerR}
We have the inclusion $P, \tilde{P} \in Ker(\pi_*)$.
\end{lemma}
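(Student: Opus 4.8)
The plan is to deduce both vanishings from the orthogonality criterion of Lemma~\ref{lem:orthogonalample}. Since $P,\tilde P \in D^b(Q) = \mathcal{D}^c$, the implication $(1)\Rightarrow(2)$ of that Lemma reduces $\pi_* P = 0$ to the vanishing $Hom(\pi^*\mathcal{O}_{X_0}(i), P) = 0$ for every $i \in \mathbb{Z}$, and similarly for $\tilde P$; the ample sequence $\{\mathcal{O}_{X_0}(i)\}$ of Lemma~\ref{lem:spanning} does the rest. Since Theorem~\ref{thm:quiverresolution} does not compute $\pi^*\mathcal{O}_{X_0}(i)$ directly, the idea is to build these objects out of the two families of sheaves whose images under $\pi^*$ are recorded there.

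First I would show that each twist $\mathcal{O}_{X_0}(i)$ lies in the triangulated subcategory of $D^b(X_0)$ generated by the structure sheaf $\mathcal{O}_{X_0}$ together with the skyscrapers $\mathcal{O}_p$ at smooth points $p \in X_0$. Choosing a line $H \subset \mathbb{P}^2$ through three smooth points of $X_0$ (a general line misses the unique node), the hyperplane-section sequences $0 \to \mathcal{O}_{X_0}(i-1) \to \mathcal{O}_{X_0}(i) \to \mathcal{O}_{X_0}(i)|_H \to 0$, and their downward analogues for $i<0$, realize each $\mathcal{O}_{X_0}(i)$ by iterated cones on $\mathcal{O}_{X_0}$ and smooth-point skyscrapers. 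Applying the exact functor $\pi^*$ places $\pi^*\mathcal{O}_{X_0}(i)$ in the triangulated subcategory generated by $\pi^*\mathcal{O}_{X_0}$ and the $\pi^*\mathcal{O}_p$, and these are exactly the explicit representations of Theorem~\ref{thm:quiverresolution}(b), using that $\mathcal{O}_{X_0} \in Pic^0(X_0)$.

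It then suffices to verify the base cases $Hom^\bullet(\pi^*\mathcal{O}_p, P) = 0$ and $Hom^\bullet(\pi^*\mathcal{O}_{X_0}, P) = 0$, and the same with $\tilde P$: as $Hom^\bullet(-,P)$ is cohomological, these propagate along the cones above to give $Hom^\bullet(\pi^*\mathcal{O}_{X_0}(i),P)=0$, whence the degree-zero hypothesis of Lemma~\ref{lem:orthogonalample} and the conclusion $\pi_*P=0$. For the skyscraper, $\pi^*\mathcal{O}_p$ is the module $M_{a:b}$ of Theorem~\ref{thm:quiverresolution}(b)(1) with $a,b\neq 0$; I would use that its syzygy is projective, namely $0 \to P_2 \to P_1 \to M_{a:b} \to 0$ (which holds precisely because $a,b\neq 0$), and compute $Hom^\bullet(M_{a:b},P)$ from $Hom(P_j,P)=\mathbb{C}$ together with the induced map $Hom(P_1,P)\to Hom(P_2,P)$, which is an isomorphism for both $P$ and $\tilde P$. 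For the degree-zero line bundle, $\pi^*\mathcal{O}_{X_0}$ is the complex of projectives $[P_3 \to P_2^{\oplus 2} \to P_1]$, so $Hom^\bullet(\pi^*\mathcal{O}_{X_0},P)$ is the cohomology of a three-term complex $\mathbb{C}\to\mathbb{C}^2\to\mathbb{C}$ whose differentials are read off from the arrow-actions of $P$ (resp. $\tilde P$) and the parameter $k$; a direct computation using Lemma~\ref{lem:projinj} shows it is acyclic.

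I expect the main obstacle to be precisely this last computation: matching the conventions for the morphisms among the projectives $P_i$ in Lemma~\ref{lem:projinj} with the differentials $\binom{k\alpha_1}{\beta_1}$ and $(\alpha_2\ \beta_2)$ of Theorem~\ref{thm:quiverresolution}(b)(2), so that the induced complex $\mathbb{C}\to\mathbb{C}^2\to\mathbb{C}$ is correctly identified and seen to be acyclic for the relevant $k$. Everything else is either a formal consequence of Lemma~\ref{lem:orthogonalample} or the elementary generation statement of the second paragraph.
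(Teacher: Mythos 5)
Your proposal is correct and follows the paper's global strategy — reduce to the vanishing $Hom(\pi^*\mathcal{O}_{X_0}(i),P)=0$ via Lemma~\ref{lem:orthogonalample}, then generate all twists $\mathcal{O}_{X_0}(i)$ from $\mathcal{O}_{X_0}$ and skyscrapers at smooth points using the hyperplane-section triangles — but it diverges at the key step of verifying the two base cases. The paper does not compute $Hom^\bullet(\pi^*k(p),P)$ or $Hom^\bullet(\pi^*\mathcal{O}_{X_0},P)$ with resolutions at all; instead it observes from the explicit quiver models of Theorem~\ref{thm:quiverresolution}(b) that both $\pi^*k(p)$ and $\pi^*\mathcal{O}_{X_0}$ are $1$-Calabi--Yau (i.e.\ $S_Q(A)\simeq A[1]$), while $P$ is $(4,2)$-fractional Calabi--Yau by Lemma~\ref{lem:p}; pairing these two periodicities under Serre duality gives $Hom(A,P[j])\cong Hom(A,P[j+2])$, which forces all these groups to vanish by boundedness. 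Your route replaces this with direct homological algebra: the resolution $0\to P_2\to P_1\to M_{a:b}\to 0$ (valid exactly because $a,b\neq 0$, and yielding the map ``multiplication by $b$'', resp.\ $-a$, on $Hom(P_1,-)\to Hom(P_2,-)$, hence an isomorphism for both $P$ and $\tilde P$), and the three-term complex $\mathbb{C}\to\mathbb{C}^2\to\mathbb{C}$ for the Burban--Drozd complex $[P_3\to P_2^{\oplus 2}\to P_1]$. Both arguments close; yours is more elementary and self-contained but, as you rightly flag, hinges on correctly matching the morphism conventions of Lemma~\ref{lem:projinj} with the differentials $\binom{k\alpha_1}{\beta_1}$ and $(\alpha_2\ \beta_2)$, whereas the paper's Serre-duality argument sidesteps that bookkeeping at the cost of first verifying the $1$-Calabi--Yau property of the images (itself a quiver computation). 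One small remark: Lemma~\ref{lem:orthogonalample}(1) only asks for degree-zero $Hom$'s, but to run the induction over the cones you need vanishing in all degrees anyway, so your stronger $Hom^\bullet$ statement is the right thing to propagate — the paper does the same implicitly.
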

\begin{proof}
We first consider the object $P \in D^b(Q)$. By Lemma~\ref{lem:orthogonalample}, it suffices to prove that $Hom(\pi^*\mathcal{O}_{X_0}(i), P) = 0$ for all $i \in \mathbb{Z}$. A straightforward calculation with the family of quivers of theorem~\ref{thm:quiverresolution}(b) implies in particular, that $S_Q(\pi^* \mathcal{O}_{X_0}) \simeq \pi^* \mathcal{O}_{X_0}[1]$ and $S_Q(\pi^* k(p)) \simeq \pi^*k(p) [1]$ for $p \in X_0$ any regular point. This implies, by boundedness of cohomology, that $Hom(\pi^*\mathcal{O}_{X_0}, P) = Hom(\pi^*k(p), P) = 0$ for $p \in X_0$ any regular point as $P$ is $(4,2)$-Calabi-Yau by Lemma~\ref{lem:p}. To deduce the claim, observe that we have the exact triangles
{\setlength{\emergencystretch}{0.5\textwidth}\par}
\[
\begin{tikzcd}[row sep = 0.5cm]
\mathcal{O}_{X_0}(-1) \arrow[r]& \mathcal{O}_{X_0} \arrow[r]& k(p_1) \oplus k(p_2) \oplus k(p_3)\\
(k(p_1) \oplus k(p_2) \oplus k(p_3)) \arrow[r] & \mathcal{O}_{X_0}[1] \arrow[r] & \mathcal{O}_{X_0}(1)[1] \\
\end{tikzcd}
\]
where $p_1,p_2,p_3$ are three regular points on a line through $X_0 \subset \mathbb{P}^2$. Proceeding by induction, it follows that $Hom(\pi^*\mathcal{O}_{X_0}(i), P) = 0$ for all $i \in \mathbb{Z}$. The argument is identical for the case of $\tilde{P}$ and we conclude.
\end{proof}
\begin{lemma}\label{lem:Eintersection}
Let $E \in P^\perp$ be the unique $3$-spherical object. Consider the image $Im(\pi^*)$ of the categorical resolution $Perf(X_0) \xrightarrow{\pi^*} P^\perp$. Define $Im(\pi^*)_E^\perp = \prescript{\perp}{E}{Im(\pi^*)}$ to be the right/left orthogonal subcategory within $E^\perp$. Then the following holds.
\begin{enumerate}
\item
$E^\perp = P^\perp \cap \prescript{\perp}{}{P} = \{T \in D^b(Q) \,|\, S_Q(T) = T[1]\}$.
\item
$Im(\pi^*) \subset E^\perp$.
\end{enumerate}
\end{lemma}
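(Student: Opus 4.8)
The plan is to prove both statements by collapsing every one-sided orthogonality to a single Serre-symmetric condition and then bootstrapping through the defining triangle~\eqref{eq:E} of $E$. First I would record the consequences of Serre duality coming from Lemma~\ref{lem:p}: since $S_Q P \simeq \tilde{P}[2]$ and, by the fractional Calabi--Yau property, $S_Q^2 P \simeq P[4]$ (so also $S_Q \tilde{P} \simeq P[2]$), for every $T \in D^b(Q)$ one gets $RHom(P, T) \simeq RHom(T, \tilde{P})^{\vee}$ and $RHom(\tilde{P}, T) \simeq RHom(T, P)^{\vee}$ up to shift. These identities yield $P^\perp = \prescript{\perp}{}{\tilde{P}}$ and $\prescript{\perp}{}{P} = \tilde{P}^\perp$, so that $\mathcal{C} := P^\perp \cap \prescript{\perp}{}{P}$ is exactly the \emph{two-sided} orthogonal to the pair $\{P,\tilde{P}\}$, and is stable under $S_Q$ since $S_Q$ permutes $P$ and $\tilde{P}$ up to shift. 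Throughout, $E^\perp$ is read inside $P^\perp$, i.e. $E^\perp = \{T \in P^\perp : RHom(E,T)=0\}$, the $S_Q$-description below making membership in $P^\perp$ automatic.

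Next I would establish $\mathcal{C} = \{T : S_Q T \simeq T[1]\}$. For the inclusion $\supseteq$, suppose $S_Q T \simeq T[1]$. Applying the equivalence $S_Q$ and the shift relation to $Y=P$ and $Y=\tilde{P}$ gives $RHom(P,T) \simeq RHom(S_Q P, T)[1] \simeq RHom(\tilde{P},T)[-1] \simeq RHom(P,T)[-2]$. This $2$-periodicity, together with the boundedness of $T$, forces $RHom(P,T)=0$; by the reformulation above $RHom(T,P)=0$ as well, so $T \in \mathcal{C}$. For $\subseteq$, let $T \in \mathcal{C}$; the easy direction below gives $RHom(E,T)=0$, so the triangle $RHom(E,T)\otimes E \to T \to \mathbb{T}_E T$ shows $\mathbb{T}_E T \simeq T$, which by Corollary~\ref{cor:serrepperp} reads $S_{P^\perp}^{-1} T \simeq T[-1]$. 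Comparing Serre functors via $S_{P^\perp}^{-1} \simeq \mathbb{L}_P S_Q^{-1}$ and observing that $T \in \prescript{\perp}{}{P}$ forces $S_Q^{-1} T \in P^\perp$ (so $\mathbb{L}_P$ acts as the identity on it), I conclude $S_Q^{-1} T \simeq S_{P^\perp}^{-1} T \simeq T[-1]$.

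To identify $E^\perp$, the inclusion $\mathcal{C} \subseteq E^\perp$ is immediate: applying $RHom(-,T)$ to~\eqref{eq:E} and using $RHom(P,T)=RHom(\tilde{P},T)=0$ for $T \in \mathcal{C}$ gives $RHom(E,T)=0$. For the reverse inclusion, take $T \in P^\perp$ with $RHom(E,T)=0$; as above $S_{P^\perp}T \simeq T[1]$, and Serre duality inside $P^\perp$ then gives $RHom(T,E) \simeq RHom(E, S_{P^\perp}T)^{\vee} = RHom(E,T[1])^{\vee} = 0$. Applying $RHom(T,-)$ to~\eqref{eq:E} and using $RHom(T,\tilde{P})=0$ (because $T \in P^\perp = \prescript{\perp}{}{\tilde{P}}$) identifies $RHom(T,P) \simeq RHom(T,E) = 0$, so $T \in \prescript{\perp}{}{P}$ and $T \in \mathcal{C}$. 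This proves part (1). For part (2) I would use adjunction $\pi^* \dashv \pi_*$ together with Lemma~\ref{lem:KerR}: for $\mathcal{F} \in Perf(X_0)$, $RHom(\pi^*\mathcal{F}, P) \simeq RHom(\mathcal{F}, \pi_* P) = 0$ and $RHom(\pi^*\mathcal{F}, \tilde{P}) \simeq RHom(\mathcal{F}, \pi_* \tilde{P}) = 0$, whence $Im(\pi^*) \subseteq \prescript{\perp}{}{P} \cap \prescript{\perp}{}{\tilde{P}} = \prescript{\perp}{}{P} \cap P^\perp = \mathcal{C} = E^\perp$.

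The main obstacle is the reverse inclusion $E^\perp \subseteq \mathcal{C}$: a naive long exact sequence from~\eqref{eq:E} only shows that the connecting map links $RHom(T,P)$ with $RHom(T,\tilde{P})$ and cannot by itself produce vanishing. The decisive input is the self-referential use of the twist identity $\mathbb{T}_E \simeq S_{P^\perp}^{-1}[1]$ of Corollary~\ref{cor:serrepperp} together with Serre duality in $P^\perp$, which converts the hypothesis $RHom(E,T)=0$ into the dual vanishing $RHom(T,E)=0$ and thereby breaks the symmetry; equivalently, the cleanest self-contained route is the $2$-periodicity computation above, where boundedness of $T$ is precisely what collapses the periodic pattern to zero. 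I would therefore take care to fix the handedness of the Serre-duality reformulations $P^\perp=\prescript{\perp}{}{\tilde P}$ and $\prescript{\perp}{}{P}=\tilde P^\perp$ before running either argument.
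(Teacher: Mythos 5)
Your argument is correct and rests on the same two pillars as the paper's: Serre duality together with $S(P)\simeq\tilde P[2]$, $S(\tilde P)\simeq P[2]$ to trade left for right orthogonals of $P$ and $\tilde P$, and the Calabi--Yau property of $E$ (via Corollary~\ref{cor:serrepperp}) to convert $RHom(E,T)=0$ into $RHom(T,E)=0$; part (2) is verbatim the paper's reduction to Lemma~\ref{lem:KerR} by adjunction and Serre duality. Where you go beyond the paper is in actually carrying out the identification with $\{T\,|\,S_Q(T)\simeq T[1]\}$: the two-periodicity argument $RHom(P,T)\simeq RHom(P,T)[-2]$ killed by boundedness for one inclusion, and the chain $\mathbb{T}_ET\simeq T\Rightarrow S_{P^\perp}^{-1}T\simeq T[-1]\Rightarrow S_Q^{-1}T\simeq T[-1]$ (using $\mathbb{L}_P\circ S_Q^{-1}$ and the observation that $S_Q^{-1}T\in P^\perp$) for the other --- steps the paper's very terse proof of part (1) leaves implicit. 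All the individual verifications (handedness of the orthogonals, $RHom(E,T)=0$ from the triangle~\eqref{eq:E}, and the use of Serre duality inside $P^\perp$) check out, so this is a complete and, if anything, more carefully documented version of the intended argument.
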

\begin{proof}
For the first part, note that if $A \in P^\perp$ satisfied $Hom(E, A[i]) = 0$ for all $ i \in \mathbb{Z}$, then $A \in E^\perp$. But if $Hom(B, A[i]) = 0$ for all $ i \in \mathbb{Z}$ and $B \in E^\perp$, then in particular $Hom(A,A) = 0$ and so $A \simeq 0$. The other direction is identical by using that $E^\perp = \prescript{\perp}{}{E}$ by Serre duality as $E$ is Calabi-Yau.

For the second, it suffices to prove by part~$(1)$ that for any $A \in Perf(X_0)$, we have the vanishings $Hom(\pi^*A, P[i]) = 0, Hom(P, \pi^*A[j]) = 0$ for all $i,j \in \mathbb{Z}$. But this follows from Lemma~\ref{lem:KerR} together with Serre duality and adjunction.

%
\end{proof}


We now turn to proving Proposition~\ref{prop:bondalsphericalfunctor}.
\begin{proof}[Proof of Proposition~\ref{prop:bondalsphericalfunctor}]
We have the following diagram
\[
\begin{tikzcd}[row sep = 0.3cm]
D^b(\mathbb{P}^2) \arrow[r,"i^*"]&Perf(X_0) \arrow[d,symbol=\subseteq] \arrow[r, "\pi^*"] & P^\perp\arrow[ll, bend right=30,"L"]\arrow[ld, "\pi_*"]\\
& D^b(X_0) \arrow[lu, "i_*"]&
\end{tikzcd}
\]
where we define $F = \pi^* \circ i^*$, the right adjoint $R = i_* \circ \pi_*$. As $D^b(\mathbb{P}^2)$ and $P^\perp$ admit Serre functors, the left adjoint exists and is given by $L = S_{\mathbb{P}^2}^{-1} \circ R \circ S_{P^\perp}$.

We first prove that the cotwist is an equivalence. By definition, $C_F$ fits into an exact triangle
\[
\begin{tikzcd}
id_{\mathbb{P}^2} \arrow[r] & i_* \pi_* \pi^* i^* \arrow[r] & C_F
\end{tikzcd}
\]
By definition~\ref{def:catresolution}(1), we have that $C_F = Cone(id_{\mathbb{P}^2} \rightarrow i_* i^*) = \otimes\mathcal{O}_{\mathbb{P}^2}(-3)[1]$ and the claim follows.

By theorem~\ref{lem:spherical}, it suffices to prove the equivalence $S_{P^\perp}FC \simeq F S_{D^b(\mathbb{P}^2)}$. On the other hand, by Lemma~\ref{lem:Eintersection} and the above paragraph, we have the equivalences 
\[S_{P^\perp}FC \simeq [1] \circ F C \simeq FC[1] \simeq F\circ (\otimes \mathcal{O}_{\mathbb{P}^2}(-3))[2] = FS_{D^b(\mathbb{P}^2)}
\]
and we immediately conclude.


\end{proof}
\begin{rem}
Though it is not central to our analysis, we remark that the resolution functor $\pi^*$ is indeed crepant in the sense of \cite[Definition 3.4]{2016}. To see this, let $A\in P^\perp, B \in Perf(X_0)$. Then the right adjoint to the functor $\pi_*$ is specified by the following sequence of isomorphisms
\[
Hom(\pi_*A,B) = Hom(S_{X_0}^{-1}B, \pi_*A) = Hom(\pi^* S_{X_0}^{-1}B, A) = Hom(A, S_{P^\perp}\pi^* S_{X_0}^{-1}B)
\]
In \cite{10.1093/imrn/rnu072}, the image $\pi^*B$ is defined by a pair $(i_0^*B, \nu^*B) \in D(pt) \times_\varphi D(\mathbb{P}^1)$ where $i_0 \colon Spec(k) \xhookrightarrow{} X_0$ is the inclusion of the node and $\nu \colon \mathbb{P}^1 \rightarrow X_0$ is the normalization and $\varphi$ is a gluing bimodule. But the dualizing sheaf on $X_0$ satisfies $\nu^*\omega_{X_0} \simeq \mathcal{O}_{\mathbb{P}^1}$ and so we have
\[
S_{P^\perp}\pi^*S_{X_0}^{-1}B \simeq S_{P^\perp}\pi^* (B \otimes \omega_{X_0}^{-1} [-1] )\simeq S_{P^\perp} \pi^*B [-1] \simeq \pi^*B 
\]
Thus $\pi^*$ is also right adjoint to $\pi_*$ and so the resolution functor is crepant. In particular, both $P^\perp$ and $D^b(Q)$ serve as crepant resolutions in this case, and hence this gives an example of a non-unique crepant categorical resolution.
\end{rem}

\bibliographystyle{amsplain}
\bibliography{bondalserre}
\end{document}